\documentclass[11pt]{aims}
\usepackage{amsmath, amssymb, mathrsfs}
  \usepackage{paralist}
  \usepackage{graphics} 
  \usepackage{epsfig} 
\usepackage{graphicx}
  \usepackage{epstopdf}
 \usepackage[colorlinks=true]{hyperref}
 \hypersetup{urlcolor=blue, citecolor=red}
 \usepackage[toc,page]{appendix}
\usepackage{chngcntr}
\usepackage{tikz-cd}
\usepackage{dsfont}
\usepackage{boondox-cal}
\usepackage[nolabel]{showlabels}
\usepackage{titlesec}
\titleformat{\section}{\Large\bfseries}{\thesection.}{4pt}{}
\titleformat{\subsection}{\large\bfseries}{\thesection.\arabic{subsection}.}{4pt}{}
\titleformat{\subsubsection}{\bfseries}{\thesection.\arabic{subsection}.\arabic{subsubsection}.}{4pt}{}
\titleformat*{\paragraph}{\bfseries}
\titleformat*{\subparagraph}{\bfseries}
\setcounter{secnumdepth}{3}

 \usepackage[margin=1.1in]{geometry}


\def\RR{\mathbb{R}}

\newcommand{\be}{\begin{equation}}
\newcommand{\ee}{\end{equation}}

\newcommand{\pa}{\partial} 
 
\newcommand{\lab}{\label} 
\newcommand{\ba}{\begin{array}}
\newcommand{\ea}{\end{array}}
\newcommand{\bee}{\begin{eqnarray*}}
\newcommand{\eee}{\end{eqnarray*}}
\newcommand{\bea}{\begin{eqnarray}}
\newcommand{\eea}{\end{eqnarray}}
\newcommand{\non}{\nonumber}

\def\fref#1{{\rm (\ref{#1})}}
\newcommand{\donothing}[1]{{}}


%
%
%
\newcommand{\pd}{\partial}

\newtheorem{propositionmain}{Proposition}
\newtheorem{theoremmain}{Theorem}
\newtheorem{theorem}{Theorem}[section]

\newtheorem{definition}[theorem]{Definition}

\newtheorem{lemma}[theorem]{Lemma}

\newtheorem{remark}[theorem]{Remark}

\numberwithin{equation}{section}


\def\pa{\partial}

\def\RR{\mathbb{R}}

\def\fref#1{{\rm (\ref{#1})}}

\catcode`@=11
\def\supess{\mathop{\operator@font Sup\,ess}}
\catcode`@=12

\def\RR{\mathbb{R}}

\def\bar#1{{\overline #1}}
\def\fref#1{{\rm (\ref{#1})}}

\def\R2+{\RR ^2_+}

\def\pa{\partial}

\def\lim{\mathop{\rm lim}}

\def\sup{\mathop{\rm sup}}

\def\log{{\rm log}}

\def\T{\Theta}

\def\pa{\partial}

\def\tb{\tilde{b}}

\def\pa{\partial}

\def\ba{\begin{array}}
\def\ea{\end{array}}
\def\lab{\label}
\def\non{\nonumber}
\def\mX{\mathcal X}
\def\mY{\mathcal Y}
\def\tb{\tilde b}
\def\ta{\tilde a}
\def\T{\Theta}

\def\sft{\mathsf{t}}
\def\sfX{\mathsf{X}}
\def\sfY{\mathsf{Y}}
\def\mH{\mathcal{H}}
\def\sfv{\mathsf{v}}

\def\ma{\mathcal a}
\def\mb{\mathcal b}

\def\tmb{\tilde{\mathcal b}}

\begin{document}
\title{Singularities and unsteady separation for the inviscid two-dimensional Prandtl system}

\author[C. Collot]{Charles Collot}
\address{Courant Institute of Mathematical Sciences, New York University, 251 Mercer Street, New York, 10003 NY, United States of America.}
\email{cc5786@nyu.edu}
\author[T.-E. Ghoul]{Tej-Eddine Ghoul}
\address{NYUAD Research Institute, New York University in Abu Dhabi, Saadiyat Island, P.O. Box 129188, Abu Dhabi, United Arab Emirates.
Department of Mathematics, New York University in Abu Dhabi, Saadiyat Island, P.O. Box 129188, Abu Dhabi, United Arab Emirates.}
\email{teg6@nyu.edu}
\author[N. Masmoudi]{Nader Masmoudi}
\address{ NYUAD Research Institute, New York University Abu Dhabi, PO Box 129188, Abu Dhabi, United Arab Emirates.
 Courant Institute of Mathematical Sciences, New York University, 251 Mercer Street, New York, NY 10012, USA,}
\email{masmoudi@cims.nyu.edu}

\keywords{Prandtl's equations, blow-up, singularity, self-similarity, stability}
\subjclass{35B44, 35Q35, 35B40, 35B350}
\maketitle

\begin{abstract} 
We consider the inviscid unsteady Prandtl system in two dimensions, motivated by the fact that it should model to leading order separation and singularity formation for the original viscous system. We give a sharp expression for the maximal time of existence of regular solutions, showing that singularities only happen at the boundary or on the set of zero vorticity, and that they correspond to boundary layer separation. We then exhibit new Lagrangian formulae for backward self-similar profiles, and study them also with a different approach that was initiated by Elliott-Smith-Cowley and Cassel-Smith-Walker. One particular profile is at the heart of the so-called Van-Dommelen and Shen singularity, and we prove its generic appearance (that is, for an open and dense set of blow-up solutions) for any prescribed Eulerian outer flow. We comment on the connexion between these results and the full viscous Prandtl system. This paper combines ideas for transport equations, such as Lagrangian coordinates and incompressibility, and for singularity formation, such as self-similarity and renormalisation, in a novel manner, and designs a new way to study singularities for quasilinear transport equations.
\end{abstract}


\section{Introduction}

We consider the inviscid Prandtl equations on the upper half-plane $\mH:=\mathbb R \times [0,\infty)$:
\begin{align}\label{2DPrandtlp}
\left\{\begin{array}{ll}
&u_t+uu_x+vu_y=-p_x^E \\
&u_x+v_y=0,\\
&v\arrowvert_{y=0}=0, \quad \lim_{y\to \infty} u(t,x,y)=u^E(t,x),
\end{array}
 \right. \qquad (t,x,y)\in[0,T)\times \mH,
\end{align}
where $p^E$ and $u^E$ are the trace of the Eulerian pressure and tangential flow at the boundary $\mathbb R \times \{ 0\}$ induced by the Eulerian flow at infinity. The functions $p^E$ and $u^E$ are prescribed, and then act as forcing terms for $u$.  They are linked through Bernoulli's equation:
\be \lab{eq:bernouilli}
u^E_t+u^Eu^E_x=-p^E_x,
\ee
whose solutions have to be global in two dimensions.

\subsection{Historic background}

\subsubsection{Boundary layer separation and singularities} \label{subsubsec:prandtl}

Prandtl's system comes from the vanishing viscosity limit of the Navier-Stokes equations with Dirichlet boundary condition. It describes the formation of a boundary layer, where the solution does a sharp transition from the vanishing at the boundary of an obstacle, induced by the Dirichlet condition, to a solution of the Euler system away from its boundary.

Prandtl describes in \cite{prandtl04} the phenomenon of boundary layer separation, when "a fluid layer, set rotating as a result of friction at the wall, moves out into the free stream" (this English translation being taken from \cite{ackroyd}). For the steady Prandtl system, separation happens at the boundary with the condition of vanishing wall shear stress $\pa_y u(x_0,0)=0$. Goldstein \cite{goldstein} finds that at such location the solution is singular, and as a result so is the displacement thickness (a quantity describing the influence of the layer on the outer flow), revealing that the layer separates past $x_0$. We refer to the textbook \cite{schlichting} for details. Dalibard and Masmoudi \cite{DM} gave recently a mathematically rigorous description of the Goldstein singularity.

For the unsteady Prandtl system, Moore, Rott and Sears \cite{moore,rott,sears} realised that $\frac{\pa}{\pa y}_{|y=0}u=0$ was not a correct condition for separation, and came up with what is now known as the MRS conditions. These conditions state that if a piece of layer that is separating off the boundary, then there is a point $(x_s(t),y_s(t))$ at the center of this separating piece of layer, at which the vorticity vanishes $\pa_y u(t,x_s(t),y_s(t))=0$, and whose tangential velocity $u(t,x_s(t),y_s(t))$ is equal to the tangential velocity of the whole separating piece of layer. We recall that $\pa_y u$ is the vorticity, when considering the approximation of the Navier-Stokes equations with the Prandtl system at high Reynolds number. It was then believed \cite{sears2} that a criterion for separation at time $T_0$ was that at such a point the solution $u$ becomes singular as $t\uparrow T_0$. Van Dommelen and Shen \cite{VanShen80,van2} showed this equivalence between separation and singularity formation. We refer to the introduction of \cite{casselconlisk} for an historic perspective for criteria for steady and unsteady separation.

It is noted in  \cite{VanShen80} that the layer becomes inviscid to leading order during separation. This justifies our study of the inviscid Prandtl system \fref{2DPrandtlp}. The aim of this article is to give a rather complete study of \fref{2DPrandtlp}. In particular, we justify in a mathematical and rigorous way the aforementioned results: we prove the equivalence between separation and singularities for \fref{2DPrandtlp}, and describe the generic mechanism for this phenomenon.

This introduction will now be restricted to unsteady problems.

\subsubsection{Existence and regularity of solutions}

The first rigorous mathematical justification of the unsteady Prandtl system is due to Oleinik \cite{Ol66}. She proves the local wellposedness by imposing a monotonicity in $y$ condition on the tangential velocity $u$ in order to use the Crocco transform. Xin and Zhang \cite{XinZhang04} obtain global existence of weak solutions by imposing monotonicity and an extra condition on the pressure.
The monotonicity condition allows Masmoudi and Wong in \cite{MasWong15}, and Alexandre, Wang, Xu and Yang in \cite{AWXY} to prove wellposedness in Sobolev regularity.
Without the monotocity condition, the equation can be ill-posed in Sobolev regularity \cite{GVDormy10}. Indeed, the authors in \cite{grenier2000,Grenier2016} constructed instabilities that can prevent the Prandtl system to be a good approximation of the Navier-Stokes system in the vanishing viscosity limit. We refer to \cite{CafSam98I,GMM,maekawa2014inviscid} and the references therein for further informations. Otherwise, in the general case the system is locally well-posed in the analytical setting  \cite{CafSam98II,lombardo2003well,KV13,dietert2018well}.

Up to our knowledge, the only local existence with general smooth initial data for the inviscid Prandlt system \fref{2DPrandtlp} is due to Hong and Hunter, in \cite{HongHunter03} (see also \cite{brenier1999homogeneous,grenier1999derivation}). They find a lower bound for the maximal time of existence which corresponds to that of the Burger's equation $T=(-\inf_{\mH}\pd_xu_0)^{-1}$ in the case of a trivial outer Eulerian flow \fref{2DPrandtlphom}. We find in this current paper that the sharp maximal existence time for the homogeneous inviscid Prandtl system \fref{2DPrandtlphom} is $T=-(\inf_{\{\pd_y u_0=0 \}\cup \{y=0 \}}\pd_x u_0)^{-1}$ which is larger. We prove that this time is sharp which clarifies why the aforementioned monotonicity condition is important to guarantee global wellposedeness for the inviscid Prandtl system. We obtain more generally a sharp maximal time of existence in the case of nontrivial Eulerian flows.

\subsubsection{Description of singularities}

The first reliable numerical result on the unsteady Prandtl system explaining how the separation is linked to the formation of singularity was obtained by Van Dommelen and Shen \cite{VanShen80}. They characterise the singularity as a result of particles being squashed in the streamwise direction, with a compensating expansion in the normal direction of the boundary. We refer to \cite{gargano2009singularity,della2006singularity} for recent numerical simulations and references therein for previous ones.

The first rigorous result on singularity formation for the unsteady Prandtl system is due to E and Enquist \cite{EE97} (see \cite{kukavica2017van} for nontrivial outer flows). They consider the trace of the tangential derivative of odd solutions in $x$ along the transversal axis $\xi(t,y)=-u_x(t,0,y)$, which obeys the following equation for $y\in [0,\infty)$:
\begin{align}\label{1DPrandtl}
\left\{\begin{array}{ll}
\xi_t-\xi_{yy}-\xi^2+\left(\int_0^y \xi \right)\xi_y=p^E_{xx}(t,0),\\
\xi(t,0)=0, \ \ \xi(0,y)=\xi_0(y), \ \ \lim_{y\rightarrow \infty}\xi(t,y)=-u^E_x(t,0).
\end{array}
 \right.
\end{align}
They prove singularity formation for \fref{1DPrandtl}. In \cite{CGIM} we give a precise description of the singular dynamics of the equation above, where we find a stable profile (and instable ones) and prove that the blowup point is ejected to infinity in the transversal direction because of the incompressibility condition. This result can be interpreted as a partial stability result for one of the profiles studied in the present paper, see Remark \ref{rk:degen}.\\

\noindent It is interesting to understand singularities for simplified models. In a first part of \cite{CGM}, we treat the inviscid Burgers equation $u_t+uu_x=0$ and prove that the Taylor expansion of the initial data around the blowup point will decide which profile and scaling law the flow will select to form the singularity. The solution will be of the form $u(t,x)=(T-t)^{1/(2i)} \Psi_i(x/(T-t)^{1+1/(2i)})$ where $(\Psi_i)_{i\geq 1}$ are the profile, and the Taylor expansion selects the integer $i$.
We prove that the generic profile
 \be \lab{eq:def Psi1}
\Psi_1(\mX):= \left(-\frac \mX2 +\left(\frac{1}{27}+\frac{\mX^2}{4}\right)^{\frac 12} \right)^{\frac 13}+ \left(-\frac \mX2 -\left(\frac{1}{27}+\frac{\mX^2}{4}\right)^{\frac 12} \right)^{\frac 13},
\ee
appears generically during blow-up. Surprisingly, the above profile is also going to play a role in the generic separation phenomenon for the Prandlt's system.

 In order to understand the effect of the transversal viscosity on the horizontal transport we consider in the second part of \cite{CGM} a two dimensional Burgers system:
\be \label{2DBurger}
u_t- u_{yy}+uu_x=0\quad\quad (t,x,y)\in[0,T)\times\mathbb R^2 .
\ee
We found infinitely many different profiles, one being stable under suitable perturbations. We find that despite the infinite speed of propagation induced by the transverse viscosity, the Taylor expansion of the initial datum around the blowup point will still decide the profile and the scaling law.
We prove that the vertical viscosity affects the shock formation of Burgers equation, in the sense that the solutions are now anisotropic and of the form $u(x,y,t)\sim\lambda^{1/(1+2i)}(t,y)\Psi_i(x/\lambda (t,y))$ where $\Psi_i$ is a profile of the Burger's equation and $\lambda\rightarrow 0$ depends on the solutions of a parabolic system similar to \eqref{1DPrandtl} without the nonlocal term.

Inspired by \cite{van1990lagrangian,VanShen80,ESC83,CSW96,cowley1990use} where it is suggested based on numerics and formal calculations that the viscosity is asymptotically negligible during singularity formation for the unsteady Prandtl system, we treat in this paper the singularity formation for the inviscid problem \fref{2DPrandtlp}. In a forthcoming paper we treat the viscous case.\\

\subsection{Results}

\subsubsection{A sharp existence result and equivalence between separation and singularity} \label{subsec:lwpth1}

We give here a sharp local well-posedness result for \fref{2DPrandtlp}, relying on the Lagrangian approach initiated in \cite{VanShen80,HongHunter03}. As a by-product, we prove that the layer separates if and only if the solution becomes singular, establishing rigorously this standard criterion for separation in the physics literature (see for example \cite{sears2}).

We denote by $(X,Y)$ the Lagrangian variables and $(x,y)$ the Eulerian ones for equation \fref{2DPrandtlp}. The position at time $t$ of a particle with initial position $(X,Y)$ and that is transported by the flow, is denoted by $(x(t,X,Y),y(t,X,Y))$. Using dots for differentiation with time $t$, they are related by the characteristics ODE:
\be \label{id:charac}
\dot x =u(t,x,y), \ \ \dot y=v(t,x,y), \ \ (x(0),y(0))=(X,Y).
\ee
Our definition of separation, meaning that the layer penetrates the outer flow (following Prandtl as cited in Subsubsection \ref{subsubsec:prandtl}), is:
\begin{definition} \label{def:separation}
Given $T_0>0$, $u^E,p^E\in C^1([0,T_0)\times \mathbb R)$ solving \fref{eq:bernouilli} and $u\in C^1([0,T_0)\times \mH)$ solving \fref{2DPrandtlp}, we say that there is boundary layer separation at time $T_0$ if there exists a solution $(x(t),y(t))$ of \fref{id:charac} such that $\lim_{t\uparrow T_0}y(t)=\infty$.
\end{definition}

From \fref{2DPrandtlp}, along the characteristics $u$ solves the ODE $\dot u=-p^E_x(t,x)$. Therefore, the tangential position $x$ of the particle can be retrieved without any knowledge about the normal one $y$, by solving the following ODE for each triple $(X,Y,u_0(X,Y))$:
\be \lab{eq:charx}
\left\{  \begin{array}{l l}\dot x =u, \\ \dot u =-p^E_x (t,x), \end{array} \right. \qquad (x(0),u(0))=(X,u_0(X,Y)).
\ee
The above equation is that of one dimensional particle moving in a force field $-p^E_x$. The corresponding change of variables $(X,Y)\mapsto (X,u)$ is called the Crocco transform and has been used extensively in the study of the Prandtl system. One key fact about \fref{2DPrandtlp} is that the vorticity $u_y$ is preserved along the characteristics, as differentiating \fref{2DPrandtlp} with respect to $y$ yields $ u_{yt}+uu_{yx}+vu_{yy}=0$
because $p^E$ does not depend on $y$, and using incompressibility. Hence the set $\{u_y=0 \}$ is transported by the characteristics. The boundary $\{y=0\}$ is also preserved as $v_{|y=0}=0$. Hence for any $(x,y)$ either in the set $\{u_y=0\}$ or at the boundary $\{y=0\}$, one has $v_xu_y=0$, so that differentiating \fref{2DPrandtlp} with respect to $x$ yields $ u_{xt}+u u_{xx}+vu_{xy}=-(u_x)^2-p_{xx}^E$. It follows from this equation and\fref{eq:charx} that the transport along the tangential variable and the tangential compression $u_x$, when restricted to these two sets, are given by the previous ODE completed by an inhomogeneous Riccati equation:
\be \lab{eq:ricattizerovorticity}
\left\{  \begin{array}{l l}\dot x =u, \\ \dot u =-p^E_x (x), \\ \dot u_x=-(u_x)^2-p^E_{xx}(x), \end{array} \right. \qquad (x,u,u_x)(0)=(X,u_0(X,Y),u_{0x}(X,Y)).
\ee
Given a global in time pressure field $p^E\in C^k([0,\infty)\times \mathbb R)$ with $k\geq 2$, the solution to the above system might not exist for all time due to the nonlinearity in the last equation, and we denote by $T(X,Y)$ the corresponding maximal time of existence. We will distinguish later on between singularities happening at the boundary or away from it, and define to this aim:
\be \label{def:T1}
T:= \min (T_a,T_b ) , \ \ T_a:= \min \{T(X,Y), \ \ \pa_Y u_{0}(X,Y)=0, \ Y>0 \} , \ \ T_b:= \min \{T(X,Y), \  \ Y=0 \} .
\ee
The time $T$ defined above is a natural upper bound for the maximal existence of a solution to \fref{2DPrandtlp} with $u_x\in L^{\infty}$. In fact, this time is sharp. We introduce the spaces

\begin{align}
\non &L^{\infty}_{\text{loc},0}([0,\infty)\times \mathbb R)=\left\{ f, \ \| f \|_{L^{\infty}([0,t]\times \mathbb R)}<\infty \mbox{ and } \lim_{m\rightarrow \infty }\| f \|_{L^{\infty}([0,t]\times \{|x|\geq m\})}=0 \mbox{ for all } t>0\right\}, \\
\lab{def:Fk} &\mathcal F^k=\left\{ (u^E,p^E_x) \in C^k ([0,\infty)\times \mathbb R) \mbox{ solving } \fref{eq:bernouilli}, \mbox{ with } (u^E_x,p^E_{xx}) \in L^{\infty}_{\text{loc},0}([0,\infty)\times \mathbb R) \right\},
\end{align}
for the outer flow, and the following spaces for the initial datum (depending on $u^E(0)$):
\begin{align}
\non &L^{\infty}_{0}(\mH) = \left\{ f, \ \| f \|_{L^{\infty}(\mH )}<\infty \mbox{ and } \lim_{m\rightarrow \infty }\| f \|_{L^{\infty}( \{|x|\geq m\}\times [0,\infty))}=0\right\},\\
\lab{def:Ek} &\mathcal E^k=\Bigl\{ u_0\in C^k(\mH) \mbox{ with } \pa_y u_0 \in L^{\infty}(\mH), \ \pa_x u_0\in L^{\infty}_{0}(\mH) \\
\non &\qquad \qquad  \qquad \qquad  \qquad \qquad  \mbox{ and } \lim_{y\rightarrow \infty} \| u_0(\cdot ,y)-u^E(0,\cdot)\|_{C^1(\{ |x|\leq m\})}, \ \mbox{ for all } m>0\Bigr\}.
\end{align}

\begin{theoremmain} \lab{th:main}

For any $(u^E,p^E_x)\in \mathcal F^2$ and $u_0 \in \mathcal E^2$, there exists a unique solution $u\in C^1([0,T)\times \mH)$ of \fref{2DPrandtlp} where $T$ is defined by \fref{def:T1}. It satisfies $\sup_{t\in [0,\tilde T]}\| \nabla u(t)\|_{L^{\infty}(\mH)}<\infty$ for any $\tilde T<T$. If $T$ is finite then:
$$
\lim_{t\uparrow T} \| u_x(t) \|_{L^{\infty}(\mH)}=\infty
$$
and if moreover $T=T_a$ then there is boundary layer separation at time $T$ in the sense of Definition \ref{def:separation}. If in addition $u_0\in C^{k}(\mH)$ and $(u^E,p^E_x)\in C^k([0,\infty)\times \mathbb R)$ for some $k\geq 3$, then $u\in C^{k-1}([0,T)\times \mH)$. The mapping which to $u_0$ assigns the solution $u$ is strongly continuous from $C^k(\mH)\cap \mathcal E^2$ into $C^{k-1}([0,T']\times \mH)$ for any $T'<T$.

\end{theoremmain}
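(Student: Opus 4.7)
My plan is to construct the solution in Lagrangian coordinates, following the Van~Dommelen--Shen and Hong--Hunter philosophy, and then to transfer to Eulerian while carefully tracking where the Eulerian derivatives can blow up. For each $(X,Y)\in\mH$, the tangential characteristic system \eqref{eq:charx} is an autonomous two-dimensional Hamiltonian ODE, globally well-posed in time because $(u^E,p^E_x)\in\mathcal F^k$, yielding $(x,u)(t,X,Y)$ for all $t\geq 0$. Differentiating this system in $(X,Y)$ gives a linear trace-free variational equation for $(x_X,u_X)$ and $(x_Y,u_Y)$. Writing $(\mathcal{a},\mathcal{c})$ for the two linearly independent position solutions with initial conditions $(1,0)$ and $(0,1)$, and $(\mathcal{b},\mathcal{d})=(\dot{\mathcal{a}},\dot{\mathcal{c}})$, linearity and the Wronskian identity $\mathcal{a}\mathcal{d}-\mathcal{b}\mathcal{c}\equiv 1$ give
\begin{align*}
x_X=\mathcal{a}+u_{0X}\mathcal{c},\quad u_X=\mathcal{b}+u_{0X}\mathcal{d},\quad x_Y=u_{0Y}\mathcal{c},\quad u_Y=u_{0Y}\mathcal{d}.
\end{align*}

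I would then reconstruct the normal position $y$ from incompressibility. Differentiating \eqref{2DPrandtlp} in $y$ and using that $p^E$ does not depend on $y$ shows that the vorticity is Lagrangian-conserved, $u_y(t,x,y)=u_{0Y}(X,Y)$; in turn, incompressibility forces the Lagrangian Jacobian $J=x_Xy_Y-x_Yy_X$ to be time-invariant and identically equal to $1$. Together with the wall-preservation condition $y(t,X,0)=0$ and the identity $\dot y_X=v_x x_X-u_xy_X$ (closed via the chain rule), this determines $y$ and its Lagrangian derivatives by integration in $Y$ from the wall. Applying the chain rule to $u(t,X,Y)=u(t,x,y)$ then yields
\begin{align*}
u_y=u_{0Y},\qquad u_x=\frac{u_X-u_{0Y}\,y_X}{x_X}.
\end{align*}
On the distinguished locus $\{u_{0Y}=0,\,Y>0\}\cup\{Y=0\}$ the product $u_{0Y}\,y_X$ vanishes (either because $u_{0Y}=0$, or because $y\equiv 0$ on the wall forces $y_X=0$), so $u_x=u_X/x_X$. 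A direct computation using $\dot u_X=-p^E_{xx}\,x_X$ shows that this ratio solves along a characteristic the Riccati equation $\dot u_x=-u_x^2-p^E_{xx}$ of \eqref{eq:ricattizerovorticity}. Hence $u_x$ becomes infinite at a special point precisely when $x_X$ vanishes there, at the corresponding time $T(X,Y)$.

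The main obstacle is to exclude earlier Eulerian blow-ups produced by some $(X_*,Y_*)\notin\{u_{0Y}=0\}\cup\{Y=0\}$ at which $x_X$ might nevertheless vanish. The argument rests on a Wronskian cancellation: at such a point, $x_X=0$ yields $u_{0X}=-\mathcal{a}/\mathcal{c}$, whence $u_X=(\mathcal{b}\mathcal{c}-\mathcal{a}\mathcal{d})/\mathcal{c}=-1/\mathcal{c}$, while the identity $J\equiv 1$ combined with $x_Y=u_{0Y}\mathcal{c}\neq 0$ forces $y_X=-1/(u_{0Y}\mathcal{c})$, so that
\begin{align*}
u_X-u_{0Y}\,y_X=-\frac{1}{\mathcal{c}}+\frac{1}{\mathcal{c}}=0.
\end{align*}
Because $J\equiv 1$, the Lagrangian map is globally a smooth diffeomorphism of $\mH$, and a division-lemma argument along the smooth hypersurface $\{x_X=0\}$ shows that $(u_X-u_{0Y}y_X)/x_X$ extends continuously and locally boundedly across this hypersurface. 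Together with the Riccati analysis and the decay of $u_{0X},u_{0Y}$ at spatial infinity dictated by $u_0\in\mathcal{E}^k$, this gives $\|u_x(t)\|_{L^\infty(\mH)}<\infty$ for every $t<T=\min(T_a,T_b)$, with blow-up as $t\uparrow T$.

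The remaining parts are more standard. If $T=T_a$, the first Riccati blow-up is attained (or approached) at some interior $(X_*,Y_*)\in\{u_{0Y}=0,\,Y>0\}$, so $x_Y=u_{0Y}\mathcal{c}=0$ along the corresponding curve and $J=1$ gives $y_Y=1/x_X\to\infty$ there; integrating $y_Y$ in $Y$ between $0$ and $Y_*$ yields $y(t,X_*,Y_*)\to\infty$ as $t\uparrow T$, which is separation in the sense of Definition~\ref{def:separation}. Uniqueness follows because any $C^1$ solution of \eqref{2DPrandtlp} with bounded $u_x$ must generate the same characteristic ODE \eqref{eq:charx}, hence the same $(x,u)(t,X,Y)$ and the same Eulerian velocity. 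Higher regularity $u\in C^{k-1}([0,T)\times\mH)$ follows from the smooth dependence of \eqref{eq:charx} and its variational equations on initial data when $u_0\in C^k$, and strong continuity of the solution map on compact time subintervals of $[0,T)$ from continuous dependence of ODE flows and their Jacobians on parameters.
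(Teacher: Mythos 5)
Your overall strategy --- Lagrangian characteristics, the volume-preserving structure of \fref{eq:charx} in the $(x,u)$ phase plane, conservation of vorticity, reconstruction of $y$ from incompressibility, and the Riccati equation on $\{u_{0Y}=0\}\cup\{Y=0\}$ --- is the same as the paper's, and your Wronskian cancellation $u_X-u_{0Y}y_X=0$ on $\{x_X=0\}$ is a correct reformulation of the paper's Crocco argument (there, $(X,Y)\mapsto(x,u)$ is a local diffeomorphism wherever $u_{0Y}\neq 0$, which encodes the same cancellation). However, there is a genuine gap in your construction of $y$. You determine $y$ ``by integration in $Y$ from the wall'', i.e.\ along a level curve of $x$ parametrised by $Y$ one has $dy=dY/x_X$, so $y=\int_0^Y x_X^{-1}\,d\tilde Y$; this requires $x_X\neq 0$ along the entire level curve. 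But, as you yourself note, $x_X=\mathcal{a}+u_{0X}\mathcal{c}$ does vanish before time $T$ at points where $u_{0Y}\neq 0$ (already in the pressureless case $x_X=1+tu_{0X}$ vanishes at $t=-1/u_{0X}$, which can be strictly smaller than $T$); such points are exactly why $T$ exceeds the naive Burgers time, so they cannot be avoided. There the level curve of $x$ is tangent to the $X$-axis ($x_X=0$, $x_Y=u_{0Y}\mathcal{c}\neq 0$ by the Wronskian) and is not a graph over $Y$, so your formula for $y$ breaks down and $y_X$, $y_Y$ are not yet defined. Your subsequent cancellation argument presupposes that $y_X$ exists and equals $-1/(u_{0Y}\mathcal{c})$ at precisely these points, so the gap propagates into the key boundedness claim for $u_x$. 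The paper's fix is Lemma \ref{lem:paramincompressibility}(i): parametrise the level curves by arclength and set $y=\int_0^s|\nabla x|^{-1}\,d\tilde s$, which only needs $\nabla x\neq 0$ --- and that does hold up to time $T$.

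A secondary weak point: even once $y$ is constructed, the ``division-lemma'' step needs $\{x_X=0\}$ to be a regular level set to conclude that $(u_X-u_{0Y}y_X)/x_X$ is locally bounded, and nothing guarantees $\nabla x_X\neq 0$ there. It is cleaner to avoid the division: since the Lagrangian-to-Eulerian map has Jacobian $\equiv 1$, its inverse derivative is the cofactor matrix, so $u_x=u_Xy_Y-u_Yy_X$, which is bounded once $\nabla y$ is bounded; the latter follows from the arclength formula together with a uniform lower bound on $|\nabla x|$ for $t\le\tilde T<T$ (here the spatial decay built into $\mathcal E^2$, $\mathcal F^2$ is used). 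Your separation argument has the same flavour of imprecision: $y_Y=1/x_X$ holds only where $x_Y=0$, not along the whole segment $\{X=X_*\}$, so ``integrating $y_Y$ in $Y$'' does not immediately give divergence; the paper instead obtains $y(t,X_0,Y_0)\to\infty$ from the arclength integral and the Lipschitz bound on $\nabla x(T,\cdot)$ near its zero.
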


\begin{remark} \label{rk:applications}

Boundedness of the gradient in \fref{def:Fk} and \fref{def:Ek} is the natural requirement for classical solutions. The decay as $|x|\rightarrow \infty$ facilitates the study of singularities, forcing them to appear at a finite location in $x$, and not at infinity in $x$. We use the word singularity as it is usual, but it can be misleading: the solution might remain smooth at time $T$. Indeed, the points where $u_x$ becomes large can be sent to infinity in the normal direction as the study below shows. Also, from Theorem \ref{th:main} and the ODE \fref{eq:ricattizerovorticity} one easily derives criterions for global-well posedness or finite time blow-up:
\begin{itemize}
\item In the case of normal monotonicity $u_y>0$ or $u_y<0$, the solution is global if and only if the solution to the Burgers equation at the boundary 
$$
\pa_t u_{|y=0}+u_{|y=0}\pa_x u_{|y=0}=-p^E_x
$$
is global.
\item The solution is global in the case of tangential growth $u_{0x}\geq 0$ on both the set of zero vorticity $\{u_{0y}=0 \}$ and the boundary $\{y=0\}$, and of concave pressure $p^E_{xx}\leq 0$.
\item The maximal time of existence is $T=(-\min_{\{u_{0y}=0 \}\cup \{y=0 \}}u_{0x})^{-1}$ in the pressureless case $p^E=0$ (with the convention $T=\infty$ if the $\min$ is nonnegative).
\end{itemize}

\end{remark}

\begin{remark}
For a singularity away from the wall, $T_a<T_b$, let $(X_0,Y_0)$ be the point attaining the minimum in the definition of $T_a$. Then the time dependent position $(x(t),y(t))$ in Definition \ref{def:separation} can be chosen as the characteristics starting initially at $(X_0,Y_0)$. One thus recovers\footnote{The MRS conditions are satisfied because of Definition \ref{def:separation} and because $\pa_y u_0(X_0,Y_0)=0$ from \fref{def:T1} so that $\pa_y u(t,x(t),y(t))=0$ by conservation of vorticity. There is a Burgers type compression because $\pa_x u(t,x(t),y(t))\rightarrow -\infty$ as $t\rightarrow T_a$ since the third equation in \fref{eq:ricattizerovorticity} becomes singular from the definition of $T_a$.} a standard criterion for separation in the physics literature: the MRS conditions of Subsubsection \ref{subsubsec:prandtl} at this point $(x(t),y(t))$ supplemented by the appearance of a Burgers type compression at this point (this supplementary condition was found by Van Dommelen and Shen \cite{VanShen80}).
\end{remark}

\subsubsection{The generic self-similar singular solution} \label{subsec:thetafundamental}

Considering the nonlinearity in the ODE \fref{eq:ricattizerovorticity}, Theorem \ref{th:main} and a standard convexity argument shows that given any prescribed Eulerian flow $u^E$ and $p^E$, there exist initial data $u_0$ such that the corresponding solution becomes singular in finite time. Theorem \ref{th:main} also indicates where singularities of \fref{2DPrandtlp} form: either at the boundary $\{y=0 \}$, or away from the boundary on the set of zero vorticity $\{y>0, \ u_y=0 \}$. We now focus on the description of this phenomenon. Motivated by the full original viscous Prandtl system, where the Dirichlet boundary condition forces $u_{|y=0}= 0$, we will only study what happens when the singularity forms away from the boundary, i.e. $T_a<T_b$ and $T_a<\infty$. The study done in this document could be adapted to the case of a blow-up at the boundary.

We are interested first in a leading order description of the singularity. Since as $\nabla u$ becomes large, the effects of the pressure $p^E$ and of the boundary conditions become of lower order, we start by dropping them and investigate the homogeneous inviscid Prandtl system
\begin{align}\label{2DPrandtlphom}
\left\{\begin{array}{ll}
&u_t+uu_x+vu_y=0,\\
&u_x+v_y=0, \ \ v\arrowvert_{y=0}=0,
\end{array}
 \right. \qquad (t,x,y)\in[0,T)\times\mH.
\end{align}
This equation has the following invariances. If $u$ is a solution then so is
\be \label{def:invariancesprandtl}
 \frac{\mu}{\lambda} \iota u\left(\frac{t}{\lambda},\iota \frac{x-ct}{\mu},\frac{y}{\nu} \right)+c
\ee
for $(\iota,\lambda,\mu,\nu,c)\in \{-1,1\}\times (0,\infty)^3\times \mathbb R$. Backward self-similar solutions are special solutions living in the orbit of the initial datum under the action of a one dimensional scaling subgroup, of the form $u(t,x,y)=(T-t)^{\alpha-1} \T(x/(T-t)^{\alpha},y/(T-t)^\beta)$. We develop in this paper, in Section \ref{sec:selfsim}, a method to find the admissible exponents $\alpha$ and $\beta$, and an explicit formula for $\T$. To have a solution to \fref{2DPrandtlphom} of this form is equivalent to have a solution of the stationary equation:
\begin{equation} \label{eq:selfsimstationary}
\left\{ \begin{array}{l l}
(1-\alpha)\Theta+(\alpha \mX+\T)\pa_\mX \T +(\beta \mY +\Upsilon )\pa_\mY \T=0,\\
\pa_\mX \T+\pa_\mY \Upsilon=0,
\end{array}
\right. \qquad (\mX,\mY)\in \Omega,
\end{equation}
subject to the condition $\lim_{\mY \rightarrow 0} \Upsilon=0$, on an open set $\Omega \subset \mH$ (which might be different than $\mH$, see below). The above equation \fref{eq:selfsimstationary} is nonlinear and nonlocal. Cassel, Smith and Walker \cite{CSW96} (see also \cite{ESC83}) give a change of variables to transform \fref{eq:selfsimstationary} into a nonlinear local equation, see (iii) in Lemma \ref{lem:resolutioneqprofiles}, proving the existence of solutions to \fref{eq:selfsimstationary} but without providing explicit formulas. We find here a new way to solve \fref{eq:selfsimstationary} and show, see (ii) in Lemma \ref{lem:resolutioneqprofiles}, that \fref{eq:selfsimstationary} is equivalent to the following linear and local equation:
\be \label{eq:autosimstattransformee}
(\alpha-1)\ma\pa_\ma \mX+(1+\beta)\mb\pa_\mb \mX=\alpha \mX-\ma, \qquad (\ma,\mb)\in \Omega',
\ee
where $\mX$ is seen as a function of the variables $(\ma,\mb)$ given by a volume preserving change of variables $(\mX,\mY)\mapsto (\ma,\mb)$ with $\ma=-\T$. A Lagrangian interpretation of this change of variables is given in Remark \ref{re:lagrangian interpretation}.

We obtain two important explicit solutions of Equation \fref{eq:autosimstattransformee}, with explicit changes of variables $(\ma,\mb)\mapsto (\mX,\mY)$. Although these changes of variables appear from a different perspective in \cite{van2,van1990lagrangian}, it seems that the authors did not link them to solutions of \fref{eq:selfsimstationary}. Classifying analytic solutions to \fref{eq:selfsimstationary2}, relying on \fref{eq:autosimstattransformee}, is an interesting open problem. The first solution we obtain, the generic self-similar profile, corresponds to $\alpha=3/2$, $\beta=-1/4$ and is the one related to the so-called Van-Dommelen and Shen singularity \cite{VanShen80}. The terminology "generic" is justified in the next Subsection. We introduce ($\Gamma$ being the Gamma function)
\be \lab{def:pstar}
p^*:=\frac{4}{9\pi^3}\Gamma\Big(\frac 14\Big)^4.
\ee
We describe below the generic profile $\Theta$, obtained from an explicit solution $\mX$ to \fref{eq:autosimstattransformee} and an explicit change of variables $\Phi:(\ma,\mb)\mapsto (\mX,\mY)$ relating \fref{eq:autosimstattransformee} to \fref{eq:selfsimstationary}.

\begin{propositionmain}[Generic self-similar profile] \lab{pr:selfsimfunda}
The mapping $\Phi:(\ma,\mb)\mapsto (\mX,\mY)$ given by:
\be \lab{def:Phi}
\Phi (\ma,\mb)=\left(\ma+\mb^2+p^{*2}\ma^3 \ , \ \int_{-\infty}^\mb \frac{d\tmb}{1+3\Psi_1^2 \left(p^*\left(\ma+p^{*2}\ma^3+\mb^2-\tmb^2\right)\right)} \right),
\ee
where $\Psi_1$ and $p^*$ are defined by \fref{eq:def Psi1} and \fref{def:pstar}, satisfies the following properties:
\begin{itemize}
\item[(i)] It is an analytic volume preserving diffeomorphism between $\mathbb R^2$ and the subset of the upper half plane $\{ (\mX,\mY)\in \mH, \ 0< \mathcal Y< 2 \mY^*(\mX) \}$ where:
\be \lab{eq:def Y*}
\mY^*(\mX)=\int_{0}^{\infty}\frac{d\tmb}{1+3\Psi_1^2(p^*(\mX-\tmb^2))}.
\ee
The above curve $ \mY^*$ is analytic and satisfies:
\be \label{exp Y 0}
\mY^*(\mX) = \frac{3\pi}{8}+\mX+\frac{c_2}{2}\mX^2+O(|\mX|^3),\qquad \text{as} \ \mX\rightarrow  0,
\ee
\be \label{exp Y infty}
\mY^*(\mX)=C_{\pm} |\mX|^{-\frac 16}+O(|\mX|^{-\frac 56}) \qquad \text{as} \ \mX\rightarrow \pm \infty,
\ee
where $c_2=-\frac{5\Gamma (1/4)}{72\pi^5}$, and $C_{\pm}=2^{-1}p^{*-2/3}\int_{\mp 1}^\infty (z^3 \pm 1)^{-1/2}dz$ (see \fref{id:cpm} for a formula).
\item[(ii)] The opposite of the first component of its inverse $\Phi^{-1}=(\Phi^{-1}_1,\Phi^{-1}_2)$:
\be \lab{def:thetageom}
\Theta:=-\Phi_1^{-1}:(\mX,\mY)\mapsto -\ma,
\ee
is a self-similar profile, that is, the following is a solution of \fref{2DPrandtlphom} on its support:
\be \lab{def:selfsimtheta}
u(t,x,y)=(T-t)^{\frac{1}{2}}\Theta \left(\frac{x}{(T-t)^{\frac 32}},\frac{y}{(T-t)^{-\frac 14}} \right).
\ee
\item[(iii)] It enjoys the symmetry $\Theta(\mX,\mY^*(\mX)+\mY)=\Theta(\mX,\mY^*(\mX)-\mY)$ for $|\mY|<\mY^*(\mX)$.
\item[(iv)] The set of zero vorticity $\{\Theta_\mY=0\}$ is the curve $\{\mY=\mY^*(\mX)\}$, where $\Theta (\mX,\mY^*(\mX))=\Psi_1(p^*\mX)/p^*$. On this curve, $\pa_\mX\Theta$ is minimal at $(0,3\pi/8)$ with, as $(\mX,\mY)\rightarrow 0$:
\be \lab{taylorthetalag}
\T\left(\mX,\frac{3\pi}{8}+\mY\right)=-\mX+\left(\mX-\mY\right)^2+\left(p^{*2}+c_2\right)\mX^3-c_2\mX^2\mY+O(\mX^4+\mY^4).
\ee
\item[(v)] $\Theta$ has the following behaviour near the boundary of its domain:
\begin{align}\label{expanboundary}
&\Theta(\mX,\mY)=p^{*-2}\mY^{-2}+O(|\mX|\mY^4+\mY^2) \qquad \qquad \mbox {for } 0<\mY \leq \mY^*(\mX)\\
\label{expanboundary12} &\Theta(\mX,2\mY^*(\mX)-\mY)=p^{*-2}\mY^{-2}+O(|\mX|\mY^4+\mY^2)  \qquad \qquad \mbox {for } 0<\mY \leq \mY^*(\mX),
\end{align}
and the following behaviour at infinity\footnote{The condition $\mY<(2-\epsilon)\mY^*$ ensures both quantities $\T(\mX,\mY)$ and $\varphi_{\pm \infty} (\mY |\mX|^{1/6})$ are defined. Note that on $[(2-\epsilon)\mY^*,2\mY^*)$ the asymptotics \fref{expanboundary12} prevails.}: for any $\epsilon>0$, for $0<\mY<(2-\epsilon)\mY^*(\mX)$:
\begin{align}\label{expanboundary2}
&\Theta(\mX,\mY)= |\mX|^{\frac 13} \varphi_{\pm \infty} (\mY |\mX|^{1/6}) +O\left(|\mX|^{-\frac 23}\mY^{-2}+|\mX|^{-1} (2\mY^*(\mX)-\mY)^4\right),
\end{align}
as $\mX\rightarrow \pm \infty$, where $\varphi_\pm \in C^{\infty}((0,2C_\pm),\mathbb R)$ is decreasing on $(0,C_\pm)$, increasing on $(C_\pm,2C_{\pm})$, with $\varphi_\pm (z)\sim p^{*-2}z^{-2}$ and $\varphi_\pm (2C_\pm-z)\sim p^{*-2}z^{-2}$ as $z\rightarrow 0$, and $\varphi_\pm(C_\pm)=\mp p^{*-2/3}$.
\end{itemize}
\end{propositionmain}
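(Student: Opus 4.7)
The plan is to treat the five parts in order, with parts (i) and (ii) as the structural core and (iii)--(v) following by elementary analysis of the explicit map $\Phi$. The design principle is that $\mX(\ma,\mb)=\ma+\mb^2+p^{*2}\ma^3$ solves the linear transport equation \fref{eq:autosimstattransformee} with $(\alpha,\beta)=(3/2,-1/4)$, and that $\Phi$ is volume-preserving. Lemma \ref{lem:resolutioneqprofiles} then converts this data into a self-similar solution of \fref{2DPrandtlphom}.

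For part (i), analyticity of $\Phi$ follows from that of $\Psi_1$ together with the uniform bound $1+3\Psi_1^2\geq 1$. The key computation is volume preservation: differentiating the second component of $\Phi$ under the integral sign, the interior $\pa_\mb$-contributions (proportional to $2\mb$) cancel exactly when forming $J=\pa_\ma\mX\,\pa_\mb\mY-\pa_\mb\mX\,\pa_\ma\mY$, leaving only the boundary contribution $J=(1+3p^{*2}\ma^2)/(1+3\Psi_1^2(p^*(\ma+p^{*2}\ma^3)))$. The cubic relation $\Psi_1(\xi)^3+\Psi_1(\xi)+\xi=0$ evaluated at $\xi=w+w^3$ yields the identity $\Psi_1(w+w^3)=-w$; applied with $w=p^*\ma$ this collapses $J\equiv 1$. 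For bijectivity onto $\{0<\mY<2\mY^*(\mX)\}$, $\ma\mapsto\ma+p^{*2}\ma^3$ is a strictly increasing bijection of $\mathbb R$, so at each fixed $\mb$, $\ma\mapsto\mX$ is a bijection; then at each fixed $\mX$, $\mb\mapsto\mY$ is strictly increasing between $0$ and $2\mY^*(\mX)$ by positivity of the integrand and its even parity in $\tmb$. The asymptotic $\mY^*(0)=3\pi/8$ is obtained through the substitution $y=\Psi_1(-p^*\tmb^2)>0$, $\tmb^2=(y^3+y)/p^*$, which reduces $\mY^*(0)$ to $(2\sqrt{p^*})^{-1}\int_0^\infty y^{-1/2}(y^2+1)^{-1/2}\,dy=\Gamma(1/4)^2/(4\sqrt{\pi p^*})$ by the Beta integral; inserting the value of $p^*$ yields $3\pi/8$. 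The subleading coefficient $c_2$ of \fref{exp Y 0} comes from Taylor-expanding the integrand in $\mX$, and \fref{exp Y infty} follows from the rescaling $\tmb=|\mX|^{1/2}z$ together with $\Psi_1(\eta)\sim -\mathrm{sgn}(\eta)|\eta|^{1/3}$ at infinity.

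For part (ii), by Lemma \ref{lem:resolutioneqprofiles} it suffices to check that $\mX(\ma,\mb)$ solves \fref{eq:autosimstattransformee} with $\alpha=3/2,\beta=-1/4$; this reduces to the one-line polynomial identity $\tfrac12\ma(1+3p^{*2}\ma^2)+\tfrac32\mb^2=\tfrac32(\ma+\mb^2+p^{*2}\ma^3)-\ma$, together with the boundary condition $\Upsilon\to 0$ as $\mY\to 0$, which is encoded in $\mb\to-\infty$. Part (iii) is then immediate from the change of variable $\mb\mapsto-\mb$: it fixes $\ma$ and hence $\Theta$, preserves $\mX$ by even parity of $\mb^2$, and sends $\mY$ to $2\mY^*(\mX)-\mY$ by parity of the integrand in $\tmb$.

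For part (iv), inverting the volume-preserving Jacobian gives $\pa_\mY\ma=-\pa_\mb\mX=-2\mb$, so $\pa_\mY\Theta=2\mb$ and the zero-vorticity set is exactly $\{\mb=0\}=\{\mY=\mY^*(\mX)\}$; on this curve $\mX=\ma+p^{*2}\ma^3$ and the Cardano identity gives $\Theta=\Psi_1(p^*\mX)/p^*$. The Taylor expansion \fref{taylorthetalag} and the location of the minimum of $\pa_\mX\Theta$ are obtained by expanding $\Phi$ at $(\ma,\mb)=(0,0)$ using the expansion of $\mY^*$ already derived, and inverting the resulting power series. For part (v), near $\mY\to 0$ at fixed $\mX$ one has $\mb\to-\infty$; the balance $\ma+p^{*2}\ma^3=\mX-\mb^2$ forces $\ma\sim -(p^*)^{-2/3}|\mb|^{2/3}$, while a tail estimate on the integral defining $\mY$ yields $\mY\sim (p^*)^{-2/3}|\mb|^{-1/3}$, whence $\Theta=-\ma\sim p^{*-2}\mY^{-2}$, establishing \fref{expanboundary}; \fref{expanboundary12} then follows by the symmetry of part (iii). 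For $|\mX|\to\infty$, the balances $\ma\sim|\mX|^{1/3}\hat\ma$ and $\tmb\sim|\mX|^{1/2}z$ suggested by the structure of $\Phi$ reduce \fref{def:Phi} to a rescaled Lagrangian system whose inversion defines $\varphi_\pm$ and produces \fref{expanboundary2} with the explicit constants $C_\pm$. I expect the main obstacle to be the uniform control of the remainder in \fref{expanboundary2} as $\mY$ approaches $2\mY^*(\mX)$, where the asymptotics \fref{expanboundary12} take over and a matched-asymptotics argument is needed; by contrast, the conceptually central part (ii) is algebraically trivial once Lemma \ref{lem:resolutioneqprofiles} is in hand.
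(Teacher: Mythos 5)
Your treatment of parts (i)--(iv) is correct and follows essentially the same route as the paper: the Jacobian cancellation reducing to $\Psi_1(p^*(\ma+p^{*2}\ma^3))=-p^*\ma$, the parametrisation of the level set $\{\Phi_1=\mX\}$ by $\mb$ to identify the image as $\{0<\mY<2\mY^*(\mX)\}$, the Beta-integral evaluation of $\mY^*(0)=3\pi/8$, the one-line verification that $\mX=\ma+\mb^2+p^{*2}\ma^3$ solves \fref{eq:autosimstattransformee} with $(\alpha,\beta)=(3/2,-1/4)$ followed by Lemma \ref{lem:resolutioneqprofiles}, the $\mb\mapsto-\mb$ symmetry for (iii), and $\pa_\mY\T=\pa_\mb\mX=2\mb$ for (iv) are all exactly the paper's arguments. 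Two points in (ii) are compressed but not wrong: the lemma is first applied on $\{\T\neq 0\}$ and extended by analyticity, and the boundary condition $\Upsilon\to 0$ as $\mY\to 0$ is not automatic from ``$\mb\to-\infty$'' --- it requires showing $|\ma\pa_\ma\mY|+|\mb\pa_\mb\mY|\to 0$, which uses the decay of $\Psi_1\Psi_1'/(1+3\Psi_1^2)^2$ at infinity.

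The genuine gap is in part (v). Your argument there produces only the leading-order equivalences $\T\sim p^{*-2}\mY^{-2}$ and the scaling ansatz at $|\mX|\to\infty$; it does not establish the quantitative remainders $O(|\mX|\mY^4+\mY^2)$ in \fref{expanboundary} (which must hold uniformly up to $\mY=\mY^*(\mX)\approx|\mX|^{-1/6}$, not just for $\mY\ll\langle\mX\rangle^{-1/6}$ where the naive tail estimate applies), and it neither constructs $\varphi_\pm$ nor verifies any of its asserted properties (monotonicity on $(0,C_\pm)$ and $(C_\pm,2C_\pm)$, the value $\varphi_\pm(C_\pm)=\mp p^{*-2/3}$, the endpoint asymptotics). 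These are part of the statement, not decoration. The paper's device here is the \emph{local ODE} $\pa_\mY\T=\mp 2\sqrt{\mX+\T+p^{*2}\T^3}$ of Lemma \ref{lem:ode} (the Eulerian/Crocco route of \fref{eq:autosimstattransformee2}), which at fixed $\mX$ expresses $\mY$ as an explicit convergent integral in $\T$; inverting that integral is what yields both the explicit definition $\varphi_\pm=\psi_\pm^{-1}$ with $\psi_-(z)=\tfrac12\int_z^\infty(-1+p^{*2}\zeta^3)^{-1/2}d\zeta$ (extended evenly about $C_\pm$) and the uniform error control after the rescaling $\tilde\T=zp^{*-1/3}\Psi_1(p^*\mX)$. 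Your purely Lagrangian route through $\mb$ can reproduce the leading order, but without this ODE (or an equivalent exact relation between $\mY$ and $\T$ at fixed $\mX$) the matched-asymptotics step you flag as ``the main obstacle'' is not merely an obstacle --- it is the missing proof.
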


\begin{remark}
From the invariances \fref{def:invariancesprandtl} of the equation \fref{2DPrandtlphom}, $\Theta$ generates the full family of generic profiles $(\Theta_{\mu,\nu,\iota})_{(\mu,\nu,\iota)\in (0,\infty)^2\times \{-1,1 \}}$ where $\Theta_{\mu,\nu,\iota}(\mX,\mY)= \mu \iota \Theta \left(\iota \mX/\mu, \mY/\nu \right),$ in that $u(t,x,y)=(T-t)^{1/2}\Theta_{\mu,\nu,\iota} \left(x/(T-t)^{3/2},y/(T-t)^{-1/4} \right)$ also solves \fref{2DPrandtlphom}. The profile $\Theta_{\mu,\nu,\iota}$ is obtained from the mapping:
$$
\Phi_{\mu,\nu,\iota} (\ma,\mb)=\left(\ma+\iota\frac{\mu}{\nu^2}\mb^2+\frac{p^{*2}}{\mu^2}\ma^3 \ , \ \int_{-\infty}^{\mb} \frac{d\tmb}{1+3\Psi_1^2 \left(p^*\left(\frac{\ma}{\mu}+\frac{p^{*2}}{\mu^3}\ma^3+ \frac{\iota}{\nu^2}\mb^2-\frac{\iota}{\nu^2} \tmb^2\right)\right)} \right)
$$
via $\Theta_{\mu,\nu,\iota}=-\Phi_{\mu,\nu,\iota,1}^{-1}$ where we wrote $\Phi_{\mu,\nu,\iota}^{-1}=(\Phi_{\mu,\nu,\iota,1}^{-1},\Phi_{\mu,\nu,\iota,2}^{-1})$. Therefore, changing the value \fref{def:pstar} of $p^*$ in \fref{def:Phi} would have defined another self-similar profile as well. The choice of $p^*$ \fref{def:pstar} was made to normalise the Taylor expansion \fref{taylorthetalag}.
\end{remark}

\begin{remark} \label{re:lagrangian interpretation}
The variables $(\ma,\mb)$ in \fref{def:Phi} should be thought, asymptotically as $t\uparrow T$, as Lagrangian self-similar variables. Indeed, for the self-similar solution $u(t,x,y)=(1-t)^{1/2}\T(x/(1-t)^{3/2},y/(1-t)^{-1/4})$ of \fref{2DPrandtlphom} with initial datum $u_0(x,y)=\T(x,y)$, let $(X,Y)\mapsto (x,y)$ be the Lagrangian to Eulerian map defined in Subsubsection \ref{subsec:lwpth1}, let $(x,y)\mapsto (\mX,\mY)=(x/(1-t)^{3/2},y/(1-t)^{-1/4})$ be the Eulerian self-similar renormalisation, and let $(\mX,\mY)\mapsto (\ma,\mb)=\Phi^{-1}(\mX,\mY)$ be as in \fref{def:Phi}. Compositing these maps, we get a map $(X,Y)\mapsto (\ma,\mb)$, allowing to interpret what the $(\ma,\mb)$ variables represent for the Lagrangian variables $(X,Y)$. A computation shows that they are related through: $(X,Y)=\Phi((1-t)^{1/2}\ma,(1-t)^{3/4}\mb)$. Thus, for $(\ma,\mb)=O(1)$, a Taylor expansion, using Proposition \ref{pr:selfsimfunda}, produces:
$$
(X,Y)=\left(0,\frac{3\pi}{8}\right)+(1-t)^{\frac 12}\ma \cdot (1,1)+(1-t)^{\frac 34}\mb \cdot (0,1)+O((1-t)).
$$
Asymptotically as $t\uparrow 1$, the change of variables $(X,Y)\mapsto (\ma,\mb)$ is then a linear rescaling centred at $(0,3\pi/8)$, namely, it is asymptotically a \emph{self-similar renormalisation centred at that point}.
\end{remark}

\subsubsection{Generic singularity leading to boundary layer separation}

The self-similar profile $\Theta$ given by Proposition \ref{pr:selfsimfunda} is at the heart of the singularity formation for the inviscid Prandtl equations, triggering the separation of the boundary layer. In this paper we show the appearance of this self-similar profile from a generic set (dense, and open, among data producing singularities) of initial data $u_0$. This shows that the effects of the outer Eulerian flow $(u^E,p^E)$ might alter the position and time of appearance of the singularity, but not its structure (the profile $\T$ and the self-similar exponents $3/2$ and $-1/4$), and hence are of lower order. The generic singularity is a consequence of a tangential generic Burgers-type compression occurring on a line of zero vorticity, and induces a normal expansion by volume preservation. We justify the following picture, which was first derived in \cite{van2,VanShen80}:

\begin{center}
\includegraphics[width=16cm]{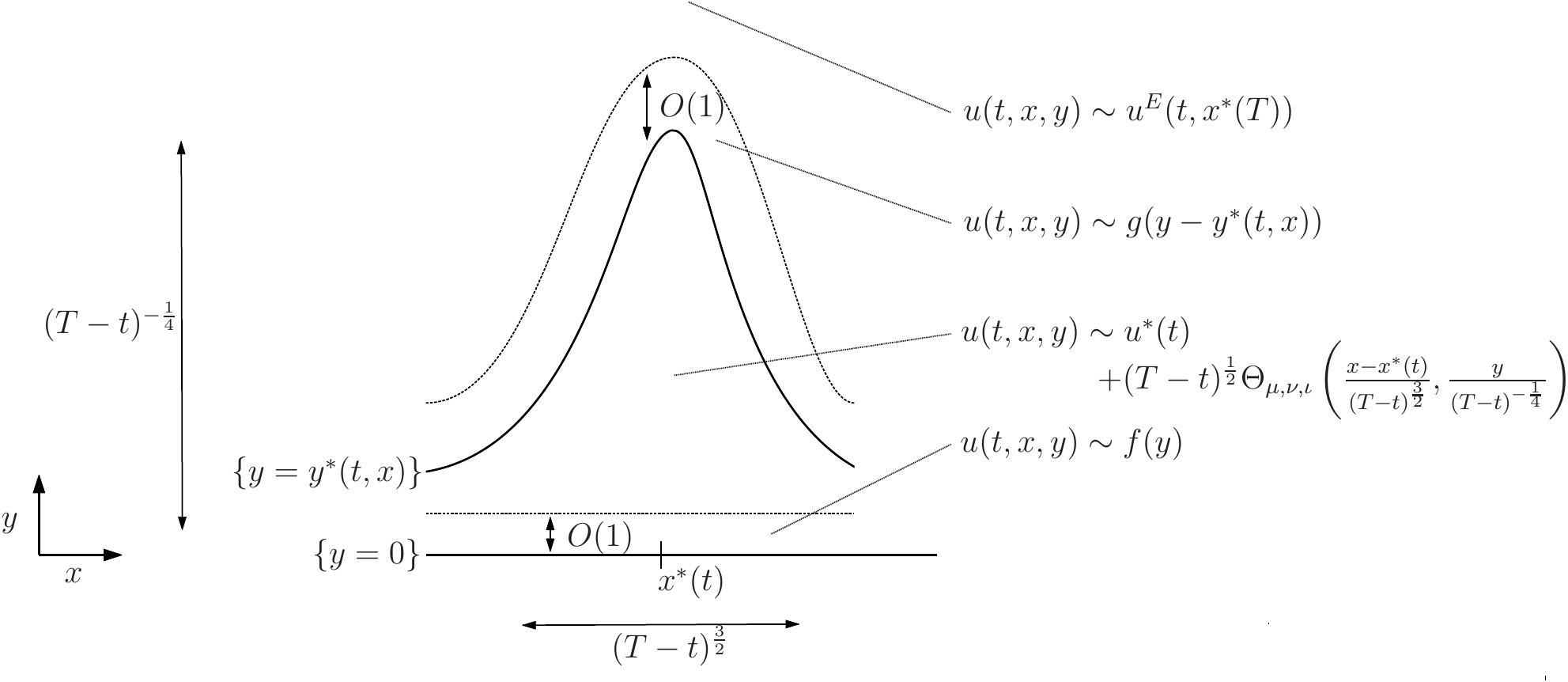}
\end{center}

The idea behind the generic appearance of $\T$ is the following. A shock is forming at a moving location $(x^*,y^*)$ in Eulerian variables. This point corresponds in Lagrangian variables to a point $(X_0,Y_0)$, at which the characteristics map $(X,Y)\mapsto (x,y)$ becomes critical (i.e., the Jacobian vanishes) at blow-up time. Generically, the characteristics map will satisfy certain properties at this location, see Section \ref{sec:characteristics}, ensuring that the following holds true. Around that point, there exist self-similar Lagrangian variables $(a,b)$ (see \fref{def:abrenorm} for a precise definition) such that for appropriate self-similar Eulerian variables $(\mX,\mY)$ the renormalised characteristics map becomes asymptotically time independent as $t\uparrow T$, and converges to $\Phi$:

\begin{center}
\begin{tikzcd}[column sep=180pt,row sep=30pt]
  \mbox{Lagrangian } (X,Y) \arrow[r, "\displaystyle characteristics"] 
    & \mbox{Eulerian } (x,y) \arrow[d] \\
  \mbox{Self-similar Lagrangian } (a,b) \arrow[r, "\displaystyle renormalised \ characteristics", "\displaystyle \longrightarrow \Phi \ as \ t\uparrow T "'] \arrow[u]
&\mbox{Self-similar Eulerian } (\mX,\mY) \end{tikzcd}
\end{center}

Finding a solution to the transport equation \fref{2DPrandtlphom} is equivalent to finding the characteristics. Thus, this knowledge of an asymptotic equivalent for the characteristics map, allows to show the existence of an asymptotic equivalent for $u$: it is close to a rescaling of the self-similar profile $\T$. Said differently, solutions of the inviscid Prandtl equations become asymptotically self-similar during singularity formation, \emph{because the characteristics become asymptotically self-similar}.

The profile $\T$ cannot be an approximation of $u$ away from the singularity: the support of $\Theta$ is finite, and there are two singular zones within its support, at infinity in space in the tangential variable \fref{expanboundary2}, and close to the boundary \fref{expanboundary}-\fref{expanboundary12}. One thus needs as well to describe the solution in these two zones, and above the support of $\Theta$. We show that away from the singularity, either below or on the sides, $u$ reconnects to a regular profile ($f$ and $g$ in the Theorem below and the picture above). Above the singularity, $u$ reconnects to a less singular profile as well, but that undergoes a strong vertical motion created by the singularity underneath it.

We denote by $d_{C^k_{\text{loc}}}$ the standard $C^k_{\text{loc}}$ distance\footnote{For exemple: $ d_{C^k_{\text{loc}}}(u_0,u_0')=\sum_{l=0}^k \sum_{n=1}^\infty 2^{-n} \min (\| \nabla^l (u_0-u_0')\|_{L^{\infty}(\mathcal K_n)},1)$ with $\mathcal K_n=\{(x,y)\in \mH, \ |x|\leq n \mbox{ and } |y|\leq n\}$. Another choice for the covering of compact sets $(\mathcal K_n)_n$ would yield an equivalent distance.} for functions on $\mH$ that are $k$ times continuously differentiable, and equip $\mathcal E^k$ with the topology associated to the distance
$$
d_{\mathcal E^k}(u_0,u_0')=d_{C^k_{\text{loc}}}(u_0,u_0')+\| \nabla (u_0-u_0')\|_{L^{\infty}(\mH)}.
$$
We define $\mY^*_{\mu,\nu,\iota}(\mX)=\nu \mY^*(\iota \mX/\mu)$.

\begin{theoremmain}[Generic Separation] \lab{th:main2}

Let $(p^E_x,u^E)\in \mathcal F^4$. In the subset of $\mathcal E^4$ of initial data $u_0$ such that $T<\infty$ and $T<T_b$, there exists a dense open set  for which the corresponding solution satisfies the following. There exist parameters $(\mu,\nu,\iota)\in (0,\infty)^{2}\times \{-1,1\}$ and two constants $\kappa,C>0$ such that:
\begin{itemize}
\item \emph{Location of the singularity}. There exists $x^*\in C^4([0,T],\mathbb R)$, regular up to time $T$ such that $\nabla u(t)$ remains bounded in $\{(x,y,t), \ 0\leq t\leq T, \ |x-x^*(t)|\geq \epsilon \}$ for any $\epsilon>0$. 
\item \emph{Displacement line}. There exists $y^*\in C^3([0,T)\times \mathbb R)$ for which the properties below hold true, with: 
\be \lab{th:y*}
y^*(t,x)=\frac{2}{(T-t)^{\frac 14}} \mY^*_{\mu,\nu,\iota}\left(\frac{x-x^*}{(T-t)^{\frac 32}}\right)\left( 1+O\left((1-t)^{\kappa}+|x-x^*(t)|^{\kappa}\right)\right).
\ee
\item \emph{Self-similarity}. Let $\eta(t,x,y)=\left((T-t)^{\kappa}+|x-x^*(t)|^{\kappa}+y^{-\kappa}+|y^*(t,x)-y|^{-\kappa} \right)$ and let $u^*(t)=\pa_t x^*(t)$. For any $\epsilon>0$, there exists $\tilde \epsilon>0$ such that for $|x-x^*|\leq \tilde \epsilon$ and $y \leq (1-\epsilon)y^*(t,x)$:
$$
u(t,x,y)=u^*(t)+(T-t)^{\frac 12} \left(\Theta_{\mu,\nu,\iota}+\tilde u \right) \left(\mX,\mY \right),
$$
where $\mX=(T-t)^{-3/2}(x-x^*(t))$, $\mY=(T-t)^{1/4}y$, and where $\tilde u$ satisfies:
\bea
\lab{bd:tildeu} & |\tilde u(t,\mX,\mY)| \leq C \left( |\mX|^{\frac 13}+|\mY|^{-2}+\left((T-t)^{\frac 14}y^*(t,x)-\mY\right)^{-2} \right)\eta(t,x,y) \\
\lab{bd:paXtildeu} & |\pa_{\mX} \tilde u (t,\mX,\mY)| \leq  C \left( |\mY|^4+(1+|\mX|)^{-\frac 76}\left((T-t)^{\frac 14}y^*(t,x)-\mY\right)^{-3} \right)\eta(t,x,y) \\
\lab{bd:paYtildeu} & |\pa_{\mY} \tilde u(t,\mX,\mY)| \leq  C \left( |\mY|^{-3}+\left((T-t)^{\frac 14}y^*(t,x)-\mY\right)^{-3} \right)\eta(t,x,y).
\eea
\item \emph{Close to the displacement line}. Given any $K,\epsilon>0$, there exists $\tilde \epsilon>0$ such that for $(1-\epsilon)y^*(t,x)\leq y \leq y^*(t,x)-K$ and $|x-x^*|\leq \tilde \epsilon$:
\be \lab{id:topselfsim}
u(t,x,y)=u^*(t)+\frac{\mu \nu^2}{p^{*2}(y-y^*(t,x))^2}+\tilde v(t,x,y),
\ee
where, for $\tilde \eta(t,x,y)=\left((T-t)^{\kappa}+|x-x^*|^{\kappa}+(y^*-y)^{-\kappa}+(|y^*-y|/y^*)^{\kappa} \right)$:
\be \lab{bd:tildevtop}
|\tilde v|\leq \frac{C\tilde \eta(t,x,y)}{|y-y^*(t,x)|^2}, \ \ |\pa_x\tilde v|\leq \frac{C\tilde \eta(t,x,y)}{\left((T-t)^{\frac 32}+|x| \right)^{\frac76}|y-y^*(t,x)|^3}, \ \ |\pa_y \tilde v|\leq \frac{C\tilde \eta (t,x,y)}{|y-y^*(t,x)|^3}.
\ee
\item \emph{Reconnections below and above}. There exist two functions $f\in C^3([0,\infty), \mathbb R)$ and $g\in C^3(\mathbb R, \mathbb R)$ depending on $u_0$, $u^E$ and $p^E$ with:
\be \label{id:asymptotiquefetg}
f(y)-u^*\underset{y \rightarrow \infty}{\sim} \frac{\mu \nu^2}{p^{*2}}y^{-2}, \qquad g(y)-u^*\underset{y \rightarrow- \infty}{\sim} \frac{\mu \nu^2}{p^{*2}}y^{-2}, \qquad g(y)\underset{y \rightarrow \infty}{\rightarrow} u^E(T,x^*(T)),
\ee
such that for any $K>0$, as $(t,x)\rightarrow (T,x^*(T))$:
\be \lab{id:recobottom}
u(t,x,y) \rightarrow f(y) \qquad \text{uniformly for} \ y\leq K, \ \ 
\ee
\be \lab{id:recotop}
u(t,x,y) \rightarrow g(y-y^*(t,x))  \qquad \text{uniformly for} \ y^*(t,x)-K\leq y.
\ee

\end{itemize}

\end{theoremmain}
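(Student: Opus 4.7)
The plan is to work entirely in Lagrangian coordinates and reduce the analysis to the explicit ODE \fref{eq:ricattizerovorticity}, whose generic blow-up along the zero-vorticity set reproduces, through the characteristics, the self-similar map $\Phi$ of Proposition \ref{pr:selfsimfunda}. The first step is to identify the generic open dense subset. Given $(u^E,p^E_x)\in \mathcal F^4$, a datum $u_0\in \mathcal E^4$ producing blow-up at $T=T_a<T_b$ belongs to the generic set when the minimiser $(X_0,Y_0)$ of $T(X,Y)$ on $\{Y>0,\ \pa_Y u_0=0\}$ is unique and when, at this point, $\pa_{YY}u_0(X_0,Y_0)\neq 0$ (so that $\{\pa_Y u_0=0\}$ is locally a smooth curve $X\mapsto (X,Y^*(X))$), the minimum of $u_{0x}$ along this curve is non-degenerate (the second derivative at $X_0$ vanishes while the third does not, giving cubic Burgers genericity and the rescaling $(T-t)^{1/2}$), together with a matching transversality ensuring that the compression propagates tangentially to the curve. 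Openness follows from the smooth dependence on initial data in Theorem \ref{th:main} and the stability of \fref{eq:ricattizerovorticity}; density follows by a perturbation argument using small smooth bumps at the minimiser that force the above conditions to become strict while preserving the outer data.

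Second, I would derive the self-similar convergence of the characteristics. The Riccati equation for $u_x$ in \fref{eq:ricattizerovorticity}, restricted along $X\mapsto (X,Y^*(X))$, admits near $t=T$ the cubic Burgers-type blow-up already analysed in \cite{CGM}: $u_x$ matches $\Psi_1$ from \fref{eq:def Psi1} after a self-similar rescaling of order $(T-t)^{1/2}$, with coefficients determined by the Taylor expansion of $u_0$ at $(X_0,Y_0)$. These coefficients, together with $\pa_{YY}u_0(X_0,Y_0)$, fix the parameters $(\mu,\nu,\iota)\in (0,\infty)^2\times\{-1,1\}$, and by the invariances \fref{def:invariancesprandtl} we may reduce to $\mu=\nu=1$, $\iota=1$. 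Introducing self-similar Lagrangian variables
\be
a=\frac{X-X_0}{(T-t)^{1/2}},\qquad b=\frac{Y-Y_0}{(T-t)^{3/4}},
\ee
and expanding the flow map $(X,Y)\mapsto (x(t),y(t))$ obtained from \fref{eq:charx}, \fref{eq:ricattizerovorticity} and the conservation of vorticity along characteristics, one shows that, for fixed $(a,b)\in \mathbb R^2$, the renormalised characteristic map $(a,b)\mapsto (\mX,\mY)$ converges as $t\uparrow T$ to the map $\Phi$ of \fref{def:Phi}. This is the rigorous content of the diagram in Subsubsection \ref{subsec:thetafundamental}.

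Each remaining statement of the theorem is then read off from this convergence. The singular location $x^*(t)=x(t,X_0,Y_0)$ has $C^4$ regularity on $[0,T]$ since the tangential ODE $\dot x=u$, $\dot u=-p^E_x$ in \fref{eq:charx} remains regular even when $u_x$ diverges; the displacement line $y^*(t,x)$ is the Eulerian image of $X\mapsto(X,Y^*(X))$ near $X_0$, and its asymptotic form \fref{th:y*} is obtained by evaluating the limit $\Phi$ of the renormalised flow map on this curve. For $y\leq (1-\epsilon)y^*(t,x)$, the self-similar expansion of $u$ follows from inverting the Lagrangian-to-Eulerian map and using the identity $u(t,x(t,X,Y),y(t,X,Y))=u_0(X,Y)-\int_0^t p^E_x(s,x(s,X,Y))\,ds$; the weighted bounds \fref{bd:tildeu}--\fref{bd:paYtildeu} come from matching the next-order Taylor remainders against the explicit boundary asymptotics \fref{expanboundary}--\fref{expanboundary2} of $\Theta$ in each of the three singular subregions $\mX\to\pm\infty$, $\mY\to 0$, and $\mY\to 2\mY^*$.

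Finally, the near-$y^*$ regime and the reconnections $f,g$ follow from volume preservation. Particles starting in a small neighbourhood of $(X_0,Y_0)$ but outside the zero-vorticity curve have their tangential compression $\pa_X x(t,X,Y)\to 0$ at $t=T$, which by incompressibility of $(X,Y)\mapsto(x,y)$ forces a strong vertical stretching. A direct computation using the Burgers asymptotics of $\pa_X x$ produces both the $y^{-2}$ tail in \fref{id:asymptotiquefetg} and the explicit term $\mu\nu^2/(p^{*2}(y-y^*)^2)$ in \fref{id:topselfsim}; the remainder $\tilde v$ is controlled by the same Taylor-remainder analysis. The limiting profiles $f,g$ are obtained as Eulerian pull-backs of $u_0$ along characteristics emanating from points strictly below, respectively strictly above, $(X_0,Y_0)$, in the limit $t\uparrow T$. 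The main obstacle of the scheme is extracting \fref{bd:tildeu}--\fref{bd:paYtildeu} with uniform constants: while pointwise convergence of the renormalised map to $\Phi$ is straightforward, the three boundary regimes of the support of $\Theta$ are singular and the pressure $p^E$ introduces sub-leading non-self-similar perturbations; reconciling these with the volume-preserving structure of $\Phi$ demands a careful asymptotic matching in each regime, which is the technical heart of the proof.
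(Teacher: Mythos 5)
Your overall architecture matches the paper's: a genericity condition at the singular point, openness and density by perturbation of the data, self-similar Lagrangian and Eulerian renormalisations under which the characteristics converge to $\Phi$, inversion of the characteristics, and recovery of the reconnection profiles from volume preservation. However, there is a concrete error at the key step. Your self-similar Lagrangian variables $a=(X-X_0)(T-t)^{-1/2}$, $b=(Y-Y_0)(T-t)^{-3/4}$ are diagonal in the original coordinates, whereas the two anisotropic scaling directions must be aligned with the tangent and the normal of the zero-vorticity curve $\{x_Y[T]=0\}$ at $(X_0,Y_0)$, equivalently with the kernel and co-kernel of the degenerate Hessian $H x[T](X_0,Y_0)$: the cubic behaviour $\ma+p^{*2}\ma^3$ of $\Phi_1$ lives along that curve and the quadratic behaviour $\mb^2$ transverse to it. Under the generic condition \fref{conservationlaw2}--\fref{conservationlaw3} one has $x_{XY}(T,X_0,Y_0)\neq 0$, so these directions are never the coordinate axes, and the correct change of variables is the full non-diagonal matrix \fref{def:abrenorm}. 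Quantitatively, writing $(a_p,b_p)$ for the paper's variables, your $(a,b)$ give $b_p=-k_3(T-t)^{-1/4}a+k_4b$, which diverges for fixed $a\neq 0$; correspondingly $\mX\approx b_p^2\approx k_3^2a^2(T-t)^{-1/2}\to\infty$, so the renormalised characteristic map in your variables does not converge to $\Phi$ (indeed does not converge at all) off the axis $\{a=0\}$. This geometry has to be built into the renormalisation before any of the subsequent matching can be carried out.

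Two further points. First, with non-trivial pressure the non-degeneracy conditions cannot be read on $u_0$ alone: they must be evaluated on the characteristics map at time $T$ (the quantities in \fref{conservationlaw2}--\fref{conservationlaw3}) and propagated from $t=0$ through the volume-preserving matrix ODE \fref{gen:ode:defM}; your formulation is the pressureless specialisation. Second, the uniform weighted bounds \fref{bd:tildeu}--\fref{bd:paYtildeu}, which you rightly flag as the technical heart, are obtained in the paper by computing $\mY$ from the incompressibility integral \fref{id:intevolume1} along the level curves $\{x=Cte\}$ --- which forces one to switch between parametrisations by $a$ and by $b$ on different pieces of each curve (Lemma \ref{lem:lagrangian}) --- and then by inverting the characteristics with a separately renormalised implicit-function argument in each of the core, bottom, side and top zones; a single ``Taylor remainder matching'' does not suffice because $\Phi$ degenerates both at infinity and at the boundary of its support.
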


Let us make the following comments on the results of Theorem \ref{th:main2}.

\vspace{0.2cm}

\noindent{\it 1.} The set considered in the above Theorem is nonempty. Indeed, given any outer Eulerian flow $(u^E,p^E)$, there exist solutions blowing up outside the boundary, as negative enough initial data for the third equation in \fref{eq:ricattizerovorticity} will tend to $-\infty$ in finite time. However, the structure for unsteady separation described in this Theorem is not the only one occurring, and degenerate instable singularities also exist, see Proposition \ref{pr:selfsimdegen}.\\

\noindent{\it 2.} The estimates for the error in \fref{bd:tildeu}, \fref{bd:paXtildeu}, \fref{bd:paYtildeu} and \fref{bd:tildevtop} should be interpreted as follows. The first term in the right hand side is the typical size of $\Theta$, $\pa_\mX \T$ and $\pa_\mY \T$ respectively. The $\eta(t,x,y)$ term then quantifies a gain as $T-t\ll 1$, $|x-x^*|\ll 1 $ and $1\ll y \ll y^*$. For example, for $(\mX,\mY)$ in a compact set $\mathcal K$ in the support of $\Theta_{\mu,nu,\iota}$, these estimates imply $\| \tilde u\|_{C^1(\mathcal K)}=O((T-t)^{C(\kappa)})\rightarrow 0$. We have to distinguish between below the displacement line and near it as in this latter region the solution is close to a displaced version of $\Theta$. The identities \fref{id:topselfsim} and \fref{bd:tildevtop} indeed show the solution is close to the asymptotic expansion \fref{expanboundary} of $\T$ near the top part of its support, but with $y^*$ replacing $2\mY^*$. From the asymptotic behaviour of $\T$ in Proposition \ref{pr:selfsimfunda}, $\tilde u$ is of lower order compared to $\T$ precisely in a size one zone in $x$ around $x^*$, and up to a size one distance to the boundary and the displacement curve. These estimates are then sharp since they precisely fail when the solution reconnect to another nonsingular behaviour.\\

\noindent{\it 3.} Note that the asymptotic behaviour of $\T$ and that of the reconnection functions $f$ and $g$, in \fref{id:recobottom} and \fref{id:recotop} respectively, are compatible from Proposition \fref{expanboundary}-\fref{expanboundary12}.\\

\noindent{\it 4.} This convergence result also holds for higher order derivatives, which is a direct consequence of the proof of the Theorem. In particular the weighted estimates adapt naturally.

\subsubsection{A self-similar profile with symmetry}

Other degenerate singular behaviours are also possible. The degeneracy can come from two distinct aspects: at the singular point, the set of zero vorticity can locally not be a line (for example the intersection of two lines), and the tangential compression can be induced by a degenerate shock formation for Burgers. There exist a large range of self-similar profiles corresponding to these (infinitely many) degenerate cases. Their properties can also be studied with the same strategy used in the proof of Proposition \ref{pr:selfsimfunda}, and their stability similarly as in Theorem \ref{th:main2}. As a particular interesting example, we study in this paper one of the least degenerate cases, corresponding to a Burgers generic shock happening at the crossing of two lines of zero vorticity. A particular self-similar profile corresponding to this case enjoys remarkable properties: it is odd in $x$, admits an analytic expansion beyond its support, and explicit formulas can be obtained on the vertical axis. In a forthcoming paper we shall show its stability for the full Prandtl system. The prime ' notation below is not a differential, it is simply to distinguish from Proposition \ref{pr:selfsimfunda}.

\begin{propositionmain}[Degenerate symmetric profile] \lab{pr:selfsimdegen}
The mapping $\Phi' :(\ma,\mb)\mapsto (\mX,\mY)$ defined by
\be \lab{def:tildetheta}
\Phi' (\ma,\mb)=\left(\ma+\ma^3+\frac{\mb^2a}{4} \ , \ 2\int_{-\infty}^{\frac \mb2} \frac{d\tmb}{\left(1+\tmb^2\right)\left(1+3\Psi_1^2 \left(\frac{\ma+\ma^3+\frac{\mb^2a}{4}}{\left(1+\tmb^2\right)^{3/2}}\right)\right)} \right),
\ee
where $\Psi_1$ is defined by \fref{eq:def Psi1}, satisfies the following properties:
\begin{itemize}
\item[(i)] It is an analytic volume preserving diffeomorphism between $\mathbb R^2$ and the subset of the upper half plane $\{0<\mY<2 \mY^{'*}(\mX) \}$, where
\be \lab{def:tildeY*}
\mY^{'*}(\mX)=\int_{-\infty}^{\infty} \frac{d\tmb}{\left(1+\tmb^2\right)\left(1+3\Psi_1^2 \left(\frac{\mX}{\left(1+\tmb^2\right)^{3/2}}\right)\right)}
\ee
is an analytic curve with asymptotic expansion (with $B$ the Euler integral of the first kind)
\be \label{exp tildeY 0}
\mY^{'*}(\mX)\underset{\mX\rightarrow 0}{=} \pi- \frac{15}{16}\pi \mX^2+O(\mX^4), \ \ \mY^{'*}(\mX)\underset{\mX\rightarrow \pm \infty}{=} \frac{B(\frac 16,\frac 12)}{3}|\mX|^{-\frac 13}+O(|\mX|^{-1}).
\ee
\item[(ii)] The opposite of the first component of its inverse, extended by $0$ on the upper half-space:
\be \lab{selfsimtildetheta2}
\Theta'(\mX,\mY)=\left\{ \begin{array}{l l} - \Phi_1^{'-1}(\mX,\mY)=\ma \qquad \mbox{if } \ 0<\mY<2\mY^{'*}(\mX), \\ 0 \qquad \qquad \qquad \qquad \mbox{if } \ \mY=0 \ \mbox{ or } \ 2 \mY^{'*}(\mX)\leq \mY, \end{array} \right.
\ee
is a self-similar profile. Namely, the following is a solution of \fref{2DPrandtlphom}:
\be \lab{selfsimtildetheta}
u(t,x,y)=(T-t)^{\frac{1}{2}} \Theta' \left(\frac{x}{(T-t)^{\frac 32}},\frac{y}{(T-t)^{-\frac 12}} \right)
\ee
\item[(iii)] There holds $ \T'\in C^1(\mH)$. Moreover, $ \T'$, restricted to the set $\{0\leq \mY\leq 2 \mY^{'*}(\mX)\}$ is analytic, including at the boundaries $\{\mY=0\}\cup \{\mY=2 \mY^{'*}(\mX)\}$. The following extension\footnote{where for $z\geq 0$ and $z'>0$, we write $z=kz' +z\mbox{ mod }z'$, $k\in \mathbb N$, $0\leq z\mbox{ mod }z'<z'$. At each fixed $\mX$, this is a periodic extension along $\mY$ with period $2 \mY^{'*}(\mX)$}:
\be \label{eq:analyticextensiondegenerate}
\overline{\Theta}' (\mX,\mY):= \T '(\mX,\mY \mbox{ mod }2 \mY^{'*}(\mX))
\ee
is analytic on the whole upper half space, and is also a self-similar profile, in the sense that $\overline u(t,x,y)=(T-t)^{1/2}\overline \Theta' (x/(T-t)^{\frac 32},y/(T-t)^{-\frac 12} )$ is a solution of \fref{2DPrandtlphom}.
\item[(iv)] $\Theta'$ is odd in $\mX$, and satisfies $\Theta'(\mX,\mY^{'*}(\mX)+\mY)=\Theta'(\mX, \mY^{'*}(\mX)-\mY)$ for all $|\mY|< \mY^{'*}(\mX)$. $\T'$ is positive on the set $\{\mX<0, \ 0<\mY<2 \mY^{'*}(\mX) \}$ and negative on $\{\mX>0, \ 0<\mY<2 \mY^{'*}(\mX) \}$.
\item[(v)] The set $S=\{\pa_\mY  \Theta'=0\}$ is equal to $S_1\cup S_2$ where $S_1=\{\mX=0\}\cup \{\mY= \mY^{'*}(\mX)\}$ on which $ \T'(\mX,\mY^{'*}(\mX))=\Psi_1(\mX)$, and $S_2=\{\mY=0\}\cup \{\mY\geq 2 \mY^{'*}(\mX)\}$. The minimum of $\pa_\mX \T'$ on the set $S$ is attained at $(0,\pi)$ where one has the expansion:
\be \lab{eq:taylortildetheta}
\Theta' (\mX,\pi+\mY)=-\mX+\mX^3+\frac{\mY^2}{4}\mX+O(|\mX|^5+|\mY|^4|\mX|) \qquad \text{as} \ (\mX,\mY)\rightarrow (0,0).
\ee
\item[(vi)] The trace of the first order tangential derivative on the vertical axis is given by:
\be \label{eq:formulederiveeaxe}
\pa_\mX \Theta' (0,\mY)=-\sin^2 \left(\frac{\mY}{2}\right)\mathds 1_{0\leq \mY \leq 2\pi}.
\ee
\item[(vii)] The trace of the third order derivative on the vertical axis is given by:
$$
\pa_\mX^3 \Theta' (0,Y)=\frac{1}{576} \left[ \frac{96\sin^8\left(\frac{\mY}{2}\right)}{\cos^2\left(\frac{\mY}{2}\right)+\frac 16}-\sin (\mY)\left(270\mY-80\sin (\mY)+3\sin (2\mY)-\frac{686\sin (\mY)}{3\cos^2\left(\frac{\mY}{2}\right)+\frac 12}\right)\right]\mathds 1_{0\leq \mY \leq 2\pi}.
$$
The above function is positive on $(0,2\pi)$. It admits the expansions as $\mY\rightarrow 0$:
$$
\pa_\mX^3 \Theta' (0,\mY)=c\mY^8+O(\mY^{10}), \ \ \pa_\mX^3\Theta' (0,2\pi-\mY)=c'\mY+O(\mY^2), \ \ c,c'>0.
$$

\end{itemize}

\end{propositionmain}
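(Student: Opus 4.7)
My plan parallels Proposition~\ref{pr:selfsimfunda}: I apply Lemma~\ref{lem:resolutioneqprofiles} to reduce constructing a self-similar profile of \fref{eq:selfsimstationary} to exhibiting a volume-preserving change of variables $\Phi':(\ma,\mb)\mapsto(\mX,\mY)$ whose first component solves the linear equation \fref{eq:autosimstattransformee}. The scaling in \fref{selfsimtildetheta} fixes $\alpha=3/2,\ \beta=-1/2$, so \fref{eq:autosimstattransformee} reads $\tfrac12\ma\pa_\ma\mX+\tfrac12\mb\pa_\mb\mX=\tfrac32\mX-\ma$. I would first verify by direct differentiation that $\Phi_1'=\ma+\ma^3+\ma\mb^2/4$ satisfies this equation, and check that the Jacobian of $\Phi'$ equals~$1$ using the explicit formula $\pa_\mb\Phi_2'=[(1+\mb^2/4)(1+3\Psi_1^2(\mX/(1+\mb^2/4)^{3/2}))]^{-1}$ together with differentiation under the integral sign in $\pa_\ma\Phi_2'$. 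For (i), injectivity follows from the strict monotonicity of $\Phi_1'$ in $\ma$ and of $\Phi_2'$ in $\mb$; surjectivity onto $\{0<\mY<2\mY^{'*}(\mX)\}$ follows from inspecting the limits $\mb\to\pm\infty$. Lemma~\ref{lem:resolutioneqprofiles} then yields item (ii) automatically via $\T'=-\Phi_1^{'-1}$.

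\textbf{Asymptotics, symmetries and vorticity set.} For \fref{exp tildeY 0}, expanding $\Psi_1(z)=-z+O(z^3)$ gives $3\Psi_1^2(\mX/(1+\tilde\mb^2)^{3/2})=3\mX^2/(1+\tilde\mb^2)^3+O(\mX^4)$; integrating and using $\int_{-\infty}^\infty d\tilde\mb/(1+\tilde\mb^2)^4$ produces the coefficient $\tfrac{15\pi}{16}$. For $\mX\to\pm\infty$, I would rescale $\tilde\mb=|\mX|^{1/3}s$ so $(1+\tilde\mb^2)^{3/2}\sim|\mX|s^3$ and reduce the limit integral to a Beta function $B(1/6,1/2)/3$. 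The oddness in $\mX$ and the vertical reflection in (iv) are immediate consequences of the identities $\Phi'(-\ma,\mb)=(-\mX,\mY)$ and $\Phi'(\ma,-\mb)=(\mX,2\mY^{'*}(\mX)-\mY)$, the latter obtained by the substitution $\tilde\mb\mapsto-\tilde\mb$ in $\Phi_2'$, both resting on evenness of $\Psi_1^2$. For (v), volume preservation yields $\pa_\mY\T'=-\pa_\mY\ma=\pa_\mb\mX=\ma\mb/2$, so the interior vorticity set is $\{\mX=0\}\cup\{\mY=\mY^{'*}(\mX)\}$; on the second branch $\mb=0$ and $\mX=\ma+\ma^3$, so $-\ma$ is the real root $\Psi_1(\mX)$ of $z^3+z+\mX=0$. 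The Taylor expansion \fref{eq:taylortildetheta} at $(0,\pi)$ is then obtained by a local inversion of the defining equations near $\ma=\mb=0$.

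\textbf{Regularity and axis formulae.} Interior analyticity in (iii) is immediate from that of $\Phi'$; at the boundary $\mY=0$ one has $\mb\to-\infty$ and $\ma\sim 4\mX/\mb^2\sim\mX\mY^2/4$, which vanishes together with its first derivatives, giving $C^1$-matching with the trivial extension. To prove analyticity of the periodic extension $\overline{\T}'$, I would reparametrize $\mb=2\tan(\theta/2)$ with $\theta\in\mathbb R$ (no longer restricted to $(-\pi,\pi)$), which recasts $\Phi_2'$ as $\int_{-\pi}^\theta d\tilde\theta/(1+3\Psi_1^2(\mX\cos^3(\tilde\theta/2)))$, manifestly analytic in $(\ma,\theta)$ with the integrand $2\pi$-periodic in $\theta$, so that advancing $\theta$ by $2\pi$ increments $\mY$ by exactly $2\mY^{'*}(\mX)$. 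For (vi), at $\mX=0$ we get $\ma=0$, the $\Psi_1^2$ vanishes, and $\mY=\pi+2\arctan(\mb/2)$; combined with $\pa_\mX\ma=\pa_\mb\mY=1/(1+\mb^2/4)=\sin^2(\mY/2)$ (from $\det J=1$) this gives $\pa_\mX\T'(0,\mY)=-\sin^2(\mY/2)$. For (vii), using the parity from (iv) I write $\ma=\ma_1(\mY)\mX+\ma_3(\mY)\mX^3+O(\mX^5)$ and $\mb=\mb_0(\mY)+\mb_2(\mY)\mX^2+O(\mX^4)$; matching powers of $\mX$ in both $\mX=\ma(1+\mb^2/4)+\ma^3$ and $\mY=\Phi_2'(\ma,\mb)$ determines $\mb_2$ and then $\ma_3$, producing the stated closed form after trigonometric simplification. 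Positivity on $(0,2\pi)$ and the behaviour at the endpoints follow by direct inspection of the resulting formula.

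\textbf{Main obstacle.} The principal technical difficulty is item (vii): extracting the $O(\mX^2)$ correction to $\Phi_2'$ requires evaluating and combining several elementary trigonometric integrals of the form $\int(1+\tilde\mb^2)^{-k-1/2}d\tilde\mb$ and collapsing the result into the compact closed form. A secondary but conceptually delicate point is the analyticity of the $\theta$-reparametrization across the lines $\mY=2k\mY^{'*}(\mX)$, where the original $(\ma,\mb)$-parametrization degenerates as $\mb\to\pm\infty$.
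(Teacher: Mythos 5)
Your overall architecture coincides with the paper's: verify that $\Phi_1'=\ma+\ma^3+\ma\mb^2/4$ solves the linear equation \fref{eq:autosimstattransformee1} with $\alpha=3/2$, $\beta=-1/2$, check volume preservation directly, invoke Lemma \ref{lem:resolutioneqprofiles}, and read the symmetries, the vorticity set via $\pa_\mY\T'=\pa_\mb\mX=\ma\mb/2$, and the image $\{0<\mY<2\mY^{'*}(\mX)\}$ off the explicit formula. Where you genuinely depart: (a) for \fref{exp tildeY 0} near the origin you expand $\Psi_1$ directly under the integral instead of the paper's two changes of variables reducing to Beta integrals --- equivalent and slightly shorter; (b) for (vi) you read $\pa_\mX\ma=\pa_\mb\mY$ off the unit Jacobian and evaluate at $\ma=0$, which is cleaner than the paper's limiting argument on the integral representation of $\mY(\mX,\T)$; (c) for the analyticity of the periodic extension in (iii) you propose the reparametrisation $\mb=2\tan(\theta/2)$, whereas the paper works on the Eulerian side with the ODEs \fref{odetildetheta} and shows that $(\mX,U)\mapsto(\mX,\mY)$ with $U=\sqrt{-\T'}$ is an odd analytic diffeomorphism near $\mY=0$. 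Your route can be made to work, but the point you yourself flag as delicate is exactly where the work lies: at fixed $\mX$ the relation $\ma+\ma^3\cos^2(\theta/2)=\mX\cos^2(\theta/2)$ must be inverted by the analytic implicit function theorem across $\theta\in\pi+2\pi\ZZ$ (where the $\ma$-derivative of the left-hand side equals $1$), since the naive map $(\ma,\theta)\mapsto\mX$ degenerates there; and you still need the analytic-continuation step to conclude that the extension solves \fref{eq:selfsimstationary2} on the whole of $\mH$.

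There are two genuine gaps. First, in (ii) the profile does not come ``automatically'' from Lemma \ref{lem:resolutioneqprofiles}: the lemma only produces some $\Upsilon$ on $\{\T'\neq0\}=\{\mX\neq 0\}$, and to conclude that \fref{selfsimtildetheta} solves \fref{2DPrandtlphom} you must identify $\Upsilon=\pa_\mY^{-1}\pa_\mX\T'$ and verify $\Upsilon\to 0$ as $\mY\to 0$; the paper does this through \fref{id:Upsilonfunda2}--\fref{id:paamYpabmY3}. Second, for (vii) the power-series matching you describe does yield the coefficient $\ma_3$ (hence a closed form) after evaluating $\int_{-\infty}^{\mb_0/2}(1+t^2)^{-4}\,dt$, but the final claims are not ``by direct inspection'': at $\mY=\pi/2$ the two terms of the closed form nearly cancel (roughly $9$ against $1$ before the factor $1/576$), so positivity on $(0,2\pi)$ requires an argument --- the paper obtains it from the integro-differential equation satisfied by $\psi=\pa_\mX^3\T'(0,\cdot)$ by a contradiction at a first zero --- and the vanishing $\pa_\mX^3\T'(0,\mY)=c\mY^8+O(\mY^{10})$ rests on an eight-fold cancellation that the paper makes transparent by exhibiting the factorised antiderivative $f=(\sin^2 r+\tfrac16)U(r)$ with $U'=96\cos^8 r/(\sin^2 r+\tfrac16)^2$. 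You would either need to reproduce that ODE structure or carry out the order-eight expansion explicitly.
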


\begin{remark} \lab{rk:degen}

\begin{itemize}
\item Note the difference in the scaling exponents when comparing \fref{selfsimtildetheta} with the generic profile \fref{def:selfsimtheta}. The above degenerate profile yields a slower expansion along the normal direction.
\item In \cite{CGIM}, we show that there is a stable blow-up pattern for equation \ref{1DPrandtl}, for which solutions converge to $(T-t)^{-1} \sin^2 \left( y/(\nu 2(T-t)^{-1/2})\right)\mathds 1_{0\leq y \leq \nu 2\pi(T-t)^{-1/2}}$. This partially shows that the profile of Proposition \ref{pr:selfsimdegen} is the stable attractor for solutions that are odd in $x$ when the singularity is located on the transversal axis.
\end{itemize}

\end{remark}

\subsection{Ideas of the proofs and organisation of the paper}

The proof of the local well-posedness result in Theorem \ref{th:main} for regular initial data relies on a careful study of the characteristics \fref{id:charac} and of the ODEs for tangential displacement \fref{eq:charx} underlying them, which then permits to retrieve the normal displacement by volume preservation (Lemma \ref{lem:paramincompressibility}). The sharp expression we find for the maximal time of existence is a consequence of two volume preserving dynamics: that of the characteristics map from Lagrangian to Eulerian variables, and that of the ODE for tangential displacement \fref{eq:charx} in the $(x,u)$ phase space.

In the study of the self-similar profiles, the transformation of \fref{eq:selfsimstationary} into \fref{eq:autosimstattransformee} uses a modified Crocco transform relying on the fact that the vector field in \fref{eq:selfsimstationary} has constant divergence. The solutions \fref{def:Phi} and \fref{def:tildetheta} are found by solving \fref{eq:autosimstattransformee}, and all the properties of the profiles can be obtained from computations on these explicit formulas.

To obtain the generic appearance of the generic self-similar profile during separation/singularity formation, we first define (Definition \ref{def:generic}) a condition for the characteristics at their first critical point at time $T$. This condition is stable under perturbations, by standard ODE stability arguments. Using a control argument, we moreover show an initial datum $u_0$ leading to blow-up can always be perturbed to ensure this condition is met at time $T$. Roughly speaking, this means that the map $u_0\mapsto u(T)$ is invertible, which relies on the fact that \fref{eq:charx} is volume preserving in the $(x,u)$ phase space. We then reconstruct the solution around the point at which the shock is forming using the characteristics map. We prove this map, since satisfying the aforementioned condition, after suitable self-similar renormalisations both in Lagrangian and Eulerian variables, converge to $\Phi$ defined by \fref{def:Phi}. We invert the characteristics, by a uniform application of the local inversion Theorem for $(x,y)\in [x^*-\epsilon,x^*+\epsilon]\times [0,\infty)$, and show it is close to $\Phi^{-1}$ in the self-similar zone. This part is lengthy and technical since, as the characteristics map becomes degenerate and has distinct asymptotic behaviours in various zones, each requiring a specific treatment, and since we track precisely all error terms to obtain an optimal picture (in an Eulerian zone of optimal $O(1)\times [0,\infty)$ size). The solution $u$ is then retrieved from the characteristics, and explicit computations end the proof of Theorem \ref{th:main2}. \\

\noindent The paper is organised as follows. Section \ref{sec:T} is devoted to the proof of Theorem \ref{th:main}, showing local existence of solutions and computing the maximal time of existence. Self-similar profiles are studied in Section \ref{sec:selfsim}. Subsection \ref{subsec:transforselfsimeq} transforms the self-similar profile equation \fref{eq:selfsimstationary} into the linear and local equation \fref{eq:autosimstattransformee}. The generic profile is studied in Subsection \ref{subsec:funda} where Proposition \ref{pr:selfsimfunda} is proved, the degenerate one in Subsection \ref{subsec:degen} where Proposition \ref{pr:selfsimdegen} is proved. The rest of the article is devoted to the proof of Theorem \ref{th:main2}. The Definition \ref{def:generic} of generic singularity for the characteristics is given in Subsection \ref{sec:characteristics} where it is proved to hold generically. Section \ref{sec:gen} then establishes the conclusions of Theorem \ref{th:main2} for solutions satisfying this condition. Appendix \ref{ap:y} shows how to retrieve one component of a $2$-d volume preserving map from the other, and Appendix \ref{ap:y2} contains computations for the characteristics used in the proof of Theorem \ref{th:main2}.

\subsection*{Acknowledgements}  The authors thank the anonymous referees for their useful comments. The work of T.-E. G. and N. M. is supported by Tamkeen under the NYU Abu Dhabi Research Institute grant
of the center SITE. C. C. is supported by the ERC-2014-CoG 646650 SingWave. N. M. is supported by NSF grant DMS-1716466. Part of this work was done while C. C., T.-E. G. and N. M. were visiting IH\'ES and they thank the institution. C. C. is grateful to New York University in Abu Dhabi for a stay during which part of this work was carried out.

\subsection*{Notations} We write $x\lesssim y$ if there exists a constant $C>0$ independent of the context such that $x\leq Cy$. We write $x\approx y$ if $x\lesssim y $ and $y\lesssim x$. We use Lagrangian variables $(X,Y)$ and Eulerian variables $(x,y)$. As they are equal at the initial time, we might use one notation or the other in several places, but in this context only. We use the notations $\pa_x$, $\pa/\pa_x$ or the subscript $\cdot_x$ to indicate partial differentiation. In some contexts, we write $\frac{\pa f}{\pa x}_{|y}$ to indicate partial differentiation with respect to $x$ with the variable $y$ being kept fixed.


\section{Local well-posedness and time of existence} \lab{sec:T}

This section is devoted to the proof of Theorem \ref{th:main}. We establish here local-existence of solutions to the inviscid Prandtl system \fref{2DPrandtlp}, prove that $T$ given by \fref{def:T1} is the maximal time of existence, and that separation occurs if $T$ is finite.

\begin{proof}[Proof of Theorem \ref{th:main}]

\noindent The proof relies on the special structure of the characteristics and uses the Crocco transformation. The existence follows from their nondegeneracy until time $T$, while the regularity follows from standard regularity theory for level sets of functions. We denote by $\nabla=(\pa_X,\pa_Y)$ the nabla operator in Lagrangian variables.

\noindent \textbf{Step 1} \emph{Existence}. We aim at establishing the existence and a formula for the characteristics, $\phi[t]:(X,Y)\mapsto (x,y)$ with $\phi=(\phi_1,\phi_2)$. We first solve for the tangential displacement:
\be \lab{id:xproof}
x(t,X,Y)=\phi_1[t](X,Y),
\ee
where $x$ above is the solution of \fref{eq:charx}. Notice that \fref{eq:charx} can always be solved \emph{globally} in time, so that $x(t,X,Y)$ is well-defined for all $(t,X,Y)\in \mathbb R_+\times \mH$. We next study the level sets $x=Cte$ in Lagrangian variables. Let us show first that they are non-degenerate, in that $\nabla x\neq 0$.

In the first case, we assume that $(X_0,Y_0)$ is such that $u_{0Y}(X_0,Y_0)\neq 0$. We then claim that $\nabla x(t,X_0,Y_0)\neq 0$ for any $t>0$, as obtained from the following diagram that we explain below.
\begin{center}
 \begin{tikzcd}[column sep=150pt,row sep=30pt]
  (X,Y) \arrow[d, "\displaystyle diffeomorphism"'] \arrow[dr, "\displaystyle diffeomorphism"] & \\
   (X,u_0(X,Y)) \arrow[r, "\displaystyle \underset{\displaystyle preserving}{volume}"']&(x(t,X,Y),u(t,X,Y))
  \end{tikzcd}
  \end{center}
Indeed, the Crocco transformation $(X,Y)\mapsto (X,u_0(X,Y))$ is in this case a well defined local diffeomorphism near $(X_0,Y_0)$. The vector field $(x,u)\mapsto (u,-p^E_x(t,x))$ in the ODE \fref{eq:charx} is divergence free in the $(x,u)$ phase space. Hence, the mapping $(x,u)\mapsto (x(t),u(t))$ is volume preserving in the $(x,u)$ phase space, hence a diffeomorphism. By composition, $(X,Y)\mapsto (x(t,X,Y),u(t,X,Y))$ is a local diffeomorphism near $(X_0,Y_0)$ implying that $\nabla x(t,X_0,Y_0)\neq 0$.
  
In the second case, we assume that $u_{0Y}(X_0,Y_0)=0$ or $Y_0=0$. The couple $(x,u)$ solves \fref{eq:charx}, so that in particular:
$$
\pa_t (\pa_{X} x)=\pa_X u, \ \ \text{implying} \ \ \pa_{tt} (\pa_{X} x)=\pa_X (-p^E_x(t,x))=-(\pa_X x) p^E_{xx}(t,x).
$$
This shows that at each fixed $(X,Y)$ as long as $\pa_X x$ does not vanish:
$$
\frac{d}{dt} \left(\frac{\pa_t \pa_X x}{\pa_X x} \right)=-\left( \frac{\pa_t \pa_X x}{\pa_X x}\right)^2-p^E_{xx}(t,x), \ \ \frac{\pa_t \pa_X x}{\pa_X x}(0)=\pa_X u_0.
$$
At the point $(X_0,Y_0)$, the quantity $\pa_t \pa_X x/\pa_X x$ is precisely the third component of the ODE system \fref{eq:ricattizerovorticity}. Because of the definition of $T$ \fref{def:T1}, the solution to the above differential equation is well defined for $t<T$. Hence $\pa_t \log (\pa_X x)$ is well-defined for $t<T$, implying $\pa_X x(t,X_0,Y_0)>0$ after integration. Hence, $\nabla x(t,X_0,Y_0)\neq 0$ for $t<T$ in this second case as well.\\

\noindent We just showed that $\nabla x \neq 0$ everywhere on $\mathbb R_+\times \mathbb R$, as long as $t<T$. Hence, in Lagrangian variables, the level sets $x=Cte$ are non-degenerate. At the boundary, as showed in the second case above: $\pa_X x_{|Y=0}\neq 0$. Therefore, the upper half plane is foliated by curves corresponding to the level sets $\Gamma [t,x]=\{(X,Y)\mbox{ such that }x(t,X,Y)=x\}$. Since $u_0,u^E,p^E_x$ are $C^2$, solving the ODE \fref{eq:charx} produces a solution map that is also of class $C^2$, and $x(t,X,Y)$ is a $C^2$ function. Hence the curves $\Gamma[t,x]$ are $C^2$. We then define an arclength parametrisation $s$ for each of these curves, $\Gamma[t,x]=\{ \gamma[t,x](s), \ s\geq 0\}$ where $s=0$ corresponds to the point at the boundary $Y=0$. Denoting by $X^E\mapsto x^E$ the Lagrangian to Eulerian map of the Bernouilli equation \fref{eq:bernouilli} (with $\dot x^E=u^E(t,x^E)$ and $x^E(0,X^E)=X^E$) then there holds $\gamma[t,x](s)\rightarrow X^E(t,x)$ as $s\rightarrow \infty$.\\

\noindent We now apply (i) in Lemma \ref{lem:paramincompressibility}. Since $x=\phi_1[t]$ is non-degenerate in that $\nabla \phi_1[t]\neq 0$, and that $\gamma$ is an arclength parametrisation its level curves, then the second component of the characteristics $y=\phi_2[t]$ is such that $\phi_2[t]=0$ at the boundary and $\phi[t]=(\phi_1[t],\phi_2[t])$ is volume preserving if and only if:
\be \lab{id:yproof}
y(t,X,Y)=\phi_2[t](X,Y)= \int_0^{s[t,x](X,Y)} \frac{d\tilde s}{|\nabla \phi_1[t](\gamma[t,x](\tilde s))|}.
\ee
Note that before $T$, the denominator in the above integral is uniformly away from $0$. The function $y$ above is of class $C^1$ because $\gamma $, $s$ and $\nabla \phi_1$ are. The mapping $(t,X,Y)\mapsto (t,x,y)$ is thus a $C^1$ diffeomorphism from $[0,T)\times \mH$ onto itself. We finally define the solution as $u(t,x,y)=u(t,X,Y)$ where the right hand side is the solution to \fref{eq:charx} (abusing notations). Clearly,
$$
\frac{\pa x}{\pa t}_{|X,Y}=u_0(X,Y)=u(t,x,y).
$$
Since the mapping $(X,Y)\mapsto (x,y)$ is $C^1$ and preserves the measure, $\pa_x\frac{\pa x}{\pa t}_{|X,Y}+\pa_y\frac{\pa y}{\pa t}_{|X,Y}=0$, yielding:
$$
\frac{\pa y}{\pa t}_{|X,Y}=-\int_0^y \pa_x u (t,x,\tilde y)d\tilde y.
$$
And since $\pa_t u(t,x(t),y(t))=-p^E_x(x(t)) $ and $u$ is $C^1$, one deduces that $u$ solves the inviscid Prandtl equations. Note that the matching condition at infinity in \fref{2DPrandtlp} are indeed satisfied for the following reason. Initially as $y\rightarrow \infty$, $u_0\rightarrow u^E_0$. $u^E$ solves the Bernouilli equation \fref{eq:bernouilli} that has a global solution, and whose characteristics correspond to the tangential displacement \fref{eq:charx} of the characteristics for $u$. This gives the desired compatibility.\\

\noindent \textbf{Step 2} \emph{Regularity}. Assume $u_0\in C^k$. The formula \fref{id:xproof} for $x(t,X,Y)$ defines a $C^k$ function since $x$ is obtained as the solution of the ODE \fref{eq:charx} with a $C^k$ vector field. In the formula \fref{id:yproof}, $\nabla \phi_1 [t]$ is $C^{k-1}$, and $s$ and $\gamma$ come from the parametrisation of the level sets of a $C^k$ function, hence are also $C^{k}$. Therefore, $u$ is of class $C^{k-1}$. The continuity of the flow follows from similar arguments.\\

\noindent \textbf{Step 3} \emph{Uniqueness}. If $u$ is a $C^2$ solution then uniqueness is straightforward as the characteristics are well defined and have to produce the diffeomorphism constructed above from the uniqueness property (i) in Lemma \ref{lem:paramincompressibility}. In the case where $u \in C^1$ only, let us detail how the normal component of the characteristics and the volume preservation can be obtained. Define the characteristics $(x(t),y(t))$ through:
$$
\frac{\pa x}{\pa t}=u(t,x,y), \ \ x(0)=X, \ \ \frac{\pa y}{\pa t}=-\int_0^{y(t)}u_x(t,x,y), \ \ y(0)=Y. 
$$
One can indeed solve the second equation because the function $\int_0^{y}u_x(t,x,y)$ is $C^1$ in the third variable. One obtains characteristics $(x,y)$ such that $x$ is $C^1$ in $(X,Y)$ and $y$ is only $C^1$ in $t$ and continuous in the other variables. $u$ then solves $\dot u=-p^E_x(x)$ along the characteristics, implying that $x$ is given by the formula \fref{id:xproof}. Moreover, since $x$ is a $C^{1}$ function, and $y$ is a $C^{1}$ function in $t$, with $\pa_t y$ being $C^1$ in $y$, such that $\pa_y(\pa_{t}y(t))=-\pa_x (\pa_t(x(t)))$, an approximation argument using a regularisation procedure gives that the characteristics must preserve volume. The mapping $(X,Y)\mapsto (x,y)$ is then a bijection preserving volume with $x\in C^1$ and $y$ continuous, which can be showed to be necessarily of the form described in Step 1.\\

\noindent \textbf{Step 4} \emph{Blow-up and separation}. Assume that $T<\infty$. Then by definition of $T$ one solution to the ODEs \fref{eq:ricattizerovorticity} must blow up at time $T$, which is only possible if $u_x\rightarrow -\infty$ as $t\rightarrow T$. If moreover $T=T_a$, then from Step 1, there exists a point $(X_0,Y_0)\in \mH$ with $Y_0>0$, such that $\nabla x(t,X_0,Y_0)\rightarrow 0$ as $t\uparrow T$. Set $(x_s(t),y_s(t))=(x(t,X_0,Y_0),y(t,X_0,Y_0))$. Then $x_s$ is a regular up to $T$ solution to \fref{eq:charx} and has a finite limit $x_0$ as $t\uparrow T$, and $\lim_{t\uparrow T}y_s=\infty$ from \fref{id:yproof} and the vanishing of $\nabla x$. Hence there is boundary layer separation in the sense of Definition \ref{def:separation}.

\end{proof}


\section{Construction of self-similar profiles} \lab{sec:selfsim}

\subsection{The equation for self-similar profiles} \label{subsec:transforselfsimeq}

We study self-similar profiles $\T$ such that $u(t,x,y)=(T-t)^{\alpha-1} \T(x/(T-t)^{\alpha},y/(T-t)^\beta)$, for some $\alpha,\beta\in \mathbb R$, solves \fref{2DPrandtlphom}. Dropping the boundary condition for the normal velocity, this amounts to solve the stationary equation:
\begin{equation} \label{eq:selfsimstationary2}
\left\{ \begin{array}{l l}
(1-\alpha)\Theta+(\alpha \mX+\T)\pa_\mX \T +(\beta \mY +\Upsilon )\pa_\mY \T=0,\\
\pa_\mX \T+\pa_\mY \Upsilon=0.
\end{array}
\right.
\end{equation}
A first method, due to Cassel, Smith and Walker \cite{CSW96} (see also \cite{ESC83}) transforms the nonlinear nonlocal equation \fref{eq:selfsimstationary2} into a nonlinear local equation \fref{eq:autosimstattransformee2}. It relies on the Crocco change of variables $(\mX,\mY)\mapsto (\mX,\T)$. We find a new change of variables that transforms \fref{eq:selfsimstationary2} into the linear and local equation \fref{eq:autosimstattransformee1}. This change of variable can be thought of as a volume preserving Crocco transform, and the variables $(\ma,\mb)$ should be thought of as Lagrangian self-similar variables as explained in Remark \ref{re:lagrangian interpretation}. We include the proof of (iii) for the sake of completeness.

We use statement (ii), to find explicit solutions to \fref{eq:selfsimstationary2}. While (iii) is not useful to solve \fref{eq:selfsimstationary2} explicitly, the knowledge of the solution to \fref{eq:autosimstattransformee2}, i.e. $\pa_\mY \T $ as a function of $\mX$ and $\T$, is useful for certain computations. We mention that the classification of analytic solutions to \fref{eq:selfsimstationary2}, relying on \fref{eq:autosimstattransformee1}, is an interesting open problem.

\begin{lemma} \label{lem:resolutioneqprofiles}
Let $\alpha,\beta \in \mathbb R$, $\alpha\neq 1$, $\Omega \subset \mathcal H$ open, $\Theta\in C^3(\Omega)$, $\ma=-\Theta$ and assume $\T\neq 0$ on $\Omega$. Then the following statements are equivalent:
\begin{itemize}
\item[(i)] There exists $\Upsilon \in C^1(\Omega)$ such that $(\Theta,\Upsilon)$ solves \fref{eq:selfsimstationary2}. One has $\nabla \Theta \neq 0$ on $\Omega$ and for each $q$ in the range of $\T$ the level set $\Gamma_q=\{(\mX,\mY)\in \Omega, \ \Theta(\mX,\mY)=q\}$ is diffeomorphic to $\mathbb R$.
\item[(ii)] There exists $\mb\in C^2(\Omega)$ such that the mapping $(\mX,\mY)\mapsto (\ma,\mb)$ is a volume preserving diffeomorphism between $\Omega$ and an open set $\Omega' \subset \mathbb R^2$, and, writing $\mX(\ma,b)$ as a function on $\Omega'$, it satisfies:
\be \label{eq:autosimstattransformee1}
(\alpha-1)\ma\pa_\ma \mX+(1+\beta)\mb\pa_\mb \mX=\alpha \mX-\ma.
\ee
One has that for every $\ma\in \mathbb R$, the set $\{\mb\in \mathbb R, \ (\ma,\mb)\in \Omega'\}$ is either an interval or empty.
\end{itemize}
If moreover $\pa_\mY \T\neq 0$ on $\Omega$ these statements are equivalent to the following one:
\begin{itemize}
\item[(iii)] The change of variables $(\mX,\mY)\mapsto (\mX,\T)$ maps $\Omega$ onto an open set $\Omega''$. Writing $\tau(\mX,\T)$ as a function on $\Omega''$ it satisfies:
\be \label{eq:autosimstattransformee2}
(\alpha-1)\T \frac{\pa}{\pa \Theta}_{|\mX}\tau+(\alpha \mX+\T)\frac{\pa}{\pa \mX}_{|\Theta}\tau=(\alpha-1-\beta)\tau.
\ee
One has that for every $q$ in the range of $\T$, the set $\{ \mX \in \mathbb R, \ (\mX,q)\in \Omega''\}$ is an interval.
\end{itemize}
\end{lemma}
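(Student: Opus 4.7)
The plan is to handle the three equivalences by exploiting the geometry of the vector field $W := (\alpha\mX + \Theta,\,\beta\mY + \Upsilon)$. From (i), the first equation of \fref{eq:selfsimstationary2} reads $W\cdot\nabla\Theta = (\alpha-1)\Theta$, and the second reads $\nabla\cdot W = \alpha + \beta$. Since $\alpha\neq 1$ and $\Theta\neq 0$ on $\Omega$, no point of $\Omega$ can have $\nabla\Theta = 0$: otherwise the first equation would force $\Theta = 0$ there. Thus level sets of $\ma := -\Theta$ are smooth $C^3$ curves in $\Omega$, and the diffeomorphism to $\mathbb R$ asserted in (i) gives the global structure needed below.

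For $(i)\Rightarrow (ii)$, the guiding idea is to find coordinates in which $W$ takes the diagonal linear form $\tilde W := ((\alpha-1)\ma,(1+\beta)\mb)$; notice that $\tilde W$ has divergence $(\alpha-1) + (1+\beta) = \alpha+\beta$, matching that of $W$, which is the consistency requirement for a volume-preserving conjugacy. Since $W\cdot\nabla\ma = (\alpha-1)\ma$, the coordinate $\ma$ is already correct. To construct $\mb$, I would fix a transverse section $\Gamma$ to $W$ in $\Omega$, prescribe an initial value of $\mb$ on $\Gamma$ so that the Jacobian of $(\mX,\mY)\mapsto(\ma,\mb)$ equals $1$ along $\Gamma$, and then propagate $\mb$ along the flow $\phi^t$ of $W$ by the rule $\mb(\phi^t(p)) = e^{(1+\beta)t}\mb(p)$. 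This propagation enforces $W\cdot\nabla\mb = (1+\beta)\mb$ by construction, and volume preservation is maintained along the flow precisely because the two divergences agree. Once $(\mX,\mY)\mapsto(\ma,\mb)$ is shown to be a global diffeomorphism onto an open set $\Omega'\subset\mathbb R^2$, equation \fref{eq:autosimstattransformee1} is immediate: reading $\mX$ as a function of $(\ma,\mb)$, its left-hand side equals $\tilde W\cdot\nabla_{(\ma,\mb)}\mX$, which by the conjugacy equals $W\cdot\nabla_{(\mX,\mY)}\mX$, i.e.\ the first component of $W$, namely $\alpha\mX + \Theta = \alpha\mX - \ma$. Connectedness of the $\ma$-fibres of $\Omega'$ follows from connectedness of the level curves of $\ma$ in $\Omega$.

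The converse $(ii)\Rightarrow(i)$ reverses these steps: given \fref{eq:autosimstattransformee1} and the volume-preserving diffeomorphism, push $\tilde W$ forward to $\Omega$ to obtain a vector field $W$ of divergence $\alpha+\beta$ whose first component is $\alpha\mX - \ma = \alpha\mX + \Theta$ by \fref{eq:autosimstattransformee1}; then setting $\Theta := -\ma$ and defining $\Upsilon$ as the unique primitive with $\pa_\mY\Upsilon = -\pa_\mX\Theta$ pinned down at a chosen base point recovers \fref{eq:selfsimstationary2}. The remaining equivalence $(i)\Leftrightarrow(iii)$, valid under the extra hypothesis $\pa_\mY\Theta\neq 0$, is a direct nonlinear Crocco calculation: the implicit function theorem makes $(\mX,\mY)\mapsto(\mX,\Theta)$ a diffeomorphism onto $\Omega''$, one writes $\mY = \mY(\mX,\Theta)$, uses $\pa_\mY\Upsilon = -\pa_\mX\Theta$ to eliminate $\Upsilon$, and re-expresses the first equation in terms of $\tau := \pa_\mY\Theta$; a short computation yields \fref{eq:autosimstattransformee2}, and connectedness of the $\Theta$-slices of $\Omega''$ follows from that of the level curves of $\Theta$ in $\Omega$.

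The main obstacle I anticipate is the global aspect of $(i)\Rightarrow(ii)$: the conjugacy to the diagonal model is essentially tautological on any flow box of $W$, but producing a single globally defined $\mb\in C^2(\Omega)$ whose map to $\Omega'$ is a genuine diffeomorphism with connected $\ma$-fibres is where the hypothesis that each level set of $\Theta$ is diffeomorphic to $\mathbb R$ becomes crucial. I would implement this by parametrising each level curve $\{\ma = c\}$ globally by an arclength-type variable (similar in spirit to the parametrisation-based construction in Step 1 of the proof of Theorem \ref{th:main} via Lemma \ref{lem:paramincompressibility}), and defining $\mb$ as an appropriately weighted integral of $1/|\nabla\ma|$ so that both the eigenvector condition and the Jacobian condition hold simultaneously; the global diffeomorphism then assembles coherently from this fibre-wise construction.
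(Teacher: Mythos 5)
Your overall architecture matches the paper's: the heart of (i)$\Leftrightarrow$(ii) is conjugating the vector field $W=(\alpha\mX+\T)\pa_\mX+(\beta\mY+\Upsilon)\pa_\mY$ to its diagonal linear model $(\alpha-1)\ma\pa_\ma+(1+\beta)\mb\pa_\mb$ by a volume-preserving change of variables, with the matching of divergences ($=\alpha+\beta$) as the compatibility condition, after which \fref{eq:autosimstattransformee1} is just $W\cdot\nabla\mX=\alpha\mX+\T$ read in the new coordinates. Your (ii)$\Rightarrow$(i) and the Crocco computation for (iii) are also essentially the paper's (which routes (iii) through (ii) rather than through (i), a cosmetic difference).

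The genuine gap is in your construction of $\mb$ for (i)$\Rightarrow$(ii). Propagating $\mb$ along the flow of $W$ by $\mb(\phi^t(p))=e^{(1+\beta)t}\mb(p)$ from a section with unit Jacobian does preserve both conditions, but only on the flow-saturation of that section; the flow of $W$ need not be complete in $\Omega$ and there is no reason it sweeps out all of $\Omega$ from a single transverse section, and patching flow boxes reintroduces exactly the global consistency problem you set out to avoid. Your fallback --- a weighted integral of $1/|\nabla\ma|$ arranged so that "both the eigenvector condition and the Jacobian condition hold simultaneously" --- is where the missing idea sits: these two conditions are \emph{not} simultaneously achievable by a naive normalisation, and the paper's device is to decouple them. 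It first takes \emph{any} volume-preserving partner $\mb_0$ of $\ma$ via Lemma \ref{lem:paramincompressibility} (which uses only the level sets of $\T$); the divergence identity then forces the pushforward of $W$ to be $(\alpha-1)\ma\pa_\ma+\bigl((1+\beta)\mb_0+\varphi(\ma)\bigr)\pa_{\mb_0}$ for some function $\varphi$ of $\ma$ alone, and this residual is absorbed by the volume-preserving shear $\mb=\mb_0+\xi(\ma)$ with $\xi$ solving the linear ODE $(\alpha-1)\ma\xi'(\ma)-(1+\beta)\xi(\ma)+\varphi(\ma)=0$ --- which is precisely where the hypotheses $\T\neq0$ (so $\ma\neq0$) and $\alpha\neq1$ are consumed. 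A secondary defect of the flow construction: $W$ involves $\Upsilon\in C^1$ only, so its flow, and hence your $\mb$, is a priori only $C^1$, short of the $C^2$ demanded in (ii); the paper's route through the level sets of $\T\in C^3$ avoids this loss.
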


\begin{remark}

The hypothesis $\T\neq 0$ is not necessary. The implications between (i), (ii), (iii) would hold, provided additional regularity assumptions near the set $\{\T=0\}$, see the proof.

The hypothesis $\nabla \T \neq 0$ is necessary for (ii). For a solution to \fref{eq:selfsimstationary2}, there should be a different change of variables $(\mX,\mY)\mapsto (\ma_{\mathcal C},\mb_{\mathcal C})$ for each connected component $\mathcal C$ of the set $\{\nabla \T\neq 0\}$. For example, for $\bar \T'$ defined by \fref{eq:analyticextensiondegenerate}, for each component $\mathcal C_n=\{2(n-1)\mY^{'*}(\mX)<\mY<2n\mY^{'*}(\mX) \}$, a change of variables for (ii) is given by $(\mX,\mY)\mapsto (\ma_{\mathcal C_n},\mb_{\mathcal C_n})$ with $(\ma_{\mathcal C_n},\mb_{\mathcal C_n})=\Phi^{'-1}(\mX,\mY-2(n-1)\mY^{'*}(\mX))$, where $\Phi'$ is given by \fref{def:tildetheta}.

The assumption $\pa_\mY\T\neq 0$ is necessary for (iii). For a solution to \fref{eq:selfsimstationary2}, there should be a different change of variables $(\mX,\mY)\mapsto (\mX,\T)$ for each connected component $\mathfrak C$ of the set $\{\pa_\mY \T\neq 0\}$. For $\T$ given by \fref{pr:selfsimfunda}, there is a different change of variables on $\{ 0<\mY<\mY^*(\mX)\}$ and $\{ 0<\mY<\mY^*(\mX)\}$, yielding different solutions to \fref{eq:autosimstattransformee2}, see Lemma \ref{lem:ode}.

\end{remark}

\begin{proof}

\textbf{Step 1} \emph{(i) implies (ii)}. Assume (i) and let $\sfv =(\alpha \mX+\T )\partial_\mX+(\beta \mY +\Upsilon)\partial_\mY$ and write $\sfv .f=(\alpha \mX+\T )\partial_\mX f+(\beta \mY +\mathcal V)\partial_\mY f$ to denote the differentiation along $\sfv$ on $\Omega$. Then, the second equation in \fref{eq:selfsimstationary2} implies, with $\nabla_{\mX,\mY}.$ denoting the divergence in $(\mX,\mY)$ variables:
\be \label{id:divv}
\nabla_{\mX,\mY} .\sfv=\alpha+\beta.
\ee
Assume first that $\Omega$ is connected. As $\T$ is $C^3$ with $\nabla \Theta \neq 0$, and given the hypothesis on its level sets, we use one of the formulas provided by Lemma \ref{lem:paramincompressibility} and get the existence of $\mb\in C^2(\Omega )$ such that the mapping $\phi:(\mX,\mY)\mapsto (\ma,\mb)$ is a $C^2$ volume preserving diffeomorphism onto some open set $\Omega'\subset \mathbb R^2$. Let $\sfv'=g(\ma,\mb)\pa_\ma+h(\ma,\mb)\pa_\mb$ denote the push forward of $\sfv$ from $\Omega$ to $\Omega'$, that is, the vector field such that for any $f\in C^1(\Omega')$, $\sfv'.f=\sfv .(f\circ \phi )$.

Since $\phi$ is $C^2$ and $\sfv$ is $C^1$, one has that $\sfv'$ is $C^1$, that is, $g$ and $h$ are $C^1$. The first equation in \fref{eq:selfsimstationary2} gives $\sfv .\T=(\alpha-1)\T$, so that $\sfv'.\ma=(\alpha-1)\ma$, and we get:
\be \label{id:identitiesv}
g(\ma,\mb)=(\alpha-1)\ma
\ee
so $g$ is in fact smooth. As $\phi$ is $C^2$ and preserves volume, we get conservation of divergence so that: $\nabla_{\ma,\mb}.\sfv'=\nabla_{\mX,\mY}.\sfv=\alpha+\beta$ where we used \fref{id:divv}. This gives, using \fref{id:identitiesv}:
\be \label{id:divv'}
\pa_\mb h=1+\beta
\ee
Consequently, there exists $\varphi (\ma)$ a $C^1$ function such that $h(\ma,\mb)=(1+\beta)\mb+\varphi (\ma)$. We change variables and set $\tmb(\ma,\mb)=\mb+\xi (\ma)$ for $\xi$ a $C^1$ function to be determined. Let us denote by $\sfv''$ the pushforward of $\sfv'$ by $(\ma,\mb)\mapsto (\ma,\tmb)$, so that:
$$
\sfv''=(\alpha-1)\ma\frac{\pa}{\pa \ma}_{|\mb}+((1+\beta)\tmb+(\alpha-1)\ma \xi '(\ma)-(1+\beta)\xi(\ma)+\varphi (\ma))\frac{\pa}{\pa \tmb}_{|\ma}.
$$
Since $\ma\neq 0$ on $\Omega'$ because $\T\neq 0$ on $\Omega$, since $\varphi$ is $C^1$ and $\alpha\neq 1$, there exists $\xi$ a $C^2$ solution of $(\alpha-1)\ma \xi '(\ma)-(1+\beta)\xi(\ma)+\varphi (\ma)=0$, producing $\sfv''=(\alpha-1)\ma\pa_\ma+(1+\beta)\tmb\pa_{\tmb}$. Note that the change of variables $(\ma,\mb)\mapsto (\ma,\mb)$ is $C^1$ and volume preserving. Therefore, up to relabelling $\tmb$ as $\mb$, we can always choose $\mb$ such that:
\be \label{id:identitiesv2}
h(\ma,\mb)=(1+\beta)\mb.
\ee
Hence $\sfv'=(\alpha-1)\ma \pa_\ma+(1+\beta)\mb\pa_\mb$ by \fref{id:identitiesv} and \fref{id:identitiesv2}. Since $\sfv .\mX=\alpha \mX+\T$, we get that $\sfv'.\mX=\alpha \mX-\ma$, which is shows \fref{eq:autosimstattransformee1}. Due to the assumptions on the level sets of $\T$, for each $\ma$, the set $\{\mb\in \mathbb R, \ (\ma,\mb)\in \Omega'\}$ is either an interval or empty. Hence (ii) is established.

In case $\Omega$ is not connected, we partition it into connected components $\Omega=\cup_n \Omega_n$, and denote by $\mb_n\in C^1(\Omega_n)$ the function we just obtained and $\Omega_n'=(\ma,\mb_n)(\Omega_n)$. From the properties of the level sets of $\T$, as $\ma=-\T$, $\Omega_n'\cap \Omega_{n'}'=\emptyset$ whenever $n\neq n'$. Hence, the function $\mb(\mX,\mY)=\sum_n \delta_{(\mX,\mY)\in \Omega_n}\mb_n(\mX,\mY)$ and $\Omega'=\cup_n \Omega'_n$ give (ii) in that case.\\

\noindent \textbf{Step 2} \emph{(ii) implies (i)}. Assume (ii) and let $\sfv'=(\alpha-1)\ma\pa_\ma +(1+\beta)\mb\pa_\mb $. Let the pullback of the vector field $\sfv=i(\mX,\mY)\pa_\mX+j(\mX,\mY)\pa_\mY$ by the mapping $(\mX,\mY)\mapsto (\ma,\mb)$ be $\sfv'$. Then:
\be \label{id:identitiesv3}
i(\mX,\mY)=\alpha \mX+\T.
\ee
because of \fref{eq:autosimstattransformee1}. Exploiting as in Step 1 the preservation of divergence by the mapping $(\ma,\mb)\mapsto(\mX,\mY)$, we get that $j$ is $C^1$ with $\pa_\mX i+\pa_\mY j=\alpha+\beta$. Injecting \fref{id:identitiesv3} yields $\pa_\mY j=\beta-\pa_\mX \T$. Hence there exists $\Upsilon\in C^0$ continuously differentiable with respect to $\mY$ such that: 
\be \label{id:identitiesv4}
j(\mX,\mY)=\beta \mY+\Upsilon(\mX,\mY), \qquad \mbox{with} \qquad \pa_\mX\T+\pa_\mY \Upsilon=0.
\ee
The definition of $\sfv'$ gives $\sfv'.\ma=(\alpha-1)\ma$, which, combined with $\ma=-\T$, \fref{id:identitiesv3} and \fref{id:identitiesv4} shows that \fref{eq:selfsimstationary2} is satisfied. As $(\mX,\mY)\mapsto (\ma,\mb)$ preserves volume and $\ma=-\T$, we get $\nabla \ma\neq 0$ on $\Omega$. For $q$ in the range of $\T$, the identity $\{(\mX,\mY)\in \Omega, \ \T(\mX,\mY)=q \}=(\mX,\mY)(\{(q,\mb)\in \Omega' \})$, and the fact that $\{(q,\mb)\in \Omega' \}$ is a non-empty interval, imply that this set is diffeomorphic to $\mathbb R$. Hence (i) is established.\\

\noindent \textbf{Step 3} \emph{(ii) implies (iii)}. Assume (ii) and that $\pa_\mY \T\neq 0$ on $\Omega$. The mapping $(\mX,\mY)\mapsto (\ma,\mb)$ is a $C^1$ volume preserving diffeomorphism, hence the determinant of the Jacobian matrices is either $-1$ or $1$. Up to changing $\mb$ into $-\mb$, we assume that it is $1$, so that:
\be \label{id:jacobianvolume}
\begin{pmatrix} \frac{\pa \mY}{\pa \mb} &- \frac{\pa \mX}{\pa \mb} \\ -\frac{\pa \mY}{\pa \ma} & \frac{\pa \mX}{\pa \ma}  \end{pmatrix}=\begin{pmatrix} \frac{\pa \ma}{\pa \mX} & \frac{\pa \ma}{\pa \mY} \\ \frac{\pa \mb}{\pa \mX} & \frac{\pa \mb}{\pa \mY}  \end{pmatrix}.
\ee
In particular, using $\ma=-\T$, we get $\tau=\pa_\mb \mX$. Plugging this identity in \fref{eq:autosimstattransformee1}, then differentiating with respect to $\mb$ yields:
\be \label{id:identitiesv5}
(\alpha-1)\ma\pa_\ma \tau +(1+\beta)\mb\pa_\mb \tau=(\alpha-1-\beta)\tau.
\ee
The change of variables $(\ma,\mb)\mapsto (\ma,\mX)$ produces
$$
\frac{\pa}{\pa \ma}_{|\mb} \tau=\frac{\pa}{\pa \ma}_{|\mX} \tau+\frac{\pa}{\pa \ma}_{|\mb} \mX \frac{\pa}{\pa \mX}_{|\ma} \tau, \qquad \frac{\pa}{\pa \mb}_{|\ma}\tau=\frac{\pa}{\pa \mb}\mX\frac{\pa}{\pa \mX}_{|\ma}\tau.
$$
Injecting the above identities in \fref{id:identitiesv5} gives:
$$
(\alpha-1)\ma \frac{\pa}{\pa \ma}_{|\mX} \tau +\left((\alpha-1)\ma\frac{\pa}{\pa \ma}_{|\mb} \mX+(1+\beta)\mb\frac{\pa}{\pa \mb}_{|\ma} \mX \right)\frac{\pa}{\pa \mX}_{|\ma}\tau=(\alpha-1-\beta)\tau.
$$
Above, $(\alpha-1)\ma\frac{\pa}{\pa \ma}_{|\mb} \mX+(1+\beta)\mb\frac{\pa}{\pa \mb}_{|\ma} \mX=\alpha \mX-\ma$ from \fref{eq:autosimstattransformee1}, which proves \fref{eq:autosimstattransformee2} since $\ma=-\T$. Since the statement (i) is satisfied from Step 2, for each $q$ in the range of $\T$, the set $\{(\mX,\mY)\in \Omega, \ \T(\mX,\mY)=q\}$ is diffeomorphic to $\mathbb R$. This set being diffeomorphic to $\{ (\mX,q) \in \Omega''\}$, this latter set is diffeomorphic to $\mathbb R$, and (iii) is established.\\

\noindent \textbf{Step 4} \emph{(iii) implies (ii)}. Assume (iii) and $\pa_\mY \T\neq 0$ on $\Omega$. For $q$ in the range of $\T$, the set $\{(\mX,\mY)\in \Omega, \ \T(\mX,\mY)=q\}$ is diffeomorphic to the nonempty open interval $\{ \mX \in \mathbb R, \ (\mX,q)\in \Omega''\}$, hence is diffeomorphic to $\mathbb R$. As in the beginning of Step 1, there then exists a function $\mb\in C^2(\Omega)$ be such that $(\mX,\mY)\mapsto (\ma,\mb)$ is volume preserving, with determinant $1$. Change variables $(\mX,\T)\mapsto (\ma,\mb)$. Then as $\ma=-\T$, using \fref{id:jacobianvolume} and the definition of $\tau$ (recall $\tau \neq 0$):
\begin{align*}
&\frac{\pa}{\pa \T}_{|\mX}=\frac{\pa \ma}{\pa \T}_{|\mX}\frac{\pa}{\pa \ma}_{|\mb}+\frac{\pa \mb}{\pa \T}_{|\mX}\frac{\pa}{\pa \mb}_{|\ma}=-\frac{\pa}{\pa \ma}_{|\mb}+\frac{\pa \mb}{\pa \mY}_{|\mX}\frac{\pa \mY}{\pa \T}_{|\mX}\frac{\pa}{\pa \mb}_{|\ma}=-\pa_\ma+\frac{\pa_\ma \mX}{\tau} \pa_\mb,\\
&\frac{\pa}{\pa \mX}_{|\T}=\frac{\pa \ma}{\pa \mX}_{|\T}\frac{\pa}{\pa \ma}_{|\mb}+\frac{\pa \mb}{\pa \mX}_{|\T}\frac{\pa}{\pa \mb}_{|\ma}=\frac{\pa \mb}{\pa \mX}_{|\ma}\frac{\pa}{\pa \mb}_{|\ma}=\frac{1}{\frac{\pa \mX}{\pa \mb}_{|\ma}}\frac{\pa}{\pa \mb}_{|\ma}=\frac{1}{\tau}\pa_\mb,
\end{align*}
where in the last terms, and from now on, partial derivatives are taking in $(\ma,\mb)$ coordinates. Injecting these identities in \fref{eq:autosimstattransformee2}, using that $\tau=\pa_\mb \mX$ we get:
$$
(\alpha-1)\ma\pa_{\ma\mb}\mX+\left((1-\alpha)\ma\frac{\pa_\ma \mX}{\pa_\mb \mX}-\frac{\alpha \mX+\ma}{\pa_\mb \mX}\right)\pa_{\mb\mb}\mX=(\beta+1-\alpha)\pa_\mb \mX.
$$
Dividing by $\pa_\mb \mX$, the above identity yields:
$$
\pa_\mb \left((\alpha-1)\ma\frac{\pa_\ma \mX}{\pa_\mb \mX}+\frac{\alpha \mX+\ma}{\pa_\mb\mX} \right)=\beta+1.
$$
Thus, there exists a $C^1$ function $\varphi (\ma)$ such that:
$$
(\alpha-1)\ma \pa_\ma \mX+\alpha \mX+\ma=(\beta+1+\varphi (\ma))\pa_\mb \mX.
$$
There exists a $C^1$ change of variables of the form $\tmb=\mb+\xi (\ma)$ that transforms the above equation into \fref{eq:autosimstattransformee1}, by applying verbatim the same reasoning made in Step 1. For every $\ma\in \mathbb R$, the set $\{\mb\in \mathbb R, \ (\ma,\mb)\in \Omega'\}$ is diffeomorphic to $\{ \mX \in \mathbb R, \ (\mX,q)\in \Omega''\}$, hence is either empty or an interval. We have established (ii).

\end{proof}

\subsection{Generic self-similar profile} \lab{subsec:funda}

It is convenient to decompose the proof of Proposition \ref{pr:selfsimfunda} in two parts. The first part is a Lagrangian approach, using formula \fref{def:thetageom} and certain transformations on the Lagrangian side ; some properties are directly showed out of this formula. First we study the curve $\mX\mapsto \mY^*(\mX)$ and prove (i). This shows $\T$ is well-defined, and allows to prove that it is a self-similar profile, which is (ii). The symmetry (iii), the set $\{\pa_\mY \T=0\}$ and the Taylor expansion \fref{taylorthetalag} in (iv) can be studied directly from the formula \fref{def:thetageom}. Note that although this profile diverges to infinity as $\mY \rightarrow 0$ or $2\mY^*(\mX)$, it still makes sense to speak of \fref{def:selfsimtheta} as a solution of Prandtl's equations on its support: we show that the quantity $\int_{0}^y u_x(t,x,\tilde y)d\tilde y$ is well defined.

To establish (v) however, we rely on different techniques. The second part presents another approach for the study of self-similar profiles. It is an Eulerian one since it relies on the study of the equation \fref{eq:autosimstattransformee2}, following \cite{ESC83,CSW96}. The study of \fref{eq:autosimstattransformee2} can indeed complement that of \fref{eq:autosimstattransformee1}.

We shall use at many locations that $\Psi_1$ is the inverse of the function $\mX\mapsto-\mX-\mX^3$ (i.e. $-\Psi_1(\mX)-\Psi_1^3(\mX)=\mX$), that it is analytic and odd, that $\pa_\mX \Psi_1$ attains its minimum at $0$ where $\pa_\mX \Psi_1(0)=-1$ and that:
\begin{align} \lab{id:psi1implicit}
&u+p^{*2}u^3=v \qquad \Leftrightarrow \qquad u= -\frac{1}{p^*}\Psi_1\left(p^*v \right) \qquad \mbox{for all }(u,v)\in \mathbb R^2,\\
& \lab{id:psi1origin} \Psi_1(\mX)=-\mX+\mX^3+O(|\mX^5|)\qquad \mbox{as } \mX\rightarrow 0,\\
\lab{id:psi1infinity}
&\Psi_1(\mX)=\mp |\mX|^{\frac{1}{3}}\pm\frac{1}{3}|\mX|^{-\frac 13}+O(\mX^{-1}) \qquad \mbox{as }\mX\rightarrow \pm \infty.
\end{align}
We will write $\Phi=(\Phi_1,\Phi_2)$ for the components of $\Phi$.

\begin{proof}[Proof of (i) in Proposition \ref{pr:selfsimfunda}]

These identities are obtained through direct computations. First, the identity \fref{id:psi1implicit} implies after a direct computation that the determinant of the differential of $\Phi$ is $1$. It is thus a volume preserving diffeomorphism onto its image. 

Fixing any $\mX \in \mathbb R$ and parametrising the curve of equation $\Phi_1(\ma,\mb)=\ma+\mb^2+p^{*2}\ma^3=\mX$ with the variable $\mb$, we get that $\Phi_2=\int_{-\infty}^\mb (1+3\Psi_1^2 (p(\mX-\tmb^2)))^{-1}d\tmb$ on this curve from \fref{eq:def Y*}. Recalling \fref{eq:def Y*}, the set $\{ \Phi_2(\ma,\mb), \ \Phi_1=\mX\}$ is then equal to the interval $(0<\mY<2\mY^*(\mX))$. Therefore, the image of $\Phi$ is indeed the set $\{(\mX,\mY), \ 0<\mY<2\mY^*(\mX) \}$.

We next establish \eqref{exp Y 0}. After two successive changes of variable $\Psi_1(p^*(\mX-\tmb^2))/p^*=\Theta$ and $\Theta=z+\Psi_1(p^*\mX)/p^*$, and using \fref{id:psi1implicit}, we can rewrite \fref{eq:def Y*} as:
\be \lab{formulamystarproof}
\mY^*(\mX)=\int_{\frac{\Psi_1(p^* \mX)}{p^*}}^{\infty}\frac{d\T}{2\sqrt{\Theta+p^{*2}\Theta^3+\mX}}=\int_0^\infty\frac{dz}{2\sqrt{z}\sqrt{1+p^{*2}z^2+3zp^*\Psi_1(p^*\mX)+3\Psi_1^2(p^*\mX)}}.
\ee
We compute that $\mY^*(0)=\frac{1}{2}\int_0^\infty (z+p^{*2}z^3)^{-1/2}dz=4\Gamma(\frac{5}{4})^2 (\pi p^*)^{-\frac 12}=\frac{3\pi}{8}$ for the specific value $p=p^*$ given by \fref{def:pstar}. Other direct computations using \fref{formulamystarproof} then give:
\begin{align*}
&\pa_{\mX} \mY^*(0)= \sqrt{p^*}\frac 34 \int_0^{\infty} \frac{\sqrt z dz}{(1+z^2)^{\frac 32}}=\frac{3\sqrt{p^*}}{4\sqrt{\pi}}\Gamma\Big(\frac{3}{4}\Big)^2=1,\\
&\pa_{\mX}^2 \mY^*(0)=-\frac 32 p^{*\frac 32} \int_0^{\infty} \frac{dz}{\sqrt z (1+z^2)^{\frac 32}}+\frac{27}{8} p^{*\frac 32} \int_0^{\infty} \frac{z^{\frac 32}dz}{(1+z^2)^{\frac 52}}=-\frac{5\Gamma\Big(\frac 14\Big)}{72\pi^5},
\end{align*}
which concludes the proof of \eqref{exp Y 0}. We finish by proving \eqref{exp Y infty}. If $\mX>0$ we change variables in \fref{formulamystarproof} setting $z=-\tilde{z}\Psi_1(p^*\mX)/p^*$ (note that $\Psi_1(p^*\mX)<0$ in this case):
\begin{align}
\mY^*(\mX) = \frac{1}{2\sqrt{p^*}|\Psi_1(p^*\mX) |^{\frac 12}}\int_{0}^{\infty} \frac{d\tilde{z}}{\sqrt{\tilde{z}}\sqrt{3+\Psi_1^{-2}(p^*\mX)-3\tilde{z}+\tilde{z}^2}}.
\end{align}
We use the asymptotic behaviour \fref{id:psi1infinity} of $\Psi_1$ to deduce that
\begin{align}
\mY^*(\mX)=C_+ \mX^{-\frac{1}{6}}+O(\mX^{-\frac 56})=C_+\mX^{-\frac{1}{6}}+O(\mX^{-\frac 56}) \ \text{as} \ \mX\rightarrow \infty
\end{align}
where $C_+=2^{-1}p^{*-2/3}\int_{- 1}^\infty (z^3 + 1)^{-1/2}dz$. The same computation in the case $\mX\rightarrow -\infty$ gives:
\begin{align}
\mY^*(\mX)&=C_- |\mX|^{-\frac{1}{6}}+O(|\mX|^{-\frac 56}) \ \text{as} \ \mX\rightarrow -\infty,
\end{align}
where $C_-=2^{-1}p^{*-2/3}\int_{1}^\infty (z^3 -1)^{-1/2}dz$. The constants $C_\pm$ can be computed explicitely:
\be \lab{id:cpm}C_+=\frac{3\Big(\frac 32\Big)^{\frac 13}\pi^{\frac{5}{2}}\Gamma\Big(\frac 13\Big)}{4\Gamma\Big(\frac 14\Big)^{\frac 83}\Gamma\Big(\frac 56\Big)},~~C_-=\frac{3\Big(\frac 32\Big)^{\frac 13}\pi^{\frac{5}{2}}\Gamma\Big(\frac 76\Big)}{2\Gamma\Big(\frac 14\Big)^{\frac 83}\Gamma\Big(\frac 23\Big)}.\ee
This ends the proof of \fref{exp Y infty}.

 \end{proof}

 \begin{proof}[Proof of (ii) in Proposition \ref{pr:selfsimfunda}]
 
This is a consequence of (ii) in Lemma \ref{lem:resolutioneqprofiles}. Indeed, $\mX$ given by \fref{def:Phi} solves \fref{eq:autosimstattransformee1} with $\alpha=3/2$ and $\beta=-1/4$ by a direct computation. Moreover, we have already proved (i) in Proposition \ref{pr:selfsimfunda}, so that the mapping $(\mX,\mY)\mapsto (\ma,b)$ is volume preserving. Applying Lemma \ref{lem:resolutioneqprofiles}, we get that, on the set $\{(\mX,\mY), \T(\mX,\mY)\neq 0\}$, there exists $\Upsilon$ such that $(\T,\Upsilon)$ solves \fref{eq:selfsimstationary2}. Moreover, from Step 1 in the proof of Lemma \ref{lem:resolutioneqprofiles}, we have that $\frac 12 \ma\pa_\ma +\frac 34 \mb\pa_\mb$ is the pushforward of $(\frac 32 \mX+\T)\pa_\mX  +(-\frac 14 \mY +\Upsilon )\pa_\mY$ by the mapping $(\mX,\mY)\mapsto (a,\mb)$. Applying these vector fields to $\mY$ gives the equality
\be \label{id:Upsilonfunda}
-\frac 14 \mY +\Upsilon=\frac 12 \ma\pa_\ma\mY +\frac 34 \mb\pa_\mb \mY.
\ee
Therefore, $\Upsilon$ is analytic since all other terms above are, and \fref{eq:selfsimstationary2} is in fact satisfied everywhere on the set $\{0<\mY<2\mY^*(\mX)\}$. Let us show $\Upsilon=-\pa_{\mY}^{-1}\pa_\mX \T$. First, we show that the right hand side is well-defined. Recall $\pa_\mX \T=-\pa_\mb \mY$ from \fref{id:jacobianvolume}. Differentiating \fref{def:Phi}, using \fref{id:psi1implicit}, gives:
\be \label{id:paamYpabmY}
\pa_\ma \mY=-6p^*(1+3p^{*2}\ma^2)\int_{-\infty}^\mb g(p^*(\mX-\tmb^2))d\tmb, \quad \pa_\mb \mY=\frac{1}{1+3p^{*2}\ma^2}-12p^*\mb\int_{-\infty}^\mb g(p^*(\mX-\tmb^2))d\tmb
\ee
where $g(z)=(\frac{\Psi_1 \Psi_1'}{(1+3\Psi_1^2)^2})(z)$. Fix $\mX \in \mathbb R$ and let $\mY\rightarrow 0$. The constraint $\ma+p^{*2}\ma^3+\mb^2=\mX$ and \fref{def:Phi} imply that $\ma,\mb\rightarrow -\infty$ with $|\ma|^3\approx \mb^2$. Using the bounds $g(z)=O(z^{-5/3})$ as $z\rightarrow \infty$ from \fref{id:psi1infinity}, we get from \fref{id:paamYpabmY} that $|\ma\pa_\ma \mY|+|\mb\pa_\mb\mY|\rightarrow 0$ and $\pa_\mX \T=\pa_\mb \mY \rightarrow 0$ as $\mY \rightarrow 0$. Thus $\pa_{\mY}^{-1}\pa_\mX \T$ is well defined, and injecting this in \fref{id:Upsilonfunda} gives $\Upsilon(\mX,\mY)\rightarrow 0$ as $\mY \rightarrow 0$. Since $\pa_\mY \Upsilon=-\pa_\mX \T$, we obtain
$$
\Upsilon=\pa_{\mY}^{-1}\pa_\mX \T.
$$
The fact that $(\T,\Upsilon)$ solves \fref{eq:selfsimstationary2} with $\Upsilon$ given as above implies that $(T-t)^{1/2}\T(x/(T-t)^{3/2},y/(T-t)^{-1/4})$ solves \fref{2DPrandtlphom}, and (ii) in Proposition \ref{pr:selfsimfunda} is proved.

 \end{proof}

\begin{proof}[Proof of (iii) and (iv) in Proposition \ref{pr:selfsimfunda}]

\textbf{Step 1} \emph{Symmetry}. From \fref{def:Phi} and \fref{eq:def Y*} we have the following symmetries: that $\Phi_1(\ma,-\mb)=\Phi_1(\ma,\mb)$ and that $\Phi_2(\ma,\mb)-\mathcal Y^*(\Phi_1(\ma,\mb))=\mathcal Y^*(\Phi_1(\ma,\mb))-\Phi_2(\ma,-\mb)$. This implies that $\Phi^{-1}(\mX,\mY^*(\mX)+\mY)=(\Phi_1^{-1}(\mX,\mY^*(\mX)-\mY),-\Phi^{-1}_2(\mX,\mY^*(\mX)-\mY))$. This implies in particular (iii).\\

\noindent \textbf{Step 2} \emph{The set of zero vorticity}. Since the mapping $(\ma,\mb)\mapsto (\mX,\mY)$ is volume preserving, \fref{id:jacobianvolume} gives $\pa_\mb \mX=\pa_\mY\Theta$. Since $\pa_\mb \mX=2\mb$ from \fref{def:Phi}, the zero set of $\pa_\mY\Theta$ is $\{\mb=0\}$ or equivalently $\{\mY=\mY^*\}$. On this set, $\mX=\ma+p^{*2}\ma^3$, so that $\Theta=-\ma=\Psi_1(p^* \mX)/p^*$ applying \fref{id:psi1implicit}.\\

\noindent \textbf{Step 3} \emph{Taylor expansion}. This is a direct consequence of the Taylor expansion of the fonction $\Phi$ at $(0,0)$. Let us write $\Theta_{ij}=\pa_\mX^i\pa_\mY^j \Theta (0,\mY^*(0))$. We first look at the set $\{ \mX=0=\ma+\mb^2+p^{*2}\ma^3\}$, or equivalently $\ma=\Psi_1(p^*\mb^2)/p^*$ from \fref{id:psi1implicit}. One has from the Taylor expansion \fref{id:psi1origin} of $\Psi_1$:
$$
\mY= \int_{-\infty}^\mb \frac{d\tmb}{1+3\Psi_1^2 \left(p^*\tmb^2\right)} =\mY^*(0)+\mb+O(|\mb|^5).
$$
So that $\mb(0,\mY^*(0)+\mY)=\mY+O(|\mY|^5)$ as $\mY\rightarrow 0$. As $\ma=\Psi_1(p^*\mb^2)/p^*=-\mb^2+O(\mb^6)$ from \fref{id:psi1origin}, and $\ma=-\Theta$, one deduces the information on the vertical derivatives of $\Theta$:
\be \lab{eq:thetataylorproof1}
\Theta(0,\mY^*(0))=0, \ \ \Theta_{01}=0, \ \ \Theta_{02}=2, \ \ \Theta_{03}=0.
\ee
Let us secondly look at the set $\{\mb=0\}$, corresponding to $\{\mY=\mY^*(\mX)\}$. Recall that there holds $\pa_\mY \Theta (\mX,\mY^*(\mX))=0$ from Step 2. Differentiating with respect to $\mX$ once and twice this identity, setting $\mX=0$, using the Taylor expansion \fref{exp Y 0} of $\mY^*$ and \fref{eq:thetataylorproof1} gives:
\be \lab{eq:thetataylorproof22}
\Theta_{11}=-2, \ \ \Theta_{21}+2c_2+2\Theta_{12}=0.
\ee
Still on the set $\mb=0$, one has $\Theta=-p^*\Psi_1(\mX)/p^*$. We differentiate once, twice and three times this identity with respect to $\mX$, and set $\mX=0$. Using \fref{id:psi1origin}, \fref{eq:thetataylorproof1}, \fref{eq:thetataylorproof22} and \fref{exp Y 0} this gives:
$$
\Theta_{10}=-1, \ \ \Theta_{20}=2, \ \ \Theta_{30}+3\Theta_{21}+3\Theta_{12}=6p^{*2}.
$$
We need one last information. We take the identity $\pa_\mX \ma=\pa_\mb \mY$ from \fref{id:jacobianvolume}. Consider the set $\{\mX=\ma+\mb^2+p^{*2}\ma^3=0\}$. Then on this set from \fref{def:Phi}, since $\Psi_1(0)=0$ and $\Psi_1'=-1/(1+3\Psi_1^2)$:
\begin{align*}
\frac{\pa \mY}{\pa \mb} &=\frac{1}{1+3p^{*2}\ma^2}-\mb\int_{-\infty}^\mb \frac{12 p^* \Psi_1(p^*\tmb^2)}{(1+3\Psi_1^2(p^*\tmb^2))^3}d\tmb =1+O(\mb^4)-\mb\int_{-\infty}^0 \frac{12 p^* \Psi_1(p^*\tmb^2)}{(1+3\Psi_1^2(p^*\tmb^2))^3}d\tmb+O(\mb^3) \\
 &\qquad =  1-\mb\sqrt{p^*} \int_{0}^{\infty} \frac{1}{\left(\sqrt{ \frac x 3}+\sqrt{\frac x 3}^3 \right)^{\frac 12} (1+x)^2}d\tmb+O(\mb^3)
\end{align*}
where we changed variables $x=3p\Psi_1(p\tmb^2)$. Hence as $\mb(0,\mY^*(0)+\mY)=\mY+O(|\mY|^5)$ on this set, $\pa_\mX \Theta(0,\mY^*(0)+\mY)=-1+C\mY+O(|\mY|^3)$. This implies $\Theta_{12}=0$, and we obtain the desired Taylor expansion \fref{taylorthetalag} for $\Theta$ using the previous information.

\end{proof}

To finish the proof of Proposition \ref{pr:selfsimfunda}, there remains to prove item (v). We first establish that $\Theta$ solves a \emph{local} ODE in the Lemma below. This was, before our discovery of formula \fref{def:thetageom}, the way the existence and some properties of the profile $\Theta$ had been showed.

\begin{lemma}[\cite{ESC83,CSW96}] \label{lem:ode}
$\Theta$ defined by \fref{def:thetageom} solves:
\be \lab{eq:Thetabis1}
\left\{\begin{array}{ll}
&\frac{\pa \Theta}{\pa \mY}= -2\sqrt{\mX+\Theta+p^{*2}\Theta^3},\ \ \mbox{ for } 0<\mY\leq \mY^*(\mX),\\
&\frac{\pa \Theta}{\pa \mY}= 2\sqrt{\mX+\Theta+p^{*2}\Theta^3},\ \ \mbox{ for } \mY^*\leq \mY<2\mY^*(\mX).
\end{array}
\right.
\ee
\end{lemma}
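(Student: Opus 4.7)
The plan is to read the ODE directly off the explicit change of variables in \fref{def:Phi}, using the volume preservation already established in Proposition \ref{pr:selfsimfunda} (i).

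First I would recall that, by definition \fref{def:thetageom}, $\Theta=-\ma$ on the image of $\Phi$, so that the first component of the relation $\Phi(\ma,\mb)=(\mX,\mY)$ yields the algebraic identity
\begin{equation*}
\mX = \ma+\mb^{2}+p^{*2}\ma^{3} = -\Theta-p^{*2}\Theta^{3}+\mb^{2},
\end{equation*}
or equivalently
\begin{equation*}
\mX+\Theta+p^{*2}\Theta^{3} = \mb^{2}.
\end{equation*}
This immediately produces the quantity under the square root in \fref{eq:Thetabis1} and shows in particular that it is nonnegative on the whole domain, so that the right-hand side of \fref{eq:Thetabis1} is well defined.

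Next I would compute $\pa_{\mY}\Theta=-\pa_{\mY}\ma$ using the volume preservation of $\Phi$. The Jacobian identity \fref{id:jacobianvolume}, combined with $\pa_{\mb}\mX=2\mb$ (direct from \fref{def:Phi}), yields $\pa_{\mY}\ma=-\pa_{\mb}\mX=-2\mb$, and therefore
\begin{equation*}
\frac{\pa\Theta}{\pa\mY}(\mX,\mY) = 2\,\mb(\mX,\mY) = \pm 2\sqrt{\mb^{2}} = \pm 2\sqrt{\mX+\Theta+p^{*2}\Theta^{3}},
\end{equation*}
the sign being the sign of $\mb$.

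It then remains to determine the sign of $\mb$ in each of the two subregions. From the explicit integral representation of $\Phi_{2}$ in \fref{def:Phi} and the formula \fref{eq:def Y*} for $\mY^{*}$, we have, at fixed $\mX$ (equivalently, at fixed $\ma$ along the algebraic curve $\ma+p^{*2}\ma^{3}+\mb^{2}=\mX$), that $\mY$ is a strictly increasing function of $\mb$, tending to $0$ as $\mb\to-\infty$, equal to $\mY^{*}(\mX)$ at $\mb=0$, and tending to $2\mY^{*}(\mX)$ as $\mb\to+\infty$. Hence $\mb<0$ on the region $\{0<\mY<\mY^{*}(\mX)\}$ and $\mb>0$ on $\{\mY^{*}(\mX)<\mY<2\mY^{*}(\mX)\}$, which gives the two signs in \fref{eq:Thetabis1}. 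There is no real obstacle here; the only mild point to state cleanly is the monotonicity of $\mb\mapsto\mY$ at fixed $\mX$, but this follows at once from the fact that the integrand in \fref{def:Phi} is positive.
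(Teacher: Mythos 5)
Your proof is correct and follows essentially the same route as the paper's: both read $\pa_\mY\Theta=\pa_\mb\mX=2\mb$ off the volume-preservation identity \fref{id:jacobianvolume}, use the algebraic constraint $\mb^2=\mX+\Theta+p^{*2}\Theta^3$ from the first component of \fref{def:Phi}, and fix the sign of $\mb$ in each subregion via the monotonicity of $\mY$ in $\mb$ at fixed $\mX$. No issues.
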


\begin{proof}[Proof of Lemma \ref{lem:ode}]

From \fref{id:jacobianvolume} we have that $\pa_\mY \T=\pa_\mb \mX$. From \fref{def:Phi}, $\pa_\mb \mX=2\mb$ and $\mb^2=\mX-\ma-p^{*2}\ma^3$. Since $\ma=-\T$ this gives:
$$
(\pa_\mY \T)^2=4(\mX+\T+p^{*2}\T^3).
$$
Since $\pa_\mY \T=2\mb$, we get $\pa_\mY \T\geq0$ for $\mb\geq0$, and $\pa_\mY \T\leq 0$ for $\mb\leq 0$. As $\mb\geq0$ and $\mb\leq 0$ correspond respectively to $\mY^*(\mX)<2\mY^*$ and $0<\mY<\mY^*(\mX)$ from \fref{def:Phi} and \fref{eq:def Y*}, we obtain \fref{eq:Thetabis1} from the above equation and these sign considerations.

\end{proof}

\begin{proof}[Proof of (v) in Proposition \ref{pr:selfsimfunda}]

\noindent \textbf{Step 1} \emph{Proof of \fref{expanboundary} and \fref{expanboundary12}} Let $\mX\in \mathbb R$ be fixed. Using the ODE's \fref{eq:Thetabis1}, we express $\mY$ as a function of $\T$ and expand for $\T \gg \langle \mX\rangle^{1/3}$:
\be
\label{eq:mYTgen} \mY=\int_\T^\infty\frac{d\tilde \T}{2\sqrt{\mX+\tilde \T+p^{*2}\tilde \T^3}}=\int_\T^\infty d\tilde \T\left(\frac{1}{2p^*\tilde \T^{\frac 32}}+O\left(\frac{|\mX|}{\tilde \T^{\frac 92}}+\frac{1}{\tilde \T^{\frac 72}} \right) \right)=\frac{1}{p^{*2}\T^{\frac 12}}+O\left(\frac{|\mX|}{ \T^{\frac 72}}+\frac{1}{ \T^{\frac 52}}\right) 
\ee
which after inversion gives:
$$
\T=-\frac{1}{p^{*2}\mY^2}+O\left(|\mX|\mY^4+|\mY|^2 \right)
$$
which is exactly \eqref{expanboundary}. Note that we have only established it for $\T \gg \langle \mX\rangle^{1/3}$, which from the above expansion, corresponds to $\mY\ll \langle  \mX\rangle^{-1/6}$. The validity of the $O()$ in the whole region $\{\mY\leq \mY^*(\mX)\}$ is a consequence of step 2 below. Note finally that \eqref{expanboundary12} is obtained from \eqref{expanboundary} using the symmetry (iii) in Proposition \ref{pr:selfsimfunda}.\\

\noindent \textbf{Step 2} \emph{Proof of \fref{expanboundary2}}. We only prove the expansion in the case $\mX\rightarrow -\infty$, as the other case can be handled with similar ideas. Let $\psi_-:[p^{*-2/3},\infty)$ be the following function:
$$
 \psi_- (z)= \frac 12 \int_{z}^{\infty} \frac{dz}{\sqrt{-1+p^{*2}z^3}}.
$$
By definition \fref{id:cpm} of $C_-$, the range of $\psi_-$ is $(0,C_-]$. Denote by $\psi_-^{-1}:(0,C-]\rightarrow [p^{*-2/3},\infty)$ its inverse, and define $\varphi_-$ to be its extension on $(0,2C_-)$ by even symmetry about $C_-$:
$$
\varphi_-(\tilde \mY):=\left\{ \begin{array}{l l}  \psi^{-1}(\mY) \mbox{ if } 0<\tilde \mY\leq C_-,  \\  \psi^{-1}(2C_--\tilde \mY) \mbox{ if } C_-\leq \tilde \mY< 2C_-. \end{array} \right.
$$
The properties of $\varphi_-$ listed in (v) of Proposition \ref{pr:selfsimfunda} are verified by a direct check. It remains to prove the convergence. For this we use the first equality in \fref{eq:mYTgen}. Let first $0<\mY\leq \mY^*(\mX)$, then:
$$
\mY(\T)=\frac 12 \int_{\T}^{\infty} \frac{d\tilde \T}{\sqrt{\mX+\tilde \T+p^{*2}\tilde \T^3}}.
$$
We change variables and set $\tilde \T=zp^{*-1/3}\Psi_1(p^*\mX)$, and use $\mX=-\Psi_1(p^*\mX)/p^*-\Psi_1^3(p^*\mX)/p^*$ so that after some rewriting:
\bee
&& \frac{\Psi_1^{\frac 12}(p^*\mX) }{p^{*\frac 16}}\mY(\T)=\frac 12 \int_{\frac{p^{*1/3}\T}{\Psi_1(p^*\mX)}}^{\infty} \frac{dz}{\sqrt{-1+p^{*2}z^3}}\left(1+\frac{1}{z^2p^{*\frac 43}+zp^{*\frac 23}+1}\Psi_1^{-2}(p^*\mX)\right)^{-\frac 12}\\
&=& \psi_-\left(\frac{p^{*\frac{1}{3}}\T}{\Psi_1(p^*\mX)}\right)-\frac 12 \int_{\frac{p^{*1/3}\T}{\Psi_1(p^*\mX)}}^{\infty} \frac{dz}{\sqrt{-1+p^{*2}z^3}}\left(1-\left(1+\frac{1}{z^2p^{*\frac 43}+zp^{*\frac 23}+1}\Psi_1^{-2}(p^*\mX)\right)^{-\frac 12}\right)\\
&=& \psi_-\left(\frac{p^{*\frac{1}{3}}\T}{\Psi_1(p^*\mX)}\right)+O\left(|\mX|^{-\frac 23}\left(\frac{\T}{\Psi_1(p^*\mX)} \right)^{-\frac 52}\right)
\eee
where we used \fref{id:psi1infinity} and that $\T\geq \Psi_1(p^*\mX)/p^*$ for all $\mY<\mY^*(\mX)$. Given that uniformly on $[p^{*-2/3},\infty)$ one has $|\psi_-'|\approx (z-p^{*-2/3})^{-1/2}z^{-1}$, we can invert the above equation for all $\T$ such that $\frac{p^{*1/3}\T}{\Psi_1(p^*\mX)}-p^{*-2/3}\gg |\mX|^{-4/3}$, with:
$$
\frac{p^{*\frac{1}{3}}\T}{\Psi_1(p^*\mX)}=\psi^{-1}\left(\frac{\Psi_1^{\frac 12}(p^*\mX) }{p^{*\frac 16}}\mY(\T) \right)+O\left(|\mX|^{-\frac 23}\left(\frac{\T}{\Psi_1(p^*\mX)}\right)^{-1} \right)
$$
which, given that $\psi_-^{-1}(\tilde \T)\approx \tilde \T^{-2}$ uniformly on $(0,C_-]$, and that $\Psi_1(\mX)=|\mX|^{1/3}+O(|\mX|^{-1/3})$ gives finally:
\bee
\T&=& p^{*\frac{1}{3}}\Psi_1(p^*\mX) \psi^{-1}\left(\frac{\Psi_1^{\frac 12}(p^*\mX) }{p^{\frac 16}}\mY \right)+p^{*\frac{1}{3}}\Psi_1(p^*\mX)O\left(|\mX|^{-\frac 13}|\mY|^{2} \right) \\
&=&|\mX|^{\frac 13}\varphi_-\left(|\mX|^{\frac 13}\mY \right)+O\left(|\mX|^{-\frac 23}\mY^{-2} \right)
\eee
where we used the fact that $|\mY|\lesssim |\mX|^{-1/6}$ as $\mX\rightarrow -\infty$ and $\mY\leq \mY^*$. For $\mY^*<\mY\leq (2-\epsilon)\mY^*$ we write:
\bee
\mY & = & \mY^*+\frac{p^{*\frac 16}}{ \Psi_1^{\frac 12}(p^*\mX) } \frac 12 \int_{\frac{\Psi_1(p^*\mX)}{p^*}}^{\T} \frac{d\tilde \T}{\sqrt{\mX+\tilde \T+p^{*2}\tilde \T^3}}
\eee
so that changing variables with $\tilde \T=zp^{*-1/3}\Psi_1(p^*\mX)$ as previously:
\bee
\frac{\Psi_1^{\frac 12}(p^*\mX) }{p^{*\frac 16}}\mY&=&\frac{\Psi_1^{\frac 12}(p^*\mX) }{p^{*\frac 16}} \mY^*+\frac 12 \int_{1}^{\frac{p^{*1/3}\T}{\Psi_1(p^*\mX)}} \frac{dz}{\sqrt{-1+p^{*2}z^3}}\left(1+\frac{1}{z^2p^{*\frac 43}+zp^{*\frac 23}+1}\Psi_1^{-2}(p^*\mX)\right)^{-\frac 12} \\
&=& C_-+\frac 12 \int_{1}^{\frac{p^{*1/3}\T}{\Psi_1(p^*\mX)}} \frac{dz}{\sqrt{-1+p^{*2}z^3}}+O(|\mX|^{-\frac 23}).
\eee
As we are restricting to the range $\mY^*<\mY\leq (2-\epsilon)\mY^*$ the above equation, using the asymptotic behaviour of $\varphi_-$, gives:
\bee
\T=\frac{\Psi_1(p\mX)}{p^{\frac 13}}\varphi_-\left( \frac{\Psi_1^{\frac 12}(p\mX) }{p^{\frac 16}}\mY  \right)+O\left(|\mX|^{-1}\T^{2}\right)=|\mX|^{\frac 13} \varphi_-\left(|\mX|^{\frac 16}\mY  \right)+O\left(\frac{\mX^{-1}}{(2\mY^*(\mX)-\mY)^4}\right).
\eee
This shows the desired asymptotic behaviour (v) in Proposition \ref{pr:selfsimfunda} at $-\infty$. The behaviour at $\infty$ can be proved along similar lines.

\end{proof}

\subsection{Degenerate self-similar profile} \lab{subsec:degen}

In this subsection, we prove Proposition \ref{pr:selfsimdegen}. In order to simplify notations, we drop the prime ' notation for $\Phi'$, $\T'$ and $\mY^{'*}$ and simply write $\Phi$, $\T$ and $\mY^*$ instead.

We construct an odd in $\mX$ self-similar profile that is the two dimensional version of the profile found for the full viscous Prandtl system in \cite{CGIM} on the transversal axis.  Again, we proceed in two parts. In the first part, we use a Lagrangian approach, and the explicit formula \fref{selfsimtildetheta2} to study the curve $\mY^*$, the symmetries of $\T$, and the set of zero vorticity, and to prove it defines a self-similar profile.

In the second part, we perform an Eulerian study of the self-similar equation. Namely, we solve \fref{eq:autosimstattransformee2}. This allows us to prove the analyticity at the boundary of $\T$, and to study the derivatives on the axis. This shows how the two studies of \fref{eq:autosimstattransformee1} and \fref{eq:autosimstattransformee2} can complement one another.

\begin{proof}[Proof of (i) in Proposition \ref{pr:selfsimdegen}]

The fact that $\Phi$ preserves volume is a direct computation from the formula \fref{def:tildetheta}. Its analyticity and that of $\mathcal Y^*$ are direct consequences of the analyticity of $\Psi_1$. By fixing $\mX \in \mathbb R$, and so fixing the relation $\ma+\ma^3+\mb^2\ma/4=\mX$, the vertical component of the image is:
$$
\mY(\ma,\mb)=2\int_{-\infty}^{\frac \mb2} \left(1+\tmb^2\right)^{-1}\left(1+3\Psi_1^2 \left(\frac{\mX}{(1+\tmb^2)^{3/2}}\right)\right)^{-1}d\tmb .
$$
Hence, using the formula \fref{def:tildeY*}, the set $\{ \Phi(\ma,\mb), \ \Phi_1=\mX\}$ consists of the interval $(0<\mY<2\mY^*(\mX))$ where $\mY^*$ is indeed defined by \fref{def:tildeY*}. The range of $\Phi$ is thus the set $\{0<\mY<2 \mY^*(\mX) \}$.

For $\mX>0$, we change variables twice in \fref{def:tildeY*}, first with $\Theta=(1+\tmb^2)^{1/2}\Psi_1 ( \mX/(1+\tmb^2)^{3/2})$ using \fref{id:psi1implicit}, and then with $\Theta=z\Psi_1(\mX)$ to get:
$$
\lab{idY*}  \mY^*(\mX) =\int_{\Psi_1(\mX)}^{0} \frac{dz}{\sqrt{|\Theta|} \sqrt{\mX+\Theta+\Theta^3}}= \sqrt{\frac{-\Psi_1(\mX)}{\mX}} \int_0^1 \frac{d z}{\sqrt{z}\sqrt{1+ z \frac{\Psi_1(\mX)}{\mX}+ z^3 \frac{\Psi_1^3(\mX)}{\mX}}} .
$$
The expansion of $\mY^*$ near the origin \fref{exp tildeY 0} then comes as a direct consequence of \fref{id:psi1origin} and of:
$$
\int_0^1 \frac{dz}{\sqrt{z}\sqrt{1-z}} =B(\frac 12,\frac 12)=\pi, \ \ \int_0^1 \frac{\sqrt{z}(1+z)}{\sqrt{1-z}}dz =\frac{7}{8}\pi.
$$
We now turn to the expansion at infinity. We write:
$$
\mY^*(\mX) = \sqrt{\frac{-\Psi_1(\mX)}{\mX}} \int_0^1 \frac{dz}{\sqrt{z}\sqrt{1+z \frac{\Psi_1(\mX)}{\mX}+z^3 \frac{\Psi_1^3(\mX)}{\mX}}} = \sqrt{\frac{-\Psi_1(\mX)}{\mX}} \int_0^1 \frac{(1+g(\mX,z))^{-1/2} }{\sqrt{z}\sqrt{1-z^3}}dz
$$
where using that $\Psi_1(\mX)+\Psi_1^3(\mX)+\mX=0$:
$$
g(\mX,z)= \frac{z\frac{\Psi_1 (\mX)}{\mX}+z^3 \left( \frac{\Psi_1^3(\mX)}{\mX}+1\right)}{1-z^3}= \frac{z-z^3}{1-z^3}\frac{\Psi_1 (\mX)}{\mX}=\frac{z (1+z)}{1+z+z^2}\frac{\Psi_1 (\mX)}{\mX} \ = \ O(\mX^{-\frac 23})
$$
uniformly on $[0,1]$ as $\mX \rightarrow \infty$ from \fref{id:psi1infinity}. This, the integral value $\int_0^1 \frac{dz}{\sqrt{z}\sqrt{1-z^3}} = 2\sqrt{\pi}\frac{\Gamma (7/6)}{\Gamma (2/3)}$ and \fref{id:psi1infinity} then imply \fref{exp Y infty} as $\mX \rightarrow \infty$. The same limit holds at $ -\infty$ as $\mY^*$ is an even function.
\end{proof}

Now that $\T$ is well defined, we can study some properties directly from its formula.

\begin{proof}[Proof of (iv) and (v) in Proposition \ref{pr:selfsimdegen}]

\textbf{Step 1} \emph{Symmetries}. We have the first symmetry that $\Phi_1(\ma,-b)=\Phi_1(\ma,\mb)$ and that $\Phi_2(\ma,\mb)-\mathcal Y^*(\Phi_1(\ma,\mb))=\mathcal Y^*(\Phi_1(\ma,\mb))-\Phi_2(\ma,-\mb)$, from \fref{def:tildetheta} and \fref{def:tildeY*}. This implies $\Phi^{-1}(\mX,\mY^*(\mX)+\mY)=(\Phi_1^{-1}(\mX,\mY^*(\mX)-\mY),-\Phi^{-1}_2(\mX,\mY^*(\mX)-\mY))$, hence $\Theta(\mX,\mY^*-\mY)=\Theta(\mX,\mY^*+\mY)$ for $\mY<\mY^*$.

We have the second symmetry that $\Phi_1(-\ma,\mb)=-\Phi_1(\ma,\mb)$ and that $\Phi_2(-\ma,\mb)=\Phi_2(\ma,\mb)$. This implies that $\Phi^{-1}(-\mX,\mY)=(-\Phi_1^{-1}(\mX,\mY),\Phi^{-1}_2(\mX,\mY))$, so that $\Theta$ is odd in $\mX$.

Finally, $\Phi_1=\mX$ is negative on $\{\ma<0\}$, positive on $\{\ma>0\}$. Hence, on the set $\{0<\mY<\mY^*(\mX)\}$, $\T=-\ma$ is positive for $\mX<0$ and negative for $\mX>0$. This concludes the proof of (iv).\\

\noindent \textbf{Step 2} \emph{The set of zero vorticity}. We assume the regularity properties of (iii) and the identity \fref{eq:formulederiveeaxe} of (vi) in Proposition \ref{pr:selfsimdegen}, which are proved later on in this Subsection. Since $\T=0$ on the set $\{\mY\geq 2\mY^*(\mX)\}$, and is $C^1$, we deduce that $\pa_\mY \T=0$ on $\{\mY\geq 2\mY^*(\mX)\}$. Using the symmetry property (iv) proved in Step 1, this shows that  $\pa_\mY \T=0$ as well at the boundary $\{\mY=0\}$.

We now restrict ourselves to the set $\{0<\mY<2\mY^*(\mX)\}$. Since the mapping $(\ma,\mb)\mapsto (\mX,\mY)$ is volume preserving, one inverts the Jacobian matrix to find:
\be \label{id:jacobianvolumepres}
\begin{pmatrix} \frac{\pa \mY}{\pa \mb} &- \frac{\pa \mX}{\pa \mb} \\ -\frac{\pa \mY}{\pa \ma} & \frac{\pa \mX}{\pa \ma}  \end{pmatrix}=\begin{pmatrix} \frac{\pa \ma}{\pa \mX} & \frac{\pa \ma}{\pa \mY} \\ \frac{\pa \mb}{\pa \mX} & \frac{\pa \mb}{\pa \mY}  \end{pmatrix}.
\ee
One has $\pa_\mb \mX=\mb\ma/2$ from \fref{def:tildetheta}, so that $\pa_\mY \Theta (\mX,\mY^*(\mX))=-\mb\ma/2$. Hence the set where $\pa_\mY \Theta$ is zero is equal to $\{\ma=0\}\cup \{\mb=0 \}$, which corresponds to $\{\mX=0\}\cup\{\mY=\mY^*(\mX)\}$. On the set $\{\mY=\mY^*(\mX)\}=\{\mb=0\}$, we have $\ma+\ma^3=\mX$ so that $\T(\mX,\mY^*(\mX))=-\ma=\Psi_1(\mX)$ using \fref{id:psi1implicit}.

On the line $\{\mY=\mY^*(\mX)\}$, as $\T(\mX,\mY^*(\mX))=\Psi_1(\mX)$ and $\pa_\mY \Psi_1(\mX,\mY^*(\mX))=0$, one gets that $\pa_\mX \T(\mX,\mY^*(\mX))=\pa_\mX \Psi_1(\mX)$. Hence, as $\pa_\mX \Psi_1$ attains its minimum at the origin, the minimum of $\pa_\mX \T$ on $\{\mY =\mY^*\}$ is attained at $(0,\mY^*(0))=(0,\pi)$. From \fref{eq:formulederiveeaxe}, as $\pa_\mX\T=-\sin^2(\mY/2)1_{0\leq \mY\leq 2\pi}$, the minimum on the vertical axis is also attained at $(0,\pi)$.

We now show the Taylor expansion \fref{eq:taylortildetheta}. We write $\Theta_{ij}=\pa_\mX^i\pa_\mY^j \Theta (0,\pi)$. As $\T$ is odd in $\mX$, we obtain that:
$$
\T_{00}=\T_{01}=\T_{02}=\T_{03}=0, \ \ \T_{20}=\T_{21}=0
$$
We proved above that $\pa_\mY \Theta (\mX,\mY^*(\mX))=0$. Differentiating with respect to $\mX$ this identity, setting $\mX=0$, using the Taylor expansion \fref{exp tildeY 0} of $\mY^*$ and the coefficients computed above:
$$
\Theta_{11}=0.
$$
We differentiate once and three times this identity with respect to $\mX$, and set $\mX=0$. Using $\Psi_1(\mX)=-\mX+\mX^3+O(|\mX|^5)$, the coefficients computed above and the Taylor expansion of $\mY^*$ at $0$ \fref{exp tildeY 0} this gives:
$$
\Theta_{10}=-1, \ \ \Theta_{30}=6.
$$
We now set $\ma=0$. Then:
$$
\mY(0,\mb)=\int_{-\infty}^\mb \frac{d\tmb}{1+\frac{\tmb^2}{4}}
$$
so that $\pa_\mb \mY(0,0)=1$, $\pa_{\mb\mb}\mY(0,0)=0$ and $\pa_{\mb\mb\mb}\mY(0)=-\frac 12$. Inverting this relation one gets $\pa_\mY \mb (0,\pi)=1$, $\pa_{\mY\mY} \mb (0,\pi)=0$. We now use the relation \fref{id:jacobianvolumepres} to write $\pa_{\mX\mY \mY}\ma(0,\pi)=\pa_{\mY\mY_{|\mX}}(\pa_\mb \mY)(0,0)$, which, injecting all values of the coefficients already found, gives:
$$
\T_{12}=2
$$
and ends the proof of \fref{eq:taylortildetheta}. This closes the proof of (v).

\end{proof}

\begin{proof}[Proof of (ii) in Proposition \ref{pr:selfsimdegen}]

This is, as in the previous Subsection, a consequence of (ii) in Lemma \ref{lem:resolutioneqprofiles}. First, an elementary computation gives that $\mX$ given by \fref{def:tildetheta} solves \fref{eq:autosimstattransformee1} with $\alpha=3/2$ and $\beta=-1/2$. We have already proved that the mapping $(\mX,\mY)\mapsto (a,\mb)$ is volume preserving. Applying Lemma \ref{lem:resolutioneqprofiles}, there exists a function $\Upsilon$ on the set $\{(\mX,\mY), \T(\mX,\mY)\neq 0\}$ such that $(\T,\Upsilon)$ solves \fref{eq:selfsimstationary2}. Moreover, from Step 1 in the proof of Lemma \ref{lem:resolutioneqprofiles}, we have that $\frac 12 \ma\pa_\ma +\frac 12 \mb\pa_\mb$ is the pushforward of $(\frac 32 \mX+\T)\pa_\mX  +(-\frac 12 \mY +\Upsilon )\pa_\mY)$ by the mapping $(\mX,\mY)\mapsto (\ma,\mb)$. We apply both vector fields to $\mY$, and the equality of the results shows
\be \label{id:Upsilonfunda2}
-\frac 12 \mY +\Upsilon=\frac 12 \ma\pa_\ma\mY +\frac 12 \mb\pa_\mb \mY.
\ee
The change of variables $(\ma,\mb)\mapsto (\mX,\mY)$ being analytic, $\Upsilon$ given by the above formula is also analytic. The identity \fref{eq:selfsimstationary2} is in fact satisfied everywhere on the set $\{0<\mY<2\mY^*(\mX)\}$. We now show $\Upsilon=-\pa_{\mY}^{-1}\pa_\mX \T$. Recall $\pa_\mX \T=-\pa_\mb \mY$ from \fref{id:jacobianvolume}. Differentiating \fref{def:tildetheta}, using \fref{id:psi1implicit}:
\begin{align} \label{id:paamYpabmY2}
&\pa_\ma \mY=-12(1+3\ma^2+\frac{\mb^2}{4})\int_{-\infty}^{\frac \mb2} \frac{d\tmb}{(1+\tmb^2)^{\frac 52}}g\left(\frac{\mX}{(1+\tmb^2)^{\frac 32}} \right)d\tmb,\\
\label{id:paamYpabmY3} & \pa_\mb \mY=\frac{1}{(1+\frac{\mb^2}{4})(1+3\Psi_1^2\left(\frac{\mX}{(1+\mb^2/4)^{3/2}}\right)}-6\mb\ma\int_{-\infty}^{\frac \mb2} \frac{d\tmb}{(1+\tmb^2)^{\frac 52}}g\left(\frac{\mX}{(1+\tmb^2)^{\frac 32}} \right)d\tmb
\end{align}
where $g(z)=(\frac{\Psi_1 \Psi_1'}{(1+3\Psi_1^2)^2})(z)$. We now fix $\mX \in \mathbb R$ and let $\mY\rightarrow 0$. The constraint $\ma+\ma^3+\mb^2\ma/4=\mX$ and \fref{def:Phi} imply that $\mb\rightarrow -\infty$ and $\ma\rightarrow 0$ with $|\mb|\approx |\ma|^{-1/2}$. Using that $g$ is bounded, we get from \fref{id:paamYpabmY} that $|\ma\pa_\ma \mY|+|\mb\pa_\mb\mY|\rightarrow 0$ and $\pa_\mX \T=\pa_\mb \mY \rightarrow 0$ as $\mY \rightarrow 0$. Thus $\pa_{\mY}^{-1}\pa_\mX \T$ is well defined, and injecting this in \fref{id:Upsilonfunda2} gives $\Upsilon(\mX,\mY)\rightarrow 0$ as $\mY \rightarrow 0$. Since $\pa_\mY \Upsilon=-\pa_\mX \T$, we obtain
$$
\Upsilon=\pa_{\mY}^{-1}\pa_\mX \T.
$$
The fact that $(\T,\Upsilon)$ solves \fref{eq:selfsimstationary2} with $\Upsilon$ given as above implies that $(T-t)^{1/2}\T(x/(T-t)^{3/2},y/(T-t)^{-1/2})$ solves \fref{2DPrandtlphom}, and (ii) in Proposition \ref{pr:selfsimdegen} is proved.

\end{proof}

We can now end the proof of Proposition \ref{pr:selfsimdegen} which we will do by solving equation \fref{eq:autosimstattransformee2}.

\begin{lemma}
Keeping writing $\T$ instead of $\tilde \T$ for the function defined by \fref{selfsimtildetheta2}, it satisfies:
\be \lab{odetildetheta}
\left\{\begin{array}{lll}
& \pa_\mY \T = \sqrt{\T}\sqrt{-\T^3-\T-\mX}  & \mbox{ for }  \mX<0 \ \text{and} \ 0\leq \mY\leq \mY^*(\mX) \\
& \pa_\mY \T = -\sqrt{\T}\sqrt{-\T^3-\T-\mX}  & \mbox{ for }  \mX<0 \ \text{and} \  \mY^*(\mX)\leq \mY \leq 2\mY^*(\mX) \\
& \pa_\mY \T = -\sqrt{-\T}\sqrt{\T^3+\T+\mX}  & \mbox{ for }  \mX>0 \ \text{and} \ 0\leq \mY\leq \mY^*(\mX) \\
& \pa_\mY \T = \sqrt{\T}\sqrt{\T^3+\T+\mX}  & \mbox{ for }  \mX>0 \ \text{and} \  \mY^*(\mX)\leq \mY \leq 2\mY^*(\mX) \\
\end{array} \right.
\ee
\end{lemma}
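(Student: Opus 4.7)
The plan is to mirror the strategy of Lemma \ref{lem:ode}: exploit the volume-preservation of $\Phi$ to convert $\pa_\mY\T$ into $\pa_\mb\mX$, compute the latter directly from the explicit formula \fref{def:tildetheta}, and then use the sign of $\mb$ together with the sign of $\T$ already established in (iv) of Proposition \ref{pr:selfsimdegen} to split into the four cases of \fref{odetildetheta}.

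First, I would use the inverse-Jacobian identity \fref{id:jacobianvolumepres}, which combined with $\ma = -\T$ gives $\pa_\mY\T = \pa_\mb\mX$. Differentiating $\Phi_1(\ma,\mb) = \ma + \ma^3 + \mb^2\ma/4$ with respect to $\mb$ at fixed $\ma$ produces $\pa_\mb\mX = \mb\ma/2$, so
\[
\pa_\mY\T \;=\; \frac{\mb\ma}{2} \;=\; -\frac{\mb\T}{2}.
\]
Squaring and using the defining relation $\mb^2\ma/4 = \mX - \ma - \ma^3$ to eliminate $\mb^2$, I then obtain
\[
(\pa_\mY\T)^2 \;=\; \frac{\mb^2\ma^2}{4} \;=\; \ma\bigl(\mX - \ma - \ma^3\bigr) \;=\; -\T\bigl(\mX + \T + \T^3\bigr).
\]

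Second, I would verify the signs of the two factors so as to extract the square roots in the form appearing in \fref{odetildetheta}. By (iv) of Proposition \ref{pr:selfsimdegen}, $\T > 0$ on $\{\mX < 0, \, 0 < \mY < 2\mY^{*}(\mX)\}$ and $\T < 0$ on $\{\mX > 0, \, 0 < \mY < 2\mY^{*}(\mX)\}$; moreover, the relation $\mX = -\T - \T^3 - \mb^2\T/4$ forces $-\T^3 - \T - \mX = \mb^2\T/4$ to have the same sign as $\T$, so $\sqrt{\T}\sqrt{-\T^3 - \T - \mX}$ (resp.\ $\sqrt{-\T}\sqrt{\T^3 + \T + \mX}$) is well-defined on $\{\mX < 0\}$ (resp.\ $\{\mX > 0\}$). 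This immediately gives $(\pa_\mY\T)^2$ equal to the squares of the right-hand sides in \fref{odetildetheta}.

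Finally, to fix the sign of $\pa_\mY\T = -\mb\T/2$, I would observe from \fref{def:tildetheta} and \fref{def:tildeY*} that $\mb \mapsto \Phi_2(\ma,\mb)$ is strictly increasing with $\Phi_2 = \mY^{*}(\mX)$ at $\mb = 0$, so $\mb \leq 0 \Leftrightarrow \mY \leq \mY^{*}(\mX)$. Cross-tabulating the two signs of $\mX$ with the two signs of $\mb$ yields exactly the four lines of \fref{odetildetheta}, with the $\pm$ pattern matching $-\mb\T/2$. The formulas extend to the axis $\{\mX = 0\}$ (where $\T = 0$) and to the boundary curves $\{\mY = 0\}$ and $\{\mY = 2\mY^{*}(\mX)\}$ by continuity of both sides. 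I do not anticipate any substantive obstacle: once the product $\mb\ma/2$ is read off from the explicit $\Phi_1$, everything reduces to a bookkeeping of signs.
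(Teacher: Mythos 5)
Your proposal is correct and follows essentially the same route as the paper: use the volume-preservation identity to write $\pa_\mY\T=\pa_\mb\mX=\mb\ma/2$, square and eliminate $\mb^2$ via the constraint $\mX=\ma+\ma^3+\mb^2\ma/4$, then fix signs from the sign of $\T$ in (iv) and the correspondence between $\text{sgn}(\mb)$ and the position of $\mY$ relative to $\mY^{'*}(\mX)$. Your sign bookkeeping (in particular $\mb\leq 0\Leftrightarrow\mY\leq\mY^{'*}(\mX)$ and the identity $(\pa_\mY\T)^2=-\T(\mX+\T+\T^3)$) is in fact cleaner than the paper's write-up of the same argument.
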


\begin{proof}

From \fref{id:jacobianvolume} we have that $\pa_\mY \T=\pa_\mb \mX$. From \fref{def:Phi}, $\pa_\mb \mX=\mb\ma/2$ and $\ma^2\mb^2=4\ma(\mX-\ma-\ma^3)$. Since $\ma=-\T$ this gives:
$$
(\pa_\mY \T)^2=-\T (\mX-\T-\T^3).
$$
Using $\pa_\mY \T=\mb\ma/2$ and $\ma=-\T$, we get the signs of $\ma$ and $\pa_\mY \T$ via (iv) in Proposition \ref{pr:selfsimdegen} that has already been proved. Combined with the fact that $\mb<0$ and $\mb>0$ correspond respectively to $\mY^*(\mX)<2\mY^*$ and $0<\mY<\mY^*(\mX)$ from \fref{def:tildetheta} and \fref{def:tildeY*}, we obtain \fref{eq:Thetabis1} by solving the above equation for $\pa_\mY \T$.

\end{proof}

With the local ODEs \fref{odetildetheta}, we can now end the proof of Proposition \ref{pr:selfsimdegen}.

\begin{proof}[Proof of (iii), (vi) and (vii) in Proposition \ref{pr:selfsimdegen}]

\noindent \textbf{Step 1} \emph{Analyticity at the boundary}. The analyticity of $\T$ in the interior of its domain could be proved by studying the ODEs \fref{odetildetheta}, but note that it is a direct consequence of the formula \fref{selfsimtildetheta2}, as the diffeomorphism defined by \fref{def:tildetheta} is analytic. We now prove the existence of an analytic extension by solving periodic manner in $\mY$ the ODEs \fref{odetildetheta}. To do this, we will show that these ODEs can be used to prove the analyticity at the boundary $\{\mY=0\}$ (where the natural extension of $\T$ is $0$).

We will establish that an extension of the mapping $(\mX,\sqrt{-\Theta})\mapsto (\mX,\mY)$ is an analytic diffeomorphism, implying the result by taking the inverse transformation. Without loss of generality we consider the case $\mX>0$. From \fref{odetildetheta} we infer that there is a one-to-one relation between $(\mX,\mY)$ and $(\mX,\Theta(\mX,\mY))$ between the sets $\{\mX>0, \ 0\leq \mY \leq \mY^*\}$ and $\{ \mX>0, \ \Psi_1(\mX)\leq \Theta \leq 0\}$ with the formula
$$
\mY(\mX,\Theta(\mX,\mY)) = \int_0^{-\Theta (\mX,\mY)}\frac{dz}{\sqrt{z}\sqrt{\mX-z-z^3}}.
$$
We change variables and define $U=\sqrt{-\Theta}$. Then the above formula becomes
$$
\mY(\mX,U) = 2 \int_0^{U}\frac{du}{\sqrt{\mX-u^2-u^6}}.
$$
This formula also makes sense for $-\sqrt{-\Psi_1(\mX)}\leq U<0$, since $\mX-u^2-u^6>0$ for $|u|\leq \sqrt{-\Psi_1(\mX)}$, and the mapping $(\mX,U)\mapsto (\mX,\mY)$ is one-to-one from $\{\mX>0, \ |U|< \sqrt{-\Psi_1(\mX)} \}$ onto $\{\mX>0, \ |\mY|< \mY^*\}$, such that its restriction to nonnegative $\mY$ satisfies $U(\mX,\mY)=\sqrt{-\Theta (\mX,\mY)}$. Let $\mX_0>0$. For $(\mX,u)$ close to $(\mX_0,0)$, the function under the integral sign is analytic
$$
\frac{1}{\sqrt{\mX-u^2-u^6}}=\sum_{\alpha \in \mathbb N^2} a_\alpha (\mX-\mX_0)^{\alpha_1}u^{2\alpha_2}
$$
with $a_{(0,0)}=1/\sqrt{\mX_0}>0$ and $a_{(0,1)}=1/(2\mX_0^{3/2})>0$, and hence the integral is also analytic
$$
\mY(\mX,U) =2 \sum_{\alpha \in \mathbb N^2} \frac{a_\alpha}{2\alpha_2+1} (\mX-\mX_0)^{\alpha_1}U^{2\alpha_2+1}.
$$
Therefore, the mapping $(\mX,U)\mapsto (\mX,\mY)$ is an analytic diffeomorphism near $(\mX_0,0)$. Its inverse is then analytic of the form (since odd in $\mY$):
$$
U(\mX,\mY)=\sum_{\alpha \in \mathbb N^2}b_\alpha (\mX-\mX_0)^{\alpha_1} \mY^{1+2\alpha_2}
$$
Since $U$ coincides with $\sqrt{-\Theta}$ for $\mY\geq 0$ this means that $\Theta$ is analytic of the form:
$$
\Theta=-U^2=\sum_{\alpha \in \mathbb N^2}c_\alpha (\mX-\mX_0)^{\alpha_1} \mY^{2\alpha_2+2}.
$$
This proves that $\tilde \T$, restricted to the set $\{0\leq \mY\leq 2\tilde \mY^*(\mX)\}$ is analytic in its interior, and also at the lower boundary $\{\mY=0\}$. The symmetry property (iv) in Proposition \ref{pr:selfsimdegen} implies that this restriction is also analytic at the upper boundary $\{\mY=2\mY^*(\mX)\}$. From the ODEs \fref{odetildetheta}, this restriction satisfies $\pa_{\mX}\T=\pa_{\mY}\T=0$ at the upper boundary $\{\mY=2\mY^*(\mX)\}$. Hence, extending $\T$ by $0$ on the set $\{\mY\geq 2\mY^*(\mX)\}$ maintains the $C^1$ regularity.

Finally, the other extension \fref{eq:analyticextensiondegenerate} is, by the above analysis of the ODEs \fref{odetildetheta}, analytic on the whole upper half space $\mH$. Since it solves \fref{eq:selfsimstationary2} with $\alpha=3/2$ and $\beta=-1/2$ on $\{0<\mY<2\mY^*(\mX) \}$ it thus solves it on the whole upper half space by uniqueness of analytic expansions, and is therefore also a self-similar profile. This ends the proof of (iii).\\

\noindent \textbf{Step 2} \emph{First derivative on the axis}. Let us consider the zone $\mX>0$ and $0<\mY<\mY^*$. One has the following convergence result
$$
\mY(\mX,\Theta ) = \int_0^{-\Theta }\frac{dz}{\sqrt{z}\sqrt{\mX-z-z^3}}=\int_0^{-\frac{\Theta}{\mX}} \frac{du}{\sqrt{u}\sqrt{1-u-\mX^2u^2}}\rightarrow \int_0^{-\pa_\mX \Theta (0,\mY)} \frac{du}{\sqrt{u}\sqrt{1-u}}
$$
as $\mX\rightarrow 0$. Hence on the vertical axis $\mY$ and $\pa_\mX\Theta (0,\mY)$ are linked by
$$
\mY=\int_0^{-\pa_\mX \Theta (0,\mY)} \frac{du}{\sqrt{u}\sqrt{1-u}}= 2\arcsin (\sqrt{-\pa_\mX \Theta (0,\mY)}).
$$
This proves (vi) upon inverting the $\arcsin$ function.\\

\noindent \textbf{Step 3} \emph{Third derivative on the axis} Let $\phi (\mY)=-\pa_\mX\Theta (0,\mY)$ and $\psi(\mY)=\pa_{\mX}^3 \Theta (0,\mY)$. Since $\Theta$ solves \fref{eq:selfsimstationary2} with $\alpha=3/2$ and $\beta=-1/4$ and vanishes on the vertical axis $\{ \mX=0\}$, differentiating 3 times yields the following equation for $\psi$:
$$
4(1-\phi)\psi+\left(\pa_\mY^{-1}\phi-\frac 12 \mY\right) \pa_\mY \psi+3\pa_{\mY}^{-1}\psi \pa_\mY\phi=0, \ \ \psi(0)=0
$$
One computes from the formula of $\phi$:
\bee
&0=& 4\psi \left(\frac 12 -\frac 12 \cos(\mY-\pi)\right) +\frac 12 \sin(\mY-\pi) \pa_\mY \psi-\frac 32 \pa_\mY^{-1}\psi \sin (\mY-\pi)\\
\Leftrightarrow &0=& 4\psi \sin^2\left(\frac{\mY-\pi}{2}\right)+\frac 12 \sin(\mY-\pi) \pa_\mY \psi-\frac 32 \pa_\mY^{-1}\psi \sin (\mY-\pi) \\
\Leftrightarrow &0=& 4 \psi \tan \left(\frac{\mY-\pi}{2}\right)+ \pa_\mY \psi- 3 \pa_\mY^{-1}\psi .
\eee
We change variables and set $\psi(\mY)=\tilde \psi ((\mY-\pi)/2)$, with $r=(\mY-\pi)/2$. One then has:
\be \lab{eq:tildepsi}
8 \tilde \psi \tan(r)+ \pa_{r} \tilde \psi- 12 \int_{-\frac \pi 2}^{r} \tilde \psi=0.
\ee
We finally change again variables by setting
$$
\int_{-\frac \pi 2}^{r} \tilde \psi= f(r)
$$
and $f$ finally solves 
\be \lab{eq:f}
f''+8 \tan(r)f'-12 f=0, \ \ f\left(-\frac{\pi}{2}\right)=f'\left(-\frac{\pi}{2}\right)=0.
\ee
A first solution (forgetting about the boundary conditions) is
$$
f_1=\sin^2 r+\frac 16
$$
The wronskian $W=f_1'f_2-f_2'f_1$ between solutions solves
$$
W'=-8W\tan (r)
$$
and therefore is equal up to renormalisation to
$$
W=\left(\cos (r) \right)^8
$$
From the Wronskian relation we deduce that a second solution is given by
$$
f_2(r)=\left(\sin^2(r)+\frac 16\right)\left(540r+80\sin (2r)+3\sin (4r)+\frac{686}{3}\frac{\sin (2r)}{\sin^2(r)+\frac 16}\right).
$$
The set of all solutions satisfying \fref{eq:f} with the boundary conditions is spanned by
$$
f=\left( \sin^2(r)+\frac 16 \right) U(r)
$$
where
$$
U(r):=\left(540\left( r+\frac \pi 2\right)+80\sin (2r)+3\sin (4r)+\frac{686}{3}\frac{\sin (2r)}{\sin^2(r)+\frac 16}\right)
$$
satisfies
$$
\frac{d}{dr}\left(U\right)=96\frac{\cos^8(r)}{(\sin^2(r)+\frac 16)^2}
$$
so that the function $f$ and its derivatives up to order $8$ vanish at $-\pi/2$. The solution to \fref{eq:tildepsi} is then $\tilde \psi (r) =  \frac{d}{dr}f (r)$ hence
\bee
\tilde \psi (r) &=& 96 \frac{\cos^8(r)}{\sin^2(r)+\frac 16}+\sin(2r)\left(540\left(r+\frac \pi 2\right)+80\sin(2r)+3\sin(4r)+\frac{686 \sin (2r)}{3 \sin^2(r)+\frac 12}\right)
\eee
The original solution, using standard trigonometric identities is then:
\bee
\psi (\mY) = \frac{96\sin^8\left(\frac{\mY}{2}\right)}{\cos^2\left(\frac{\mY}{2}\right)+\frac 16}-\sin (\mY)V(\mY), \quad V(\mY):= 270\mY-80\sin (\mY)+3\sin (2\mY)-\frac{686\sin (\mY)}{3\cos^2\left(\frac{\mY}{2}\right)+\frac 12}.
\eee
The strict positivity of $\psi$ comes from the equation. Indeed, assume that $\psi (\mY_0)=0$ for some $0<\mY_0<2\pi$, and that $\psi>0$ on $(0,\mY_0)$. Then at $\mY_0$ there holds
$$
 \pa_\mY \psi (\mY_0)= 3 \int_0^{\mY_0}\psi>0
$$
which is a contradiction. The regularity properties and the limited development come from direct computations, and end the proof of (vi), namely
$$
\pa_\mY^j \psi(0)=0, \ \ j=0,...,7, \ \ \pa_\mY^8 \psi(0)>0, \ \ \psi(2\pi)=0, \ \ \pa_\mY \psi(2\pi)=-V(2\pi)=-540 \pi<0.
$$

\end{proof}


\section{Generic singularity for the characteristics} \label{sec:characteristics}

\noindent We describe here a property that the characteristic map has generically at the singularity time. Definition \ref{def:generic} below can be found in the analysis of \cite{van2,van1990lagrangian}, but no argument is given to support that it would hold generically. We establish this in Lemma \ref{lem:nondegeneracy}, and the ideas in its proof are new. In what follows, $\nabla$ is the gradient in Lagrangian $(X,Y)$ variables. We denote by $\nabla^{\perp}$ the orthogonal gradient, $H$ the Hessian matrix and $v^t$ the transposition:
$$
\nabla^{\perp} f= (-\pa_y f,\pa_x f), \ \ H f=\begin{pmatrix} \pa_{xx}f & \pa_{xy}f \\ \pa_{xy} f & \pa_{yy}f \end{pmatrix}, \ \ (v_1,v_2)^t=(v_2,v_1).
$$
We recall that $T$ and $T_b$ are defined by \fref{def:T1}, and that $x[t]$ is defined by \fref{eq:charx}.

\begin{definition}[Generic condition] \label{def:generic}
Let $(p^E_x,u^E)\in \mathcal F^4$ and $u_0\in \mathcal E^4$ be such that  $T<\infty$ and $T<T_b$. We say that \emph{the singularity of the characteristics is generic if} the mapping $x[t]:(X,Y)\mapsto x(t,X,Y)$ satisfies the following:
\begin{itemize}
\item[(i)] \emph{Uniqueness of the singular point}. At time $T$, there exists a unique point $(X_0,Y_0)\in \mathbb R \times (0,\infty)$ such that $\nabla x[T](X_0,Y_0)=0$. For any $(t,X,Y)\in [0,T]\times \mH$ such that $(t,X,Y)\neq (T,X_0,Y_0)$ and $x_Y(t,X,Y)=0$, there holds $x_X(t,X,Y)>0$.
\item[(ii)] \emph{Local nondegeneracy of the set of zero vorticity}. There holds:
\begin{eqnarray} \lab{lem:gen:xY1}
&&x_Y(t,X_0,Y_0) =0 \qquad \mbox{for all }t\in [0,T],\\
&& \lab{lem:gen:xY2} \nabla x_Y (T,X_0,Y_0)\neq 0.
\end{eqnarray}
\item[(iii)] \emph{Local nondegenerate minimality for $x_X$ on the set of zero vorticity}. There holds, where all quantities below are evaluated at $(T,X_0,Y_0)$:
\begin{eqnarray} \lab{conservationlaw2}
&&(\nabla x_Y)^t \nabla^{\perp} x_X=0,\\
&&\lab{conservationlaw3} p_0^2=(\nabla^{\perp } x_Y)^t\left( H x_X-\frac{(\nabla x_Y)^t\nabla x_X}{|\nabla x_Y|^2}Hx_Y \right)\nabla^{\perp}x_Y> 0.
\end{eqnarray}
\item[(iv)] \emph{First order Taylor expansion of $u$}. There holds:
\be \lab{lem:gen:tayloru}
u_X(T,X_0,Y_0)<0, \quad \mbox{and} \quad u_Y(t,X_0,Y_0)=0 \ \ \text{for} \ \text{all} \ t\in [0,T].
\ee
\end{itemize}
\end{definition}

\begin{remark}

In the case of a trivial outer Eulerian flow $p^E=u^E=0$, since $x=X+tu_0$, the above generic condition can be read on the initial datum $u_0$. It is equivalent to the fact that at a point $(X_0,Y_0)$ with $Y_0>0$ and $u_{0Y}=0$, the restriction of $u_{0X}$ to the set $\{ u_{0Y}=0\}\cup \{Y=0\}$ attains a unique negative global minimum, that moreover $\nabla u_{0y}(X_0,Y_0)\neq 0$ so that the set $\{ u_{0y}=0\}$ is locally nondegenerate, and that:
$$
p^2=\frac{p_0^2}{T^3}=(\nabla^{\perp } u_{0Y})^t\left(Hu_{0X}-\frac{(\nabla u_{0Y})^t\nabla u_{0X}}{|\nabla u_{0Y}|^2}Hu_{0Y} \right)\nabla^{\perp}u_{0Y}> 0
$$
when evaluated at $(X_0,Y_0)$, so that this minimum is nondegenerate. The quantity $p^2$ is well defined in Eulerian coordinates, and a computation shows that it satisfies the following identity, when evaluated at the characteristics $(x(t,X_0,Y_0),y(t,X_0,Y_0))$ starting from $(X_0,Y_0)$:
$$
p^2(t)=(\nabla^{\perp } u_{y})^t\left(Hu_{x}-\frac{(\nabla u_{y})^t\nabla u_{x}}{|\nabla u_{y}|^2}Hu_{y} \right)\nabla^{\perp}u_{y}=\frac{p^2}{\left(1-\frac tT \right)^2}.
$$
Hence $p^2$ should be thought of in this case as a conserved\footnote{Up to a fixed self-similar factor $\left(1-\frac tT \right)^{-2}$} quantity that will help determine the parameters $(\mu,\nu)$ of the asymptotic profile $\Theta_{\mu,\nu,\iota}$.

\end{remark}

Certain conditions in the above Definition \ref{def:generic} are always met, as the next Lemma shows.

\begin{lemma} \lab{lem:conditionspasimportantes}

Let $(p^E_x,u^E)\in \mathcal F^4$, $u_0\in \mathcal E^4$ and $(X_0,Y_0)\in \mH$ be such that $T<\infty$, $T<T_b$, and $\nabla x(T,X_0,Y_0)=0$. Then for all $t\in [0,T]$ one has $x_Y(t,X_0,Y_0) =0$ and $u_Y(t,X_0,Y_0)=0$, and there holds $u_X(T,X_0,Y_0)<0$ and $(\nabla x_Y(T,X_0,Y_0))^t \nabla^{\perp} x_X(T,X_0,Y_0)=0$.

\end{lemma}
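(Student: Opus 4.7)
The plan is to first reduce to the case $Y_0>0$ and $u_{0Y}(X_0,Y_0)=0$, then apply the variational equations along the characteristic trajectory through $(X_0,Y_0)$. For the reduction I invoke Step 1 of the proof of Theorem \ref{th:main}: if $Y_0=0$, the assumption $T<T_b$ ensures that the Riccati component of \fref{eq:ricattizerovorticity} along the boundary characteristic starting at $(X_0,0)$ is regular on $[0,T]$, whence $\pa_X x(T,X_0,0)>0$, contradicting $\nabla x(T,X_0,Y_0)=0$; and if $Y_0>0$ with $u_{0Y}(X_0,Y_0)\neq 0$, the Crocco-plus-volume-preservation argument (Case 1 of that step) shows that $(X,Y)\mapsto (x(t,\cdot),u(t,\cdot))$ is a local diffeomorphism near $(X_0,Y_0)$ for every $t\in[0,T]$, which forces $\nabla x(T,X_0,Y_0)\neq 0$, again a contradiction.

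With this reduction in hand, I would differentiate the characteristic system $\dot x=u$, $\dot u=-p^E_x(t,x)$ along the trajectory through $(X_0,Y_0)$ with respect to $Y$, producing the linear $2\times 2$ variational system
\be \lab{lem:CP:varY}
\dot{x_Y}=u_Y, \qquad \dot{u_Y}=-p^E_{xx}(t,x(t,X_0,Y_0))\,x_Y,
\ee
with initial data $(x_Y(0),u_Y(0))=(0,u_{0Y}(X_0,Y_0))=(0,0)$. Uniqueness for this linear ODE gives $x_Y(t,X_0,Y_0)=u_Y(t,X_0,Y_0)=0$ for every $t\in[0,T]$, yielding the first two assertions. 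Differentiating instead in $X$ produces an analogous linear system for $(x_X,u_X)$, now with initial data $(1,u_{0X}(X_0,Y_0))$. Theorem \ref{th:main} guarantees $x_X(t,X_0,Y_0)>0$ on $[0,T)$, while $\nabla x(T,X_0,Y_0)=0$ forces $x_X(T,X_0,Y_0)=0$; hence $\dot{x_X}(T)=u_X(T,X_0,Y_0)\leq 0$. Strict inequality follows from backward uniqueness: equality would propagate $(x_X,u_X)\equiv(0,0)$ down to $t=0$, contradicting $x_X(0)=1$. This gives the third conclusion.

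The last identity I would obtain from the Lagrangian--Eulerian incompressibility constraint $x_X y_Y-x_Y y_X\equiv 1$. Differentiating this identity once in $X$ and once in $Y$ and evaluating at $(T,X_0,Y_0)$, where $x_X=x_Y=0$, I get
\be \lab{lem:CP:hess}
x_{XX}\,y_Y=x_{XY}\,y_X, \qquad x_{XY}\,y_Y=x_{YY}\,y_X
\ee
at that point. The vector $(y_X,y_Y)(T,X_0,Y_0)$ is nonzero, since otherwise the full Jacobian of $(x,y)$ would vanish there and volume preservation would fail; the two relations in \fref{lem:CP:hess} therefore assert that both rows of the symmetric Hessian $Hx(T,X_0,Y_0)$ are parallel to the single vector $(y_X,y_Y)$, so $\det Hx(T,X_0,Y_0)=0$. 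A direct expansion identifies this determinant with $(\nabla x_Y)^t\nabla^{\perp} x_X$ evaluated at $(T,X_0,Y_0)$, establishing \fref{conservationlaw2}.

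The only mildly delicate step is the initial reduction, where one must combine the two cases of the dichotomy in Step 1 of the proof of Theorem \ref{th:main} with the hypothesis $T<T_b$; once that is done, the three remaining conclusions are routine consequences of uniqueness for the linear variational systems and of the algebraic identity at the critical point of $x$ forced by incompressibility.
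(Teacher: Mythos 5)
Your treatment of the first three conclusions is sound and close in spirit to the paper's: the reduction to $Y_0>0$, $u_{0Y}(X_0,Y_0)=0$ via the dichotomy of Step 1 of the proof of Theorem \ref{th:main} is a legitimate substitute for the paper's argument (which instead uses the invertibility of the fundamental matrix $M$ of the linearised system to force collinearity of $(1,u_{0X})$ and $(0,u_{0Y})$), and your backward-uniqueness argument for $u_X(T,X_0,Y_0)<0$ is equivalent to the paper's use of $\det M=1$.

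The proof of the last identity, however, has a genuine gap. You differentiate the volume-preservation identity $x_Xy_Y-x_Yy_X=1$ and evaluate at $(T,X_0,Y_0)$, but the normal component $y$ of the characteristics does not extend to that point: by \fref{id:yproof}, $y(t,X_0,Y_0)=\int_0^{s}|\nabla \phi_1|^{-1}d\tilde s$ with a denominator that vanishes at $(T,X_0,Y_0)$, and indeed Step 4 of the proof of Theorem \ref{th:main} shows $y(t,X_0,Y_0)\to\infty$ as $t\uparrow T$ (this divergence \emph{is} the separation of Definition \ref{def:separation}). So $y_X$, $y_Y$, $y_{XY}$ are not defined at $(T,X_0,Y_0)$, and the relations \fref{lem:CP:hess} cannot be obtained by evaluation there; a limiting argument as $t\uparrow T$ leaves you with $x_{XX}y_Y-x_{XY}y_X=-x_Xy_{XY}$ at $(t,X_0,Y_0)$, whose right-hand side is an indeterminate product ($x_X\to 0$ while $y_{XY}$ blows up), so nothing can be concluded without rate information that is not available at this stage. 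Your algebraic identification of the target quantity with $\det Hx(T,X_0,Y_0)$ is correct ($(\nabla x_Y)^t\nabla^{\perp}x_X=x_{XX}x_{YY}-x_{XY}^2$), but the derivation must stay on the Lagrangian side and avoid $y$ altogether. The paper's route: since $T<T_b$ and $T$ is the first time $x_X$ vanishes on $\{u_{0Y}=0\}$, the point $(X_0,Y_0)$ is a minimizer of $x_X[T]$ restricted to the level set $\{x_Y[T]=0\}$ (which coincides with $\{u_{0Y}=0\}$ near $(X_0,Y_0)$ because $x_Y=M_{12}\,u_{0Y}$); if $\nabla x_X$ and $\nabla x_Y$ are both nonzero there, Fermat's theorem for the constrained minimum forces them to be parallel, which is exactly $(\nabla x_Y)^t\nabla^{\perp}x_X=0$, and the identity is trivial if either gradient vanishes. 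You should replace your incompressibility computation by this constrained-minimality argument.
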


\begin{proof}

We define for each $(X,Y)\in \mH$ the $2\times 2$ matrices $M=M(t,X,Y)$ and $A=A(t,X,Y)$ by:
\be \lab{gen:ode:defM}
\dot M=A M, \ \ M(0,X,Y)=Id, \ \ A(t,X,Y):= \begin{pmatrix} 0  & 1 \\ -p^E_{xx}(t,x(t,X,Y)) & 0 \end{pmatrix}.
\ee
From the above equation \fref{gen:ode:defM}, one obtains that $M(t)$ is of determinant $1$ (a consequence of the fact that the ODE \fref{eq:charx} is volume preserving in the phase space $(x,u)$), and in particular invertible. From the ODE \fref{eq:charx}, one deduces that at any $(X,Y)\in \mH$, $x_X$, $x_Y$, $u_X$ and $u_Y$ solve:
$$
\dot{\begin{pmatrix} x_X \\ u_X \end{pmatrix}} =A \begin{pmatrix} x_X \\ u_X \end{pmatrix}, \ \ \dot{\begin{pmatrix} x_Y \\ u_Y \end{pmatrix}} =A \begin{pmatrix} x_Y \\ u_Y \end{pmatrix},
$$
and therefore that they are given by the following formula
\be \lab{gen:ode:paXpaYid}
\begin{pmatrix} x_X (t) \\ u_X (t) \end{pmatrix} =M (t) \begin{pmatrix} 1 \\ u_{0X} \end{pmatrix}, \ \ \begin{pmatrix} x_Y (t) \\ u_Y (t) \end{pmatrix} =M (t) \begin{pmatrix} 0 \\ u_{0Y} \end{pmatrix}.
\ee
We recover from this that the set $\{ u_Y=0\}$ is preserved with time $t$. At $(T,X_0,Y_0)$ we get:
\bea \label{gen:intervanishing2}
&&\begin{pmatrix} 0 \\ u_X (T,X_0,Y_0) \end{pmatrix} =M(T,X_0,Y_0) \begin{pmatrix} 1 \\ u_{0X}(X_0,Y_0) \end{pmatrix},\\
&& \label{gen:intervanishing}
\begin{pmatrix} 0  \\ u_Y (T,X_0,Y_0)  \end{pmatrix} =M (T,X_0,Y_0) \begin{pmatrix} 0 \\ u_{0Y}(X_0,Y_0) \end{pmatrix},
\eea
since $\nabla x(T,X_0,Y_0)=0$. The first identity implies $u_X(T,X_0,Y_0)\neq 0$ using the invertibility of $M$. Next, $(1, u_{0X}(X,Y))$ and $ (0 ,u_{0Y} (X_0,Y_0))$ must be collinear since their images by $M$ are collinear and $M$ is invertible. This is only possible if $u_{0Y} (X_0,Y_0)=0$, and the second identity in \fref{gen:ode:paXpaYid} then implies $x_Y(t,X_0,Y_0)=u_Y(t,X_0,Y_0)=0$ for all $t\in [0,\infty)$.

We recall that from Step 1 in the proof of Theorem \ref{th:main}, $T$ is characterised as the first time at which $x_X$ touches zero on the set $\{u_{0Y}=0\}$, which happens away from the boundary as $T<T_b$. Hence $x_X\geq 0$ on $\{u_{0Y}=0\}\cap \{0\leq t\leq T\}$, and so since $x_X(T,X_0,Y_0)=0$ and $u_{0Y}(X_0,Y_0)=0$, $(T,X_0,Y_0)$ is a global minimizer of $x_X$ on this set. As $u_Y(t,X_0,Y_0)= 0$ for all $t\in [0,T]$, $\dot x_X=u_X$ from \fref{eq:charx}, and $u_X(T,X_0,Y_0)\neq 0$, necessarily $u_X(T,X_0,Y_0)<0$. Finally, if one of the quantities in the product $(\nabla x_Y(T,X_0,Y_0))^t \nabla^{\perp} x_X(T,X_0,Y_0)$ is zero, then this product is zero. If both are nonzero, since $x_X[T]$ restricted to $\{x_Y[T]=0 \}$ attains a minimum at $(X_0,Y_0)$ then this quantity is zero as well by Fermat's Theorem. This ends the proof of the Lemma.

\end{proof}

The terminology of "generic" singularity is due to the next Lemma.

\begin{lemma} \lab{lem:nondegeneracy}
Let $(p^E_x,u^E)\in \mathcal F^4$. In the subset of $\mathcal E^4$ of initial data $u_0$ such that $T<\infty$ and $T<T_b$, there exists a dense open set such that the singularity of the characteristics is generic in the sense of Definition \ref{def:generic}.
\end{lemma}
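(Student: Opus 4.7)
The plan is to establish openness and density of the generic condition separately. For openness, starting from $u_0$ satisfying Definition \ref{def:generic} with singular point $(X_0,Y_0)$, the continuous dependence from Theorem \ref{th:main} shows the characteristic map varies continuously with $u_0\in\mathcal E^4$. Conditions (ii), (iii), (iv) are manifestly open: they are respectively the nonvanishing of $\nabla x_Y(T,X_0,Y_0)$, the strict inequality $p_0^2>0$, and the strict inequality $u_X(T,X_0,Y_0)<0$. For (i), the nondegeneracy in (iii) lets the implicit function theorem (applied to the constrained minimization of $x_X[T]$ on $\{x_Y[T]=0\}$) locate the perturbed minimizer smoothly near $(X_0,Y_0)$, while the decay at infinity built into $\mathcal E^4$ and $\mathcal F^4$ implies $x_X[T]$ is bounded below by a positive constant outside a large compact set, forbidding new minimizers from appearing at spatial infinity.

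For density, I fix $u_0$ with $T<\infty$ and $T<T_b$ and $\epsilon>0$. The set $S=\{(X,Y)\in\mH:x_X[T](X,Y)=0\}\subset\{u_{0Y}=0\}\cup\{Y=0\}$ is compact by the decay assumptions on $u_{0X}$ and $p^E_{xx}$. The construction proceeds in three successive perturbations. First, I would perturb $u_{0Y}$ by an arbitrarily small generic $C^3$ function so that it becomes a Morse function near $S$; then $\{u_{0Y}=0\}$ is locally a disjoint union of smooth arcs through each critical point, and (ii) follows since the identity $x_Y(T,X,Y)=M_{12}(T,X,Y)u_{0Y}(X,Y)$ from \fref{gen:ode:paXpaYid} together with $M_{12}(T,X_0,Y_0)\neq 0$ (a consequence of $\det M=1$ and $u_X(T,X_0,Y_0)\neq 0$) transfers $\nabla u_{0Y}\neq 0$ into $\nabla x_Y\neq 0$. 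Second, I parameterize each arc by arclength $s$ and observe that the Lagrangian blow-up time along the arc is, by the ODE \fref{eq:ricattizerovorticity}, a smooth function $T(s)$ of the triple $(X(s),u_0(\gamma(s)),u_{0X}(\gamma(s)))$, and similarly on $\{Y=0\}$. Third, I perturb $u_0$ by functions supported in tubular neighborhoods of each arc, with vanishing normal derivative and prescribed tangential values; a standard Morse/transversality argument then arranges that $T(s)$, viewed as a function on the disjoint union of all arcs and the boundary line, attains its minimum at a unique point $s_0$ with strictly positive second derivative. This simultaneously yields (i) and the Lagrangian form of (iii); the explicit expression for $p_0^2$ emerges by chain-rule computation of $T''(s_0)$ through the characteristic flow, and (iv) is automatic from Lemma \ref{lem:conditionspasimportantes}.

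The main obstacle is the coupling in the third step: perturbing $u_0$ alters both the arc $\{u_{0Y}=0\}$ and the function $T(s)$ restricted to it, and these two effects must be disentangled. I would resolve this by introducing local coordinates in which the arc itself is a coordinate line, so that perturbations vanishing to first order normally preserve the arc while tangential derivatives of the perturbation freely modify $u_0$ and $u_{0X}$ along it. The volume-preservation of the ODE \fref{eq:charx} in $(x,u)$ phase space provides the underlying flexibility: the flow being a diffeomorphism between initial and terminal triples guarantees that independent perturbations of $T(s)$ at finitely many points of the arc can all be realized by admissible perturbations of the initial data, which is exactly what the Morse/transversality step requires.
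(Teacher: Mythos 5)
Your openness argument and your reformulation of density in terms of the Riccati blow-up time $T(s)$ along the zero-vorticity arcs are viable, and the identification of $p_0^2$ with $(-u_X)\,T''(s_0)$ (via $x_X(T,\gamma(s))\approx(-u_X)(T(s)-T)$) is correct; this is a genuinely different packaging from the paper's explicit bump construction. However, the mechanism of your third perturbation is flawed as stated. A perturbation $v$ with vanishing \emph{normal} derivative on the arc and freely prescribed tangential values does \emph{not} preserve the arc $\{u_{0Y}=0\}$: preservation requires $\pa_Y v=0$ on the arc, and for a non-horizontal arc the conditions $\pa_n v=0$ and $\frac{d}{ds}(v\circ\gamma)\neq 0$ force $\pa_Y v\neq 0$ there, so the constraint set moves and the "disentangling" you describe collapses. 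If instead you make $v$ vanish to first order at the arc, then $u_0$ and $u_{0X}$ are unchanged along it and $T(s)$ is not perturbed at all. The working choice (the paper's) is $v=\phi(X)\chi(Y)$ with $\chi\equiv 1$ near the relevant portion of the arc, so that $\pa_Y v\equiv 0$ there and $u_{0X}$ shifts by $\phi'(X)$ along the arc; but then the achievable perturbations of $T(s)$ factor through $X(s)$, and the transversality step requires $X'(s)\neq 0$ at the candidate minimizer, i.e. $x_{YY}\neq 0$ there. That non-verticality is true, but only because of the collinearity identity \fref{conservationlaw2} supplied by Lemma \ref{lem:conditionspasimportantes}; your argument never invokes it, and without it the "independent perturbations of $T(s)$ at finitely many points" you need cannot be realized.

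Your first step is also imprecise in a way that hides the hardest cases. Making $u_{0Y}$ Morse does not make $\{u_{0Y}=0\}$ a union of smooth arcs "through each critical point": at a saddle lying on the zero level the set is two crossing curves, and at an extremum it is an isolated point. What you need is that $0$ be a regular value of $x_Y[T]$ near the singular set, and resolving the degenerate configurations is precisely where the argument can fail: one must check, case by case on the signature of $Hx_Y(T,X_0,Y_0)$, that the resolving perturbation does not destroy the singularity (keeping $T<\infty$, $T<T_b$, and the blow-up point nearby). For instance, when $\{x_Y=0\}$ is locally an isolated point, a generic perturbation could simply erase it locally; it is the sign condition $u_X(T,X_0,Y_0)<0$, combined with the minimality of $x_X$ on the constraint set, that guarantees a small circle of zero vorticity survives and carries a new nondegenerate minimum. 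The paper's Step 4 is devoted to exactly these checks; your proposal replaces them with an off-the-shelf genericity claim, which is the genuine gap.
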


\begin{proof}[Proof of Lemma \ref{lem:nondegeneracy}]

Fix $u_0\in \mathcal E^4$ such that $T<\infty$ and $T<T_b$ a reference initial datum. Let $(\bar x,\bar u)$ denote the solution to the ODE \fref{eq:charx} with initial datum $\bar u(0,X,Y)=u_0(X,Y)$. For any $\delta>0$ and $(u_0+v)\in \mathcal E^4$ with $d_{\mathcal E^4}(v,0)\leq \delta$ we let $( x,u)$ denote the solution of \fref{eq:charx} with initial datum $u(0,X,Y)=(u_0+v)(X,Y)$.\\

\noindent \textbf{Step 1} \emph{Evolution of the perturbation and of the derivatives}. Differentiating \fref{eq:charx} we find that the solution $(x,u)$ can always be written under the following linearised form:
\be \lab{gen:ode:linearisation}
\begin{pmatrix} x(t) \\ u(t) \end{pmatrix} =\begin{pmatrix} \bar x(t) \\ \bar u(t) \end{pmatrix}+M(t)\begin{pmatrix} 0 \\ v \end{pmatrix}+F(t)v^2,
\ee
where the volume preserving matrix $M$ is defined by \fref{gen:ode:defM}, and where at a given $\delta$, $F:[0,T]\times \mH\rightarrow \mathbb R^2$ is a function such that $\| F\|_{C^3}\leq C$ is bounded independently of $v$.
As at $(T,X_0,Y_0)$, $M$ is invertible and sends the vector $(1,u_{0X})$ onto the vector $(0,u_X)$ from \fref{gen:intervanishing2}, one has that
\be \lab{gen:ode:defm}
M\begin{pmatrix} 0 \\ 1 \end{pmatrix} =\begin{pmatrix} m \\ n \end{pmatrix} \ \ \text{with} \ \ m(T,X_0,Y_0)\neq 0.
\ee
Hence, we can rewrite the first line of the identity \fref{gen:ode:linearisation} as:
\be \lab{nondegen:expansionx}
x(t)=\bar x(t)+mv+fv^2,
\ee
where $m(T,X_0,Y_0)\neq 0$, $\bar x$ is $C^4$, and $m$ and $f$ are bounded $C^3$ functions (with $C^3$ norm independent of $v$). We end this first step by writing the ODEs for higher order derivatives:
\be \lab{gen:ode:paXYpaYY}
\dot{\begin{pmatrix} x_{YY} \\ u_{YY} \end{pmatrix}} =A\begin{pmatrix} x_{YY} \\ u_{YY} \end{pmatrix}+\begin{pmatrix}0 \\ -(x_Y)^2p^E_{xxx}(x)\end{pmatrix}, \ \ \dot{\begin{pmatrix} x_{XY} \\ u_{XY} \end{pmatrix}} =A\begin{pmatrix} x_{XY} \\ u_{XY} \end{pmatrix}+\begin{pmatrix}0 \\ -x_Xx_Y p^E_{xxx}(x)\end{pmatrix}.\\
\ee

\noindent \textbf{Step 2} \emph{Generic singularities define an open set}. We show in this step that if $u_0$ is such that the properties (i) to (iv) of Definition \ref{def:generic} are satisfied, then this is also the case for $u_0+v$ for $\delta$ small enough. We use bars to denote quantities related to the initial datum $u_0$, and no bars for those related to $u_0+v$.

First, we make some computations for the unperturbed solution $\bar u$. At $(\bar X_0,\bar Y_0)$ one has $\bar u_{Y}=\bar x_Y=0$ for all times $t\in [0,\infty)$ from \fref{gen:intervanishing}. Next, one gets from \fref{gen:ode:paXpaYid} and \fref{gen:ode:defm} that:
$$
\bar x_{Y}(t, X_0, Y_0)= \bar m(t, X_0, Y_0) u_{0Y}( X_0, Y_0),
$$
and so since $\bar m(\bar T,\bar X_0,\bar Y_0)\neq 0$ from \fref{gen:ode:defm}, one gets that the sets $\{\bar x_Y=0\}$ and $\{\bar u_{0Y}=0\}$ coincide locally near $(\bar T,\bar X_0,\bar Y_0)$. Since $\bar x_Y(t,\bar X_0,\bar Y_0)=0$ for all times and $\nabla \bar x_Y(\bar T,\bar X_0,\bar Y_0)\neq 0$  from \fref{lem:gen:xY2}, from standard parametrisation methods we have that for $t$ close to $T$ the set $\{\bar x_{Y}(t)=0\}$ is locally near $(\bar X_0,\bar Y_0)$ a curve $\bar \Gamma$ that can be parametrised as $\bar \Gamma=\{(\bar X(s),\bar Y(s)), \ |s|<\epsilon\}$ for some small enough $\epsilon$ where (the derivatives being computed at $(t,\bar X_0,\bar Y_0)$):
$$
(\bar X(s),\bar Y(s))=(\bar X_0,\bar Y_0)+s\nabla \bar x_{Y}^{\perp}+cs^2\nabla \bar x_{Y}+O(|s|^3), \ \ c=-\frac{1}{2|\nabla \bar x_{Y}|^2}(\nabla \bar x_{Y}^\perp)^t H \bar x_{Y} \nabla \bar x_{Y}^{\perp}.
$$
A direct consequence of the above parametrisation is that one has the following Taylor expansion for $\bar x_X$ on $\bar \Gamma$ for $t$ close to $T$, after some elementary computations, using that $\bar x_X(\bar T,\bar X_0,\bar Y_0)=0$, $\dot x_X=u_X$, the identity \fref{conservationlaw2} and the definition of $\bar p_0$:
\be \label{gen:taylorexpbaru}
\bar x_{X}(t,\bar X(s),\bar Y(s))=-(\bar T-t)\bar u_X(\bar T,\bar X_0,\bar Y_0)(1+O(|\bar T-t|+|s|))+ \frac 12 s^2 \bar p^2_0(1+O(|\bar T-t|))+O(|s|^3),
\ee
$$
\frac{d^2}{ds^2} \bar x_X(t)=\bar p^2_0(1+O(|\bar T-t|+|s|)),
$$
where $\bar p^2_0>0$ is defined by \fref{conservationlaw2}.

Next, we study the perturbed solution $u$. First, note that for the unperturbed solution, because of (i), outside any neighbourhood of $(\bar T,\bar X_0,\bar Y_0)$, there exists $c>0$ such that $|\nabla \bar x|>c$ is uniformly far away from $0$. Hence, for $\delta$ small enough, it at any time $T$ close to $\bar T$ the solution is such that $\nabla x[T]=0$ somewhere, this has to be at a point near $(\bar X_0,\bar Y_0)$.\\

From the condition \fref{lem:gen:xY2} for $\bar u$, for $\delta$ small enough, for all $t$ close to $\bar T$, the set $\{x_Y=0\}$ is near $(\bar X_0,\bar Y_0)$ a non-empty curve $\Gamma$. Since $x_{Y}(t, X_0, Y_0)= m(t, X_0, Y_0) u_{0Y}( X_0, Y_0)$, and that $\bar m(\bar T, \bar X_0, \bar Y_0)\neq 0$, we obtain moreover that for $\delta$ small enough and $t$ close to $\bar T$, $\Gamma$ is independent of time. From \fref{gen:taylorexpbaru}, for $\delta$ small enough, there exists on $\Gamma$, at time $\bar T$, a unique point $(X_0,Y_0)$ at which $x_X$ is minimal (non necessarily $0$). From the nondegeneracy condition \fref{lem:gen:xY2}, one obtains that for $t$ close to $\bar T$, the curve $\Gamma$ can be parametrised by $ \Gamma=\{(X(s), Y(s)), \ |s|<\epsilon'\}$ for some small enough $\epsilon'$ where (the quantities below being computed at $(t,X_0,Y_0)$):
$$
(X(s),Y(s))= (X_0,Y_0)+s\nabla  x_{Y}^{\perp}+cs^2\nabla x_{Y}+O(|s|^3), \ \ c=-\frac{1}{2|\nabla x_{Y}|^2}(\nabla x_{Y}^\perp)^t H x_{Y} \nabla x_{Y}^{\perp},
$$
A direct consequence of the above parametrisation and of the definition of $(X_0,Y_0)$ is that on $ \Gamma$, close to $(\bar T,X_0,Y_0)$:
$$
x_{X}(t)=x_X(\bar T,X_0,Y_0)-(\bar T-t)\bar u_X(\bar T,\bar X_0,\bar Y_0)(1+O((\bar T-t)+|s|+\delta))+ \frac 12 s^2 \bar p^2_0(1+O((\bar T-t)+|s|+\delta)).
$$
As $x_X(\bar T,X_0,Y_0)=O(\delta)$ and $\bar u_X(\bar T,\bar X_0,\bar Y_0)<0$, we infer from the above expansion that the first time $x_X$ touches zero on the curve $\{x_Y=0 \}$ is $T=\bar T+O(\delta)$, at least at one location $s=O(\delta)$. The condition 
$$
\frac{d^2}{ds^2} x_X(t,X(s),Y(s))=\bar p^2_0(1+O(|\bar T-t|+|s|+\delta),
$$
ensures that this happens at a unique location. Hence (i) is satisfied. The inequality in (ii) is satisfied as at this point $\nabla x_Y=\nabla \bar x_Y +O(\delta)\neq 0$. The inequality in (iii) is satisfied as $p_0^2=\bar p^2_0+O(\delta)>0$. Finally, all other conditions in Definition \ref{def:generic} are satisfied from Lemma \ref{lem:conditionspasimportantes}. Hence for $\delta$ small enough, the solution $u$ has a singularity of the characteristics that is generic in the sense of Definition \ref{def:generic}.\\

\noindent \textbf{Step 3} \emph{Density of generic singularities}. We now assume that $u_0$ is such that $T<\infty$ and $T<T_b$, but that $\bar u$ fails to meet the conditions of Definition \ref{def:generic}. For any $\delta>0$, we will find a suitable $v$ with $d_{\mathcal E^4}(v,0)\leq \delta$ such that $u$ does satisfy these conditions. As the generic condition involves several requirements, there are several cases to consider. We start with the first case: we assume that for $\bar u$ the condition (ii) holds true (the equality is always holds true from Lemma \ref{lem:conditionspasimportantes}, so only $\nabla \bar x_Y(\bar T,\bar X_0,\bar Y_0)\neq 0$ is real assumption), and that (iv) fails (the first equality always holds from Lemma \ref{lem:conditionspasimportantes} so the real assumption is that $\bar p^2\leq 0$). We do not assume anything regarding (i), and we recall that (iv) is always true from Lemma \ref{lem:conditionspasimportantes}.\\

\noindent From $\nabla \bar x_Y(\bar T,\bar X_0,\bar Y_0)\neq 0$, as in Step 2 we infer that for all $t$ near $T$, near $(\bar X_0,\bar Y_0)$, the set $\{x_Y[t]=0\}$ is a curve $\bar \Gamma$ that we can still parametrise as $\bar \Gamma=\{(\bar X(s),\bar Y(s)), \ |s|<\kappa\}$ for some small $\kappa>0$ with
\be \lab{nondegen:paramgamma}
(\bar X(s),\bar Y(s))=(\bar X_0,\bar Y_0)+s\nabla \bar x_{Y}^{\perp}+cs^2\nabla \bar x_{Y}+O(s^3), \ \ c=-\frac{1}{2|\nabla \bar x_{Y}|^2}(\nabla \bar x_{Y}^\perp)^t J \bar x_{0Y} \nabla \bar x_{Y}^{\perp},
\ee
but this time:
\be \label{id:barp0iszero}
\frac{d^2}{ds^2} \left(\bar x_X(\bar T,\bar X(s),\bar Y(s))\right)_{|s=0}=\bar p^2_0=0,
\ee
this quantity cannot be negative by minimality of $\bar x_X$ at $(\bar X_0,\bar Y_0)$, so indeed $\bar p^2_0=0$. We assume without loss of generality that the function $m$ defined by \fref{gen:ode:defm} satisfies $\bar m(\bar T,\bar X_0,\bar Y_0)>0$. We then perturb the initial datum $u_0$ by
$$
v(\bar X_0+X,\bar Y_0+Y)=\epsilon^4 \Psi \left(\frac{K(X-\bar X_0)}{\epsilon} \right)\chi \left(\frac{K'(Y-\bar Y_0)}{\epsilon} \right), \ \ 0<\epsilon \ll 1, \ \ K\gg K'\gg 1,
$$
where
\begin{itemize}
\item $\Psi$ is odd in $X$ and compactly supported in $[-4,4]$.
\item $\Psi'\geq 0$ on $[-4,-2]$, $\Psi'\leq 0$ on $[-2,0]$, $\Psi'<0$ on $[-1,0]$ with $\Psi'$ attaining its minimum at the origin where $\Psi'(0)=-1$.
\item $\Psi'''\geq 1$ on $[-1,0]$ with $\Psi'''(0)=1$.
\item $\chi$ is a standard smooth cutoff function $\chi=1$ on $[-1,1]$ and $\chi=0$ outside $[-2,2]$.
\end{itemize}
Note that for any value of $K$ and $K'$, $v$ is indeed small in $C^3$ for $\epsilon$ small enough. Note also that as $v$ vanishes for $|X-X_0|\geq 4\epsilon/K$ or $|Y-Y_0|\geq 2\epsilon/K'$ the solution $(x,u)$ remains unchanged there. In the modification zone, note first that one has the formula from \fref{nondegen:expansionx}:
\bea
\non x(t,X,Y)&=&\bar x(\bar T)-(\bar T-t)\bar u(T)+m(\bar T)v+fv^2+gv(T-t)+h(T-t)^2, \\
\lab{degen:exptaylorx} &=&\bar x(\bar T,X,Y)-(\bar T-t)\bar u(T,X,Y)+\bar m(\bar T,\bar X_0,\bar Y_0)v\\
\non &&+v^2f_1+v(T-t)f_2+v(X-X_0)f_3+v(Y-Y_0)f_4+(T-t)^2f_5,
\eea
where all the functions $f$, $g$, $h$, $f_1,...,f_5$ are $C^3$ functions with $O(1)$ size uniformly in $\delta$. Note that for $t$ close to $T$, the curve $\Gamma:=\{x_Y[t]=0 \}$ is unchanged for $|Y-Y_0|\leq \epsilon/K'$, hence it is still parametrised by \fref{nondegen:paramgamma}. There holds on this part of the curve for $t=T+O(K \epsilon^3)$ and $|s|\lesssim \epsilon/K$, writing $(X,Y)$ instead of $(\bar X(s),\bar Y(s))$ to ease notation:
\begin{align*}
x_X(t,X,Y) & = \bar x_X(\bar T,X,Y) -(\bar T-t)\bar u_X(\bar T,\bar X_0,\bar Y_0)\left(1+O(|X-\bar X_0|+|Y-\bar Y_0|)\right)\\
& \qquad  + K\epsilon^3 \Psi' \left(\frac{K(X-\bar X_0)}{\epsilon} \right) \chi \left(\frac{K'(Y-\bar Y_0)}{\epsilon} \right)\bar m(\bar T,\bar X_0,\bar Y_0)+ O(\epsilon^4)\\
 &=  O(|s|^3) -(\bar T-t)\bar u_X(\bar T,\bar X_0,\bar Y_0)\left(1+O(|s|)\right)\\
& \qquad  +K\epsilon^3 \Psi' \left(\frac{K(X-\bar X_0)}{\epsilon} \right) \chi \left(\frac{K'(Y-\bar Y_0)}{\epsilon} \right) \bar m(\bar T,\bar X_0,\bar Y_0)+ O(\epsilon^4) \\
&=  -(\bar T-t)\bar u_X+K\epsilon^3 \Psi' \left(\frac{K(X-\bar X_0)}{\epsilon} \right) \chi \left(\frac{K'(Y-\bar Y_0)}{\epsilon} \right) \bar m+ O\left( \frac{\epsilon^3}{K^3} \right)
\end{align*}
where we wrote $\bar u_X<0$ and $\bar m>0$ for $\bar u_X(\bar T,\bar X_0,\bar Y_0)$ and $\bar m(\bar T,\bar X_0,\bar Y_0)$ to ease notation. We now restrict further to the zone of this curve for which $|X-\bar X_0|\ll \epsilon/K $, hence $\chi=1$ and we Taylor expand $\Psi$:
$$
x_X=\left(-1+\frac{t-\bar T}{\epsilon^3K}\frac{\bar u_X}{\bar m}+ \left(\frac{K(X-\bar X_0)}{\epsilon} \right)^2+O\left( \left|\frac{K(X-\bar X_0)}{\epsilon} \right|^3\right)+O\left(K^{-4} \right)\right) \epsilon^3 K \bar m.\\
$$
The above identity implies that the first time $\pa_X x$ touches $0$ in the zone of the curve $\Gamma$ for which $|X-X_0|\ll \epsilon/K$ is at a spacetime point $(T,X_0,Y_0)$ with:
$$
T=\bar T+\frac{\epsilon^3 K\bar m}{\bar u_X}+O\left(\epsilon^3K^{-3} \right)<\bar T, \ \ X_0=\bar X_0+O\left(\frac{\epsilon}{K^3} \right).
$$
At such a time, on the part of the curve for which $|X-\bar X_0|\geq \epsilon/K$ there holds for some $c>0$ from the condition on $\Psi$:
$$
\pa_X \tilde x(T)\geq  (T-\bar T)\bar u_X+\epsilon^3 K \bar m(-1+c)+O\left(\frac{\epsilon^3}{K^3}\right)\geq c'\epsilon^3K, \ \ c'>0.
$$
Hence at time $T$ on $\{u_Y=0\}$, $x_X$ did not touch zero outside the part of $\Gamma$ in the zone $|X-X_0|\leq \epsilon/K$. From Lemma \ref{lem:conditionspasimportantes}, the vectors $\nabla  x_Y$ and $\nabla x_X$ are collinear at $(T,X_0,Y_0)$. From Step 2, the second derivative of $x_X(T,\bar X(s),\bar Y(s))$ with respect to $s$ is positively proportional to $p^2$ computed at this point. The collinearity of $\nabla  x_Y$ and $\nabla x_X$ implies the simplification for $p_0$ defined by \fref{conservationlaw2}:
\be \lab{id:p}
\frac{p^2_0}{x_{YY}}=x_{XXX}x_{YY}-3x_{XXY}x_{XY}+3x_{XYY}x_{XX}-\frac{x_{XY}^3}{x_{YY}^2}x_{YYY}.
\ee
From \fref{degen:exptaylorx} we infer that, introducing the notation $\bar P=(\bar T,\bar X_0,\bar Y_0)$:
\bee
x_{XXX} & = & \bar x_{XXX}+O(|T-\bar T|)+\bar m K^3\epsilon \Psi'''\left(\frac{K(X-\bar X_0)}{\epsilon} \right)+O(\epsilon^2)\\
&=& \bar x_{XXX}(\bar P)+\bar m K^3\epsilon \Psi'''\left(\frac{K(X-\bar X_0)}{\epsilon} \right)+O(\epsilon/K)
\eee
$$
x_{YY}=\bar x_{YY}(\bar P)+O(\frac \epsilon K), \ \ x_{XXY}= \bar x_{XXY}(\bar P)+O(\frac \epsilon K), \ \ x_{YYY}=\bar x_{XXY}(\bar P)+O(\frac \epsilon K)
$$
$$
x_{XY}=\bar x_{XY}(\bar P)+O(\frac \epsilon K), \ \ x_{XYY} =\bar x_{XYY}(\bar P)+O(\frac \epsilon K), \ \ x_{XX}=\bar x_{XX}(\bar P)+O(\frac \epsilon K).
$$
The condition $\nabla \bar x_Y\neq 0$ and the fact that $\nabla^\perp \bar x_Y.\nabla \bar x_X=0$ ensure $\bar x_{YY}\neq 0$ and therefore:
\bee
p^2_0  & = & \bar p^2_0+(\bar x_{YY})^2\bar m K^3\epsilon \Psi'''\left(\frac{K(X-\bar X_0)}{\epsilon} \right)+O(\epsilon/K)\\
&=&(\bar x_{YY})^2\bar m K^3\epsilon \Psi'''\left(\frac{K(X-\bar X_0)}{\epsilon} \right)+O(\epsilon/K) \geq  cK^3\epsilon
\eee
for some $c>0$, where we used \fref{id:barp0iszero}. This ensures that the zero we found for $x_X$ on the curve was unique in the part $|X-\bar X_0|\leq \epsilon/K$, and hence is globally unique, proving (i) in Definition \ref{def:generic}. The second inequality \fref{lem:gen:xY2} is true as $\nabla x_Y=\nabla \bar x_Y +O(\epsilon)$ and since we assumed $\nabla \bar x_Y(\bar T,\bar X_0,\bar Y_0)\neq 0)$. We proved that $p^2_0>0$ above. Using Lemma \ref{lem:conditionspasimportantes} $u$ thus meets the requirements of Definition \ref{def:generic}.\\

\noindent \textbf{Step 4} \emph{Other cases} Step 3 does not cover all cases. One also has to treat the case $\nabla \bar x_Y=0$, for which (ii) fails. The set $\{\bar x_Y=0\}$ is degenerate. As the parameter $p$ depends on third order derivatives, one has to consider three subcases, wether the symmetric Hessian matrix $H\bar x_Y(\bar T,\bar X_0,\bar Y_0)$ has eigenvalues with the same sign, different signs, or if it is degenerate. In each of these cases we can perform a similar analysis as in Step 3, so we only explain the strategy and leave the details to the reader.

In the case for which $H\bar x_Y(\bar T,\bar X_0,\bar Y_0)$ has two eigenvalues with different signs, the set $\{\bar x_Y[\bar T]=0 \}$ is locally two crossing curves. Hence as at time $\bar T$, on the set $\{\bar x_Y=0\}$, as the quantity $\bar x_X(\bar T)$ has to be minimal at $(\bar X_0,\bar Y_0)$, one gets that $Hu_X(\bar T,\bar X_0,\bar Y_0)$ must be a nonnegative matrix. We can thus perturb the initial datum to separate the two crossing curves in two non-crossing curves, while making a minimum for $\bar x_X$ appear on one of these curves.

 In the case for which $H\bar x_Y(\bar T,\bar X_0,\bar Y_0)$ has two eigenvalues with the same sign, the set $\{\bar x_Y[\bar T]=0 \}$ is locally a point. As $\bar u_X(\bar T,\bar X_0,\bar Y_0)<0$, by perturbing the initial datum one can transform this point into a circle, which makes us go back to Step 3. In the case of a degenerate matrix $H\bar x_Y(\bar T,\bar X_0,\bar Y_0)$ with at least a zero eigenvalue, one perturbs the initial data to make these eigenvalues nonzero, which makes us go back to the two previous cases.

\end{proof}

\section{Generic self-similar separation} \lab{sec:gen}

\subsection{Strategy of the proof and renormalised variables}

This section is devoted to the proof of Theorem \ref{th:main2}. We fix throughout this section $(u^E,p^E_x)\in \mathcal F^4$ and $u_0\in \mathcal E^4$ such that the solution $u$ to \fref{2DPrandtlp} has a singularity of the characteristics that is generic in the sense of Definition \ref{def:generic}. We recall that this happens for a dense subset of initial data in $\mathcal E^4$ such that $T<\infty$ and $T<T_b$ thanks to Lemma \ref{lem:nondegeneracy}. We will show that $u$ satisfies the conclusion of Theorem \ref{th:main}. The proof is decomposed in several steps:
\begin{itemize}
\item We define below self-similar Lagrangian and Eulerian variables\footnote{The mapping $(\mX,\mY)\mapsto (a,b)$ will be showed to be close to the mapping $(\mX,\mY)\mapsto (\ma,\mb)=\Phi^{-1}(\mX,\mY)$ given by the inverse of $\Phi$ defined in \fref{def:Phi}, hence this $(a,b)$ notation.}, $(a,b)$ and $(\mX,\mY)$.
\item We compute $\mX[t](a,b)$ in Subsection \ref{subsec:mX}, and show that $\mX[t](a,b)\approx \Phi_1 (a,b)$ (where $\Phi=(\Phi_1,\Phi_2)$ is defined in \fref{def:Phi}).
\item We compute $\mY[t](a,b)$ in Subsection \ref{subsec:mY} using incompressibility, from the formula for $\mX$ and the relation \fref{id:intevolume1}. We show that $\mY[t](a,b)\approx \Phi_2 (a,b)$. The technical difficulty is the parametrisation of the level curves $\{\mX=c\}$, which is done differently in the three zones $Z_0^c$, $Z_1$ and $Z_2$ defined by \fref{def:Z0}, \fref{def:Z1} and \fref{def:Z2}.
\item Theorem \ref{th:main2} is proved in Subsection \ref{subsec:proofthmain} in two parts. In the first one, we invert the characteristics and compute $(a[t](\mX,\mY),b[t](\mX,\mY))$ and show that $(a,b)\approx \Phi^{-1}(\mX,\mY)$. The technical difficulty is that one performs this inversion simultaneously at all points in an unbounded zone, and that $\Phi$ becomes degenerate at infinity and at the boundary of its support. We introduce several zones, each with an associated renormalisation, so that the error terms are uniformly estimated in each zone.
In the second part, the explicit formula for the mapping $(\mX,\mY)\mapsto (a,b)$ allows to retrieve $u$ and to end the proof of Theorem \ref{th:main2}.
\end{itemize}

The identity \fref{conservationlaw2} implies at $(T,X_0,Y_0)$ that $x_{XY}^2=x_{XX}x_{YY}$, and \fref{conservationlaw3} implies that $x_{XY}\neq 0$. There are thus four cases to consider, depending on the sign of $x_{XX}$ (which is the same as that of $x_{YY}$), and the sign of $x_{XY}$. We make the hypothesis that:
\be \label{id:assumptioniota}
x_{XX}(T,X_0,Y_0)>0 \qquad \mbox{and} \qquad x_{XY}(T,X_0,Y_0)<0.
\ee
It is clear from the proof that the sign of $x_{XY}$ does not matter for the result (i.e. does not change the parameters $\mu,\nu,\iota$ of the profile), and that the case $x_{XX}<0$ would lead to a singularity with profile $-\Theta(-\mX,\mY)$, i.e. $\iota=-1$. First, we define centred variables, for notational convenience:
$$
\mathsf{t}=T-t >0, \qquad \sfX=X-X_0, \qquad \sfY=Y-Y_0.
$$
Second, we define self-similar Lagrangian variables\footnote{We abuse notations, since variables also called $(a,b)$ were introduced in Subsection \ref{sec:selfsim} to study the profile $\T$. Our proof shows that these variables become asymptotically equivalent as $t\uparrow T$, justifying this abuse.} $(a,b)$ near $(X_0,Y_0)$:
\be \label{def:abrenorm}
\begin{pmatrix}a \\b \end{pmatrix} = \begin{pmatrix} k_1 \mathsf{t}^{-\frac 12}  & k_2 \sft^{-\frac 12}  \\ -k_3 \sft^{-\frac 34}  &k_4 \sft^{-\frac 34} \end{pmatrix} \begin{pmatrix} \sfX  \\ \sfY \end{pmatrix}, \qquad \begin{pmatrix}X \\Y \end{pmatrix} = \begin{pmatrix}X_0 \\Y_0 \end{pmatrix} +\begin{pmatrix} \bar k_1 \mathsf{t}^{\frac 12}  & -\bar k_2 \sft^{\frac 34}  \\ \bar k_3 \sft^{\frac 12}  & \bar k_4 \sft^{\frac 34} \end{pmatrix} \begin{pmatrix} a \\ b \end{pmatrix},
\ee
where the constants are given by (all expressions below being computed at $(T,X_0,Y_0)$ and $p_0$ being given by \fref{conservationlaw3}):
\begin{align}
\non &k_1=\frac{1}{\sqrt{-u_X}\sqrt{6}(x_{XX}+x_{YY})}\frac{p_0}{p^*}, \quad k_2=\sqrt{\frac{x_{XX}}{x_{YY}}}k_1, \quad k_4=\sqrt{\frac{1}{2\sqrt{6}(-u_X)^{\frac 32}}\frac{p_0}{p^*}}, \quad k_3=\sqrt{\frac{x_{XX}}{x_{YY}}}k_4 \\
\label{def:ks} & \bar k_1=x_{YY} \sqrt 6 \sqrt{-u_X} \frac{p^*}{p_0}, \quad \bar k_3=\sqrt{\frac{x_{XX}}{x_{YY}}}\bar k_1, \quad \bar k_2=\frac{\sqrt{2\sqrt{6}(-u_X)^{\frac 32}x_{XX}x_{YY}}}{x_{XX}+x_{YY}}\sqrt{\frac{p^*}{p_0}}, \quad \bar k_4=\sqrt{\frac{x_{YY}}{x_{XX}}}\bar k_2.
\end{align}

The reader should keep in mind that there are the variables $(a,b)$ given by \fref{def:abrenorm}, and that there are the variables $(\ma,\mb)$ of Proposition \ref{pr:selfsimfunda} (that will be showed to be asymptotically equivalent). We denote by $(\mX^\T,\mY^\T)$ and $(\ma,\mb)$ the change of variables related to the unperturbed profile $\T$:
\be \label{def:mXmYT}
\begin{array}{l l}
&(\ma,\mb)\mapsto (\mX^\T(\ma,\mb),\mY^\T(\ma,\mb))=\Phi(\ma,\mb) \mbox{ is the mapping }\Phi \mbox{ defined in \fref{def:Phi}},\\
&(\mX,\mY)\mapsto (\ma(\mX,\mY),\mb(\mX,\mY)) \mbox{ is the inverse of the mapping }\Phi \mbox{ defined in \fref{def:Phi}}.
\end{array}
\ee

 The two line vectors in the above matrices in \fref{def:abrenorm} are orthogonal. We renormalise the Eulerian side and use Eulerian self-similar variables $(\mX,\mY)$ according to
\be \label{def:mathcalXmathcalYrenorm}
x^*(t)=x(t,X_0,Y_0), \qquad \mX = k_5 \frac{x-x^*(t)}{\sft^{\frac 32}}, \qquad \mY = k_6 \frac{y}{\sft^{-\frac 14}},
\ee
where
$$
k_5=\frac{1}{-u_X \bar k_1},\quad k_6=\sqrt{\frac{\sqrt{-u_X}p_0}{2\sqrt{6}p^*}}
$$
so that the mapping $(a,b)\mapsto (\mX,\mY)$ preserves volume. Note that in \fref{def:mathcalXmathcalYrenorm} a slight abuse of notation is made, as these are the $(\mX,\mY)$ variables of Theorem \ref{th:main2}, but up to some fixed renormalisation factors. This is only to simplify notations in the forthcoming analysis.

\begin{center}
\includegraphics[width=14cm]{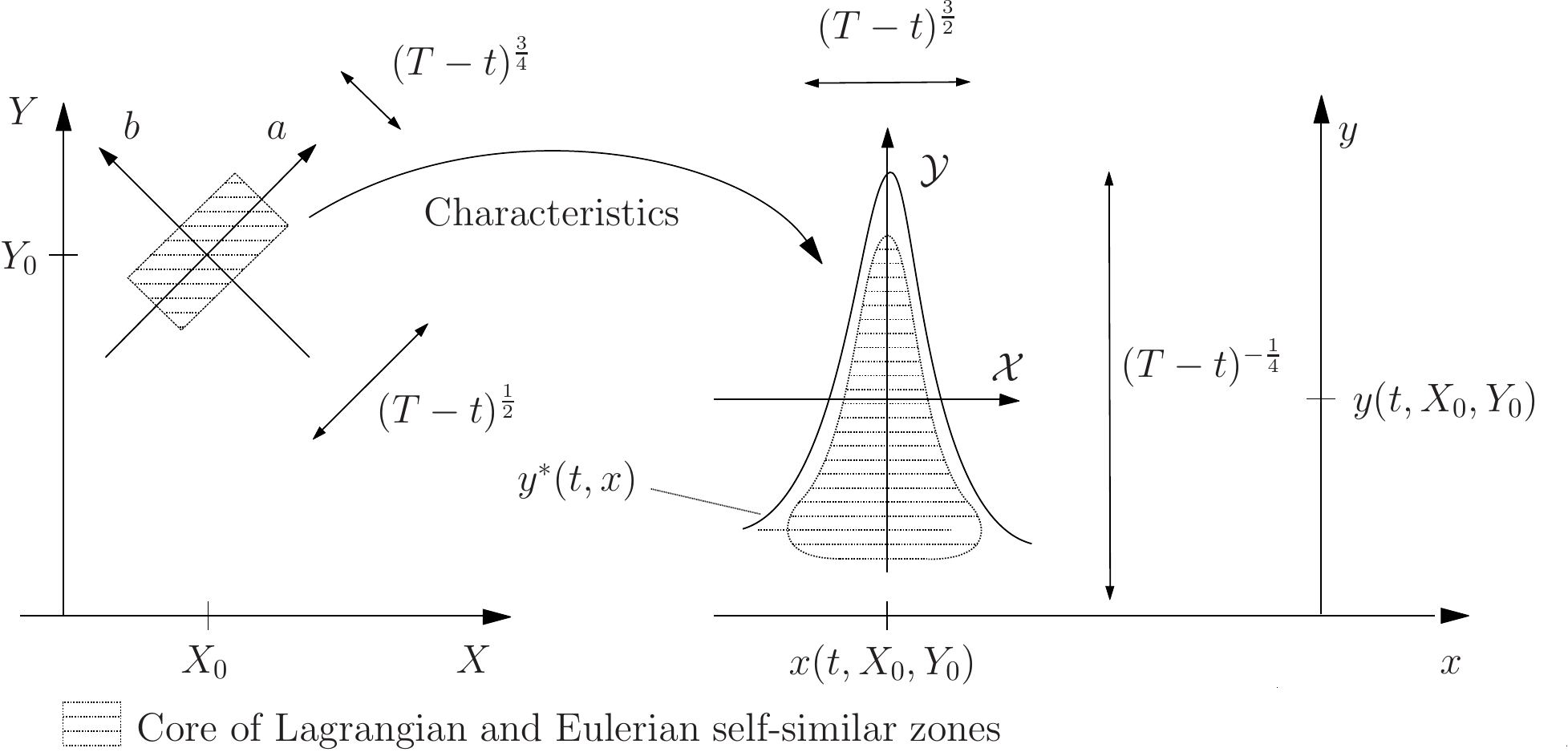}
\end{center}

Our analysis will prove that the parameters $(\mu,\nu,\iota)$ of Theorem \ref{th:main2} are\footnote{We recall that the value $\iota=1$ has been ensured by the sign assumption \fref{id:assumptioniota}, see the comments there.}:
\be \label{id:defparametersgeneric}
\mu=-u_X \bar k_1, \qquad \nu =k_6^{-1}, \qquad \iota=1.
\ee

\subsection{Tangential displacement} \label{subsec:mX}

We compute here $(a,b)\mapsto \mX(t,a,b)$, with detailed estimates to parametrise the level curves $\{\mX=Cte\}$ in the next Subsection. We describe two zones $Z_1$ and $Z_2$ near $(X_0,Y_0)$, in $(a,b)$ variables, for which the curve $\mX=Cte$ can be parametrised either with the variable $b$ in $Z_1$, or the variable $a$ in $Z_2$ (the Taylor expansion is not precise enough to allow for a single parametrisation). For this purpose, we assume for a small parameter $0<\delta \ll 1$ that:
\be \label{bd:seftdelta}
0<T-t \leq \delta^2,
\ee
(equivalently $0<\sft \leq \delta^2$) and for a large constant $L\gg 1$ we define:
\be \lab{def:Z0}
Z_0:= \mH \backslash \left\{|a|\leq \delta^{\frac 13}\sft^{-\frac 12} \ \text{and} \ |b|\leq \delta \sft^{-\frac 34} \right\},
\ee
\be \lab{def:Z1}
Z_1=Z_1[L]:= Z_0^c \cap \{ |b|\leq L(1+|a|^{\frac 32})\},
\ee
\be \lab{def:Z2}
Z_2=Z_2[L]:= Z_0^c \cap Z_1^c =Z_0^c \cap \{ |b|> L(1+|a|^{\frac 32})\}.
\ee
The zone $Z_0^c$ defines a size one neighbourhood of $(X_0,Y_0)$ in Lagrangian variables $(X,Y)$, and so $Z_0$ is an exterior zone where the dynamics remains regular. The following two Lemmas give all estimates we will use later on. The precise value of the constants $L$ above do not matter, we simply will take them large enough to ensure a dichotomy for the level curves $\{x=Cte \}$, see next Lemma \ref{lem:lagrangian}. We introduce:
$$
r=\frac{\bar k_2}{\bar k_1}
$$
\begin{lemma}  \lab{ap:calculsx}
For $L$ large enough, then $\delta$ small enough, the following holds true. For $(X,Y)\in Z_0^c$, there holds, with constants in the $O()$'s depending on $L$:
\bea
\non \mX &=& a+p^{*2}a^3\left(1+O\left(\sft+|a|\sft^{\frac 12}\right)\right) +b^2 \left(1+O\left(\sft+|b|\sft^{\frac 34}+|a|\sft^{\frac 12}\right)\right) \\
\lab{bd:xvanshen} &&+\sft^{\frac 14}rb+O\left(a^2|b|\sft^{\frac 14}+a^2\sft^{\frac 12}+|b|\sft^{\frac 12+\frac 34} \right).
\eea
If moreover $(X,Y)\in Z_1$ then there holds:
\begin{eqnarray} \lab{bd:Z1:x}
\mX&=&\left(a+p^{*2}a^3\right)\left(1+O\left(\sft^{\frac 14}+|a|^{\frac 12}\sft^{\frac 14}\right)\right) +b^2 +O(\sft^{\frac 14}),\\
 \lab{bd:Z1:a}
a&=&-\frac{1}{p^*} \Psi_1\left(p^*\left(\mX-b^2 \right) \right)\left(1+O\left(\sft^{\frac 14}+|\mX|^{\frac 16}\sft^{\frac 14}+|b|^{\frac{1}{3}}\sft^{\frac 14}\right)\right)+O(\sft^{\frac{1}{12}}).
\end{eqnarray}
Or if, moreover, $(X,Y)\in Z_2$ then there holds:
\begin{eqnarray} \lab{bd:Z2:x}
\mX&=& \left(a+p^{*2}a^3+b^2\right)\left(1+O\left(\sft^{\frac 14}|b|^{\frac 13}\right)\right),\\
 \lab{bd:Z2:a}
a&=&-\frac{1}{p^*}\Psi_1\left(p^*(\mX-b^2)\right)+O\left(|\mX|^{\frac 13+\frac{1}{18}}\sft^{\frac{1}{12}}\right), \\
\lab{bd:Z2:b}
b&=&\pm  \sqrt{\mX-a-p^{*2}a^3}\left(1+O\left(|b|^{\frac 13}\sft^{\frac 14}\right)\right)=\pm  \sqrt{\mX-a-p^{*2}a^3}\left(1+O\left(|\mX|^{\frac 16}\sft^{\frac 14}\right)\right).
\end{eqnarray}

\end{lemma}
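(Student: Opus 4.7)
\textbf{Proof plan for Lemma \ref{ap:calculsx}.} The approach is a careful Taylor expansion of $x(t,X,Y) - x^*(t)$ in the three variables $(t,X,Y)$ around the critical point $(T,X_0,Y_0)$, followed by substitution of the self-similar variables \eqref{def:abrenorm} and \eqref{def:mathcalXmathcalYrenorm}. Since $u_0 \in \mathcal{E}^4$ and $(u^E,p^E_x)\in\mathcal{F}^4$, Theorem \ref{th:main} combined with the ODE \eqref{eq:charx} gives $x\in C^4$ in all variables near $(T,X_0,Y_0)$. The vanishing conditions supplied by Definition \ref{def:generic} and Lemma \ref{lem:conditionspasimportantes} eliminate several leading terms: $x_Y(t,X_0,Y_0)=0$ for all $t\in[0,T]$ (so every pure $\sfY$-derivative of $x$ evaluated on the vertical axis is $0$ at leading order in $\sft$), while $x_X(t,X_0,Y_0) = -\sft\, u_X(T,X_0,Y_0)(1+O(\sft))$ with $u_X(T,X_0,Y_0)<0$. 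The expansion then takes the schematic form
\begin{equation*}
x(t,X,Y)-x^*(t) = -\sft\, u_X\, \sfX(1+O(\sft)) + \tfrac{1}{2}Q_2(\sfX,\sfY) + \tfrac{1}{6}Q_3(\sfX,\sfY) + O\bigl(\sft(\sfX^2+\sfY^2)\bigr)+O\bigl((|\sfX|+|\sfY|)^4\bigr),
\end{equation*}
where $Q_2$ and $Q_3$ are the quadratic and cubic forms built from the Hessian and third derivatives of $x$ at $(T,X_0,Y_0)$, with $O(\sft)$ corrections in their coefficients.

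The first key simplification is that $Q_2$ is a perfect square. Indeed, \eqref{conservationlaw2} combined with the assumption \eqref{id:assumptioniota} implies $x_{XY} = -\sqrt{x_{XX}\,x_{YY}}$ at the critical point, so $Q_2(\sfX,\sfY) = (\sqrt{x_{XX}}\,\sfX - \sqrt{x_{YY}}\,\sfY)^2$. Next, I would substitute \eqref{def:abrenorm} and observe that the precise choice of constants in \eqref{def:ks} (in particular the relations $\bar k_3 = \sqrt{x_{XX}/x_{YY}}\,\bar k_1$ and $\bar k_4 = \sqrt{x_{YY}/x_{XX}}\,\bar k_2$) is engineered so that the $a$-contribution inside the square cancels, leaving a pure $b^2$ at order $\sft^{3/2}$; the normalization factor $k_5$ from \eqref{def:mathcalXmathcalYrenorm} and the value of $\bar k_2$ are tuned so that this term equals exactly $b^2$ after division by $\sft^{3/2}/k_5$. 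Similarly, the linear term $-\sft\, u_X\, \sfX$ produces $a$ at order $\sft^{3/2}$ and $\sft^{1/4} r b$ at subleading order. The cubic term, evaluated on the $a$-direction $(\sfX,\sfY)\approx \sft^{1/2}(\bar k_1,\bar k_3)a$, contributes $p^{*2}a^3$ at order $\sft^{3/2}$; this last identity uses the definition \eqref{conservationlaw3} of $p_0^2$ (which collapses to a combination of third derivatives under the degeneracy $\bar x_{XX}\bar x_{YY}=\bar x_{XY}^2$) and the explicit value \eqref{def:pstar} of $p^*$. Collecting the remaining contributions and distributing cross-terms gives \eqref{bd:xvanshen}.

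Finally, the refinements in the two subzones follow from the dichotomy $|b|\leq L(1+|a|^{3/2})$ versus $|b| > L(1+|a|^{3/2})$. In $Z_1$, the $r\sft^{1/4}b$ term and the cross errors can be absorbed into a multiplicative perturbation of $a+p^{*2}a^3$ plus a $O(\sft^{1/4})$ remainder, yielding \eqref{bd:Z1:x}. Then \eqref{bd:Z1:a} is obtained by inverting $a+p^{*2}a^3 = \mX - b^2 + (\text{errors})$ via the function $-\Psi_1(p^*\,\cdot\,)/p^*$ from \eqref{id:psi1implicit}, using the uniform bound on $|\Psi_1'|$ on compact sets and its decay like $|z|^{-2/3}$ at infinity to estimate how the errors propagate through the inversion. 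In $Z_2$, the term $b^2$ dominates, so one factors it out to get \eqref{bd:Z2:x}, and then \eqref{bd:Z2:b} follows by solving a quadratic in $b$ (the sign being determined by which side of $\mY^*$ the point lies on), while \eqref{bd:Z2:a} follows again by applying $\Psi_1$, now exploiting the fact that in $Z_2$ the argument $\mX-b^2$ is comparable to $b^2$ and hence to $|\mX|$, so bounds on $\Psi_1'$ at large argument give improved error terms.

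The main obstacle is not conceptual but algebraic: verifying that the explicit constants \eqref{def:ks} really do produce the normalized expansion with leading behavior $a + p^{*2}a^3 + b^2$, and tracking every cross-term at orders $\sft^{1/4}$, $\sft^{1/2}$, $\sft^{3/4}$ and above so as to justify the specific error exponents in \eqref{bd:xvanshen}--\eqref{bd:Z2:b}. The inversion step in $Z_2$ is particularly delicate, since $\Psi_1'(p^*(\mX-b^2))$ decays like $(\mX-b^2)^{-2/3}$, so an error of size $\varepsilon$ in the implicit equation translates to an error of size $\varepsilon\,|\mX|^{2/3}$ in $a$; one must therefore start from an expansion with sufficiently small errors in $Z_2$ to recover the bound $O(|\mX|^{1/3+1/18}\sft^{1/12})$ announced in \eqref{bd:Z2:a}.
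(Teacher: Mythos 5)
Your plan follows essentially the same route as the paper: Taylor-expand $x$ at $(T,X_0,Y_0)$ using the vanishing/degeneracy identities of Definition \ref{def:generic} (so that the quadratic form is the perfect square $(\sqrt{x_{XX}/2}\,\sfX-\sqrt{x_{YY}/2}\,\sfY)^2$), substitute the renormalised variables \eqref{def:abrenorm}--\eqref{def:mathcalXmathcalYrenorm} with the constants \eqref{def:ks} tuned to produce $a+p^{*2}a^3+b^2$, and then invert via $\Psi_1$ separately in $Z_1$ and $Z_2$, exactly as in the paper's proof. The only quibble is in your closing heuristic: since $\Psi_1'(z)\sim|z|^{-2/3}$, a multiplicative error of relative size $\eta$ in the implicit equation perturbs the argument by $O(|\mX|\eta)$ and hence $a$ by $O(|\mX|^{1/3}\eta)$, not $O(|\mX|^{2/3}\eta)$ — the bookkeeping still closes, as in \eqref{bd:Z2:a}.
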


\begin{proof}

The proof uses the Taylor expansion of $x$ near $(T,X_0,Y_0)$ and the algebraic identities of Definition \ref{def:generic}. In what follows, all symbols of functions without arguments denote the evaluation of these functions at the point $(T,X_0,Y_0)$. From Lemma \ref{lem:nondegeneracy}, for all $t$ there holds $x_{Y}(t,X_0,Y_0)=0$ and at the point $(T,X_0,Y_0)$ there holds $x_{X}=0$, $x_{Xt}=u_X$, and $x_{XX}x_{YY}=x_{XY}^2$ with the assumption $x_{XX}>0$ and $x_{XY}<0$ from \fref{id:assumptioniota}. Thus the Taylor expansion of $x$ at $(T,X_0,Y_0)$ gives:
\bea
\non x(T-\sft,X_0+\sfX,Y_0+\sfY)&=&x^*(t)-u_X\sft\sfX+\left(\sqrt{\frac{x_{XX}}{2}}\sfX-\sqrt{\frac{x_{YY}}{2}}\sfY\right)^2 \\
\non  && +\frac{x_{XXX}}{6}\sfX^3+\frac{x_{XXY}}{2}\sfX^2\sfY+\frac{x_{XYY}}{2}\sfX\sfY^2+\frac{x_{YYY}}{6}\sfY^3\\
\lab{id:taylor:xt} &&+O\Bigl( |\sfX|^4+|\sfY|^4+\sft|\sfX|^2+\sft|\sfY|^2+\sft^2|\sfX|\Bigr) .
\eea
We use now the variables $(a,b)$, for $(X,Y)\in Z_0^c$, which means that $|a|\leq \delta^{1/3} \sft^{-1/2}\ll \sft^{-1/2}$ and $|b|\leq \delta \sft^{-3/4}\ll\sft^{-3/4}$. From the identities \fref{id:p}, \fref{def:abrenorm} and \fref{def:ks}, the above expression becomes:
\begin{align*}
x-x^*(t)&= (-u_X)\bar k_1 \Bigl(\sft^{\frac 32}a+\sft^{\frac{3}{2}} p^{*2}a^3\left(1+O\left(\sft+|a|\sft^{\frac 12}\right)\right)+\sft^{\frac 32} b^2 \left(1+O\left(\sft+\sft^{\frac 34}|b|+\sft^{\frac 12}|a|\right)\right) \\
&\qquad \qquad \qquad +\sft^{\frac 74}rb+O\left(a^2|b|(1-t)^{\frac 74}+\sft^{2}|a|^2+\sft^{2+\frac 34}|b| \right)\Bigr)
\end{align*}
which, using \fref{def:mathcalXmathcalYrenorm}, shows \fref{bd:xvanshen}. Consider now \fref{bd:xvanshen} in the zone $Z_1$ where $|b|\leq L( 1+|a|^{3/2})$. As $a$ and $a^3$ share the same sign, and since $|a|\sft^{1/2}\ll 1$ and $|b|\sft^{3/4}\ll1$ we can write:
\bee
\non \mX &=& \left(a+p^{*2}a^3\right)\left(1+O\left(\sft+|a|\sft^{\frac 12}\right)\right) +b^2 \left(1+O\left(\sft^{\frac 34}+\sft^{\frac 12}|a|\right)\right) \\
&&+\sft^{\frac 14}rb+O\left(|a|^{3+\frac 12}\sft^{\frac 14}+|a|^2\sft^{\frac 14} +\sft^{\frac 12+\frac 34}|a|^{\frac 32}+\sft^{\frac 12 +\frac 34} \right)\\
&=& \left(a+p^{*2}a^3\right)\left(1+O\left(\sft^{\frac 14}+|a|^{\frac 12}\sft^{\frac 14}\right)\right) +b^2+O(\sft^{\frac 14}).
\eee
where we used $|a|^2+|a|^{3/2}\lesssim |a+a^3|$. This proves \fref{bd:Z1:x}. We rewrite the above identity as:
$$
\left(\mX-b^2+O(\sft^{\frac 14})\right)\left(1+O\left(\sft^{\frac 14}+|a|^{\frac 12}\sft^{\frac 14}\right)\right)= a+p^{*2}a^3
$$
Note that the solution to $a+p^{*2}a^3=f$ is $a=-\Psi_1(p^*f)/p^*$. Hence:
$$
a=-\frac{1}{p^*} \Psi_1\left(p^*\left(\mX-b^2+O(\sft^{\frac 14})\right)\left(1+O\left(\sft^{\frac 14}+|a|^{\frac 12}\sft^{\frac 14}\right)\right)\right)
$$
Now, for $t$ close to $T_0$, and $(X,Y)$ in $Z_0^c$, there holds $O\left(\sft^{\frac 14}+|a|^{\frac 12}\sft^{\frac 14}\right)=o(1)$, hence using \fref{bd:Psi11} the above identity gives:
$$
a=-\frac{1}{p^*} \Psi_1\left(p^*\left(\mX-b^2+O(\sft^{\frac 14})\right)\right)\left(1+O\left(\sft^{\frac 14}+|a|^{\frac 12}\sft^{\frac 14}\right)\right).
$$
Hence, from \fref{bd:Psi11}  $|a|\lesssim |\Psi_1\left(p^*\left(\mX-b^2+O(\sft^{\frac 14})\right)\right)|\lesssim |\mX|^{1/3}+|b|^{2/3}+\sft^{1/12}$. This, reinjected in the above identity and using \fref{bd:Psi11}, gives \fref{bd:Z1:a}. Consider now \fref{bd:xvanshen} in the zone $Z_2$ where $|a|\leq L^{-2/3} |b|^{2/3}\ll b^{2/3}$ and $|b|\geq L\gg 1$. Then the dominant term in the right hand side of \fref{bd:xvanshen} is $b^2$ and we infer that $\mX\approx b^2\gg 1$ and $\mX\gg |a|^{3}$. Injecting these bounds in \fref{bd:xvanshen} gives the desired identity \fref{bd:Z2:x} for $\mX$ in $Z_2$. Then we rewrite \fref{bd:Z2:x} as:
$$
b^2=\mX (1+O(|b|^{\frac 13}\sft^{\frac 14}))-a-p^{*2}a^3=(\mX -a-p^{*2}a^3)(1+O(|b|^{\frac 13}\sft^{\frac 14}))
$$
as $\mX^{1/3}\gg |a|$. Hence the solution is:
\bee
b&=&\pm  \sqrt{(\mX -a-p^{*2}a^3)(1+O(|b|^{\frac 13}\sft^{\frac 14}))}=\pm \sqrt{\mX -a-p^{*2}a^3}(1+O(|b|^{\frac 13}\sft^{\frac 14})) \\
&&\quad =\pm  \sqrt{\mX -a-p^{*2}a^3}(1+O(|\mX|^{\frac 16}\sft^{\frac 14})) 
\eee
which shows \fref{bd:Z2:b}. We also rewrite \fref{bd:Z2:x} as $ a+p^{*2}a^3=\mX-b^2+O\left(|\mX|^{1+1/6}\sft^{1/4}\right)$, so that, using \fref{bd:Psi11}:
$$
a=-\frac{1}{p^*} \Psi_1\left(p^*\left(\mX-b^2+O\left(|\mX|^{1+\frac 16}\sft^{\frac 14}\right)\right)\right)=-\frac{1}{p^*} \Psi_1\left(p^*\left(\mX-b^2\right)\right)+O\left(|\mX|^{\frac 13+\frac 16}\sft^{\frac 14}\right)
$$
This shows \fref{bd:Z2:a} and ends the proof of the Lemma.

\end{proof}

The next Lemma adapts Lemma \ref{ap:calculsx} to higher order derivatives.

\begin{lemma}  \lab{ap:calculsx2}
For $L$ large enough, then $\delta$ small enough, the following holds true. For $(X,Y)\in Z_0^c$, there holds, with constants in the $O()$'s depending on $L$:
\begin{eqnarray} \lab{bd:paaMxvanshen}
\pa_a \mX & = & 1+3p^{*2}a^2+O\left(|a||b|\sft^{\frac 14}+|b^2|\sft^{\frac 12}+|a|^3\sft^{\frac 12}+\sft+\sft^{\frac 12}|a|+\sft^{\frac 14}|b| \right),\\
 \lab{bd:pabmXvanshen}
\pa_{b}\mX & = & 2b+r\sft^{\frac 14}+O\left(\sft^{\frac 34}|b|^2+\sft^{\frac 12}|b||a|+\sft|a|^2+\sft^{\frac 54}+\sft|b|+\sft^{\frac 34}|a| \right).
\end{eqnarray}
If moreover $(X,Y)\in Z_1$ then there holds:
\begin{eqnarray} \lab{bd:Z1:paamX}
\pa_a\mX &=& (1+3p^{*2}a^2)(1+O(\sft^{\frac 14}+|a|^{\frac 12}\sft^{\frac 14})) \\
\non&&= \left(1+3\Psi_1^2\left(p^*\left(\mX-b^2 \right)\right)\right)\left(1+O(\sft^{\frac{1}{12}}+|\mX|^{\frac 16}\sft^{\frac 14}+|b|^{\frac 13}\sft^{\frac 14})\right),\\
  \lab{bd:Z1:paaamX}
\pa_{aa}\mX&=& 6p^{*2}a\left(1+O\left((|a|^{\frac 12}\sft^{\frac 14}) \right)\right)+O\left(\sft^{\frac 14}\right) \\
\non & & = -6p^* \Psi_1\left(p^*\left(\mX-b^2 \right) \right)\left(1+O\left(\sft^{\frac 14}+|\mX|^{\frac 16}\sft^{\frac 14}+|b|^{\frac{1}{3}}\sft^{\frac 14}\right)\right)+O(\sft^{\frac{1}{12}}),\\
 \lab{bd:Z1:pabmX}
\pa_b \mX & = & 2b\left(1+O(\sft^{\frac 34}+|a|\sft^{\frac 12})\right)+O\left(\sft^{\frac 14}+\sft|a|^2 \right).
\end{eqnarray}
Or if, moreover, $(X,Y)\in Z_2$ then there holds:
\begin{eqnarray} \lab{bd:Z2:pax}
\pa_b \mX&=& 2b\left(1+O\left(|b|^{\frac 13}\sft^{\frac 14}\right)\right) \ = \ \pm 2\sqrt{\mX-a-p^{*2}a^3} \left(1+O\left(|\mX|^{\frac 16}\sft^{\frac 14}\right)\right),\\
\lab{bd:Z2:pabbx}
\pa_{bb} \mX & = & 2\left(1+O\left(|b|^{\frac 23}\sft^{\frac 12}\right)\right)=2\left(1+O\left(|\mX|^{\frac 13}\sft^{\frac 12}\right)\right),\\
 \lab{bd:Z2:paax}
\pa_a \mX & = & 1+3p^{*2}a^2+O\left(|b|^{\frac 53}\sft^{\frac 14}\right)=1+3p^{*2}a^2+O\left(|\mX|^{\frac 56}\sft^{\frac 14}\right).
\end{eqnarray}

\end{lemma}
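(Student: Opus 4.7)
The plan is to parallel the proof of Lemma \ref{ap:calculsx}, but starting from higher order Taylor expansions of $x$ near $(T,X_0,Y_0)$ and then converting back to the self-similar variables $(a,b)$ via the relations \fref{def:abrenorm} and \fref{def:mathcalXmathcalYrenorm}. Concretely, I would first differentiate the identity \fref{id:taylor:xt} in $\sfX$ and $\sfY$ to obtain, using the algebraic relations at $(T,X_0,Y_0)$ (namely $x_X=0$, $x_Y=0$, $x_{XY}<0$, $x_{XX}x_{YY}=x_{XY}^2$, $x_{Xt}=u_X$, together with the cubic identity \fref{id:p} controlling $p_0^2$), Taylor expansions for $x_X$ and $x_Y$ at $(T-\sft,X_0+\sfX,Y_0+\sfY)$. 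Then, substituting the change of variables \fref{def:abrenorm} and applying the chain rule
\[
\pa_a \mX = k_5\, \sft^{-\frac 32}\left(\bar k_1 \sft^{\frac 12} x_X+\bar k_3 \sft^{\frac 12} x_Y\right), \qquad \pa_b \mX = k_5\, \sft^{-\frac 32}\left(-\bar k_2 \sft^{\frac 34} x_X+\bar k_4 \sft^{\frac 34} x_Y\right),
\]
together with the explicit values \fref{def:ks} of the constants, would directly yield \fref{bd:paaMxvanshen} and \fref{bd:pabmXvanshen} on $Z_0^c$, exactly as in the derivation of \fref{bd:xvanshen}. The orthogonality of the rows in \fref{def:abrenorm} ensures the cross-contributions organize themselves into the expected leading polynomials $1+3p^{*2}a^2$ and $2b$, with the $r\sft^{1/4}b$ correction from $x_Y$ matching the one already appearing in $\mX$.

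For the expansions in $Z_1$, I would restrict the bounds \fref{bd:paaMxvanshen}--\fref{bd:pabmXvanshen} to the regime $|b|\leq L(1+|a|^{3/2})$: in that region $a$ and $a^3$ have the same sign, the remainder terms can be repackaged as relative errors on the dominant quantities $1+3p^{*2}a^2$ and $2b$, and exactly as in the passage from \fref{bd:xvanshen} to \fref{bd:Z1:x}, one obtains \fref{bd:Z1:paamX}, \fref{bd:Z1:paaamX}, \fref{bd:Z1:pabmX}. To express the results in terms of $\mX$ (second equality of \fref{bd:Z1:paamX} and of \fref{bd:Z1:paaamX}), I would invoke the representation \fref{bd:Z1:a} of $a$ as $-p^{*-1}\Psi_1(p^*(\mX-b^2))$ and substitute it into the leading polynomials, using the identity \fref{id:psi1implicit} (so that $a+p^{*2}a^3=\mX-b^2+O(\sft^{1/4})$ gives $3p^{*2}a^2=3\Psi_1^2(p^*(\mX-b^2))$) and the Lipschitz behavior of $\Psi_1$ on bounded sets to propagate the errors.

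For $Z_2$, i.e. $|b|>L(1+|a|^{3/2})$ with $L$ large, the term $b^2$ dominates in \fref{bd:xvanshen}, so $|\mX|\approx b^2$ and $|a|\ll |b|^{2/3}$. I would reinject these size estimates in the bounds \fref{bd:paaMxvanshen}--\fref{bd:pabmXvanshen} to clean up the remainders, yielding \fref{bd:Z2:pax} and \fref{bd:Z2:paax}; the identity \fref{bd:Z2:b} rewrites $b$ as $\pm\sqrt{\mX-a-p^{*2}a^3}$ up to the announced multiplicative error, which converts \fref{bd:Z2:pax} into its second form. The second derivative $\pa_{bb}\mX$ is obtained analogously: differentiate the Taylor expansion of $x_Y$ one more time in $\sfY$, use the rescaling to get a leading coefficient $2$, and bound the remainders by the same size considerations.

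The main obstacle, as in Lemma \ref{ap:calculsx}, is bookkeeping: each $O()$ term from the Taylor expansion gets multiplied by the anisotropic powers of $\sft$ coming from \fref{def:abrenorm}, and in $Z_2$ one has to carefully convert bounds of the form $|b|^k\sft^{\ell}$ into bounds of the form $|\mX|^{k/2}\sft^{\ell}$ without losing the sharpness needed later when parametrising the level curves $\{\mX=Cte\}$. Apart from this bookkeeping, every estimate follows mechanically from differentiating the Taylor expansion used to prove Lemma \ref{ap:calculsx} and re-applying the same case analysis on $Z_1$ and $Z_2$.
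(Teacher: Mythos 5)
Your plan is correct and coincides with the paper's own (omitted) proof, which simply states that the argument is verbatim that of Lemma \ref{ap:calculsx} applied to the differentiated Taylor expansion \fref{id:taylor:xt}; your chain-rule formulas for $\pa_a\mX$ and $\pa_b\mX$ and the subsequent case analysis on $Z_1$ and $Z_2$ are exactly what is needed. One small correction: to propagate the $O(\sft^{1/4})$ errors through $\Psi_1$ you should invoke the global sublinear bound $\Psi_1(X+X')=\Psi_1(X)+O(|X'|^{1/3})$ from \fref{bd:Psi11} (which produces the $O(\sft^{1/12})$ terms), not Lipschitz continuity on bounded sets, since $a$ is unbounded on $Z_1$.
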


\begin{proof}

We omit the proof, relying only on explicit manipulation of the Taylor expansion of $x$, and which is verbatim the same as that of Lemma \ref{ap:calculsx}.

\end{proof}

\subsection{Normal displacement} \label{subsec:mY}

The function $(a,b)\mapsto \mX(t,a,b)$ has been computed in the previous Subsection, what allows us to compute $(a,b)\mapsto \mY(t,a,b)$, relying on Lemma \ref{lem:paramincompressibility}. Let $\Gamma [x]$ denote the curve $\{x[t](X,Y)=x \}$ in Lagrangian variables, with starting point at $\{Y=0\}$. The proof of Lemma \ref{lem:lagrangian} below ensures $\Gamma[x]$ enters $Z_0^c$ either from the bottom side $\{b=-\delta \sft^{-3/4}\}$ or from the upper side $\{b=\delta \sft^{-3/4}\}$. By a continuity argument, for $\delta$ small, then $\epsilon>0$ small enough, all such curves with $|x-x^*|\leq \epsilon$ enter from the same side. Without loss of generality we assume it to be the bottom side. This assumption is harmless: if they enter from the upper side the same conclusions would hold from the symmetry (iii) in Proposition \ref{pr:selfsimfunda}.

With the control of the behaviour of $\mX$ in the zones $Z_1$ and $Z_2$ obtained in Lemma \ref{ap:calculsx}, a parametrisation of these level curves as graphs over the $b$ variable is possible most of the time, but there is a case for which this is impossible, and we then cut in several zones, each parametrised with either the variable $a$ or the variable $b$. The following picture is described by Lemma \ref{lem:lagrangian} below.

\begin{center}
\includegraphics[width=12cm]{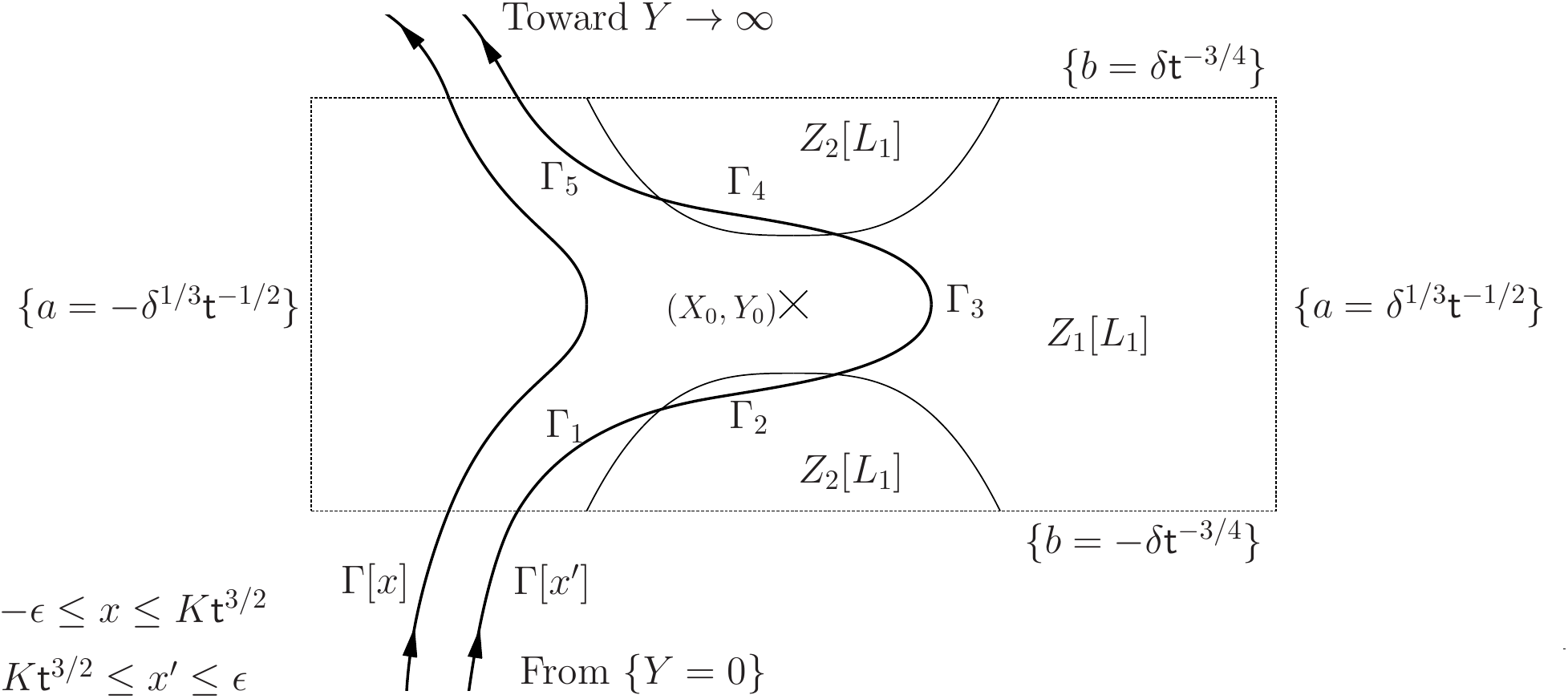}
\end{center}

\begin{lemma}[Lagrangian structure of the curves $\{x=Cte\}$] \lab{lem:lagrangian}

For all $\tilde L$ large enough, for any $L_1\geq \tilde L$, there exists $K^*\geq \tilde L$ such that for $K\geq K^*$, there exists $L_2^*\geq \tilde L$ such that for $L_2\geq L_2^*$, for $\delta$ small enough and then $\epsilon$ small enough, the following holds. For any $| x-x^*|\leq \epsilon $ the set $\Gamma [x] \cap Z_0^c$ consists of a curve entering and exiting at the points $(a_{in},b_{in})$ and $(a_{out},b_{out})$ with $b_{in}=-\delta \sft^{-3/4}$ and $b_{out}=\delta \sft^{-3/4}$. In addition,
\begin{itemize}
\item[(i)] For $-\epsilon\leq x-x^*\leq K\sft^{3/2}$ this set can be parametrised with the variable $b$, via $a=a(x,b)$, and is contained in $Z_1[L_2]$.
\item[(ii)] For $K\sft^{3/2}\leq x-x^* \leq \epsilon$ this set consists on the concatenation of five curves $\Gamma_{1},\Gamma_{2},\Gamma_3,\Gamma_{4},\Gamma_{5}$ with starting and end points $(a_{in},b_{out}),(a_{1},b_{1}),(a_{2},b_{2}),(a_{3},b_{3}),(a_{4},b_{4})$, and $(a_{out},b_{out})$ where
$$
b_{1}=-L_1(1+|a_{1}|^{\frac 32}), \ \ b_{2}=-L_1(1+|a_{2}|^{\frac 32}), \ \ b_{3}=L_1(1+|a_{3}|^{\frac 32}), \ \ b_{4}=L_1(1+|a_{4}|^{\frac 32}).
$$
$\Gamma_{1}$ lies in $Z_1[L_1]$ with $a,b\ll -1$, and can be parametrised as a curve $a=a(x,b)$. $\Gamma_{2}$ lies in $Z_2[L_1]$ with $b<0$, and can be parametrised as a curve $b=b(x,a)$. $\Gamma_{3}$ lies in $Z_1[L_1]$ with $a>0$, and can be parametrised as a curve $a=a(x,b)$. $\Gamma_{4}$ lies in $Z_2[L_1]$ with $b>0$, and can be parametrised as a curve $b=b(x,a)$. $\Gamma_{5}$ lies in $Z_1[L_1]$ with $a<0$ and $b>0$, and can be parametrised as a curve $a=a(x,b)$.
\end{itemize}

\end{lemma}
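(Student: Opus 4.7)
The strategy is to analyze the level set $\{\mX(a,b) = \mX^*\}$ with $\mX^* := k_5(x-x^*)/\sft^{3/2}$ using the Taylor expansions from Lemmas \ref{ap:calculsx} and \ref{ap:calculsx2}, together with a careful implicit function theorem argument, after first controlling how the level set crosses the boundary of $Z_0^c$ and the boundary $\{|b| = L_1(1+|a|^{3/2})\}$ separating $Z_1[L_1]$ from $Z_2[L_1]$.

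First I would establish that the curve $\Gamma[x] \cap Z_0^c$ meets $\partial Z_0^c$ only through the top and bottom sides $\{b = \pm \delta \sft^{-3/4}\}$, at exactly one point each. On the lateral sides $\{|a| = \delta^{1/3}\sft^{-1/2}\}$, the dominant term in \fref{bd:xvanshen} is $p^{*2}a^3 \approx \pm p^{*2}\delta \sft^{-3/2}$, whose absolute value exceeds $|\mX^*| \leq \epsilon k_5 \sft^{-3/2}$ once $\epsilon$ is chosen small relative to $\delta$, ruling out lateral exits. On the horizontal sides one has $\mX \approx a + p^{*2}a^3 + \delta^2 \sft^{-3/2}$, which is strictly monotone in $a$, so $\mX = \mX^*$ has a unique solution there by the intermediate value theorem. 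A continuity argument, varying $x$ from $x^*$ (for which the curve degenerates into the single point $(0,0)$), then fixes the entry-from-bottom convention uniformly for $|x - x^*| \leq \epsilon$.

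For Case (i), $|\mX^*| \leq Kk_5$, along any point of the curve \fref{bd:Z1:x} yields $b^2 = \mX^* - (a + p^{*2}a^3) + O(\sft^{1/4})$, hence $b^2 \lesssim Kk_5 + |a| + p^{*2}|a|^3 \lesssim Kk_5 + p^{*2}(1 + |a|^{3/2})^2$, placing the whole curve in $Z_1[L_2]$ once $L_2 \geq \sqrt{Kk_5} + 2p^* + 1$. The parametrization $a = a(b)$ then follows from the implicit function theorem applied with $\partial_a \mX = (1+3p^{*2}a^2)(1+o(1)) > 0$ given by \fref{bd:Z1:paamX}. For Case (ii), $Kk_5 \leq \mX^* \leq \epsilon k_5 \sft^{-3/2}$, I would study the auxiliary function $G(a) := (\mX^* - a - p^{*2}a^3) - L_1^2(1+|a|^{3/2})^2$, whose sign (up to lower-order corrections absorbed by choosing $L_1 > p^*$, then $K \gg L_1^2$, then $\delta$ small) determines whether the point of the curve at abscissa $a$ lies in $Z_2$ ($G > 0$) or $Z_1$ ($G \leq 0$). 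On $(0, a_0)$ direct differentiation gives $G'(a) = -1 - 3p^{*2}a^2 - 3L_1^2 a^{1/2}(1+a^{3/2}) < 0$, so $G$ is strictly decreasing from $G(0) = \mX^* - L_1^2 > 0$ to $G(a_0) < 0$: exactly one crossing. For $a < 0$, writing $s = |a|$, one computes $G''(s) = 6(p^{*2} - L_1^2)s - \tfrac{3}{2}L_1^2 s^{-1/2} < 0$, so $G'$ strictly decreases from $G'(0) = 1$ to $-\infty$, giving $G$ a unique local maximum; combined with $G(0) > 0$ and $G(s_{in}) = \delta^2 \sft^{-3/2}(1 - L_1^2/p^{*2}) + o(1) < 0$, this yields a unique crossing on $(0, s_{in})$. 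Performing the identical analysis on the $b > 0$ branch produces four crossings in total, carving $\Gamma[x] \cap Z_0^c$ into the five sub-curves $\Gamma_1, \ldots, \Gamma_5$ with the stated sign patterns and zone assignments.

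Within each sub-curve, the parametrization is obtained by the implicit function theorem: $a = a(b)$ on $\Gamma_1, \Gamma_3, \Gamma_5$ using $\partial_a \mX > 0$ from \fref{bd:Z1:paamX}, and $b = b(a)$ on $\Gamma_2, \Gamma_4$ using $\partial_b \mX \approx 2b$ bounded away from zero (with definite sign on each piece) from \fref{bd:Z2:pax}. The main obstacle is the quantitative bookkeeping of error terms in the Taylor expansions, which must be controlled uniformly across the unbounded self-similar zones and cannot be allowed to spoil either the sign analysis of $G$ or the monotonicity feeding the implicit function theorem; this forces the constants $L_1, K, L_2, \delta, \epsilon$ to be selected precisely in the order given by the lemma, each absorbing the corrections introduced at the previous step.
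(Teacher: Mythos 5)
Your proposal follows essentially the same architecture as the paper's proof: first the unique entry/exit points on the horizontal sides of $Z_0^c$ with no lateral crossings, then case (i) via $\pa_a\mX>0$ in $Z_1$, then in case (ii) the location of the four transition points on $\{|b|=L_1(1+|a|^{3/2})\}$ followed by implicit-function-theorem parametrisations on each piece. The one genuinely different ingredient is how you locate the transition points: the paper restricts $\mX$ to the separating curve $b=\pm L_1(1+|a|^{3/2})$, where $\mX=b^2(1+o(1))$ is monotone, and applies the intermediate value theorem there; you instead work along the level curve itself and study the sign of $G(a)=b(a)^2-L_1^2(1+|a|^{3/2})^2$. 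These are dual viewpoints of the same intersection problem, and your convexity analysis of $G$ ($G''<0$ on the $a<0$ branch for $L_1>p^*$) is a clean way to get uniqueness of each crossing while simultaneously pinning down the ordering and connectivity of $\Gamma_1,\dots,\Gamma_5$, which the paper handles by a separate boundary-crossing argument in each zone.

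One local misstep: on the horizontal sides $\{b=\pm\delta\sft^{-3/4}\}$ you claim $\mX$ is "strictly monotone in $a$" because $\mX\approx a+p^{*2}a^3+\delta^2\sft^{-3/2}$, but $\approx$ does not transfer monotonicity, and the available derivative estimate in the $Z_2$ portion of that side, \fref{bd:Z2:paax}, has error $O(|\mX|^{5/6}\sft^{1/4})=O(\delta^{5/3}\sft^{-1})$, which is not small compared with $1+3p^{*2}a^2$ near $a=0$. The statement you want is still true, but the correct route is the paper's: in the $Z_2[L_1]$ portion of those sides one has $\mX\gtrsim b^2=\delta^2\sft^{-3/2}\gg \epsilon k_5\sft^{-3/2}\geq |\mX^*|$, so there are no solutions there at all, and in the remaining $Z_1$ portion $\pa_a\mX>0$ gives uniqueness. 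A similar (harmless) circularity occurs in case (i), where you invoke \fref{bd:Z1:x} before knowing the point lies in $Z_1$; this is repaired by the dichotomy that any point of the level set in $Z_2[L]$ satisfies $\mX\gtrsim L^2>Kk_5$, forcing the curve into $Z_1[L_2]$ for $L_2\gtrsim\sqrt{K}$. With these two repairs the argument is complete.
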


\begin{proof}

\noindent \textbf{Step 1} \emph{The in and out points} Fix $|x|\leq \epsilon$, we show the existence of the in and out points and show that they are the only points at the boundary of $Z_0^c$. The in point is the point of the curve belonging to the lower boundary $\{b=-\delta\sft^{-3/4} \}$ of $Z_0^c$. We fix $b_{in}=-\delta \sft^{-3/4}$ and look for the corresponding parameter $a_{in}$. For $(a,b)\in Z_2[L_1]$ one has $|b|\gtrsim L_1$, $|a|\ll |b|^{2/3}$ and $\sft^{\frac{3}{4}}|b|=o(1)$ as $\delta\rightarrow 0$. Hence from \fref{bd:Z2:x}:
$$
\mX \gtrsim |b|^2.
$$
In particular, if $(a,b_{in})\in Z_2[L_1]$ then $x(a,b_{in})-x^*\gtrsim \delta^2 >\epsilon\geq x-x^*$. Hence there are no solutions in $Z_2[L_1]$. As $(0,b_{in})\in Z_2[L_1]$, one has from the inequality above $x(0,b_{in})-x^*>x-x^*$ and as $(-\delta^{1/3}\sft^{-1/2},b_{in})\in Z_1[L_1]$ one has from \fref{bd:Z1:x} 
\be \lab{bd:x:leftboundary}
x (-\delta^{1/3}\sft^{-1/2},b_{in})-x^*=-p^{*2} \delta (1+o(1))<-\epsilon\leq x-x^*
\ee
Hence by the intermediate value Theorem there exists a solution $a_{in}$ to $x(a_{in},b_{in})=x$, which is unique as $\pa_ax>0$ on $Z_1[L_1]$ from \fref{bd:Z1:paamX}. One can show the existence and uniqueness of the out point $(a_{out},b_{out})$ at the north boundary $\{b=\delta \sft^{-3/4} \}$ with the same argument. The computation \fref{bd:x:leftboundary} holds true at the left boundary $\{a=-\delta^{1/3}\sft^{-1/2} \}$ of $Z_0^c$, and a similar computation shows that $x-x^*>\epsilon$ at the right boundary $\{a=\delta^{1/3}\sft^{-1/2} \}$. Hence the two points $(a_{in},b_{in})$ and $(a_{out},b_{out})$ are the only points of the curve at the boundary of $Z_0^c$.\\

\noindent \textbf{Step 2} \emph{Proof of (i)}. From the first step, the strict monotonicity $\pa_ax>0$ in $Z_1[L_2]$ from \fref{bd:Z1:paamX}, and since $\Gamma \cap Z_0^c\subset Z_1[L_2]$, a standard application of the implicit function theorem gives that the curve $x(a,b)=x$ can be parametrised with the variable $b$ as $a=a(x,b)$ for $b\in [-\delta \sft^{-3/4},\delta \sft^{-3/4}]$.\\

\noindent \textbf{Step 3} \emph{Proof of (ii)}. Fix $K\sft^{3/2}\leq x-x^* \leq \epsilon$. We first prove the existence and uniqueness of the point $(a_1,b_1)$ on the curve $b_1=-L_1(|a_1|^{3/2}+1)$ with $a_1<0$. On this curve, from \fref{bd:Z2:x}, using the fact that $|a_1|\ll |b_1|$:
$$
\mX(a_1,b_1)=b_1^2(1+o(1))
$$
where the $o(1)$ is as $L_1\rightarrow \infty$. Hence, as for $a_1=0$, $\mX(0,-L_1)=O(L_1^2)<K\leq \mX$ and for $b_1=-\delta \sft^{-3/4}$:
$$
x\left(- \left(\frac{\delta}{\sft^{3/4}L_1}-1\right)^{2/3},-\delta \sft^{-3/4}\right)-x^*=\delta^2(1+o(1))>\epsilon\geq x-x^*,
$$
there exists a solution $(a_1,b_1)$ with $b_1=-L_1(|a_1|^{3/2}+1)$ and $a_1<0$ to $x(a_1,b_1)=x$. One can also show that $\pa_{b_1} \mX=2b_1(1+o(1))$ on this curve which implies the uniqueness of the point $(a_1,b_1)$. The existence and uniqueness of $(a_2,b_2)$, $(a_3,b_3)$ and $(a_4,b_4)$ can be done similarly.

We now show that $(a_{in},b_{in})$ is connected to $(a_1,b_1)$ by the part of the curve staying in $Z_1[L_1]$ with $a,b<0$. Indeed, note that for $b=0$ and $a<0$, on has $x<0$ from \fref{bd:Z1:x}. Also, for $|a|,|b|\leq L_1$ one has from \fref{bd:Z1:x} that $x(a,b)\leq C(L_1)\sft^{3/2}$ so $x(a,b)<x$ for $K$ large enough. Consider the part of $\Gamma$ which is in the zone $Z_1$ with $a,b<0$. This proves that the only points of this set at the boundary of $Z_1\cap \{a,b<0 \}$ are precisely $(a_{in},b_{in})$ and $(a_1,b_1)$. From a direct check on the gradient of $x$, the curve $\Gamma$ indeed penetrates the zone $Z_1\cap \{a,b<0 \}$ at these two points, and so the part of $\Gamma$ in the zone $Z_1\cap \{a,b<0 \}$ consists of a curve joining these two points. Moreover, as in $Z_1$ there holds $\pa_a x>0$ from \fref{bd:Z1:paamX}, it can be parametrised with the variable $b$.

The proof of the properties for $\Gamma_i$ for $i\geq 2$ can be proved following the same ideas. We just mention that in $Z_2[L_1]$ there holds $\pa_b \mX >0$ for $b>0$ and $\pa_b \mX<0$ for $b<0$ from \fref{bd:Z2:pax}, which shows that the curves $\Gamma_2$ and $\Gamma_4$ can indeed be parametrised with the variable $a$.

\end{proof}

The knowledge of $(a,b)\mapsto \mX$ and of its level curves (Lemmas \ref{ap:calculsx}, \ref{ap:calculsx2} and \ref{lem:lagrangian}), allows to retrieve the value of $\mY$ by incompressibility using \fref{id:intevolume2}.

\begin{lemma} \label{lem:computationmathcaly}

For any $N_2\geq 1$, for $N_1$ large enough and then $K$ large, the following holds true for $\delta>0$ small and then $\epsilon>0$ small enough. If, in the first case, $(X,Y)\in Z_0^c$ is such that either $-\epsilon \leq x(a,b)-x^*\leq K\sft^{3/2}$, or\footnote{Note that in this second subcase, for $K$ large, from \fref{bd:xvanshen} one has necessarily $|a|\approx |b|^{2/3}\gg 1$.}, $K\sft^{3/2}\leq x-x^*\leq \epsilon$ and $b<0$ and $N_2 |\mX(a,b)|\leq |b|^2$ then there holds:
\be \lab{id:mYleftcentral}
\mY(a,b)= \mY^\T(a,b) +\int_{-\infty}^b  \frac{O\left(\sft^{\frac{1}{12}}+|\tb|^{\frac 14}\sft^{\frac{3}{16}}+|\mX|^{\frac 16}\sft^{\frac{1}{4}} \right)}{1+3\Psi_1^2\left( p^* \left(\mX-\tb^2\right) \right)}d\tb+O(\sft^{\frac 14}),
\ee
\begin{eqnarray}
 \pa_{a}\mathcal Y(a,b)  \lab{id:paamYleftcentral}  &=& \pa_{a}\mathcal Y^\T(a,b)+ 6p^* \left(1+3\Psi_1^2\left(p^*\left(\mX-b^2 \right) \right) \right) \\
\non &&\qquad \qquad \qquad  \times \Biggl( \int_{-\infty}^{b}  \frac{\Psi_1\left(p^*\left(\mX-\tb^2 \right) \right)O\left(\sft^{\frac{1}{12}}+|\mX|^{\frac 16}\sft^{\frac{1}{4}}+|\tb|^{\frac 14}\sft^{\frac{3}{16}}+\sft^{\frac{1}{4}}|b|^{\frac 13} \right)}{\left(1+3\Psi_1^2\left(p^*(\mX-\tb^2) \right) \right)^3} d\tb \\
\non &&\qquad \qquad \qquad  \qquad \qquad \qquad +\int_{-\infty}^b \frac{O\left(\sft^{\frac{1}{12}} \right)}{\left(1+3\Psi_1^2\left(p^*(\mX-\tb^2) \right) \right)^3} d\tb+O(\sft^{\frac 74})\Biggr).
\end{eqnarray}
If, in the second case, $(X,Y)\in Z_0^c$ is such that:
\be \lab{autosimaabright2}
N_1\leq |p^{*2}a^3+b^2|, \ \ b^2\leq N_2 |p^{*2}a^3+b^2|,
\ee
then there holds:
\be \lab{bd:sides:mY}
\mY(a,b)= \mY^\Theta (a,b) \left(1+O(\sft^{\frac{1}{12}}|\mX|^{\frac{1}{18}}) \right), \ \ \pa_a \mY(a,b) = \pa_a\mY^{\Theta}(a,b) +O(\sft^{\frac{1}{12}}|\mX|^{-\frac 12+\frac{1}{18}}),
\ee
\be \lab{bd:sides:pabmY}
\pa_b \mY(a,b)=\pa_b\mY^\Theta (a,b)+O\left(|\mX|^{-\frac 23 +\frac{1}{18}}\sft^{\frac{1}{12}} \right).
\ee

\end{lemma}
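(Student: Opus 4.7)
\medskip

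\noindent\emph{Proof plan for Lemma \ref{lem:computationmathcaly}.}

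The starting point is that the map $(a,b)\mapsto (\mathcal{X}[t](a,b),\mathcal{Y}[t](a,b))$ preserves volume: the Lagrangian-to-Eulerian flow is incompressible and the rescalings \fref{def:abrenorm}--\fref{def:mathcalXmathcalYrenorm} have Jacobian one. Combining this with Appendix \ref{ap:y} (retrieval of one component of a volume-preserving planar map from the other) gives that along a level curve $\{\mathcal{X}(a,b)=\mathcal{X}_0\}$ parametrised by $\tilde b$, writing $a=a(\mathcal{X}_0,\tilde b)$,
\[
\frac{\partial \mathcal{Y}}{\partial \tilde b}\bigg|_{\mathcal{X}=\mathcal{X}_0} =\frac{1}{\partial_a\mathcal{X}(a(\mathcal{X}_0,\tilde b),\tilde b)}.
\]
Integrating from the entry point of $\Gamma[\mathcal{X}_0]\cap Z_0^c$ reduces the computation of $\mathcal{Y}$ to (i) evaluating $\mathcal{Y}$ at the entry $b=b_{in}=-\delta \sft^{-3/4}$, and (ii) controlling the integrand using the sharp estimates of Lemmas \ref{ap:calculsx} and \ref{ap:calculsx2}. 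The unperturbed counterpart $\mathcal{Y}^\Theta$ satisfies the exact same identity with $\partial_a\mathcal{X}^\Theta = 1+3p^{*2}\ma^2 = 1+3\Psi_1^2(p^*(\mathcal{X}-\tilde b^2))$ along its own level curve (thanks to \fref{id:psi1implicit}), so the estimates for $\mathcal{Y}-\mathcal{Y}^\Theta$ boil down to comparing the two integrands.

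For the first case, Lemma \ref{lem:lagrangian} guarantees that from $b_{in}$ up to the target point $(a,b)$ the level curve stays in $Z_1[L_2]$ (sub-case $-\epsilon\le x-x^*\le K\sft^{3/2}$), or crosses into $Z_2[L_1]$ while keeping $b<0$ and $N_2|\mathcal{X}|\le b^2$ (sub-case $K\sft^{3/2}\le x-x^*\le \epsilon$). In both sub-cases the $b$-parametrisation is valid since $\partial_b \mathcal{X}\ne 0$ in these regions (by \fref{bd:Z1:pabmX} and \fref{bd:Z2:pax}). Plugging \fref{bd:Z1:paamX}--\fref{bd:Z2:paax} into $1/\partial_a\mathcal{X}$, and using that on the $Z_2$ piece $3p^{*2}a^2=3\Psi_1^2(p^*(\mathcal{X}-b^2))+O(\sft^{1/12}|\mathcal{X}|^{1/6})$ thanks to \fref{bd:Z2:a}, yields
\[
\frac{1}{\partial_a \mathcal{X}}=\frac{1+O(\sft^{1/12}+|\tilde b|^{1/4}\sft^{3/16}+|\mathcal{X}|^{1/6}\sft^{1/4})}{1+3\Psi_1^2(p^*(\mathcal{X}-\tilde b^2))}.
\]
Integrating in $\tilde b$ and subtracting the analogous identity for $\mathcal{Y}^\Theta$ produces \fref{id:mYleftcentral}. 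The boundary contribution $\mathcal{Y}(a(\mathcal{X}_0,b_{in}),b_{in})-\mathcal{Y}^\Theta(a(\mathcal{X}_0,b_{in}),b_{in})$ is absorbed into the $O(\sft^{1/4})$ error term: at $b=b_{in}$ the Eulerian normal coordinate lies in an $O(\sft^{1/4})$-thin layer below $Y_0$, where the full solution is still regular and $\mathcal{Y}^\Theta$ is likewise close to its limiting value, so the discrepancy is $O(\sft^{1/4})$.

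For the derivative \fref{id:paamYleftcentral}, we differentiate the integral representation, exploiting the identity $\partial_a\mathcal{Y}|_b=\partial_a\mathcal{Y}|_{\mathcal{X}}+(\partial_a\mathcal{X})\,\partial_\mathcal{X}\mathcal{Y}|_b$ and volume preservation $\partial_a\mathcal{Y}\,\partial_b\mathcal{X}-\partial_a\mathcal{X}\,\partial_b\mathcal{Y}=-1$, to reduce to integrals of $(\Psi_1\Psi_1')/(1+3\Psi_1^2)^3$ type weights, then propagate the error estimates through. For the second case \fref{autosimaabright2}, $|\mathcal{X}|\gtrsim N_1$ is large and we are in the tails of $\mathrm{Supp}(\Theta)$ where $\mathcal{Y}^\Theta \sim C_\pm|\mathcal{X}|^{-1/6}$. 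The same integration procedure applies; now the multiplicative form \fref{bd:sides:mY} is natural because both $\mathcal{Y}$ and $\mathcal{Y}^\Theta$ are of the same small size $|\mathcal{X}|^{-1/6}$, with the relative error $O(\sft^{1/12}|\mathcal{X}|^{1/18})$ produced by inserting the asymptotic $\Psi_1^2\approx (p^*|\mathcal{X}-\tilde b^2|)^{2/3}$ in the error terms above. The bound \fref{bd:sides:pabmY} on $\partial_b\mathcal{Y}$ is the most direct since $\partial_b\mathcal{Y}=1/\partial_a\mathcal{X}$ on the level curve, giving the error at a single point.

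The principal difficulty is the uniform control of error terms across the several geometric regimes encountered along a single level curve. When a curve crosses the boundary $\{b=-L_1(1+|a|^{3/2})\}$ between $Z_1[L_1]$ and $Z_2[L_1]$, the local expansions of $\partial_a\mathcal{X}$ have different natural forms ($1+3p^{*2}a^2$ versus $1+3\Psi_1^2(p^*(\mathcal{X}-b^2))$), and one must verify by direct calculation that the two expansions match at leading order with the claimed remainder, so that the integral accumulates the correct weighted error uniformly in $\mathcal{X}$, $b$, and $\sft$. A secondary difficulty is the computation of $\partial_a\mathcal{Y}$, where differentiation under the integral generates several terms that must be reorganized into the compact form \fref{id:paamYleftcentral} via \fref{bd:Z1:paaamX} and the differentiation rule for $\Psi_1$.
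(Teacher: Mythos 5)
Your starting point is the right one and matches the paper's: volume preservation plus Lemma \ref{lem:paramincompressibility} gives $\frac{\pa \mY}{\pa \tb}\big|_{\mX}=1/\pa_a\mX$ along level curves of $\mX$, and the comparison with $\mY^\T$ reduces to comparing $\pa_a\mX$ with $1+3\Psi_1^2(p^*(\mX-\tb^2))$; the treatment of the boundary contribution at $b_{in}$ and of the first case is essentially the paper's Step 1. (One small misreading: in the second subcase of the first case the relevant arc from $(a_{in},b_{in})$ to the target point stays entirely in $Z_1$ — the target satisfies $|b|\approx p^*|a|^{3/2}$, below the $Z_2$ threshold $|b|>L(1+|a|^{3/2})$ — so no crossing into $Z_2$ occurs there and only \fref{bd:Z1:paamX} is needed.)

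The genuine gap is in the second case. There the level curve necessarily traverses $Z_2$, and on that portion the $\tb$-parametrisation with integrand $1/\pa_a\mX$ cannot be used: in $Z_2$ the error in \fref{bd:Z2:paax} is $O(|\mX|^{5/6}\sft^{1/4})$ while $1+3p^{*2}a^2$ can be of size one, so the relative error in $1/\pa_a\mX$ is not small, whereas $\pa_b\mX=2b(1+O(|\mX|^{1/6}\sft^{1/4}))$ does have a good relative error. This is why the paper splits $\Gamma\cap Z_0^c$ into up to five arcs (Lemma \ref{lem:lagrangian}(ii)), integrates $d\ta/\pa_b\mX$ on the $Z_2$ arcs, and — this is the crux — converts the resulting integrals $\int\pa_{bb}\mX/(\pa_b\mX)^3\,d\ta$ back into $\int\pa_{aa}\mX/(\pa_a\mX)^3\,d\tb$ via an explicit integration-by-parts (change-of-parametrisation) identity, tracking the boundary terms $1/(\pa_a\mX\,\pa_b\mX)$ at the junction points $(a_i,b_i)$. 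Writing ``the same integration procedure applies'' skips exactly this step, which is where most of the work in the paper's Step 2 lies.

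Second, your claim that \fref{bd:sides:pabmY} is ``the most direct since $\pa_b\mY=1/\pa_a\mX$ on the level curve'' is incorrect. The identity $\pa_{\tb}\mY|_{\mX}=1/\pa_a\mX$ concerns differentiation along the level curve at fixed $\mX$; the quantity in \fref{bd:sides:pabmY} is $\pa_b\mY|_a$, and by the chain rule $\pa_b\mY|_a$ contains in addition the term $\pa_b\mX\cdot\pa_\mX(\text{integral})$. On the sides this extra term has size $|\mX|^{1/2}\cdot|\mX|^{-7/6}=|\mX|^{-2/3}$, which is of the same order as $\pa_b\mY^\T$ itself — consistent with the formula for $\pa_b\mY^\T$ in Lemma \ref{ap:calculstheta}, whose second summand $12p^*\mb\int(\cdots)$ is not negligible. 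Dropping it would give the wrong leading order. The same differentiated integral (with the $Z_2$ reparametrisation issue above) must therefore be controlled for $\pa_b\mY$ as well as for $\pa_a\mY$, or one recovers $\pa_b\mY$ from $\pa_a\mY$ and $\pa_b\mX$, $\pa_a\mX$ by incompressibility as the paper does in the first case.
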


\begin{proof}[proof of Lemma \ref{lem:computationmathcaly}]

The proof is relegated to Appendix \ref{ap:y2}.

\end{proof}


\subsection{Inverting the characteristics map} \label{subsec:proofthmain}

We now prove Theorem \ref{th:main2}. Our main strategy is to renormalise the characteristics and to invert them precisely. The following drawing sums up the transformations. In particular, we will show that the mapping $(a,b)\mapsto (\mX,\mY)$ is close to the mapping $\Phi$ defined by \fref{def:Phi}. We will use a perturbative argument to show the inverse is close to $\Phi^{-1}=(\ma,\mb)$. However, this inversion is done in an unbounded zone, which forces us to renormalise the perturbation problems when approaching infinity or the boundary of support of $\T$. Hence we need precise asymptotic estimates for $\ma$ and $\mb$. The Lemma below provides us with all the estimates that will be used to conclude the proof of Theorem \ref{th:main}.

\begin{lemma} \lab{ap:calculstheta}
One has the formulae for the mappings $(\mX^\T,\mY^\T)$ and $(\ma,\mb)$ defined by \fref{def:mXmYT}:
\be \lab{id:paamYTheta}
\pa_\ma\mY^\Theta (\ma,b)=6p^* \left(1+3p^{*2}\ma^2 \right) \int_{-\infty}^{\mb}  \frac{\Psi_1\left(p\left(\mX^\Theta(\ma,\mb)-\tmb^2 \right) \right)}{\left(1+3\Psi_1^2\left(p(\mX^\Theta(\ma,b)-\tmb^2) \right) \right)^3}d\tmb,
\ee
$$
\pa_b \mY^{\T}(\ma,\mb)=\frac{1}{1+3p^{*2}\ma^2}+12p^*\mb\int_{-\infty}^{\mb}  \frac{\Psi_1\left(p\left(\mX^\Theta(\ma,\mb)-\tmb^2 \right) \right)}{\left(1+3\Psi_1^2\left(p(\mX^\Theta(\ma,\mb)-\tmb^2) \right) \right)^3}d\tmb,
$$
\be \lab{id:pamXmYTheta}
\begin{pmatrix} \pa_\mX \ma & \pa_\mY \ma \\ \pa_\mX \mb & \pa_\mY \mb \end{pmatrix}=\begin{pmatrix} \pa_\mb \mY^\T &- \pa_\mb \mX^\T \\ -\pa_\ma \mY^\T & \pa_\ma \mX^\T \end{pmatrix}.
\ee
\begin{itemize}
\item[(i)] \emph{Bottom of the self similar zone.} There exists $ c_i\neq 0 \ \text{for} \ i=1,...,4$, such that as $\kappa \rightarrow 0$, for all $(\mX,\mY)\in \mH$ such that $0<\mY\leq \kappa \langle \mX \rangle^{-1/6}$, there holds $\mb<0$ and:
$$
|p^{*2}\ma^{3}+\mb^{2}|= o(\mb^{2}), \ \ |\mb|\approx \frac{1}{\mY^3}, \ \ |\ma |\approx \frac{1}{\mY^{2}}, \ \ \left|\int_{-\infty}^{\mb} \frac{\Psi_1(\mX-\tmb^2)}{\left(1+3\Psi_1^2(\mX-\tmb^2)\right)^3}d\tmb \right|\approx |\mb|^{-\frac 73},
$$
and, at the point $(\ma,\mb)=(\ma,\mb)(\mX,\mY)$:
\be \lab{bd:calculTheta:bottom:pamXmY}
|\pa_\ma \mX^\Theta| \sim c_1 |\mb|^{\frac 43}, \ \ |\pa_\mb \mX^\Theta|\sim c_2 |\mb|, \ \ |\pa_\ma\mY^\Theta|\sim c_3 |\mb|^{-1}, \ \  |\pa_\mb\mY^\Theta|\sim c_4 |\mb|^{-\frac 43}.
\ee
\item[(ii)] \emph{Sides of the self similar zone.} For $\kappa>0$ small, there exists $M^*(\kappa)$ with $M^*(\kappa)\rightarrow \infty$ such that for $M\geq M^*(\kappa)$, $|\mX|\geq M$ and $\kappa |\mX|^{-1/6}\leq \mY \leq (2-\kappa)C_{\pm}|\mX|^{-1/6}$ where $\pm=\text{sgn}(\mX)$:
$$
|\mb|\lesssim C(\kappa)|p^{*2}\ma^{3}+\mb^{2}|, \ \  |p^{*2}\ma^{3}+\mb^{2}|\geq \frac M2, \ \ |\mb|\approx |\mX|^{1/2}, \ \ |\ma|\approx |\mX|^{\frac 16},
$$
\be \lab{eq:selfsimsides}
\begin{array}{l l}
&\pa_\mX \ma\sim |\mX|^{-\frac 23}\varphi^1_{\pm}(\mY|\mX|^{1/6}), \ \ \pa_\mY \ma\sim |\mX|^{\frac 12}\varphi^2_{\pm}(\mY|\mX|^{1/6}),\\
&\pa_\mX \mb\sim |\mX|^{-\frac 12}\varphi^3_{\pm}(\mY|\mX|^{1/6}), \ \ \pa_\mY \mb\sim |\mX|^{\frac 23}\varphi^4_{\pm}(\mY|\mX|^{1/6}),
\end{array}
\ee
for some functions $\varphi^j_{\pm}\in C^{\infty}(0,2C_{\pm})$ where $\pm=\mbox{sgn}(\mX)$, such that $((\varphi^1_{\pm},\varphi^3_{\pm}),(\varphi^2_{\pm},\varphi^4_{\pm}))$ is volume preserving. For $N,\tilde N>0$, we have as $\tilde N \rightarrow \infty$ and $N^2\tilde N^{-1}\rightarrow \infty$ that for $(\ma,\mb)\in \mathbb R^2$ with $|p^{*2}\ma^3+\mb^2|\geq \max (N,\tilde N^{-1}\mb^{2})$ there holds:
\be \lab{bd:calculTheta:side:mY}
\mY^\Theta(\ma,\mb)\approx |\mX^{\Theta}(\ma,\mb)|^{-\frac 16}, \ \ \int_{-\infty}^{\mb} \frac{|\tmb|^{\frac{1}{9}}d\tmb}{1+3\Psi_1^2\left(p^*(\mX^\Theta(\ma,\mb)-\tmb^2)\right)}\lesssim |\mX^{\Theta}(\ma,\mb)|^{-\frac 16 +\frac{1}{18}},
\ee
\be \lab{bd:calculTheta:side:pamY}
\left| \int_{-\infty}^\mb \frac{ \Psi_1\left(p^*(\mX^\T-\tmb^2)\right)d\tmb}{\left(1+3\Psi_1^2\left(p^*(\mX^\T-\tmb^2) \right) \right)^3} \right| \lesssim |\mX^\T|^{-\frac 76}, \ \  \int_{-\infty}^\mb \frac{ |\Psi_1\left(p^*(\mX^\T-\tmb^2)\right)||\tmb|^{\frac{1}{12}} d\tmb }{\left(1+3\Psi_1^2\left(p^*(\mX^\T-\tmb^2) \right) \right)^3}\lesssim |\mX^\T|^{-\frac 76+\frac{1}{24}}.
\ee
\end{itemize}

\end{lemma}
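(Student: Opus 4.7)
\medskip

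\noindent \textbf{Proof plan.} The two displayed formulae for $\pa_\ma\mY^\T$ and $\pa_\mb\mY^\T$ come from differentiating the integral expression \fref{def:Phi} for $\mY^\T$ under the integral sign, after noting that $\Psi_1'(z)=-(1+3\Psi_1^2(z))^{-1}$, which is obtained by differentiating the defining relation \fref{id:psi1implicit}. For $\pa_\ma\mY^\T$ only the integrand depends on $\ma$, through $\mX^\T=\ma+\mb^2+p^{*2}\ma^3$, producing the prefactor $1+3p^{*2}\ma^2=\pa_\ma\mX^\T$. For $\pa_\mb\mY^\T$ the boundary term at $\tmb=\mb$ contributes $1/(1+3\Psi_1^2(p^*(\ma+p^{*2}\ma^3)))$, which equals $1/(1+3p^{*2}\ma^2)$ by \fref{id:psi1implicit}; the interior contribution mirrors that of $\pa_\ma\mY^\T$ with $\pa_\mb\mX^\T=2\mb$ in place of $1+3p^{*2}\ma^2$. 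The Jacobian identity \fref{id:pamXmYTheta} is the standard inverse of a unit-determinant Jacobian, valid because $\Phi$ is volume preserving by Proposition \ref{pr:selfsimfunda}(i).

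For part (i), I would start from the boundary-layer asymptotic \fref{expanboundary} proved in Proposition \ref{pr:selfsimfunda}(v), giving $\Theta\sim p^{*-2}\mY^{-2}$ as $\mY\to 0^+$, hence $|\ma|\approx \mY^{-2}$. The relation $\mb^2=\mX-\ma-p^{*2}\ma^3$ together with the dominant $p^{*2}\ma^3\sim -p^{*-4}\mY^{-6}$ then yields $\mb<0$, $|\mb|\approx \mY^{-3}\approx|\ma|^{3/2}$, and $|p^{*2}\ma^3+\mb^2|=o(\mb^2)$. To estimate $I(\mb):=\int_{-\infty}^{\mb}\Psi_1(p^*(\mX-\tmb^2))/(1+3\Psi_1^2)^3\,d\tmb$, I would use $\Psi_1(z)\sim -\mathrm{sgn}(z)|z|^{1/3}$ from \fref{id:psi1infinity}: for $\tmb\leq \mb$ with $|\mb|$ large the argument $p^*(\mX-\tmb^2)\approx -p^*\tmb^2$ is large, so the integrand is of order $|\tmb|^{-10/3}$ and $|I(\mb)|\approx |\mb|^{-7/3}$. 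The four pointwise bounds \fref{bd:calculTheta:bottom:pamXmY} then follow by plugging these scales into $\pa_\ma\mX^\T=1+3p^{*2}\ma^2$, $\pa_\mb\mX^\T=2\mb$, $\pa_\ma\mY^\T=6p^*(1+3p^{*2}\ma^2)I(\mb)$, and $\pa_\mb\mY^\T=(1+3p^{*2}\ma^2)^{-1}+12p^*\mb\,I(\mb)$; in the last expression both summands are of order $|\mb|^{-4/3}$.

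For part (ii), I would use the uniform asymptotic \fref{expanboundary2}: setting $\mathsf Y:=\mY|\mX|^{1/6}$, which ranges in a compact subset of $(0,2C_\pm)$ with $\pm=\mathrm{sgn}(\mX)$, one has $\ma=-|\mX|^{1/3}\varphi_\pm(\mathsf Y)(1+O(|\mX|^{-1}))$. Then $\mb^2=\mX-\ma-p^{*2}\ma^3\sim|\mX|(1-p^{*2}\varphi_\pm^3(\mathsf Y))$, giving $|\mb|\approx|\mX|^{1/2}$ and $|p^{*2}\ma^3+\mb^2|\geq|\mX|/2\geq M/2$ for $|\mX|$ large. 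The scaling functions $\varphi^j_\pm$ in \fref{eq:selfsimsides} are then built from $\varphi_\pm$ via \fref{id:pamXmYTheta}: for instance $\pa_\mY\mb=\pa_\ma\mX^\T=1+3p^{*2}\ma^2\sim 3p^{*2}|\mX|^{2/3}\varphi_\pm(\mathsf Y)^2$, giving $\varphi^4_\pm(z)=3p^{*2}\varphi_\pm(z)^2$; similarly $\varphi^2_\pm$ is a multiple of the rescaled $\mb/|\mX|^{1/2}$, $\varphi^3_\pm$ comes from $\pa_\mX\mb=-\pa_\ma\mY^\T$, and $\varphi^1_\pm$ from $\pa_\mX\ma=\pa_\mb\mY^\T$. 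The homogeneity identity $(-\tfrac23)+\tfrac23=0=(-\tfrac12)+\tfrac12$ transfers volume preservation of $\Phi$ onto $((\varphi^1_\pm,\varphi^3_\pm),(\varphi^2_\pm,\varphi^4_\pm))$. The integral bounds \fref{bd:calculTheta:side:mY}, \fref{bd:calculTheta:side:pamY} follow by the substitution $\tilde b=\tmb|\mX|^{-1/2}$ and $|\Psi_1(p^*(\mX^\T-\tmb^2))|\lesssim|\mX|^{1/3}$, so that the exponent $-7/6$ arises as $\tfrac12-\tfrac53$.

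The main technical obstacle is part (ii): promoting the pointwise asymptotic \fref{expanboundary2} into sharp differentiated asymptotics for $\Phi^{-1}$ with identifiable smooth profiles $\varphi^j_\pm$ requires a differentiated version of \fref{expanboundary2}, uniformly up to the endpoints $\mathsf Y\to 0,\,2C_\pm$ where $\varphi_\pm$ blows up. I would handle this by reexamining the proof of Proposition \ref{pr:selfsimfunda}(v) in the rescaled coordinates $(\mathsf a,\mathsf b,\mathsf Y)=(\ma|\mX|^{-1/3},\mb|\mX|^{-1/2},\mY|\mX|^{1/6})$, in which the rescaled diffeomorphism admits an analytic limit on any compact subinterval of $(0,2C_\pm)$; the slightly weakened exponent $-7/6+1/24$ in \fref{bd:calculTheta:side:pamY} records precisely the loss incurred in matching the interior scaling of part (ii) with the boundary-layer scaling of part (i).
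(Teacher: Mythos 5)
The paper gives no proof of this lemma at all: it simply states that every identity is a ``direct consequence of the formula for $\Phi$ and of the properties of $\T$ listed in Proposition \ref{pr:selfsimfunda}'', which is exactly the derivation you lay out (differentiation under the integral using $\Psi_1'=-(1+3\Psi_1^2)^{-1}$, the unit-determinant Jacobian inversion, and the asymptotics \fref{expanboundary}, \fref{expanboundary2}, \fref{id:psi1infinity}), so your proposal is correct and matches the intended argument. The only step deserving one more line is the claim $|\pa_\mb\mY^\T|\sim c_4|\mb|^{-4/3}$ with $c_4\neq 0$: observing that both summands have this order does not exclude cancellation, so one should either compute the two leading constants or note that $\pa_\mb\mY^\T=-\pa_\mX\T$ and read the nonvanishing leading coefficient off the expansion \fref{eq:mYTgen}.
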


\begin{proof}[Proof of Lemma \ref{ap:calculstheta}]

All identities in Lemma \ref{ap:calculstheta} are direct consequences of the formula for the mapping $\Phi$ defined by \fref{def:Phi}, and of the other properties of $\T$ listed in Proposition \ref{pr:selfsimfunda}. To avoid repetition, we do not give a proof and refer to Section \ref{sec:selfsim} for the study of these functions.

\end{proof}

We are now in position to end the proof of Theorem \ref{th:main}. The proof is lengthy and a bit repetitive. The computations are however detailed for clarity. For $|x-x^*(t)|\leq \epsilon$, let $\Gamma_{(X_b,0)}^{(X_{out},Y_{out})}$ be the part of the curve $\Gamma[x]=\{x[t](X,Y)=x\}$ joining the boundary of the upper half plane and the point $(a_{out},b_{out})$ defined in Lemma \ref{lem:lagrangian}. We define:
\be \lab{def:y*}
y^*(t,x)=\int_{\Gamma_{(X_b,0)}^{(X_{out},Y_{out})}} \frac{ds}{|\nabla x|}.
\ee
We introduce the following zones: the core, the sides, the bottom and the top of the self-similar zone, and the zones below and above, respectively:
\begin{align}
\label{autosimeulerstep1} &\mathcal Z_{co}[M_{co},\kappa_{co}]=\left\{ |x-x^*|\leq M_{co}\sft^{3/2}, \ \kappa_{co}\sft^{-1/4}\leq  y\leq 2 (\mY^*_{\mu,\nu,\iota}(\frac{ x-x^*}{\sft^{3/2}})-\kappa_{co})\sft^{-\frac 14} \right\},\\
&\mathcal Z_{si}[M_{si},\kappa_{si}]=\left\{ M_{si}\sft^{3/2}\leq |x-x^*|\leq \epsilon, \ \frac{\kappa_{si}}{|x-x^*|}\leq y\leq \frac{C_{\text{sgn}(x-x^*)}-\kappa_{si}}{|x-x^*|^{1/6}} \right\},\\
\label{autosimeulerbottom} &\mathcal Z_{bo}[K_{bo},\kappa_{bo}]=\left\{ |x-x^*|\leq \epsilon, \ K_{bo}\leq y\leq \frac{\kappa_{bo}}{|x-x^*|^{1/6}+\sft^{1/4}} \right\},\\
&\mathcal Z_{to}[M_{to},\kappa_{to}]=\left\{ |x-x^*|\leq \epsilon, \ y^*(t,x)(1-\kappa_{to})\leq y\leq y^*(t,x)-K_{to} \right\},\\
&\mathcal Z_{be}[K_{bo}]=\left\{ |x-x^*|\leq \epsilon, \ 0\leq y\leq K_{bo}\right\},\\
&\mathcal Z_{ab}[K_{to}]=\left\{ |x-x^*|\leq \epsilon, \ y^*(t,x)-K_{to}\leq y\right\}.
\end{align}
corresponding to the following picture:
\begin{center}
\includegraphics[width=11cm]{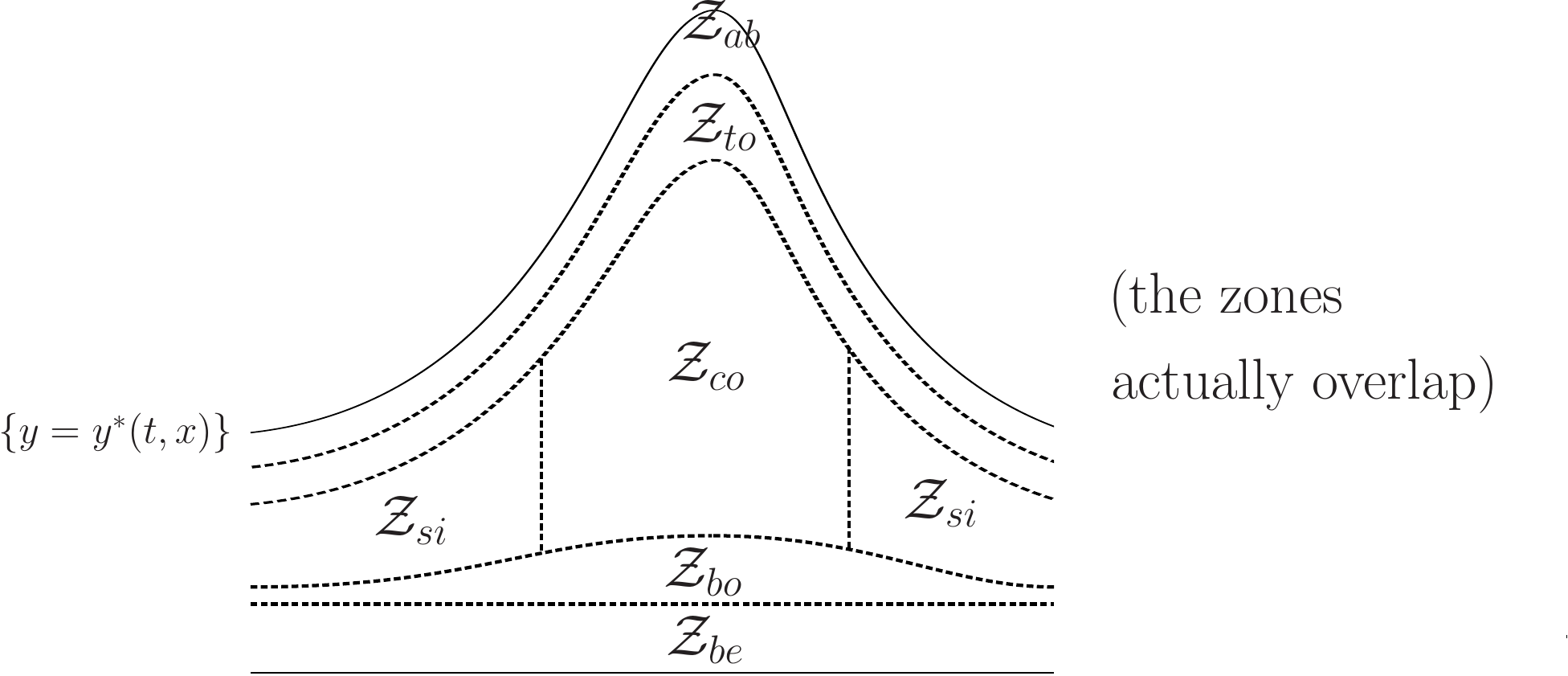}
\end{center}
Their Lagrangian counterparts are defined as follows:
\begin{align}
\label{autosimab1} & Z_{co}[N_{co}]=\left\{ |a|\leq N_{co}, \quad |b|\leq N_{co} \right\},\\
\label{autosimaabright} & Z_{si}[N_{si},\tilde N_{si}]=Z_0^c\cap \left\{ N_{si} \leq  |p^{*2}a^3+b^2|, \quad b^2\leq \tilde N_{si} |p^{*2}a^3+b^2|. \right\},\\
\label{autosimabbottom} &Z_{bo}[N_{bo},\tilde N_{bo}]=\left\{ -\delta \sft^{-3/4}\leq b\leq -N_{bo}, \quad \tilde N_{bo} |p^{*2}a^3+b^2|\leq b^2 \right\},\\
&Z_{to}[M_{to},\kappa_{to}]=\left\{ -\delta \sft^{-3/4}\leq b\leq -N_{to}, \quad \tilde N_{to} |p^{*2}a^3+b^2|\leq b^2 \right\}
\end{align}

\begin{proof}[Proof of Theorem \ref{th:main}]

\noindent We invert the characteristics, bearing in mind that the leading order term is $(\ma,\mb)$ as defined in \fref{def:mXmYT}. The change of variables preserving volume, one has:
$$
\begin{pmatrix} \pa_\mX a & \pa_\mY a \\ \pa_\mX b & \pa_\mY b \end{pmatrix}=\begin{pmatrix} \pa_b \mY &- \pa_b \mX \\ -\pa_a \mY & \pa_a \mX \end{pmatrix}
$$
Once the characteristics are inverted, we use the Taylor expansion of $u$ in Lagrangian variables near $(X_0,Y_0)$ (a direct consequence of \fref{def:abrenorm}) where $u_X$ below is evaluated at $(T,X_0,Y_0)$:
\be \lab{id:taylorugen}
u(t,X,Y)=u^*-(-u_X \bar k_1)a\sft^{\frac 12}+O\left(\sft^{\frac 34}|b|+\sft|a|^2+\sft^{\frac 32}|a| \right)
\ee
\be \lab{id:taylorpaugen}
\pa_a u =-(-u_X \bar k_1)\sft^{\frac 12}\left(1+O\left(T-t+\sft^{\frac 12}|a|+\sft^{\frac 34}|b|\right)\right), \qquad \pa_b u=O(\sft^{\frac 34}),
\ee
\be \lab{id:derivee}
\pa_\mX u =\pa_a u\pa_b\mY-\pa_b u \pa_a\mY, \qquad \pa_\mY u =-\pa_a u \pa_b \mX+\pa_b u \pa_a \mX.
\ee
We fix once for all the variables $L_1$ and $L_2$ such that Lemmas \ref{ap:calculsx}, \ref{ap:calculsx2} and \ref{lem:lagrangian} hold true.\\

\noindent \textbf{Step 1} \emph{The core of the self-similar zone}. We claim that for any $M_{co},\kappa_{co}>0$, for $\delta$ small and then $\epsilon$ small, the estimates \fref{bd:tildeu}, \fref{bd:paXtildeu} and \fref{bd:paYtildeu} hold true for $(\hat x,\hat y)\in \mathcal Z_{co}$. Fix then $(\hat x,\hat y)\in \mathcal Z_{co}$, and define $(\hat \mX,\hat \mY)$ by \fref{def:mathcalXmathcalYrenorm}. We want to invert the characteristics and to find the corresponding $\hat a$ and $\hat b$ such that $(x,y)(\hat a,\hat b)=(\hat x,\hat y)$.

Let $N_{co}\gg 1$ to be chosen later on and consider $(a,b)\in Z_{co}$. We will first show a priori estimates in $Z_{co}$, that will allow us to prove that $(\hat a,\hat b)\in Z_{co}$. This will justify the a priori estimates and prove the claim. The zone $Z_{co}$ lies in the zone $Z_1$ for $\delta$ small enough, so the corresponding estimates in Lemma \ref{ap:calculsx} apply. Moreover, as $|a|,|b|,|\mX|\lesssim 1$ one obtains from \fref{bd:xvanshen}, \fref{bd:Z1:paamX} and \fref{bd:Z1:pabmX}:
$$
\mX=a+p^{*2}a^3+b^2+O\left(\sft^{\frac{1}{4}}\right)=\mX^\Theta(a,b)+O\left(\sft^{\frac{1}{4}}\right),
$$
and similarly:
\be \lab{coreselfsim:paamX}
\pa_a \mX=1+3p^{*2}a^2+O(\sft^{\frac{1}{4}})=\pa_a\mX^\Theta+O(\sft^{\frac{1}{4}}), \ \ \pa_b \mX=2b+O(\sft^{\frac 14})=\pa_b\mX^\Theta+O(\sft^{\frac 14}).
 \ee
Since $|\mX|\lesssim 1$, one can use the identities \fref{id:mYleftcentral} and \fref{id:paamYleftcentral}. Injecting that $|\mX|,|b|\lesssim 1$ and that $1+3\Psi_1^2\left( p^* \left(\mX-b^2\right) \right)\gtrsim (1+|\tb|^{4/3})$ for $\tb\leq b$ from \fref{id:psi1infinity} in \fref{id:mYleftcentral} gives:
$$
\mY(a,b)  = 2 \int_{-\infty}^b \frac{d\tb}{1+3\Psi_1^2\left( p^* \left(\mX-b^2\right) \right)} +O\left(\int_{-\infty}^b \frac{|\tb|^{\frac 14}\sft^{\frac{3}{16}}}{1+|\tb|^{\frac 43}}d\tb  \right)+O(\sft^{\frac{1}{12}})=\mY^\Theta (a,b) +O(\sft^{\frac{1}{12}}).
$$
Injecting that $|b|,|\mX|\lesssim 1$ and $1+3\Psi_1^2\left( p^* \left(\mX-b^2\right) \right)\approx (1+|\tb|^{4/3})$ in \fref{id:paamYleftcentral} gives similarly:
\bee
 \pa_{a}\mathcal Y(a,b) &=& 6p^* \left(1+3\Psi_1^2\left(p^* \left(\mX-b^2 \right) \right) \right) \int_{-\infty}^{b}  \frac{\Psi_1\left(p^*\left(\mX-\tb^2 \right) \right)}{\left(1+3\Psi_1^2\left(p^*(\mX-\tb^2) \right) \right)^3}d\tb \\
&& +O\left(\int_{-\infty}^b \frac{\sft^{\frac{1}{12}}+|\tb|^{\frac 14}\sft^{\frac{3}{16}}}{1+\tb^{\frac{10}{3}}}d\tb \right)+O(\sft^{\frac 74})\\
&=&\pa_a \mY^\Theta (a,b)+O(\sft^{\frac{1}{12}}).
\eee
We retrieve the last partial derivative $\pa_b \mY(a,b)$ by incompressibility, using that $\pa_a \mX \approx 1$, \fref{coreselfsim:paamX} and the above identity:
$$
\pa_b \mY(a,b)=\frac{1+\pa_a\mY\pa_b\mX}{\pa_a\mX}=\frac{1+(\pa_a\mY^\Theta+O(\sft^{\frac{1}{12}}))(\pa_b\mX^\Theta +O(\sft^{\frac 14}))}{\pa_a\mX^\Theta (1+O(\sft^{\frac{1}{12}}))}=\pa_b \mY^{\Theta}+O\left(\sft^{\frac{1}{12}} \right).
$$
We are now ready to invert the characteristics. We look for a solution of the form $(a,b)=\left(\bar a+h_1,\bar b +h_2\right)$ to $(x,y)(a,b)=(x,y)$, where
$$
(\bar a,\bar b)=\left(\ma\left(\hat \mX,\hat \mY \right),\mb \left(\hat \mX,\hat \mY\right) \right)=\left(-\T \left(\hat \mX,\hat \mY \right),\mb\left(\hat \mX,\hat \mY\right) \right).
$$
As for $(\hat x,\hat y)\in \mathcal Z_{co}$, $(\hat \mX,\hat \mY)$ is in a compact zone inside the support of $\Theta$, then $(\bar a,\bar b)\in Z_{co}$ for $N_{co}$ large enough depending on $M_{co}$ and $\kappa_{co}$, so that our previous computations apply. Consider then the mapping:
$$
\Xi:(h_1,h_2)\mapsto \left(  \mX\left(\bar a+h_1,\bar b +h_2\right), \mY\left(\bar a+h_1,\bar b +h_2\right) \right).
$$
From the estimates on the derivatives done above, there holds for $|h_1|,|h_2|=O(\sft^{1/12}) $:
\be \lab{id:jacobiancenter}
\begin{pmatrix} \pa_{h_1}\Xi_1 & \pa_{h_2}\Xi_1 \\ \pa_{h_1}\Xi_2 & \pa_{h_2}\Xi_2 \end{pmatrix} = \begin{pmatrix} \pa_a \mX^{\Theta}(\bar a,\bar b) & \pa_b \mX^{\Theta}(\bar a,\bar b) \\  \pa_a \mY^{\Theta}(\bar a,\bar b) & \pa_b \mY^{\Theta}(\bar a,\bar b)  \end{pmatrix} +O(\sft^{\frac{1}{12}}).
\ee
Also, again from the computations performed above:
$$
\Xi (0,0)-\left(\hat \mX, \hat \mY \right)=O(\sft^{\frac{1}{12}}).
$$
Note that, as $\bar a$ and $\bar b$ vary in $Z_{co}$, the first term in the right hand side of \fref{id:jacobiancenter} belongs to a compact set of invertible matrices. Hence we can invert the above equation applying the implicit function Theorem, uniformly as $t \rightarrow T$ for all $(\hat x,\hat y)\in Z_{co}$: there exists $(h_1,h_2)=O(\sft^{\frac{1}{12}})$ such that $\Xi(h_1,h_2)=\left( \hat \mX,\hat \mY \right)$. Hence we inverted the characteristics and found:
$$
\hat a=\bar a+O\left(\sft^{\frac{1}{12}}\right)=-\T(\hat \mX,\hat \mY)+O\left(\sft^{\frac{1}{12}}\right), \qquad \hat b=\bar b+O(\sft^{\frac{1}{12}}).
$$
Using the Taylor expansion \fref{id:taylorugen} of $u$ and the fact that $|\hat a|,|\hat b|\lesssim 1$:
$$
u(t,\hat x,\hat y)=u(t,\hat X,\hat Y)=u(t,X_0,Y_0)-(-u_X \bar k_1)\sft^{\frac 12}(\hat a+O(\sft^{\frac 14})),
$$
so we infer from \fref{id:defparametersgeneric} that:
\be \lab{id:uthcoreselfsim}
u(t,\hat x,\hat y) =u^*+ \sft^{\frac 12} \left( \Theta_{\mu,\nu,\iota} \left(\frac{\hat x-x^*}{\sft^{\frac 32}},\frac{\hat y}{\sft^{-\frac 14}} \right)+O\left(\sft^{\frac{1}{12}} \right)\right),
\ee
which shows \fref{bd:tildeu}. Once the inversion is done, the estimates for the derivatives \fref{bd:paXtildeu} and \fref{bd:paXtildeu} follow naturally, as from \fref{id:taylorpaugen}, \fref{id:derivee}, \fref{id:pamXmYTheta} and the above estimates:
\be \lab{id:Xuthcoreselfsim}
\pa_\mX u(t,\hat x,\hat y)= -\sft^{\frac 12}\pa_b \mY^\Theta \left(1+O(\sft^{\frac{1}{12}})\right)+O(\sft^{\frac 34})=\sft^{\frac 12}\pa_\mX \Theta_{\mu,\nu,\iota}\left(\frac{\hat x-x^*}{\sft^{3/2}},\frac{\hat y}{\sft^{-1/4}}\right)+O\left(\sft^{\frac{1}{12}}\right)
\ee
\be \lab{id:Yuthcoreselfsim}
\pa_\mY u(t,\hat x,\hat y)= \sft^{\frac 12}\pa_b \mX^\Theta \left(1+O(\sft^{\frac{1}{12}})\right)+O(\sft^{\frac 34})=\sft^{\frac 12}\pa_\mY \Theta_{\mu,\nu,\iota}\left(\frac{\hat x-x^*}{\sft^{3/2}},\frac{\hat y}{\sft^{-1/4}}\right)+O\left(\sft^{\frac{1}{12}}\right)
\ee

\noindent \textbf{Step 2} \emph{The bottom of the self-similar zone}. We claim that there exists $K_{bo}*>0$ such that for all $K_{bo}\geq K_{bo}^*$, and $\kappa_{co}>0$, for $\delta$ small and then $\epsilon$ small, the estimates \fref{bd:tildeu}, \fref{bd:paXtildeu} and \fref{bd:paYtildeu} hold true for $(\hat x,\hat y)\in \mathcal Z_{bo}$.

We apply a similar strategy as in Step 1. However the leading order term of the characteristics, which is still $(\mX^\Theta,\mY^\Theta)$, becomes degenerate in this zone. Hence we first renormalise the characteristics, in order to prove that their invertibility is uniformly possible in $\mathcal Z_{bo}$. Fix $(\hat x,\hat y)\in \mathcal Z_{bo}$ meaning $|\hat{\mX} |\leq k_5\epsilon \sft^{-3/2}$ and $k_6K_{bo}\sft^{1/4} \leq \hat{\mY} \leq k_6\kappa_{bo}/ (k_5^{-1/6}|\hat \mX|^{1/6}+1)$, and consider $(a,b)\in Z_{bo}$ for $N_{bo},\tilde N_{bo}$ to be chosen later. One has from \fref{autosimabbottom} and from \fref{bd:xvanshen} that:
\be \lab{bd:amXautosimbottom}
a<0, \ \ |b|\approx |a|^{3/2}\gg 1 \ \ \text{and} \ \ |\mX|\ll |b|^2
\ee
for all $N_{bo},\tilde N_{bo}$ large enough, hence $(a,b)\in Z_1$. Consequently all the computations corresponding to the zone $Z_1$ in Lemma \ref{ap:calculsx} apply. The identities \fref{bd:Z1:x}, \fref{bd:Z1:paamX} and \fref{bd:Z1:pabmX} give, when injecting the bounds \fref{bd:amXautosimbottom}:
\be \lab{bd:bottom:x}
\mX=\left(a+p^{*2}a^3+b^2\right)+O\left(\sft^{\frac{1}{4}}|b|^{2+\frac{1}{3}}\right)=\mX^\Theta(a,b)+O\left(\sft^{\frac{1}{4}}|b|^{2+\frac{1}{3}}\right),
\ee
\be \lab{bd:bottom:paamX}
\pa_a \mX=(1+3p^{*2}a^2)\left(1+O\left(|b|^{\frac 13}\sft^{\frac 14}\right) \right)=\pa_a\mX^\Theta (a,b)\left(1+O\left(|b|^{\frac 13}\sft^{\frac 14}\right) \right),
\ee
\be \lab{bd:bottom:pabmX}
\pa_b \mX=2b\left(1+O\left(|b|^{\frac 13}\sft^{\frac 14} \right) \right)=\pa_b\mX^\Theta (a,b)\left(1+O\left(|b|^{\frac 13}\sft^{\frac 14} \right) \right).
\ee
We now compute $\mY$. Since $b<0$ and $|\mX|\ll |b|^2$, \fref{id:mYleftcentral} and \fref{id:paamYleftcentral} apply. As $|\mX|\ll |b|^2$ and $b<0$, then for $\tb\leq b$ one has from \fref{bd:Psi11} and \fref{bd:amXautosimbottom}:
\be \lab{bd:psi1bottom}
|\tb|\gg 1, \ \ \Psi_1^2(p^*(\mX-\tb^2))\approx |\tb|^{4/3} \ \ \text{and} \ \ |\mX|\ll |\tb|.
\ee
Injecting the above bound and \fref{bd:amXautosimbottom} in the identity \fref{id:mYleftcentral}, and using \fref{bd:calculTheta:bottom:pamXmY}, gives:
\bea
\non \mY &=& 2\int_{-\infty}^b \frac{d\tb}{1+3\Psi_1^2\left(p^*\left(\mX-\tb^2\right)\right)}+O\left(\int_{-\infty}^b \frac{\sft^{\frac{1}{12}}|\tb|^{\frac{1}{9}}d\tb}{|\tb|^{\frac 43}} \right)+O(\sft^{\frac 14}) \\
\lab{bd:bottom:mY} &=& \mY^\Theta(a,b)\left(1+O\left(\sft^{\frac{1}{12}}|b|^{\frac{1}{9}} \right) \right).
\eea
Similarly, injecting \fref{bd:psi1bottom} and \fref{bd:amXautosimbottom} in the integral \fref{id:paamYleftcentral} giving $\pa_a\mY$, and using \fref{id:paamYTheta} gives:
\begin{align*}
 & \pa_{a}\mathcal Y(a,b)  \\
 =& 6p^* \left(1+3\Psi_1^2\left(p^*\left(\mX-b^2 \right) \right) \right)\left( \int_{-\infty}^{b}  \frac{\Psi_1\left(p^*\left(\mX-\tb^2 \right) \right)}{\left(1+3\Psi_1^2\left(p^*(\mX-\tb^2) \right) \right)^3} +O\left(\int_{-\infty}^b \frac{\sft^{\frac{1}{12}}|\tb|^{\frac{1}{9}}}{|\tb|^{\frac{10}{3}}}d\tb \right) +O(\sft^{\frac 74})\right) \\
  =& \pa_a\mY^\Theta (a,b)\left(1+O(\sft^{\frac{1}{12}}|b|^{\frac{1}{9}}) \right)
\end{align*}
We compute the last partial derivative $\pa_b \mY(a,b) $ via the incompressibility, using \fref{bd:bottom:paamX}, \fref{bd:bottom:pabmX} and the above bound, and \fref{bd:calculTheta:bottom:pamXmY}:
$$
\pa_b \mY(a,b) = \frac{1+\pa_a \mY \pa_b \mX}{\pa_a \mX}=  \frac{1+\pa_a \mY^\Theta \pa_b \mX^\Theta \left(1+O(|b|^{\frac{1}{9}}\sft^{\frac{1}{12}})\right)}{\pa_a \mX^\Theta \left(1+O(|b|^{\frac{1}{9}}\sft^{\frac{1}{12}})\right)} = \pa_b \mY^\Theta (a,b) \left(1+O(\sft^{\frac{1}{12}}|b|^{\frac{1}{9}})\right).
$$
We can now invert the characteristics. We look for a solution of $(x,y)(\hat a,\hat b)=(\hat x,\hat y)$ of the form $(a,b)=(\bar a+h_1|\bar b|^{\frac 23},\bar b (1+h_2))$ close to
$$
(\bar a,\bar b)=\left(\ma\left(\hat \mX,\hat \mY \right),\mb\left(\hat \mX,\hat \mY\right) \right)=\left(-\T \left(\hat \mX,\hat \mY \right),\mb\left(\hat \mX,\hat \mY\right) \right).
$$
and with a priori bound $|h_1|,|h_2|\lesssim |\bar b|^{1/12}\sft^{1/16}$. From (i) in Lemma \ref{ap:calculstheta}, since $(\hat x,\hat y)\in \mathcal Z_{bo}$:
\be \lab{bd:barab:bottom}
\bar b<0, \ \ |\bar b|\approx \frac{1}{\sft^{\frac 34}\hat y^3}, \ \ |\bar a|\approx \frac{1}{\sft^{\frac 12}\hat y^2}, \ \  |p^{*2}\bar a^{3}+\bar b^{ 2}|\leq \frac{1}{\tilde N_{bo}^*} \bar b^{2}
\ee
for all $\kappa_{bo}$ small enough, for some $\tilde N_{bo}^*(\kappa_{bo})\rightarrow \infty$ as $\kappa_{bo}\rightarrow 0$. Since $\hat \mY\lesssim  \kappa_{bo} \langle \mX \rangle^{-1/6}$ we obtain from \fref{bd:barab:bottom} that $\bar b\rightarrow -\infty$ as $\kappa_{bo}\rightarrow 0$. Since $\hat y \geq  K_{bo}$ we get from \fref{bd:barab:bottom} that $\bar b\geq -\delta \sft^{-3/4}$ if $\delta\geq K_{bo}^{-3} $. Thus, by taking $K_{bo}$ large and $\kappa_{bo}$ small, there always exist some large $N_{bo},\tilde N_{bo}$ and small $\delta$ such that $(\bar a,\bar b)\in Z_{bo}$. Consider now the mapping:
$$
\Xi:(h_1,h_2)\mapsto \left( \frac{\mX\left(\bar a+h_1|\bar b|^{\frac 23},\bar b (1+h_2)\right)}{\bar b^2}, \mY\left(\bar a+h_1|\bar b|^{\frac 23},\bar b (1+h_2)\right)|\bar b|^{\frac 13} \right).
$$
From the computations on the derivatives above, \fref{bd:calculTheta:bottom:pamXmY} and \fref{bd:barab:bottom}:
$$
\begin{pmatrix} \pa_{h_1}\Xi_1 & \pa_{h_2}\Xi_1 \\ \pa_{h_1}\Xi_2 & \pa_{h_2}\Xi_2 \end{pmatrix} = \begin{pmatrix} |\bar b|^{-\frac 43}\pa_a \mX^{\Theta}(\bar a,\bar b) & |\bar b|^{-1} \pa_b \mX^{\Theta}(\bar a,\bar b) \\ |\bar b| \pa_a \mY^{\Theta}(\bar a,\bar b) & |\bar b|^{\frac 43}\pa_b \mY^{\Theta}(\bar a,\bar b)  \end{pmatrix} +O(\hat y^{-\frac 13}).
$$
From \fref{bd:calculTheta:bottom:pamXmY} the above matrix is bounded, and the leading order term in the right hand side is close to a fixed invertible matrix in the whole zone under consideration. From \fref{bd:bottom:x} and \fref{bd:bottom:mY}:
$$
\Xi (0,0)-\left( \frac{\hat \mX}{|\bar b|^2},\hat \mY|\bar b|^{\frac 13}\right)=O(|\bar b|^{\frac{1}{9}}\sft^{\frac{1}{12}})=O(\hat y^{-\frac 13}).
$$
Therefore, one can apply the implicit function Theorem: there exists $(h_1,h_2)=O(\hat y^{-1/3})$ such that $\Xi(h_1,h_2)=\left( \frac{\hat \mX}{|\bar b|^2},\hat \mY|\bar b|^{\frac 13} \right)$ or equivalently for the characteristics: 
$$
\hat a=\bar a+h_1|\bar b|^{\frac 23}=\bar a\left(1+O(\hat y^{-\frac 13})\right), \ \ \hat b=\bar b(1+h_2))=\bar b \left(1+O(\hat y^{-\frac 13}) \right),
$$
where we used that $|\bar a|\approx |\bar b|^{2/3}$. Hence, since, using the Taylor expansion \fref{id:taylorugen} of $u$ and the fact that $|\hat a|^3\approx |\hat b|^2\gg 1$:
$$
u(t,\hat x,\hat y)=u(t,\hat X,\hat Y)=u(t,X_0,Y_0)-(-u_X \bar k_1)\sft^{\frac 12}\hat a(1+O(|\hat a|\sft^{1/2})),
$$
so from \fref{id:defparametersgeneric} we infer that:
\be \lab{id:uthbottom}
u(t,\hat x,\hat y) =u^*+ \sft^{\frac 12} \Theta_{\mu,\nu,\iota} \left(\frac{\hat x-x^*}{\sft^{\frac 32}},\frac{\hat y}{\sft^{-\frac 14}} \right)\left(1+O\left(\frac{1}{\hat y^{\frac{1}{3}}}\right)\right),
\ee
which shows \fref{bd:tildeu}. Again, the estimates for the derivatives \fref{bd:paXtildeu} and \fref{bd:paYtildeu} follow from \fref{id:taylorpaugen}, \fref{id:derivee}, \fref{id:pamXmYTheta}, \fref{bd:calculTheta:bottom:pamXmY} and the above estimates:
\bea
\non \pa_{\mX} u &=& \pa_a u \pa_b\mY-\pa_b u \pa_a\mY=-\sft^{\frac 12}\pa_b\mY^{\Theta}\left(1+O\left(\frac{1}{|\hat y|^\frac 13} \right)\right)+O\left(\sft^{\frac 34}|\hat b|^{-1} \right)\\
\lab{id:Xuthbottom} &=& \sft^{\frac 12} \pa_\mX \T_{\mu,\nu,\iota}\left(1+O\left(\frac{1}{|\hat y|^{\frac 13}} \right)\right),
\eea
\bea
\non \pa_{\mY} u &=&- \pa_a u \pa_b\mX+\pa_b u \pa_a\mX=\sft^{\frac 12}\pa_b\mX^{\Theta}\left(1+O\left(\frac{1}{|\hat y|^\frac 13} \right)\right)+O\left(\sft^{\frac 34}|\hat b| \right)\\
\lab{id:Yuthbottom} &=& \sft^{\frac 12} \pa_\mY \T_{\mu,\nu,\iota}\left(1+O\left(\frac{1}{|\hat y|^{\frac 13}} \right)\right).
\eea

\noindent \textbf{Step 3} \emph{The sides of the self-similar zone}. We claim that there exists $\kappa_{si}*>0$ such that for all $0<\kappa_{si}\leq \kappa_{si}^*$, for $M_{si}$ large enough, then for $\delta$ small and then $\epsilon$ small, the estimates \fref{bd:tildeu}, \fref{bd:paXtildeu} and \fref{bd:paYtildeu} hold true for $(\hat x,\hat y)\in \mathcal Z_{si}$.

Fix then $(\hat x,\hat y)\in \mathcal Z_{si}$ corresponding to $k_5 \epsilon   \sft^{-3/2}\leq \hat \mX \leq -k_5M_{si} $ and $k_6k_5^{1/6} \kappa_{si} |\hat \mX|^{-1/6}\leq \hat \mY \leq k_6k_5^{1/6}(C_--\kappa_{si}) |\hat \mX|^{-1/6}$ on the left side, or to $k_5 M_{si}\leq \hat \mX \leq k_5 \epsilon \sft^{-3/2} $ and $k_6k_5^{1/6}  \kappa_{si} \hat \mX^{-1/6}\leq \hat \mY \leq k_6k_5^{1/6} (C_+-\kappa_{si}) \hat \mX^{-1/6}$ on the right side. Let $(a,b)\in Z_{si}$ for $N_{si},\tilde N_{si}$ to be fixed large later on. In $Z_{si}$, from \fref{autosimaabright}, $|a|\lesssim (|p^{*2}a^3+b^2|+b^2)^{1/3}\lesssim \tilde N_{si}^{1/3}|p^{*2}a^3+b^2|^{1/3}$, so one infers from \fref{bd:xvanshen} that:
\bee
\mX&=&(p^{*2}a^3+b^2)\left(1+O\left(\frac{|a|}{|p^{*2}a^3+b^2|}+\frac{|b|\sft^{\frac 14}}{|p^{*2}a^3+b^2|}+|p^{*2}a^3+b^2|^{\frac{1}{24}}\sft^{\frac 16} \right) \right)\\
&=&(p^{*2}a^3+b^2)\left(1+O\left(\tilde N_{si}^{\frac 13}N_{si}^{-\frac 23}+|p^{*2}a^3+b^2|^{\frac{1}{24}}\sft^{\frac 16} \right) \right)
\eee
which implies that $|\mX|\approx |p^{*2}a^3+b^2|$ provided $N_{si}\ll \tilde N_{si}^2$. Hence the bounds in this zone:
\be \lab{bd:abmXautosimsides}
|\mX|\gtrsim N_{si}\gg 1, \ \ |b|\lesssim |\mX|^{\frac 12}, \ \ |a|\lesssim |\mX|^{\frac 13}.
\ee
Injecting these bounds in \fref{bd:xvanshen}, \fref{bd:paaMxvanshen} and \fref{bd:pabmXvanshen}, one gets:
$$
\mX =\left(a+p^{*2}a^3+b^2\right)\left(1+O(\sft^{\frac 14}|\mX|^{\frac 16})\right)=\mX^\Theta(a,b)\left(1+O(\sft^{\frac 14}|\mX|^{\frac 16})\right),
$$
\be \lab{bd:sides:paamX}
\pa_a \mX = 1+3p^{*2}a^2+O(|\mX|^{\frac 56}\sft^{\frac 14})=\pa_a \mX^\Theta+O(|\mX|^{\frac 56}\sft^{\frac 14}) ,
\ee
\be \lab{bd:sides:pabmX}
\pa_b \mX=2b+O(\sft^{\frac 14}|\mX|^{\frac 23})=\pa_b \mX^\Theta +O(\sft^{\frac 14}|\mX|^{\frac 23}).
\ee
In addition, the estimates \fref{bd:sides:mY} and \fref{bd:sides:pabmY} apply and the following estimates are true for $\mY$:
$$
\mY= \mY^\Theta (a,b) \left(1+O(\sft^{\frac{1}{12}}|\mX|^{\frac{1}{18}}) \right), \ \ \pa_a \mY = \pa_\ma\mY +O(\sft^{\frac{1}{12}}|\mX|^{-\frac 12+\frac{1}{18}}),
$$
$$
\pa_b \mY=\pa_b\mY^\Theta +O\left(|\mX|^{-\frac 23 +\frac{1}{18}}\sft^{\frac{1}{12}} \right).
$$
With these identities we can now invert the characteristics, and again, will renormalise in a suitable way the perturbation problem. We look for a solution $(\hat a,\hat b)$ of the form $(a,b)=\left(\bar a+h_1|\mX|^{1/3},\bar b +h_2|\mX|^{1/2}\right)$ to $(x,y)(\hat a,\hat b)=(\hat x,\hat y)$, where
$$
(\bar a,\bar b)=\left(\ma\left(\hat \mX,\hat \mY \right),\mb\left(\hat \mX,\hat \mY\right) \right)=\left(-\T \left(\hat \mX,\hat \mY \right),\mb\left(\hat \mX,\hat \mY\right) \right).
$$
As $\hat x,\hat y$ is in the zone \fref{autosimeulerbottom} this produces thanks to (ii) in Lemma \ref{ap:calculstheta}:
$$
|\bar b|^2\leq C(\kappa_{si})|p^{*2}\bar a^3+\bar b^2|, \ \ |p^{*2}\bar a^3+\bar b^2|\gtrsim M_{si}.
$$
Hence, for any $\kappa_{si}$ small enough, there exists $N_{si},\tilde N_{si}$ large with $N_{si}\gg \tilde N_{si}^{1/2}$ such that for $M_{si}$ large enough, if $(\hat a,\hat b)\in \mathcal Z_{si}$ then $(\bar a,\bar b)\in Z_{si}$, which shows the validity of all the computations done so far. From the above computations one infers that:
$$
\mX(\bar a,\bar b) = \hat \mX \cdot \left(1+O(|\hat x-x^*|^{\frac 16})\right), \ \ \mY(\bar a,\bar b)=\hat \mY\cdot \left(1+O\left(|\hat x- x^*|^{\frac{1}{24}} \right)\right).
$$
Consider the mapping:
$$
\Xi:(h_1,h_2)\mapsto \left( \frac{\mX\left(\bar a+h_1|\bar \mX|^{\frac 13 },\bar b +h_2|\bar \mX|^{\frac 12}\right)}{|\bar \mX|}, \mY\left(\bar a+h_1|\bar \mX|^{\frac 13},\bar b +h_2|\bar \mX|^{\frac12}\right)|\bar \mX|^{\frac 16} \right)
$$
From the estimates on the derivatives done above, there holds for $|h_1|,|h_2|=O(|\hat x|^{1/18}) $:
$$
\begin{pmatrix} \pa_{h_1}\Xi_1 & \pa_{h_2}\Xi_1 \\ \pa_{h_1}\Xi_2 & \pa_{h_2}\Xi_2 \end{pmatrix} = \begin{pmatrix} \frac{1}{|\bar \mX|^{\frac 23}}\pa_a \mX^{\Theta}(\bar a,\bar b) &\frac{1}{|\bar \mX|^{\frac 12}} \pa_b \mX^{\Theta}(\bar a,\bar b) \\ |\bar \mX|^{\frac 12} \pa_a \mY^{\Theta}(\bar a,\bar b) & |\bar \mX|^{\frac 23} \pa_b \mY^{\Theta}(\bar a,\bar b)  \end{pmatrix} +O(|\hat x|^{\frac{1}{18}}),
$$
and the Jacobian matrix of $\Xi$ must preserve volume. From the computations above,
$$
\Xi (0,0)-\left( \frac{\hat \mX}{|\bar \mX| }, \hat \mY |\bar \mX|^{\frac 16} \right)=O(|\hat x|^{\frac{1}{18}}).
$$
Note that, due to \fref{eq:selfsimsides}, as $\mX \rightarrow +\infty$, in the zone that we consider \fref{autosimaabright}, the leading order matrix in the identity giving the Jacobian of $\Xi$ in the right hand side belongs to a compact set of matrices with determinant 1. Hence, applying the inverse function Theorem, we can invert the above equation, uniformly as $\epsilon \rightarrow 0$ in the zone that we consider: there exists $(h_1,h_2)=O(|\hat x|^{\frac{1}{24}})$ such that $\Xi(h_1,h_2)=\left( \frac{\hat \mX}{|\bar \mX|}, \hat \mY|\bar \mX|^{\frac 16} \right)$. Hence we inverted the characteristics and found:
$$
\hat a=\bar a\left(1+O\left(|\hat x|^{\frac{1}{18}}\right)\right), \ \ \hat b=\bar b\left(1+O\left(|\hat x|^{\frac{1}{18}}\right)\right).
$$
Hence, since, using the Taylor expansion \fref{id:taylorugen} of $u$ and the fact that $|\hat a|^3,|\hat b|^2\lesssim |\hat \mX|$:
$$
u(t,\hat x,\hat y) = u(t,X,Y)=u^*-(-u_X \bar k_1)\sft^{\frac 12}\hat a+O\left(|\hat b|\sft^{\frac 34}+|\hat a|^{3}\sft^{\frac 32} \right) = -(-u_X \bar k_1)\sft^{\frac 12}\left(\hat a+O\left(|\hat \mX|^{\frac 13+\frac{1}{6}}\sft^{\frac 14}\right) \right)
$$
we infer from \fref{id:defparametersgeneric} that:
\be \lab{id:uthside}
u(t,\hat x,\hat y)  = u^*+ \sft^{\frac 12} \left(\Theta_{\mu,\nu,\iota} \left(\frac{\hat x-x^*}{\sft^{\frac 32}},\frac{\hat y}{\sft^{-\frac 14}} \right)+O\left( \frac{|\hat x-x^*|^{\frac{1}{3}+\frac{1}{18}}}{\sft^{\frac 12}}\right) \right)\\
\ee
which shows \fref{bd:tildeu}. The estimates for the derivatives \fref{bd:paXtildeu} an \fref{bd:paYtildeu} follow from \fref{id:taylorpaugen}, \fref{id:derivee}, \fref{id:pamXmYTheta}, \fref{bd:calculTheta:bottom:pamXmY} and the above estimates:
\bea
\non \pa_{\mX} u &=& \pa_a u \pa_b\mY-\pa_b u \pa_a\mY\\
\non &=&-\sft^{\frac 12}\pa_b\mY^{\Theta}+O\left(\sft^{\frac 12+\frac{1}{12}}|\mX|^{-\frac 23+\frac{1}{18}}\right)\left(1+O\left(\frac{1}{|\hat y|^\frac 13} \right)\right)+O\left(\sft^{\frac 34}|\mX|^{-\frac 12} \right)\\
\lab{id:Xuthside} &=& \sft^{\frac 12} \pa_\mX \T_{\mu,\nu,\iota}+O\left(\sft^{\frac 32}|x|^{-\frac 23+\frac{1}{18}} \right)
\eea
\bea
\non \pa_{\mY} u &=&- \pa_a u \pa_b\mX+\pa_b u \pa_a\mX=\sft^{\frac 12}\pa_b\mX^{\Theta}+O\left(\sft^{\frac 34} |\mX|^{\frac 23} \right)+O\left(\sft^{\frac 34}|\mX|^{\frac 23}\right)\\
\lab{id:Yuthside} &=& \sft^{\frac 12} \pa_\mY \T_{\mu,\nu,\iota}+O\left(\sft^{-\frac 14}|x|^{\frac 12+\frac 16} \right)
\eea

\noindent \textbf{Step 4} \emph{End of the proof of \fref{bd:tildeu}, \fref{bd:paXtildeu} and \fref{bd:paYtildeu}.} These stability estimates concerning the self-similar zone and the bottom, are direct consequences of Step 1, 2, and 3, upon choosing first the parameter $\kappa_{bo}$ and $K_{bo}$, then $\kappa_{si}$ and $M_{si}$, and finally $M_{co}$, and then taking $\delta$ small and $\epsilon $ small, to ensure the zones overlap and any point $(\hat x,\hat y)$ belongs to at least one zone. \\

\noindent \textbf{Step 5} \emph{The displacement line, proof of \fref{th:y*}}. We invest here the properties of $y^*$ defined by \fref{def:y*}. The analogue of \fref{id:mYsideselfsim} adapts, namely:
$$
y^*(x)=k_6^{-1}\frac{\mY^{\Theta}(a_{out},b_{out})}{\sft^{\frac 14}}\left(1+O\left(\sft^{\frac{1}{12}}+|\mX|^{\frac{1}{18}}\sft^{\frac{1}{12}} \right)\right).
$$
By definition, as $b_{out}=\delta \sft^{-3/4}$:
$$
\mY^{\Theta}(a_{out},b_{out})  = \int_{-\infty}^{\delta \sft^{-3/4}} \frac{d\tb}{1+3\Psi_1^2\left(p^*\left(\mX-\tb^2\right) \right)} = 2\mY^*(\mX)+O(\sft^{\frac 14}),
$$
as for $b\geq \delta \sft^{-3/4}$ there holds $|\mX|\ll b$ for $\epsilon$ small enough, so that $\Psi_1^2\left(p^*\left(\mX-\tb^2\right) \right)\approx b^{4/3}$ from \fref{id:psi1infinity}. Therefore, from \fref{def:mXmYT}:
$$
y^*(x)=k_6^{-1}\frac{2\mY^*\left(\frac{k_5(x-x^*)}{\sft^{\frac 32}} \right)}{\sft^{\frac 14}}\left(1+O\left(\sft^{\frac{1}{12}}+|x|^{\frac{1}{18}}\right)\right)= \frac{2\mY^*_{\mu,\nu,\iota}\left(\frac{x-x^*}{\sft^{\frac 32}} \right)}{\sft^{\frac 14}}\left(1+O\left(\sft^{\frac{1}{12}}+|x|^{\frac{1}{18}}\right)\right).
$$
This shows \fref{th:y*}.\\

\noindent \textbf{Step 6} \emph{Reconnection functions: proof of \fref{id:asymptotiquefetg}}. The reconnection functions $f$ and $g$ are defined the following way. Fix $t=T$, and consider the curve $\Gamma:=\{x(T,X,Y)=x^*(T) \}$. We split it in two parts. $\Gamma_{bot}$ is the bottom part which goes from the boundary of the upper half plane to $(X_0,Y_0)$, and $\Gamma_{top}$ which is the top part from $(X_0,Y_0)$ and beyond. We change variables, using \fref{def:ks}:
$$
\begin{pmatrix}A \\B \end{pmatrix} =  \begin{pmatrix} k_1  & k_2  \\ -k_3  &k_4  \end{pmatrix}  \begin{pmatrix}X-X_0 \\Y-Y_0 \end{pmatrix},
$$
so that from the Taylor expansion of $x$ \fref{id:taylor:xt}, for $|A|,|B|\ll1$:
$$
x(T,X,Y)=x^*(T)+p^{*2}A^3+B^2+O(|A|^4+|A||B|^2+|B|^3)
$$
which extends naturally to higher order derivatives. Therefore, near $(X_0,Y_0)$, on $\Gamma$ one has:
\be \lab{id:A:bottomreconnect}
A=-\frac{1}{p^{*\frac 23}}B^{\frac 23}\left(1+O(|B|^{\frac 23}) \right).
\ee
Denote by $(X_0^*,0)$ the point of the boundary such that $x(T,X_0^*,0)=x^*(T)$, by $(X_{in},Y_{in})$ that for which $B=-\delta$, and by $(X_{out},Y_{out})$ that for which $B=\delta$. On $\Gamma_{bot}$ for $|B|\ll 1$ and $B<0$ we compute $y$ using \fref{id:intevolume1} and split:
\bee
y & = & \int_{\Gamma_{(X_0^*,0)}^{(X_1,Y_1)}} \frac{ds}{|\nabla x(T)|}+k_6^{-1}\int_{-\delta}^B \frac{d\tilde B}{\pa_A x} \ = \ O(1)+\frac{1}{k_63p^{*\frac 23}}\int_{-\delta}^B \frac{(1+O(|\tilde B|^{\frac 23}))d\tilde B}{\tilde B^{\frac 43}}\\
&=& O(1)-\frac{1}{k_6 p^{*\frac 23}} \frac{1}{B^{\frac 13}}.
\eee
This shows that $y\rightarrow \infty$ as $(X,Y)$ approaches $(X_0,Y_0)$ on $\Gamma_b$. Given any $y\geq 0$, there thus exists a unique point $\phi(y)$ on $\Gamma_b$ between the boundary of $\mH$ and $(X_0,Y_0)$ such that $y(X,Y)=y$ and we define:
$$
f(y)=u(T,\phi(y)).
$$
From the above expansion for $y$ one obtains that, writing $\phi(y)$ in $(A,B)$ variables:
$$
B(\phi(y))\sim -\frac{1}{y^3k_6^3p^{*2}} \ \ \text{as} \ \ y\rightarrow \infty,
$$
which, from \fref{id:taylorugen}, \fref{id:A:bottomreconnect} and \fref{id:defparametersgeneric} gives:
$$
f(y)-u ^*=u(T,\phi(y))-u ^*\sim \frac{-u_X \bar k_1}{y^2k_6^2p^{*2}}=\frac{1}{y^2} \frac{\mu \nu^2}{p^{*2}} \ \ \text{as} \ y\rightarrow \infty
$$
and the first part of \fref{id:asymptotiquefetg} is proved. For $(X,Y)\in \Gamma_{top}$ now, we define:
$$
\tilde y(X,Y)=\int_{\Gamma_{(X_{out},Y_{out})}^{(X,Y)}} \frac{ds}{|\nabla x[T]|}
$$
where this integral is taken with positive sign if $(X,Y)$ is after $(A_{out},B_{out})$ on $\Gamma$, and with negative sign if it is between $(A_{out},B_{out})$ and $(X_0,Y_0)$. It follows from \fref{id:A:bottomreconnect} that:
$$
\tilde y(A_{out},B_{out})=0, \qquad \lim_{(X,Y)\in \Gamma_{top}, \ |(X,Y)|\rightarrow \infty} \tilde y(X,Y)=\infty, \qquad \tilde y \sim -\frac{1}{k_6p^{*\frac 23}} \frac{1}{B^{\frac 13}} \ \text{as} \ B\rightarrow 0^+.
$$
Hence $(X,Y)\mapsto \tilde y$ defines a $C^3$ diffeomorphism between $\Gamma_{top}$ and $\mathbb R$. Given any $\tilde y\in \mathbb R$, there exists a unique point $\tilde \phi(\tilde y)$ on $\Gamma_{top}$ such that $\tilde y(\tilde \phi (\tilde y))=\tilde y$ and we thus define:
\be \lab{eq:deftildephi}
g(\tilde y)=u(T,\tilde \phi(\tilde y)).
\ee
From \fref{id:taylorugen} and \fref{id:A:bottomreconnect} one obtains:
$$
g(\tilde y)-u ^*\sim  \frac{-u_X \bar k_1}{y^2k_6^2p^{*2}}=\frac{1}{y^2} \frac{\mu \nu^2}{p^{*2}} \ \ \text{as} \ y\rightarrow -\infty.
$$
Denoting by $X^E\mapsto x^E$ the Lagrangian to Eulerian map of the Bernouilli equation \fref{eq:bernouilli} (with $\dot x^E=u^E(t,x^E)$ and $x^E(0,X^E)=X^E$) then, at time $T$, $x^E[T]$ is a diffeomorphism since $(u^E,p^E)$ is a globally regular solution of \fref{eq:bernouilli}. From the second equation in \fref{2DPrandtlp}, we get that for $\tilde y\rightarrow \infty$, $X(\tilde \phi(\tilde y))\rightarrow X^E(T,x^*(T))$. Hence $g(\tilde y)\rightarrow u^E(x^*(T))$ as $\tilde y \rightarrow \infty$. This and the above equation proves the second identity in \fref{id:asymptotiquefetg}.\\

\noindent \textbf{Step 7} \emph{Reconnection: proof of \fref{id:recobottom} in $\mathcal Z_{be}$, \fref{id:recotop} in $\mathcal Z_{ab}$, \fref{id:topselfsim} and \fref{bd:tildevtop} in $\mathcal Z_{to}$}. For any $K_{bo}>0$, as $(t,x)\rightarrow (T,x^*)$, the characteristics map $(X,Y)\mapsto (x,y)$ still defines a diffeomorphism when restricted to the preimage of $y<K_{bo}$, since it avoids a size one zone near $(X_0,Y_0)$. Hence the convergence of $u$ to $f$ in $Z_{be}$ follows from a direct continuity argument. This shows the reconnection at the bottom \fref{id:recobottom}.\\

\noindent We now turn to the reconnection at the top. For any $K_{to}>0$, the zone $y^*-K_{to}\leq y \leq y^*+K_{to}$ corresponds in Lagrangian variable to a zone which stays at a distance $1$ away from $(X_0,Y_0)$, and hence where the parametrisation of the curves $\Gamma:=\{x(X,Y)=Cte \}$ and $\nabla x$ remain uniformly regular as $t\rightarrow T$. From this fact and the definition \fref{def:y*} of $y^*$, as $(x,t)\rightarrow (T,x^*)$, the inverse of the characteristics map $(x,y)\mapsto (X,Y)$ is such that:
$$
(X,Y)(x,y)\rightarrow \tilde \phi(y-y^*)
$$
where $\tilde \phi$ is defined by \fref{eq:deftildephi}. Hence from \fref{eq:deftildephi} one has $u(t,X,Y)\rightarrow g(y-y^*(x))$ in this zone which proves \fref{id:recotop} for $y^*-K_{to}\leq y \leq y^*+K_{to}$. For $y\geq y^*+K_{to}$, we have that $X\rightarrow X^E(T,x^*(T))$ where $X^E$ is defined in Step 6, and $Y\rightarrow \infty$, uniformly as $K_{to}\rightarrow \infty$ for $|x-x^*|\leq \epsilon$. Thus $u(t,x,y)\rightarrow u^E(T,x^*(T))$ uniformly. This proves \fref{id:recotop} for $y \geq y^*+K_{to}$ since $g(\tilde y)\rightarrow u^E(T,x^*(T))$ for $\tilde y \rightarrow \infty$. Hence \fref{id:recotop} is proved.\\

\noindent We now turn to the convergence in the top part of the self-similar zone $Z_{to}$. Let $K_{to}\gg 1$ be large, and then $0<\kappa_{to},\epsilon \ll 1$ be small, and consider $(\hat x,\hat y)\in \mathcal Z_{to}$, corresponding to
\be \lab{autosimeulertop}
|\hat \mX|\leq  k_5\epsilon\sft^{-\frac 32}, \ \ k_6\sft^{\frac 14}y^*(t,x)(1-\kappa_{to}) \leq \hat \mY \leq  k_6\sft^{\frac 14}(y^*(x)-K_{to}),
\ee
Define $\bar b$ the following way:
$$
\bar b=M(1+|\hat \mX|^{\frac 12})
$$
where $M\gg 1$ will be chosen later on. One has for $\epsilon$ small enough depending on $\delta$ that $|\bar b|\gg |\hat \mX|^{1/2}$. We now look for the solution $\bar a$ to $\mX(\bar a,\bar b)=\hat \mX$. In the zone $a<0$ with $|a|\approx |\bar b|^{2/3}$ one is in $Z_1$ and therefore from \fref{bd:Z1:paamX} one has $\pa_a \mX \approx |\bar b|^{4/3}$. Hence, as $|\mX|\ll |b|^{2}$ this condition on the derivative via the intermediate value Theorem implies that there exists $\bar a<0$ with $|\bar a|\approx |\bar b|^{2/3}$ such that $\mX(\bar a,\bar b)=\hat \mX$. From Lemma \ref{lem:lagrangian}, the curve $\Gamma[\hat x]$, before leaving $Z_0^c$ by the point $(a_{out},b_{out})$, is in $Z_1$ where it can be parametrised with the variable $\tb$ as $\ta=\ta(\mX,\tb)$. Either in case (i) of Lemma \ref{lem:lagrangian} it always stayed in $Z_1$, or in case (ii) before that stage the curve was in $Z_2$.

As $(\bar a,\bar b)$ belongs to $Z_1$, this means that in any case, one can parametrise with the variable $\tb$ the last part of $\Gamma$ before exiting $Z_0^c$, as a curve $\ta=\ta (\mX,\tb)$ for $\bar b\leq \tb \leq \delta \sft^{3/4}$. For $(a,b)$ along this curve there holds by definition of $y^*$ \fref{def:y*}:
$$
\mX(a,b)=\hat \mX, \ \ \mY(a,b)=k_6\sft^{\frac 14}y^*(\hat x)-\int_{b}^{\frac{\delta}{\sft^{3/4}}} \frac{d\tb}{\pa_a \mX(\ta(\hat \mX,b),\tb)}.
$$
As this curve lies in $Z_1$ from \fref{bd:Z1:paamX}, and from the inequality $|\hat \mX|\ll |\tb|$ for $b\leq \tb\leq \delta \sft^{-3/4}$:
$$
\mY(a,b)=k_6 \sft^{\frac 14}y^*(\hat x)-\int_{b}^{\frac{\delta}{\sft^{3/4}}} \frac{d\tb}{1+3\Psi_1^2\left(p^*(\hat \mX-\tb^2)\right)}\left(1+O(|\tb|^{\frac{1}{9}}\sft^{\frac{1}{12}}) \right).
$$
An expansion at infinity of $\Psi_1$ gives that:
$$
\frac{1}{1+3\Psi_1^2\left(p^*(\hat \mX-\tb^2)\right)}\left(1+O(|\tb|^{\frac{1}{9}}\sft^{\frac{1}{12}}) \right)=\frac{1}{p^{*\frac 23}3\tb^{\frac 43}}+O\left(\frac{\sft^{\frac{1}{12}}|\tb|^{\frac{1}{9}}}{|\tb|^{\frac 43}} +\frac{|\hat \mX||\tb|^{-2}}{|\tb|^{\frac 43}}\right),
$$
so that:
$$
\mY(a,b)=k_6\sft^{\frac 14}y^*(t,\hat x)-\frac{k_6}{p^{*\frac 23}b^{\frac 13}}+O\left(|b|^{-\frac 13+\frac{1}{9}}\sft^{\frac{1}{12}}+|b|^{-\frac 13-2}|\hat \mX| \right).
$$
We write $\mY=k_6\sft^{1/4}y^*(t,\hat x)-\Delta \mY$, and from \fref{autosimeulertop} and \fref{th:y*}:
$$
k_6\sft^{\frac 14}K \leq \Delta \mY \leq \frac{k_6\kappa}{1+|\hat \mX|^{\frac 16}}.
$$
Solving $\mY(a,b)=\mY$ then amount to solve:
$$
-\frac{1}{p^{*\frac 23}b^{\frac 13}}+O\left(|b|^{-\frac 13+\frac{1}{9}}\sft^{\frac{1}{12}}+|b|^{-\frac 13-2}|\hat \mX| \right)=-\Delta \mY.
$$
For $b=\delta \sft^{-3/4}$ one has $b^{-1/3}\gtrsim \sft^{1/4}\delta^{-1/3}\gg \sft^{\frac 14}K_{to}$ for $\delta$ small enough, and for $b=\bar b$ one has $\bar b^{-1/3}\gtrsim M^{-1/3}(1+|\mX|^{1/6})^{-1}\gg \kappa_{to} (1+|\mX|^{1/6})^{-1}$ for $M$ large enough. Hence, there exists indeed a solution $(\hat a,\hat b)$ to the above equation by the intermediate value Theorem, and bootstrapping information from the above equation and using that $|\hat \mX|\approx |\hat b|^2$ one obtains:
\be \lab{bd:b:topreconnect}
\hat b=\frac{1}{p^{*2}k_6^3\left(y^*(t,\hat x)-\hat y \right)^3\sft^{\frac 34}}\left(1+O\left(\frac{1}{(y^*(\hat x)-\hat y)^{\frac 13}}\right)\right).
\ee
Bootstrapping information from the equation \fref{bd:Z1:x} gives finally:
\bee
\hat a & = & -p^{*-\frac 23} \hat b^{\frac 23}\left(1+O\left(\frac{|\hat \mX|}{\hat b^2}+\sft^{\frac{1}{12}}|\hat b|^{\frac 19}\right)\right) \\
&=& -\frac{1}{k^2_6p^{*2}(y^*(t,\hat x)-\hat y)^2\sft^{\frac 12}}\left(1+O\left(\frac{|\hat y-y^*(t,\hat x)|}{y^*(t,\hat x)}+\frac{1}{(\hat y-y^*(t,\hat x))^{\frac 14}}\right)\right).
\eee
And hence from \fref{id:taylorugen} and \fref{id:defparametersgeneric}:
\begin{align*}
u(t,\hat x,\hat y)-u^*(t)&= \frac{\mu\nu^2}{p^{*2}(y^*(t,\hat x)-\hat y)^2}\left(1+O\left(\frac{|\hat y-y^*|}{y^*}+\frac{1}{(\hat y-y^*)^{\frac 14}}\right)\right)
\end{align*}
The estimates for the derivative proved in Lemma \ref{lem:computationmathcaly} adapt, namely at the point $(\hat a,\hat b)$ such that $(x,y)(\hat a,\hat b)=(\hat x,\hat y)$, using \fref{bd:b:topreconnect}:
\begin{align*}
&\pa_a \mX = 1+3p^{*2}\hat a^2+O(|\hat b|^{\frac 53}\sft^{\frac 14})=\pa_a \mX^\Theta+O\left( \frac{(y^*-\hat y)}{(y^*-\hat y)^4\sft} \right) ,\\
&\pa_b \mX=2b+O(\sft^{\frac 14}|\hat b|^{\frac 43})=\pa_b \mX^\Theta +O\left( \frac{(y^*-\hat y)^{-1}}{(y^*-\hat y)^3\sft^{\frac 34}} \right),\\
& \pa_a \mY = \pa_a \mY^{\Theta} +O(\sft^{\frac{1}{12}}|\hat b|^{-1+\frac{1}{9}})= \pa_a\mY^{\Theta} +O\left((y^*-\hat y)^{3-\frac 13}\sft^{\frac 34} \right),\\
&\pa_b \mY=\pa_b\mY^\Theta +O\left(|1+|\hat \mX||^{-\frac 76}|\hat b|^{1+\frac{1}{9}}\sft^{\frac{1}{12}} \right)=\pa_b\mY^\Theta +O\left((y^*-\hat y)^{4-\frac 13}\sft \right),
\end{align*}
so that from \fref{id:taylorpaugen}, \fref{id:derivee}, \fref{id:pamXmYTheta}, \fref{bd:calculTheta:bottom:pamXmY}, (iii) in Proposition \ref{pr:selfsimfunda} and the above estimates:
\bee
\pa_{\mX} u &=& \pa_a u \pa_b\mY-\pa_b u \pa_a\mY\\
&=&-\sft^{\frac 12}(2\pa_b\mY^{*}(\mX^\T)+\pa_b(2\mY^*(\mX^\T)-\pa_b\mY^\T))+O\left(\sft^{\frac 32}|\sft^{\frac 32}+|\hat x-x^*||^{-\frac 76}|y^*-\hat y|^{-3-\frac 13}\right)\\
&=& -\sft^{\frac 12}4 b\pa_\mX \mY^*+O(\sft^{-\frac 14}|y-y^*|^4)+O\left(\sft^{\frac 32}|\sft^{\frac 32}+|\hat x-x^*||^{-\frac 76}|y^*-\hat y|^{-3-\frac 13}\right),
\eee
\bee
\pa_{\mY} u &=&- \pa_a u \pa_b\mX+\pa_b u \pa_a\mX=-\frac{1}{\sft^{\frac 14}p^{*2}k_6^3(y^*-y)^3}+O\left(\frac{(y^*-\hat y)}{(y^*-\hat y)^3\sft^{\frac 14}} \right),
\eee
which using \fref{id:defparametersgeneric} completes the proof of \fref{bd:tildevtop}.

\end{proof}

\begin{appendix}

\section{Parametrisation and volume preservation} \label{ap:y}

\begin{lemma} \label{lem:paramincompressibility}
Let $\Omega \subset \mathbb R^2$ be open, and $f=(f_1,f_2)\in C^1(\Omega ,\mathbb R^2)$ be such that $\nabla f_1\neq 0$ on $\Omega$, and that for each $q$ in the range of $f_1$, the level set $\Gamma[q]=\{z\in \Omega, \ f_1(z)=q\}$ is diffeomorphic to $\mathbb R$.
\begin{itemize}
\item[(i)] Let $\gamma:\mathcal O\rightarrow \Omega$ be a diffeomorphism between an open set $\mathcal O\subset \mathbb R^2$ and $\Omega$, such that for each $q \in \mathbb R$, $\gamma(q,\cdot)$ is an arclength parametrisation of $\Gamma[q]$. Then $f$ preserves volume if and only if for all $q$ in the range of $f_1$, and any two $z=\gamma (q,s)$ and $z'=\gamma (q,s')$ there holds:
\be \lab{id:intevolume1}
|f_2(z)-f_2(z')|=\left| \int_{s}^{s'} \frac{d\tilde s}{|\nabla f_1(\gamma (q,\tilde s))|}\right|.
\ee
\item[(ii)] If $\pa_{z_1}f_1\neq 0$ on $\Omega$, let $\tilde z_1:\mathcal O\rightarrow \Omega $ be the only diffeomorphism between an open set $\mathcal O\subset \mathbb R^2$ and $\Omega$, such that for each $q\in \mathbb R$, $\tilde z_1(q,\cdot)$ is the parametrisation of $\Gamma[q]$ by the second component, i.e. $\Gamma[q]=\{(\tilde z_1(q,\tilde z_2),\tilde z_2), \ (q,\tilde z_2)\in \mathcal O \}$, then $f$ preserves volume if and only if for all $q$ in the range of $f_1$, and any two $z=(\tilde z_1(q,z_2),z_2)$ and $z'=(\tilde z_1(q,z_2'),z_2')$:
\be \lab{id:intevolume2}
|f_2(z)-f_2(z')| =\left| \int_{z_2}^{z_2'} \frac{d\tilde{z_2}}{\pa_{z_1} f_1(\tilde z_1(q,\tilde z_2),\tilde z_2)}\right|.
\ee
\end{itemize}

\end{lemma}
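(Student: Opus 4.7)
The proof will rely on the basic identity, valid for any $C^1$ map $f=(f_1,f_2)$ on $\Omega$,
\[
\det(Df) \;=\; \partial_{z_1}f_1\,\partial_{z_2}f_2 - \partial_{z_2}f_1\,\partial_{z_1}f_2 \;=\; \nabla f_2 \cdot \nabla^{\perp}f_1,
\]
which we will read as computing the directional derivative of $f_2$ along the rotated gradient $\nabla^{\perp}f_1$, i.e.\ along the tangent direction of the level sets of $f_1$. The statement "$f$ preserves volume" is understood as $|\det Df|=1$ on $\Omega$; both equivalences will then be established by comparing, at each $z\in\Omega$, this pointwise identity with the pointwise derivative of $f_2$ along the chosen parametrisation of $\Gamma[f_1(z)]$.

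\emph{Proof of (i).} Fix $q$ in the range of $f_1$ and use the arclength parametrisation $\gamma(q,\cdot)$ of $\Gamma[q]$. Since $\partial_s\gamma(q,s)$ is unit and tangent to the level set, and $\nabla^{\perp}f_1/|\nabla f_1|$ is the unit tangent (well defined because $\nabla f_1\neq 0$), we have $\partial_s\gamma(q,s)=\varepsilon(q)\,\nabla^{\perp}f_1/|\nabla f_1|$ for a sign $\varepsilon(q)\in\{-1,1\}$ that is locally constant in $q$. Differentiating $s\mapsto f_2(\gamma(q,s))$ and using the identity above then yields
\[
\partial_s\bigl(f_2\circ\gamma\bigr)(q,s) \;=\; \varepsilon(q)\,\frac{\nabla f_2\cdot\nabla^{\perp}f_1}{|\nabla f_1|}(\gamma(q,s)) \;=\; \varepsilon(q)\,\frac{\det(Df)}{|\nabla f_1|}(\gamma(q,s)).
\]
If $|\det Df|\equiv 1$, integrating this in $s$ and taking absolute values gives \fref{id:intevolume1}. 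Conversely, if \fref{id:intevolume1} holds, then for each fixed $q$ the function $s\mapsto f_2(\gamma(q,s))$ is continuous with $\bigl|f_2(\gamma(q,s))-f_2(\gamma(q,s'))\bigr|=\bigl|\int_{s}^{s'}|\nabla f_1|^{-1}\bigr|$, so it is strictly monotone and differentiable with $\bigl|\partial_s (f_2\circ\gamma)\bigr|=1/|\nabla f_1(\gamma)|$. Combined with the previous display, $|\det Df|=1$ along $\gamma(q,\cdot)$, and since $\gamma$ is a diffeomorphism onto $\Omega$ this holds at every point of $\Omega$.

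\emph{Proof of (ii).} The argument is identical once we replace the arclength parametrisation by the graph parametrisation over $z_2$. Differentiating the defining identity $f_1(\tilde z_1(q,z_2),z_2)=q$ with respect to $z_2$ gives $\partial_{z_2}\tilde z_1 = -\partial_{z_2}f_1/\partial_{z_1}f_1$, so
\[
\partial_{z_2}\bigl(f_2(\tilde z_1(q,z_2),z_2)\bigr) \;=\; \partial_{z_1}f_2\,\partial_{z_2}\tilde z_1+\partial_{z_2}f_2 \;=\; \frac{\det(Df)}{\partial_{z_1}f_1}\bigl(\tilde z_1(q,z_2),z_2\bigr).
\]
The equivalence between $|\det Df|\equiv 1$ and \fref{id:intevolume2} then follows exactly as in (i): integrating the above identity when $|\det Df|=1$, and conversely using that \fref{id:intevolume2} forces pointwise $\bigl|\partial_{z_2}(f_2\circ(\tilde z_1,\cdot))\bigr|=1/|\partial_{z_1}f_1|$, hence $|\det Df|=1$ at every point of $\Omega$ since the graphs $z_2\mapsto(\tilde z_1(q,z_2),z_2)$ foliate $\Omega$.

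The only genuinely delicate point, which I would address carefully, is the passage from the global relation \fref{id:intevolume1} (or \fref{id:intevolume2}) to the pointwise identity for $|\det Df|$: this requires noting that the curves $\Gamma[q]$ are connected (diffeomorphic to $\mathbb R$ by hypothesis), so $\partial_s(f_2\circ\gamma)$ has constant sign on each $\Gamma[q]$, which legitimises removing the absolute values after differentiation and hence the pointwise reconstruction of $|\det Df|$. Everything else is bookkeeping on the sign $\varepsilon(q)$ and standard arguments on Lipschitz functions.
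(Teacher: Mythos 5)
Your proof is correct and follows essentially the same route as the paper: both arguments reduce volume preservation to the pointwise identity $|\partial_s(f_2\circ\gamma)|=|\det Df|/|\nabla f_1|$ (you via the identity $\det Df=\nabla f_2\cdot\nabla^\perp f_1$ and the chain rule, the paper via factoring $f=\phi\circ\gamma^{-1}$ and multiplying Jacobian determinants), and both then use connectedness of the level sets to pass between the integrated and pointwise statements. The only cosmetic difference is that you prove (ii) by a direct computation with the graph parametrisation, whereas the paper deduces it from (i) by the change of variables $s\mapsto\tilde z_2$; both are one-line verifications.
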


\begin{proof}
\emph{Proof of (i)}. We denote by $\gamma^{-1}=( q,s):\Omega \rightarrow \mathcal O$ the inverse of $ \gamma$, and note that $q=f_1$. At a point $z\in \Omega$, considering the orthonormal vectors $v_1=\nabla f_1(z) |\nabla f_1(z)|^{-1}$ and $v_2= \nabla^{\perp}f_1(z) |\nabla f_1(z)|^{-1}$ where $(e_1,e_2)^{\perp}=(-e_2,e_1)$ we get the identities $\nabla q.v_1=|\nabla f_1(z)|$, $\nabla q.v_2=0$ and $\nabla s. v_2=1$. This implies $|\text{Det}\left(J \gamma^{-1} \right)|=|\nabla  f_1(z)|.$ Consider now the mapping $\phi:(q,s)\mapsto (q,f_2(\gamma(q,s))$. Since $\frac{\pa q}{\pa s}_{|_q}=0$ and $\frac{\pa q}{\pa q}_{|s}=1$ we have $ |\text{Det}\left(J\phi \right)|=|\frac{\pa f_2}{\pa  s}_{|q}|. $
Since $f=\phi \circ \gamma^{-1}$, we get from the two previous computations that $f$ preserves volume, that is, $|\text{Det}Jf|=1$, if and only if:
$$
|\frac{\pa f_2}{\pa  s}_{|q}|=\frac{1}{|\nabla  f_1(z)|}.
$$
This is equivalent to (i) upon integrating, and noticing that these quantities cannot change sign.

\emph{Proof of (ii)}. From the assumptions on $f$, there always exists a parametrisation by arclength of the level sets $\gamma:\mathcal O' \rightarrow \Omega$ satisfying the properties of (i). We change variables $\tilde z_1^{-1}\circ \gamma:(q,s)\mapsto (q,\tilde z_2)$. Differentiating the identity $f_1(\tilde z_1(q,\tilde z_2),\tilde z_2)=q$ we find $\frac{\pa \tilde z_1}{\pa_{\tilde z_2}}_{|_{q}}=-\pa_{z_2}f_1/\pa_{z_1}f_1$. Hence:
$$
|\frac{\pa s}{\pa \tilde z_2}_{|_q}|=\sqrt{1+\left(\frac{\pa_{z_2}f_1}{\pa_{z_1}f_1} \right)^2}=\frac{|\nabla f_1|}{|\pa_{z_1}f_1|}.
$$
Changing variables $s \mapsto \tilde z_2$ in \fref{id:intevolume1}, then yields that $f$ preserves volume if and only if \fref{id:intevolume2} holds true.
\end{proof}

\section{Computing the normal component of the characteristics} \label{ap:y2}

\begin{proof}[proof of Lemma \ref{lem:computationmathcaly}]

At several moments in the proof, we will use the following. The function $\Psi_1$ enjoys:
\be \lab{bd:Psi11}
c<\left|\frac{X\pa_X\Psi_1(X)}{\Psi_1(X)} \right|\leq \frac 1c, \ \ \ \ |\Psi_1(X)|\lesssim |X|^{\frac 13}, \ \ \Psi_1(X+X')=\Psi_1(X)+O(|X'|^{\frac 13})
\ee
uniformly for $X,X'\in \mathbb R$. From the first bound above, given a small quantity $O(|z|)$ that has size $|z|$, as an application of the implicit function theorem it is true that:
\be \lab{bd:Psi12}
\Psi_1\left(X(1+O(|z|))\right)=\Psi_1\left(X\right) (1+O(|z|)).
\ee
where the "new" $O(|z|)$ in the right hand side enjoys the same differentiability properties and similar bounds, as the one in the left hand side. To rearrange the $O()$'s in what follows, we shall simplify $O(|z_1|)+O(|z_2|)=O(|z_2|)$ in zones where $|z_1|\lesssim |z_2|$, and $O(|z|^\alpha)=O(|z|^\beta)$ if $ 0< \beta \leq \alpha $ as all quantities in the $O()$'s will be small.

 To compute the vertical component, we will use the following result. Assume that $(a,b)$ and $(a',b')$ belong to the same level set curve $\Gamma[x]=\{x(\ta,\tb)=x \}$. Assume that, when ordering points on $\Gamma$ with their distance to the boundary, $(a',b')$ is after $(a,b)$, and that either $b\leq b'$ or $a\leq a'$. Assume moreover that $\Gamma_{(a,b)}^{(a',b')}$, the part of $\Gamma$ between $(a,b)$ and $(a',b')$, can be either parametrised with the variable $\tb$ as $\ta=\ta(\mX,\tb)$ for $b\leq \tb \leq b'$ or with the variable $\ta$ as $\tb=\tb (\mX,\ta)$ for $a\leq \ta\leq a'$. Then by applying Lemma \ref{lem:paramincompressibility} and \fref{def:mathcalXmathcalYrenorm} one obtains the following identities:
\be \lab{id:intchangevariable}
\int_{\Gamma_{(a,b)}^{(a',b')}} \frac{ds}{|\nabla x|}=\frac{1}{k_6\sft^{\frac 14}} \int_{b}^{b'} \frac{d\tb}{|\pa_a \mX|} \ \ \text{or} \ \ \int_{\Gamma_{(a,b)}^{(a',b')}} \frac{ds}{|\nabla x|}=\frac{1}{k_6\sft^{\frac 14}} \int_{a}^{a'} \frac{d\tb}{|\pa_b \mX|}.
\ee

\noindent \textbf{Step 1} \emph{The normal component for left part of the sides, the core, and the bottom, of the self-similar zone}. We first derive rough estimates that will be used in the next steps. Fix $(X,Y)\in Z_0^c$ such that either $-\epsilon \leq x(a,b)-x^*\leq K\sft^{3/2}$, or, $K\sft^{3/2}\leq x-x^*\leq \epsilon$ and $b<0$ and $N_2 |\mX|\leq |b|^2$. Note that in the second case,  from \fref{bd:xvanshen}, one has necessarily $|a|\approx |b|^{2/3}\gg 1$. Let $\Gamma$ denote the part of the curve $\Gamma[x]$ which joins the boundary of the upper half plane and $(X,Y)$. We decompose it in two parts:
\be \lab{id:mYstep1}
\Gamma_1:= \Gamma \cap Z_0, \ \ \Gamma_2=\Gamma \cap Z_0^c, \ \ \mY = k_6\sft^{\frac 14}\left(\int_{\Gamma_1} \frac{ds}{|\nabla x|}+\int_{\Gamma_2} \frac{ds}{|\nabla x|} \right),
\ee
The integral in $Z_0$ is at distance one to $(X_0,Y_0)$, and everything then remains regular:
\be \lab{id:mY1step1}
\int_{\Gamma_1} \frac{ds}{|\nabla x|}=O(1), \ \ \pa_{\mX}\left(\int_{\Gamma_1} \frac{ds}{|\nabla x|}\right)=O(\sft^{\frac 32}).
\ee
In $Z_0^c$, from Lemma \ref{lem:lagrangian} the curve $\Gamma [x]$ lies in $Z_1$, so it can be parametrised with the variable $\tb$. Also, from \fref{id:intchangevariable}, \fref{bd:Z1:paamX} and as $|\tb|\sft^{3/4}\ll 1$ for $-\delta\sft^{-3/4}\leq \tb \leq b$:
\bea
\non  k_6\sft^{\frac 14}\int_{\Gamma_2} \frac{ds}{|\nabla x|}  &=&  \int_{-\frac{\delta}{\sft^{-\frac 34}}}^b \frac{d\tb}{\pa_a\mX} \ =\ \int_{-\frac{\delta}{\sft^{-\frac 34}}}^b \frac{1+O\left(\sft^{\frac{1}{12}}+|\tb|^{\frac 13}\sft^{\frac{1}{4}}+|\mX|^{\frac 16}\sft^{\frac{1}{4}}\right) }{1+3\Psi_1^2( p^* (\mX-\tb^2) )}d\tb\\
\non &=& \int_{-\infty}^b  \frac{1+O\left(\sft^{\frac{1}{12}}+|\tb|^{\frac 14}\sft^{\frac{3}{16}}+|\mX|^{\frac 16}\sft^{\frac{1}{4}}\right) }{1+3\Psi_1^2( p^* (\mX-\tb^2) )}d\tb+O\left(\int_{-\infty}^{-\frac{\delta}{\sft^{-\frac 34}}} \frac{d\tb}{|\tb|^{\frac 43}} \right)\\
\lab{id:mY2step1} &=& \int_{-\infty}^b  \frac{1+O\left(\sft^{\frac{1}{12}}+|\tb|^{\frac 14}\sft^{\frac{3}{16}}+|\mX|^{\frac 16}\sft^{\frac{1}{4}} \right)}{1+3\Psi_1^2( p^* (\mX-\tb^2) )}d\tb+O\left(\sft^{\frac 14} \right)
\eea
where we used the fact that $|\mX|\leq \epsilon\sft^{-3/2}$, and that for $\tb \leq -\delta \sft^{-3/4}$, $\tb^2 \gg \mX$ if $\epsilon$ is small enough, implying $\Psi_1^2(p^*(\mX-\tb^2))\approx |\tb|^{4/3}$. Hence, injecting \fref{id:mY1step1} and \fref{id:mY2step1} in \fref{id:mYstep1}:
\be \lab{id:mYleftcentral2}
\mY(a,b)= \int_{-\infty}^b  \frac{1+O\left(\sft^{\frac{1}{12}}+|\tb|^{\frac 14}\sft^{\frac{3}{16}}+|\mX|^{\frac 16}\sft^{\frac{1}{4}} \right)}{1+3\Psi_1^2( p^* (\mX-\tb^2) )}d\tb+O(\sft^{\frac 14}).
\ee
We now study the derivative of $\mY$. As $\mX(\ta,\tb)=\mX$ is inverted through $\ta=\ta(\mX,\tb)$ one has:
$$
\frac{\pa}{\pa \mX}_{|\tb}\left(\ta(\mX,\tb) \right)=\frac{1}{\pa_a \mX (\ta(\mX,\tb),\tb )}, \ \ \frac{\pa}{\pa \mX}_{|\tb} \left(\frac{1}{\pa_a \mX (\ta(\mX,\tb),\tb)} \right)=-\frac{\pa_{aa}\mX(\ta (\mX,\tb),\tb) }{(\pa_a \mX(\ta(\mX,\tb),\tb))^3}
$$
One deduces from \fref{id:intchangevariable}, \fref{bd:Z1:paamX}, \fref{bd:Z1:paaamX} that:
\bee
&& \frac{\pa}{\pa \mX}_{|b} \left(k_6\sft^{\frac 14}\int_{\Gamma_2} \frac{ds}{|\nabla x|}\right) = \frac{\pa}{\pa \mX}_{|b} \left( \int_{-\frac{\delta}{\sft^{3/4}}}^b \frac{d\tb}{\pa_a\mX(\ta(\mX,\tb),\tb)}\right) =  -\int_{-\frac{\delta}{\sft^{3/4}}}^b \frac{\pa_{aa}\mX (\ta,\tb)d\tb}{(\pa_a \mX (\ta,\tb))^3}\\
&=&  6p^*\int_{-\frac{\delta}{\sft^{3/4}}}^b \frac{\Psi_1\left(p^*\left(\mX-\tb^2 \right) \right)}{\left(1+3\Psi_1^2\left(p^*(\mX-\tb^2) \right) \right)^3} \left(1+O\left(\sft^{\frac{1}{12}}+|\mX|^{\frac 16}\sft^{\frac{1}{4}}+|\tb|^{\frac 13}\sft^{\frac{1}{4}} \right) \right)d\tb \\
&&+\int_{-\frac{\delta}{\sft^{3/4}}}^b \frac{O\left(\sft^{\frac{1}{12}} \right)}{\left(1+3\Psi_1^2\left( p^*(\mX-\tb^2) \right) \right)^3} d\tb.
\eee
Note that in the integrals above, for $\tb\leq -\delta\sft^{-3/4}$ there holds $\Psi_1(p^*(\mX-\tb^2))\gtrsim \tb^{2/3}$. Hence, integrating from infinity instead of $-\delta\sft^{-3/4}$ produces an error which is $O(\sft^{7/4})$ and:
\begin{align*}
 \frac{\pa}{\pa \mX}_{|b} \left(k_6\sft^{\frac 14}\int_{\Gamma_2} \frac{ds}{|\nabla x|}\right) &=  6p^*\int_{-\infty}^b \frac{\Psi_1\left(p^*\left(\mX-\tb^2 \right) \right)}{\left(1+3\Psi_1^2\left(p^*(\mX-\tb^2) \right) \right)^3} \left(1+O\left(\sft^{\frac{1}{12}}+|\mX|^{\frac 16}\sft^{\frac{1}{4}}+|\tb|^{\frac 14}\sft^{\frac{3}{16}} \right) \right)d\tb \\
&\qquad +\int_{-\infty}^b \frac{O\left(\sft^{\frac{1}{12}} \right)}{\left(1+3\Psi_1^2\left(p^*(\mX-\tb^2) \right) \right)^3} d\tb+O\left(\sft^{\frac{7}{4}} \right)
\end{align*}
Therefore, injecting \fref{bd:Z1:paamX}, \fref{id:mY1step1} and the above identity in \fref{id:mYstep1}:
\bea
\non && \pa_{a}\mathcal Y(a,b) \ = \ \pa_{a}\mX \left( k_6\sft^{\frac 14}\frac{\pa}{\pa \mX} \int_{\Gamma_1} \frac{ds}{|\nabla x|} +\frac{\pa}{\pa \mX}_{|b} \left(k_6\sft^{\frac 14}\int_{\Gamma_2} \frac{ds}{|\nabla x|}\right) \right) \\
\non &=& \pa_{a}\mX \Biggl( 6 p^*\int_{-\infty}^b \frac{\Psi_1\left(p^*\left(\mX-\tb^2 \right) \right)}{\left(1+3\Psi_1^2\left(p^*(\mX-\tb^2) \right) \right)^3} \left(1+O\left(\sft^{\frac{1}{12}}+|\mX|^{\frac 16}\sft^{\frac{1}{4}}+|\tb|^{\frac 14}\sft^{\frac{3}{16}} \right) \right)d\tb \\
\non &&\qquad \qquad +\int_{-\infty}^b \frac{O\left(\sft^{\frac{1}{12}} \right)}{\left(1+3\Psi_1^2\left(p^*(\mX-\tb^2) \right) \right)^3} d\tb+O\left(\sft^{\frac{7}{4}} \right)\Biggr) \\
\lab{id:paamYleftcentral2} &=& 6p^* \left(1+3\Psi_1^2\left(p^*\left(\mX-b^2 \right) \right) \right)\Biggl( \int_{-\infty}^b \frac{O\left(\sft^{\frac{1}{12}} \right)}{\left(1+3\Psi_1^2\left(p^*(\mX-\tb^2) \right) \right)^3} d\tb+O(\sft^{\frac 74})  \\
\non && + \int_{-\infty}^{b}  \frac{\Psi_1\left(p^*\left(\mX-\tb^2 \right) \right)}{\left(1+3\Psi_1^2\left(p^*(\mX-\tb^2) \right) \right)^3} \left(1+O\left(\sft^{\frac{1}{12}}+|\mX|^{\frac 16}\sft^{\frac{1}{4}}+|\tb|^{\frac 14}\sft^{\frac{3}{16}}+\sft^{\frac{1}{4}}|b|^{\frac 13} \right) \right)d\tb  \Biggr)
\eea
The estimates \fref{id:mYleftcentral} and \fref{id:paamYleftcentral}, and \fref{def:Phi}, prove \fref{id:mYleftcentral} and \fref{id:paamYleftcentral} for the first case.\\

\noindent \textbf{Step 2} \emph{The normal component for the sides of the self-similar zone}. We now prove the estimates \fref{bd:sides:mY} and \fref{bd:sides:pabmY} in the second case described by \fref{autosimaabright2}. The parametrisation of the curve $\Gamma[x]$ has to be done more carefully. In the case of the left side, i.e. $x<0$, we are in the case considered in Step 1, which has already been covered. In the case of the right side, i.e. $x>0$, we are in case (ii) of Lemma \ref{lem:lagrangian}, and then inside $Z_0^c$, the curve $\Gamma$ can be decomposed in five curves, $\Gamma_i$ for $i=1,...,5$ that enjoy the properties described there. We use different variables to parametrise the curves $\Gamma_i$, applying Lemma \ref{lem:lagrangian}. On $\Gamma_1$ we use the variable $\tb$, on $\Gamma_2$ $\ta$, on $\Gamma_3$ $\tb$, on $\Gamma_4$ $\ta$ and on $\Gamma_5$ $\tb$. Without loss of generality for the argument, we assume that $(\bar a,\bar b)$ is located on $\Gamma_2$. Indeed, treating the case of three or more different parametrisation can be done the very same way. We consider a point $(a,b)$ close to $(\bar a,\bar b)$, which still belongs to $\Gamma_2$ (up to changing slightly the constants in the definition of $Z_1$ and $Z_2$). We denote by $\Gamma_0$ the part of the curve outside $Z_0^c$. Hence from \fref{id:intchangevariable}:
\be \lab{id:mYstep4}
\mY =k_6\sft^{\frac 14} \int_{\Gamma_0} \frac{ds}{|\nabla x|}+\int_{-\frac{\delta}{\sft^{3/4}}}^{b_1} \frac{d\tb}{\pa_a \mX}+\int_{a_1}^{a} \frac{d\ta}{\pa_b \mX},
\ee
where we recall that $(a_1,b_1)$ is the endpoint of $\Gamma_1$ and the starting point of $\Gamma_2$, defined in Lemma \ref{lem:lagrangian}. The integral over $\Gamma_0$ is at distance one to $(X_0,Y_0)$ and hence in a zone where everything remains regular:
\be \lab{id:mYGamma0step4}
\int_{\Gamma_0} \frac{ds}{|\nabla x|}=O(1), \ \ \pa_{\mX}\left(\int_{\Gamma_0} \frac{ds}{|\nabla x|}\right)=O(\sft^{\frac 32}).
\ee
We now consider the second integral, corresponding to the part $\Gamma_1$ of the curve joining the points $(a_{in},-\delta \sft^{-3/4})$ and $(a_1,b_1)$. Since this part is in $Z_1$, one obtains from \fref{bd:Z1:paamX}, injecting \fref{bd:abmXautosimsides}:
\be \lab{id:mYGamma1step4}
\int_{-\frac{\delta}{\sft^{3/4}}}^{b_1} \frac{d\tb}{\pa_a \mX}= \int_{-\frac{\delta}{\sft^{3/4}}}^{b_1} \frac{d\tb}{1+3\Psi_1^2(p^*(\mX-\tb^2) )}\left(1+O\left(\sft^{\frac{1}{12}}|\mX|^{\frac{1}{18}}+\sft^{\frac{1}{12}}|\tb|^{\frac{1}{9}} \right)\right)
\ee
We turn to the third integral, corresponding to the part $\Gamma_2$ of the curve joining the points $(a_{1},b_1)$ and $(a,b)$. There, since this part is in $Z_2$, one obtains from \fref{bd:Z2:pax}:
$$
\int_{a_1}^{a} \frac{d\ta}{\pa_b \mX} = \int_{a_1}^a \frac{d\ta}{2\sqrt{\mX-\ta-p^{*2}\ta^3}} \left(1+O\left(|\mX|^{\frac 16}\sft^{\frac 14}\right)\right).
$$
In $Z_2$, one has from \fref{def:Z2} that $|\ta|\ll |\tb|^{2/3}$ and $|\tb|\gg 1$, so that from \fref{bd:xvanshen} one has 
\be \lab{bd:step4mXa}
\mX \approx |\tb|^2\gg |\ta|^3 \ \ \text{and} \ \ \sqrt{\mX-\ta-p^{*2}\ta^3}\approx \mX^{1/2}
\ee
uniformly for $\ta$ in $Z_2$, as well as $|a_1|,|a|\ll \mX^{1/3}$. Using these bounds one infers from the above identity that:
$$
\int_{a_1}^{a} \frac{d\ta}{\pa_b \mX} =\int_{a_1}^a \frac{d\ta}{2\sqrt{\mX-\ta-p^{*2}\ta^3}} +O\left(\frac{|a-a_1|}{|\mX|^{\frac 12}} |\mX|^{\frac 16}\sft^{\frac 14}\right) = \int_{a_1}^a \frac{d\ta}{2\sqrt{\mX-\ta-p^{*2}\ta^3}} +O\left( |\mX|^{-\frac 16+\frac 16}\sft^{\frac{1}{4}}\right).
$$
Also, from the identity \fref{bd:Z2:a}, using the above bound \fref{bd:step4mXa}:
\be \lab{bd:step4changeendpoints}
\left|a+\frac{1}{p^*}\Psi_1\left(p^*\left(\mX-b^2\right)\right)\right|+\left|a+\frac{1}{p^*}\Psi_1\left(p^*\left(\mX-b^2\right)\right)\right|\lesssim |\mX|^{\frac 13 +\frac{1}{18}}\sft^{\frac{1}{12}},
\ee
$$
\left| \int_{a}^{-\frac{1}{p^*}\Psi_1\left(p^*\left(\mX-b^2\right)\right)} \frac{d\ta}{\sqrt{\mX-\ta-p^{*2}\ta^3}} \right|\lesssim \frac{\left|a+\frac{1}{p^*}\Psi_1\left(p^*\left(\mX-b^2\right)\right)\right|}{|\mX|^{\frac 12}}\lesssim |\mX|^{-\frac 16+\frac{1}{18}}\sft^{\frac{1}{12}} ,
$$
$$
\left| \int_{a_1}^{-\frac{1}{p^*}\Psi_1\left(p^*\left(\mX-4b_1^2\right)\right)} \frac{d\ta}{\sqrt{\mX-\ta-p^{*2}\ta^3}} \right|\lesssim \frac{\left|a_1+\frac{1}{p^*}\Psi_1\left(p^*\left(\mX-4b_1^2\right)\right)\right|}{|\mX|^{\frac 12}}\lesssim |\mX|^{-\frac 16+\frac{1}{18}}\sft^{\frac{1}{12}} .
$$
Therefore:
$$
\int_{a_1}^{a} \frac{d\ta}{\pa_b \mX}  =\int_{-\frac{1}{p^*}\Psi_1\left(p^*\left(\mX-4b_1^2\right)\right)}^{-\frac{1}{p^*}\Psi_1\left(p^*\left(\mX-b^2\right)\right)} \frac{d\ta}{2\sqrt{\mX-\ta-p^{*2}\ta^3}} +O\left(|\mX|^{-\frac 16+\frac{1}{18}}\sft^{\frac{1}{12}} \right).
$$
We now change variables in the above integral, taking $\tb=-\sqrt{\mX-\ta-p^{*2}\ta^3}$. The left and right endpoints of the integral are precisely $b_1$ and $b$, and this produces
$$
\ta=-\frac{1}{p^*} \Psi_1\left(p^*(\mX-\tb^2)\right), \ \ d\tb=\frac12 \frac{1+3p^{*2}\ta^2}{\sqrt{\mX-\ta-p^{*2}\ta^3}}d\ta,
$$
$$
\int_{a_1}^{a} \frac{d\ta}{\pa_b \mX} = \int_{b_1}^{b} \frac{d\tb}{1+3\Psi_1^2\left(p^*\left(\mX-\tb^2\right)\right)}+O\left(|\mX|^{-\frac 16+\frac{1}{18}}\sft^{\frac{1}{12}} \right).
$$
We inject the identities \fref{id:mYGamma0step4}, \fref{id:mYGamma1step4} and the above identity in the expression \fref{id:mYstep4}, giving the following expression for $\mY$:
\be
\lab{id:mYsideselfsim} \mY = \int_{-\frac{\delta}{\sft^{\frac 34}}}^{b} \frac{d\tb \left(1+O\left(|\mX|^{\frac{1}{18}}\sft^{\frac{1}{12}}+|\tb|^{\frac{1}{9}}\sft^{\frac{1}{12}}\right)\right)}{1+3\Psi_1^2\left(p^*\left(\mX-\tb^2\right)\right)}+O\left(|\mX|^{-\frac 16+\frac{1}{18}}\sft^{\frac{1}{12}} \right) = \mY^\Theta (a,b) \left(1+O(\sft^{\frac{1}{12}}|\mX|^{\frac{1}{18}}) \right).
\ee
where we used \fref{bd:calculTheta:side:mY}. This shows the first desired bound in \fref{bd:sides:mY}. We now consider the derivatives of $\mY$. The point $(a_1,b_1)$ changes as $\mX$ changes, but the identity $\mX(a_1,b_1)=\mX(a,b)$ ensures that 
\be \lab{id:derivativea1b1}
\pa_\mX a_1\pa_a\mX(a_1,b_1)+\pa_\mX b_1 \pa_b \mX(a_1,b_1)=1.
\ee
Hence, differentiating the sum of the two leading order integrals in \fref{id:mYstep4} one obtains:
\begin{align}
\non & \sft^{\frac 14}k_6\pa_a \left(\int_{\Gamma_1} \frac{ds}{|\nabla x|}+\int_{\Gamma_2} \frac{ds}{|\nabla x|}\right)= \pa_a \left(\int_{-\frac{\delta}{\sft^{3/4}}}^{b_1(a,b)} \frac{d\tb}{\pa_a \mX(\ta(\mX,\tb),\tb)}+\int_{a_1(a,b)}^a -\frac{d\ta}{\pa_b \mX(\ta,\tb(\mX,\ta))} \right) \\
\non &= \pa_a \mX(a,b)\Biggl(\int_{-\frac{\delta}{\sft^{3/4}}}^{b_1} \pa_\mX \left(\frac{1}{\pa_a \mX (\ta(\mX,\tb),\tb )}\right)d\tb+\int_{a_1}^{a} \pa_\mX \left(-\frac{1}{\pa_b \mX(\ta,\tb(\mX,\ta) )}\right)d\ta \\
\non & \qquad \qquad \qquad \qquad +\frac{\pa_\mX b_1}{\pa_a\mX (a_1,b_1)}+\frac{\pa_\mX a_1}{\pa_b\mX(a_1,b_1)} \Biggr)-\frac{1}{\pa_b \mX(a,b)}\\
\non &= \pa_a \mX(a,b)\Biggl(\int_{-\frac{\delta}{\sft^{3/4}}}^{b_1} \frac{-\pa_{aa}\mX (\ta(\mX,\tb),\tb )d\tb }{\left(\pa_a \mX (\ta(\mX,\tb),\tb )\right)^3}+\int_{a_1}^{a} \frac{\pa_{bb}\mX (\ta,\tb(\mX,\ta) )d\ta}{\left(\pa_b \mX(\ta,\tb(\mX,\ta) )\right)^3} + \frac{1}{\pa_a \mX(a_1,b_1) \pa_b \mX(a_1,b_1)}\Biggr)\\
\lab{id:paamYstep4} &\qquad  -\frac{1}{\pa_b \mX(a,b)}.
\end{align}
The first integral is located in $Z_1$ with $|a|\gg1$, hence from \fref{bd:Z1:paamX} and \fref{bd:Z1:paaamX}:
\be \lab{id:paamYGamma1step4}
\int_{-\frac{\delta}{\sft^{3/4}}}^{b_1} \frac{-\pa_{aa}\mX\left(\ta(\mX,\tb),\tb \right)}{\left(\pa_a \mX\left(\ta(\mX,\tb),\tb \right)\right)^3}d\tb = 6p^* \int_{-\frac{\delta}{\sft^{3/4}}}^{b_1} \frac{\Psi_1\left(p^*\left(\mX-\tb^2\right)\right)\left(1+O\left(\sft^{\frac{1}{12}}|\mX|^{\frac{1}{18}}+\sft^{\frac{1}{12}}|\tb|^{\frac{1}{9}}\right)\right)}{\left(1+3\Psi_1^2\left(p^*\left(\mX-\tb^2\right) \right)\right)^3}d\tb.
\ee
The second integral is located in $Z_2$, hence from \fref{bd:Z2:pax} and \fref{bd:Z2:pabbx}:
$$
\int_{a_1}^{a} \frac{\pa_{bb}\mX \left(\ta,\tb(\mX,\ta) \right)}{\left(\pa_b \mX\left(\ta,\tb(\mX,\ta) \right)\right)^3}d\ta =  -\frac{1}{4} \int_{a_1}^{a} \frac{1+O\left(|\mX|^{\frac{1}{6}}\sft^{\frac 14}\right)}{\left(\mX-\ta-p^{*2}\ta^3 \right)^{\frac 32}}d\ta. 
$$
From \fref{bd:step4mXa}, \fref{bd:step4changeendpoints}:
\bee
\int_{a_1}^{a} \frac{\pa_{bb}\mX \left(\ta,\tb(\mX,\ta) \right)}{\left(\pa_b \mX\left(\ta,\tb(\mX,\ta) \right)\right)^3}d\ta  &=&  -\frac{1}{4} \int_{-\frac{1}{p^*} \Psi_1\left(p^*\left(\mX-4b_1^2\right) \right)}^{-\frac{1}{p^*} \Psi_1\left(p^*\left(\mX-b^2\right) \right)} \frac{d\ta}{\left(\mX-\ta-p^{*2}\ta^3 \right)^{\frac 32}} +O\left(|a-a_1|\frac{|\mX|^{\frac 16}\sft^{\frac 14}}{|\mX|^{\frac 32}} \right)\\
&&+O\left(\left|\frac{\left|a+\frac{1}{p^*} \Psi_1 \left(p^*\left(\mX-b^2 \right) \right)\right|+\left|a_1+\frac{1}{p^*} \Psi_1 \left(p^*\left(\mX-4b_1^2 \right) \right)\right|}{|\mX|^{\frac 32}} \right| \right) \\
& = & -\frac{1}{4} \int_{-\frac{1}{p^*} \Psi_1\left(p^*\left(\mX-4b_1^2\right) \right)}^{-\frac 1p \Psi_1\left(p^*\left(\mX-b^2\right) \right)} \frac{d\ta}{\left(\mX-\ta-p^{*2}\ta^3 \right)^{\frac 32}}+O\left(\left|\mX \right|^{-\frac 76+\frac{1}{18}}\sft^{\frac{1}{12}} \right) \\
\eee
We now change variables, taking $\tb=-\sqrt{\mX-\ta-p^{*2}\ta^3}$. Note that this change of variables ensures $\mX^\Theta (\ta,\tb)=\ta+p^{*2}\ta^3+\tb^2=Cte=\mX$, and
$$
-\frac 14 \frac{1}{\left(\mX-\ta-p^{*2}\ta^3 \right)^{\frac 32}}=\frac{\pa_{bb}\mX^\Theta}{(\pa_b \mX^\Theta)^3}.
$$
There holds in this case a general formula when integrating on the curve $\{\mX(a,b)=Cte\}$, obtained by performing a change of variables and an integration by parts (note the signs $\pa_a \mX^\Theta >0$ and $\pa_b \mX^\Theta<0$ in the present case):
\begin{align*}
& \int_{a_1}^{a_2} \frac{\pa_{bb}\mX^\Theta}{(\pa_{b}\mX^\Theta)^3}da = - \int_{b_1}^{b_2}  \frac{\pa_{bb}\mX^\Theta db}{(\pa_{b}\mX^\Theta)^2\pa_a \mX^\Theta} = - \int_{b_1}^{b_2} \left(\frac{d}{db} \pa_b \mX^\Theta+\frac{\pa_b  \mX^\Theta}{\pa_a  \mX^\Theta}\pa_{ba}  \mX^\Theta \right) \frac{db}{(\pa_{b} \mX^\Theta)^2\pa_a  \mX^\Theta} \\
& =  -\int_{b_1}^{b_2} \frac{d}{db} (\pa_b  \mX^\Theta )\frac{1}{(\pa_{b} \mX^\Theta)^2\pa_a  \mX^\Theta} db- \int_{b_1}^{b_2} \frac{\pa_{ab} \mX^\Theta}{\pa_b \mX^\Theta (\pa_a \mX^\Theta)^2} \\
& =  -\int_{b_1}^{b_2} \frac{d}{db} \left(\pa_b  \mX^\Theta \frac{1}{(\pa_{b} \mX^\Theta)^2\pa_a  \mX^\Theta}\right) da +\int_{b_1}^{b_2} \pa_b  \mX^\Theta \frac{d}{db}\left(\frac{1}{(\pa_{b} \mX^\Theta)^2\pa_a  \mX^\Theta}\right)   db-\int_{b_1}^{b_2} \frac{\pa_{ab} \mX^\Theta db}{\pa_b \mX^\Theta (\pa_a \mX^\Theta)^2} \\
& = - \frac{1}{\pa_a  \mX^\Theta(a_2,b_2)\pa_b \mX^\Theta(a_2,b_2)}+\frac{1}{\pa_a  \mX^\Theta(a_1,b_1)\pa_b \mX^\Theta(a_1,b_1)} \\
&\quad -2\int_{b_1}^{b_2} \left(\pa_{bb}  \mX^\Theta-\frac{\pa_b  \mX^\Theta}{\pa_a \mX^\Theta}\pa_{ab} \mX^\Theta\right)\frac{1}{(\pa_{b} \mX^\Theta)^2\pa_a  \mX^\Theta}   db\\
&\quad -\int_{b_1}^{b_2}  \left(\pa_{ab}  \mX^\Theta-\frac{\pa_b  \mX^\Theta}{\pa_{a} \mX^\Theta}\pa_{aa} \mX^\Theta \right)\frac{1}{\pa_b  \mX^\Theta (\pa_a  \mX^\Theta)^2}db-\int_{b_1}^{b_2} \frac{\pa_{ab} \mX^\Theta}{\pa_b \mX^\Theta (\pa_a \mX^\Theta)^2} \\
& =  -\frac{1}{\pa_a  \mX^\Theta(a_2,b_2)\pa_b \mX^\Theta(a_2,b_2)}+\frac{1}{\pa_a  \mX^\Theta(a_1,b_1)\pa_b \mX^\Theta(a_1,b_1)} +2\int_{a_1}^{a_2}  \frac{\pa_{bb}  \mX^\Theta}{(\pa_{b} \mX^\Theta)^3}   da+\int_{b_1}^{b_2}  \frac{\pa_{aa} \mX^\Theta}{(\pa_a  \mX^\Theta)^3}db,\\
\end{align*}
from what one deduces the change of parametrisation identity:
$$
 \int_{a_1}^{a_2} \frac{\pa_{bb} \mX^\Theta}{(\pa_{b} \mX^\Theta)^3}da= \frac{1}{\pa_a  \mX^\Theta(a_2,b_2)\pa_b \mX^\Theta(a_2,b_2)}-\frac{1}{\pa_a  \mX^\Theta(a_1,b_1)\pa_b \mX^\Theta(a_1,b_1)} -\int_{b_1}^{b_2}  \frac{\pa_{aa} \mX^\Theta}{(\pa_a  \mX^\Theta)^3}db.
$$
Applied to our case this produces, noticing that the endpoint are $(a_1,b_1)$ and $(a,b)$:
\bee
&& -\frac{1}{4} \int_{-\frac{1}{p^*} \Psi_1\left(p^*\left(\mX-4b_1^2\right) \right)}^{-\frac{1}{p^*} \Psi_1\left(p^*\left(\mX-b^2\right) \right)} \frac{1}{\left(\mX-\ta-p^{*2}\ta^3 \right)^{\frac 32}}d\ta\\
 &=& \frac{1}{\pa_a\mX^\Theta(a,b)\pa_b \mX^\Theta (a,b)}-\frac{1}{\pa_a \mX^\Theta (a_1,b_1)\pa_b \mX^\Theta(a_1,b_1)}-\int_{b_1}^b \frac{\pa_{aa}\mX^\Theta}{(\pa_a \mX^\Theta)^3}d\tb\\
  &=& \frac{1}{(1+3p^{*2}a^2)2b}-\frac{1}{(1+3p^{*2}a_1^2)2b_1}+6p^* \int_{b_1}^b \frac{\Psi_1\left(p^*\left(\mX-\tb^2\right)\right)}{\left(1+3\Psi_1^2\left(p^*\left(\mX-\tb^2 \right) \right)\right)^3}d\tb
\eee
Note that since $(a_1,b_1)$ belong to both $Z_1$ and $Z_2$, from \fref{bd:step4mXa}, \fref{bd:Z2:pax} and \fref{bd:Z1:paamX}:
$$
\frac{1}{\pa_a \mX(a_1,b_1)\pa_b\mX(a_1,b_1)} = \frac{1+O\left(|\mX|^{\frac 16}\sft^{\frac 14}\right)}{\left(1+3p^{*2}a_1^2\right)2b_1}=\frac{1}{\left(1+3p^{*2}a_1^2\right)2b_1}+O\left(|\mX|^{-\frac{7}{6}+\frac 16}\sft^{\frac 14} \right)
$$
so that:
\bee
 \int_{a_1}^{a} \frac{\pa_{bb}\mX \left(\ta,\tb(\mX,\ta) \right)}{\left(\pa_b \mX\left(\ta,\tb(\mX,\ta) \right)\right)^3}d\ta &=& \frac{1}{(1+3p^{*2}a^2)2b}-\frac{1}{\pa_a \mX(a_1,b_1)\pa_b\mX(a_1,b_1)} \\
 &&+6p^* \int_{b_1}^b \frac{\Psi_1\left(p^*\left(\mX-\tb^2\right)\right)}{\left(1+3\Psi_1^2\left(p^*\left(\mX-\tb^2 \right) \right)\right)^3}d\tb+O\left(|\mX|^{-\frac 76+\frac{1}{18}}\sft^{\frac{1}{12}} \right).
\eee
From the above identity and \fref{id:paamYGamma1step4} one concludes that:
\bee
&&\int_{-\frac{\delta}{\sft^{3/4}}}^{b_1} \frac{-\pa_{aa}\mX\left(\ta(\mX,\tb),\tb \right)}{\left(\pa_a \mX\left(\ta(\mX,\tb),\tb \right)\right)^3}d\tb+\int_{a_1}^{a} \frac{\pa_{bb}\mX \left(\ta,\tb(\mX,\ta) \right)}{\left(\pa_b \mX\left(\ta,\tb(\mX,\ta) \right)\right)^3}d\ta +\frac{1}{\pa_a \mX(a_1,b_1) \pa_b \mX(a_1,b_1)}\\
&=& 6p^*\int_{-\frac{\delta}{\sft^{3/4}}}^{b} \frac{\Psi_1\left(p^*\left(\mX-\tb^2\right)\right)\left(1+O\left(\sft^{\frac{1}{12}}|\mX|^{\frac{1}{18}}+\sft^{\frac{1}{12}}|\tb|^{\frac{1}{9}}\right)\right)}{\left(1+3\Psi_1^2\left(p^*\left(\mX-\tb^2 \right) \right)\right)^3}d\tb\\
&& + \frac{1}{(1+3p^{*2}a^2)2b}+O\left( |\mX|^{-\frac 76+\frac{1}{18}}\sft^{\frac{1}{12}}\right) 
\eee
From \fref{bd:calculTheta:side:pamY}, and using the fact that for $\tb \leq-\delta \sft^{3/4}$ one has $|\Psi_1\left(p^*\left(\mX-\tb^2\right)\right)|\approx |\tb|^{2/3}$:
\bee
&& \int_{-\frac{\delta}{\sft^{3/4}}}^{b} \frac{\Psi_1\left(p^*\left(\mX-\tb^2\right)\right)\left(1+O\left(\sft^{\frac{1}{12}}|\mX|^{\frac{1}{18}}+\sft^{\frac{1}{12}}|\tb|^{\frac{1}{9}}\right)\right)}{\left(1+3\Psi_1^2\left(p^*\left(\mX-\tb^2 \right) \right)\right)^3}d\tb \\
&=& \int_{-\infty}^{b} \frac{\Psi_1\left(p^*\left(\mX-\tb^2\right)\right)}{\left(1+3\Psi_1^2\left(p^*\left(\mX-\tb^2 \right) \right)\right)^3}d\tb+O\left(|\mX|^{-\frac 76+\frac{1}{18}}\sft^{\frac{1}{12}} \right).
\eee
Therefore:
\bea
\non &&\int_{-\frac{\delta}{\sft^{3/4}}}^{b_1} \frac{-\pa_{aa}\mX\left(\ta(\mX,\tb),\tb \right)}{\left(\pa_a \mX\left(\ta(\mX,\tb),\tb \right)\right)^3}d\tb+\int_{a_1}^{a} \frac{\pa_{bb}\mX \left(\ta,\tb(\mX,\ta) \right)}{\left(\pa_b \mX\left(\ta,\tb(\mX,\ta) \right)\right)^3}d\ta +\frac{1}{\pa_a \mX(a_1,b_1) \pa_b \mX(a_1,b_1)}\\
\lab{bd:jenaimarre}&=& \int_{-\infty}^{b} \frac{\Psi_1\left(p^*\left(\mX-\tb^2\right)\right)}{\left(1+3\Psi_1^2\left(p^*\left(\mX-\tb^2 \right) \right)\right)^3}d\tb+ \frac{1}{(1+3p^{*2}a^2)2b}+O\left( |\mX|^{-\frac 76+\frac{1}{18}}\sft^{\frac{1}{12}}\right) 
\eea
Since $(a,b)$ is in $Z_2$, from \fref{bd:step4mXa}, \fref{bd:Z2:pax} and \fref{bd:Z1:paamX}:
$$
\frac{\pa_a \mX(a,b)}{(1+3p^{*2}a^2)2b}= \frac{1}{\pa_b \mX(a,b)}+O(|\mX|^{-\frac 12+\frac 16}\sft^{\frac 14}).
$$
Injecting the two identities above, \fref{id:paamYGamma1step4} and $|\pa_a\mX|\lesssim |\mX|^{2/3}$ in the identity \fref{id:paamYstep4} gives:
$$
\sft^{\frac 14}k_6\pa_a \left(\int_{\Gamma_1} \frac{ds}{|\nabla x|}+\int_{\Gamma_2} \frac{ds}{|\nabla x|}\right)= 6 p^*\pa_a \mX(a,b) \int_{-\infty}^{b} \frac{\Psi_1\left(p^*\left(\mX-\tb^2\right)\right)d\tb}{\left(1+3\Psi_1^2\left(p^*\left(\mX-\tb^2 \right) \right)\right)^3}+O\left(|\mX|^{-\frac 12 +\frac{1}{18}}\sft^{\frac{1}{12}} \right)
$$
From this identity, \fref{id:mYstep4}, \fref{id:mYGamma0step4} and \fref{bd:sides:paamX} we have proved that:
$$
\pa_a \mY =\pa_a \mY^\Theta +O\left(|\mX|^{-\frac 12 +\frac{1}{18}}\sft^{\frac{1}{12}} \right),
$$
which is the second identity in \fref{bd:sides:mY} that we had to show. We now turn to the partial derivative with respect to $b$. From \fref{id:mYstep4} and \fref{id:derivativea1b1}, and then injecting \fref{bd:jenaimarre}, \fref{bd:sides:paamX}, \fref{bd:sides:pabmX} and \fref{bd:abmXautosimsides}:
\bee
&& \sft^{\frac 14}k_6 \pa_b \left(\int_{\Gamma_1} \frac{ds}{|\nabla x|}+\int_{\Gamma_2} \frac{ds}{|\nabla x|}\right)= \pa_b \left(\int_{-\frac{\delta}{\sft^{3/4}}}^{b_1(a,b)} \frac{d\tb}{\pa_a \mX(\ta(\mX,\tb),\tb)}+\int_{a_1(a,b)}^a -\frac{d\ta}{\pa_b \mX(\ta,\tb(\mX,\ta))} \right) \\
&=& \pa_b \mX(a,b)\left(\int_{-\frac{\delta}{\sft^{3/4}}}^{b_1} \frac{-\pa_{aa}\mX (\ta(\mX,\tb),\tb )d\tb}{\left(\pa_a \mX (\ta(\mX,\tb),\tb )\right)^3}+\int_{a_1}^{a} \frac{\pa_{bb}\mX (\ta,\tb(\mX,\ta))d\ta}{\left(\pa_b \mX (\ta,\tb(\mX,\ta))\right)^3} +\frac{1}{\pa_a \mX(a_1,b_1) \pa_b \mX(a_1,b_1)} \right)\\
&=& \pa_b \mX(a,b)\left(\int_{-\infty}^{b} \frac{\Psi_1\left(p^*\left(\mX-\tb^2\right)\right)}{\left(1+3\Psi_1^2\left(p^*\left(\mX-\tb^2 \right) \right)\right)^3}d\tb+ \frac{1}{(1+3p^{*2}a^2)2b}+O\left( |\mX|^{-\frac 76+\frac{1}{18}}\sft^{\frac{1}{12}}\right) \right)  \\
&=&\pa_b \mY^\Theta +O\left(|\mX|^{-\frac 23 +\frac{1}{18}}\sft^{\frac{1}{12}} \right)\\
\eee
which was the last estimate \fref{bd:sides:pabmY} we had to show. We claim that the computations we performed for this right side of the self-similar zone can be adapted in a straightforward way in the case where one has to consider more parts of the curve $\Gamma$ inside $Z_0^c$ to parametrise: the integral over $\Gamma_3$, $\Gamma_4$ and $\Gamma_5$ can be treated the very same way, leading to the same result.

\end{proof}

\end{appendix}

\bibliography{biblio}
\bibliographystyle{plain}

\end{document}